\theoremstyle{plain}
\newtheorem{theorem}{Theorem}[section]
\newtheorem{lemma}[theorem]{Lemma}
\newtheorem{corollary}[theorem]{Corollary}
\newtheorem{prop}[theorem]{Proposition}
\newtheorem{conj}[theorem]{Conjecture}
\theoremstyle{remark}
\newtheorem{remark}[theorem]{Remark}
\newtheorem{example}[theorem]{Example}
\newtheorem*{note*}{Note}
\newtheorem*{remark*}{Remark}
\newtheorem*{example*}{Example}
\theoremstyle{definition}
\newtheorem*{definition*}{Definition}
\newtheorem*{hypothesis*}{Hypothesis}
\newtheorem*{assumption*}{Assumption}
\newtheorem{definition}[theorem]{Definition}
\newcommand{\Z}{\mathbb{Z}}
\newcommand{\R}{\mathbb{R}}
\newcommand{\Q}{\mathbb{Q}}
\newcommand{\C}{\mathbb{C}}
\newcommand{\N}{\mathbb{N}}
\newcommand{\Aut}{\mathrm{Aut}}
\newcommand{\Gal}{\mathrm{Gal}}
\newcommand{\Tr}{\mathrm{Tr}}
\newcommand{\GL}{\mathrm{GL}}
\newcommand{\ab}{\mathrm{ab}}
\newcommand{\Nrd}{\mathrm{Nrd}}
\newcommand{\Hom}{\mathrm{Hom}}
\newcommand{\res}{\mathrm{res}}
\newcommand{\quot}{\mathrm{quot}}
\newcommand{\infl}{\mathrm{infl}}
\newcommand{\id}{\mathrm{id}}
\newcommand{\Irr}{\mathrm{Irr}}
\newcommand{\ind}{\mathrm{ind}}
\newcommand{\Spec}{\mathrm{Spec}}
\newcommand{\PMod}{\mathrm{PMod}}
\newcommand{\perf}{\mathrm{perf}}
\newcommand{\Det}{\mathrm{Det}}
\newcommand{\ram}{\mathrm{ram}}
\newcommand{\ur}{\mathrm{ur}}
\newcommand{\aug}{\mathrm{aug}}
\newcommand{\Ver}{\mathrm{Ver}}
\newcommand{\pd}{\mathrm{pd}}
\newcommand{\Cone}{\mathrm{Cone}}
\newcommand{\Ext}{\mathrm{Ext}}
\renewcommand{\det}{\mathrm{det}}
\numberwithin{equation}{section}
\title[A main conjecture for local fields]{An equivariant Iwasawa main conjecture\\ for local fields}
\author{Andreas Nickel}
\address{Universit\"{a}t Duisburg--Essen\\
	Fakult\"{a}t f\"{u}r Mathematik\\
	Thea-Leymann-Str. 9\\
	45127 Essen\\
	Germany}
\email{andreas.nickel@uni-due.de}
\urladdr{https://www.uni-due.de/$\sim$hm0251/english.html}
\subjclass[2010]{11R23, 11S23, 11S25, 11S40}
\keywords{main conjecture, Iwasawa theory, local epsilon constant conjecture,
	 Galois Gauss sums, local fields, equivariant Tamagawa number conjecture}
\date{Version of 15th March 2018}
\begin{document}

\begin{abstract}
	Let $L/K$ be a finite Galois extension of $p$-adic fields and let
	$L_{\infty}$ be the unramified $\Z_p$-extension of $L$.
	Then $L_{\infty}/K$ is a one-dimensional $p$-adic Lie extension.
	In the spirit of the main conjectures of equivariant Iwasawa theory,
	we formulate a conjecture which relates the equivariant local
	epsilon constants attached to the finite Galois intermediate extensions
	$M/K$ of $L_{\infty}/K$
	to a natural arithmetic invariant arising from the
	\'etale cohomology of the constant sheaf $\Q_p/\Z_p$ on the spectrum
	of $L_{\infty}$. We give strong evidence of the conjecture
	including a full proof in the case that $L/K$ is at most
	tamely ramified.
\end{abstract}

\maketitle

\section{Introduction}
Let $E/F$ be a finite Galois extension of number fields with Galois
group $G$. If $E/F$ is tamely ramified, then the ring of integers
$\mathcal{O}_E$ in $E$ is projective as a module over the integral
group ring $\Z[G]$. The study of the Galois module structure of
$\mathcal{O}_E$ for tamely ramified extensions was systematically developed
by Fr\"ohlich (see \cite{MR717033} for a survey) 
and culminated in Taylor's proof \cite{MR608528}
of Fr\"ohlich's conjecture
that the class of $\mathcal{O}_E$ in the locally free class group
of $\Z[G]$ is determined by the Artin root numbers associated to the
irreducible complex symplectic characters of $G$.
Subsequently, Chinburg \cite{MR786352} formulated a generalization
of Fr\"ohlich's conjecture to the context of arbitrary 
finite Galois extensions $E/F$. This is often called
`Chinburg's $\Omega_2$-conjecture' and is in general still wide open.

Motivated by the requirement that the equivariant Tamagawa number conjecture
(as formulated by Burns and Flach \cite{MR1884523}) 
for the pair $(h^0(\Spec(E)), \Z[G])$ and its Kummer dual
$(h^0(\Spec(E))(1), \Z[G])$ should be compatible
with the functional equation of the associated equivariant $L$-functions,
Bley and Burns \cite{MR2005875} have formulated the `global equivariant
epsilon constant conjecture'. This conjecture asserts an equality
in the relative algebraic $K$-group $K_0(\Z[G], \R)$ between
an element constructed from epsilon constants and the sum of an
equivariant discriminant and certain terms coming from 
the \'etale cohomology of $\mathbb G_m$.
 Note that there is a natural
surjective morphism from $K_0(\Z[G], \R)$ to the locally free class group
of $\Z[G]$. The projection of the global equivariant
epsilon constant conjecture under this morphism indeed recovers Chinburg's
$\Omega_2$-conjecture. 
One advantage of the refinement of Bley and Burns is that
it naturally decomposes into `$p$-parts', where $p$ runs over all rational
primes.

Now fix a prime $p$ and let $L/K$ be a finite Galois extension of
$p$-adic fields with Galois group $G$. Breuning \cite{MR2078894}
defined an invariant $R_{L/K}$ in the relative algebraic $K$-group
$K_0(\Z_p[G], \Q_p^c)$, where $\Q_p^c$ is a fixed algebraic closure of $\Q_p$.
This invariant incorporates the equivariant local epsilon constant of
$L/K$ (i.e.~local Galois Gauss sums) and a natural 
arithmetic invariant arising from the \'etale
cohomology of the sheaf $\Z_p(1)$ on the spectrum of $L$.
His `local equivariant epsilon constant conjecture' then simply asserts
that $R_{L/K}$ vanishes. This fits into the very general framework
of local noncommutative Tamagawa number conjectures of
Fukaya and Kato \cite{MR2276851}.

The global and the local conjecture are in fact
closely related. Let $v$ be a $p$-adic place of $F$ and fix a place $w$
of $E$ above $v$. We write $F_v$ and $E_w$ for the completions of $F$ at
$v$ and $E$ at $w$, respectively. Then the $p$-part of the global conjecture
for $E/F$ is implied by the local conjectures for the extensions $E_w/F_v$,
where $v$ ranges over all $p$-adic places of $F$ 
(see \cite[Corollary 4.2]{MR2078894}).
If $p$ is odd, one actually knows that the $p$-part of the 
global conjecture for all Galois extensions
of number fields is equivalent to the local conjecture for all Galois
extensions of $p$-adic fields \cite[Theorem 4.3]{MR2078894}.

It therefore suffices to consider Breuning's conjecture.
The invariant $R_{L/K}$ is of the form
\[
	R_{L/K} = T_{L/K} + C_{L/K} + U_{L/K} - M_{L/K},
\]
where each term lies in $K_0(\Z_p[G], \Q_p^c)$. Here, the term $T_{L/K}$ is
the equivariant local epsilon constant, $U_{L/K}$ is the so-called
unramified term (see \S \ref{subsec:unramified-term}) and $M_{L/K}$
is a certain correction term.
We now briefly recall the definition of the cohomological term $C_{L/K}$.
Define a free $\Z_p[G]$-module $H_L := \bigoplus_{\sigma} \Z_p$,
where the sum ranges over all embeddings $\sigma: L \rightarrow \Q_p^c$.
Then
\begin{equation} \label{eqn:intro-complex}
	 K_L^{\bullet} := R\Gamma(L, \Z_p(1))[1] \oplus H_L[-1] 
\end{equation}
is a perfect complex of $\Z_p[G]$-modules which is acyclic outside
degrees $0$ and $1$. Moreover, we have natural isomorphisms
$H^1(K_L^{\bullet}) \simeq \Z_p \oplus H_L$ and
$H^0(K_L^{\bullet}) \simeq \widehat{L^{\times}}$, the $p$-completion
of $L^{\times}$. The valuation map, the $p$-adic logarithm and the various
embeddings $\sigma$ then induce a $\Q_p^c[G]$-isomorphism 
(see \cite[\S 2.4]{MR2078894})
\[
	\phi_L:\Q_p^c \otimes_{\Z_p} H^0(K_L^{\bullet}) \simeq
	\Q_p^c \otimes_{\Z_p} H^1(K_L^{\bullet}).
\]
These data can then be used to define $C_{L/K}$ as the `refined Euler characteristic'
(see \S \ref{subsec:refined-Euler})
\[
	C_{L/K} := \chi_{\Z_p[G], \Q_p^c}(K_L^{\bullet}, \phi_L^{-1})
	\in K_0(\Z_p[G], \Q_p^c).
\]
This definition only depends upon the
trivialization $\phi_L^{-1}$ and the class in 
$\Ext^2_{\Z_p[G]}(\Z_p, \widehat{L^{\times}})$ that is 
naturally determined by the
complex $R\Gamma(L, \Z_p(1))$. This is essentially the fundamental class
of local class field theory.

In this paper we approach the local equivariant epsilon constant conjecture
via Iwasawa theory. We formulate an equivariant
Iwasawa main conjecture, which might be seen as a local analogue of the
main conjecture of equivariant Iwasawa theory for totally real fields
proven by Ritter and Weiss \cite{MR2813337} and, independently, by
Kakde \cite{MR3091976} (under the assumption that Iwasawa's $\mu$-invariant
vanishes; see \cite{hybrid-EIMC} for results without this hypothesis). 

Every $p$-adic field $L$ has at least two $\Z_p$-extensions:
the cyclotomic and the unramified $\Z_p$-extension. 
It is more common in the literature to look at 
the cyclotomic $\Z_p$-extension,
but also the unramified $\Z_p$-extension is often considered
\cite{TV-mc, MR3194646, MR3273476, MR3323528}.
In order to explain why we believe that the unramified $\Z_p$-extension
bears interesting information in our case, we consider the following more
general situation. Let $V$ be a finite dimensional $\Q_p$-vector space
with a continuous action of $G_K$, the absolute Galois group of $K$.
Choose a $G_K$-stable $\Z_p$-lattice $T$ in $V$ and denote the quotient
$V/T$ by $A$. There are natural duals of $T$ and $V$ given by
$T^{\ast} := \Hom_{\Z_p}(T, \Z_p(1))$ and
$V^{\ast} := \Hom_{\Q_p}(V, \Q_p(1)) = \Q_p \otimes_{\Z_p} T^{\ast}$.
Let $K(A)/K$ be the extension defined by the kernel of
the homomorphism
$G_K \rightarrow \Aut(V)$. Then $K(A) = \bigcup_n K(A[p^n])$
is the field obtained by adjoining all $p$-power torsion points of $A$
and $K(A)/K$ is a compact $p$-adic Lie extension.
Iwasawa theory over a $p$-adic Lie extension $K_{\infty}$ of $K$
often behaves well when $K_{\infty}$ contains $K(A)$ as a subfield
(see \cite[\S 4.3]{MR1894938}, for instance; similarly
for number fields \cite{MR2217048}).
In the case considered in this article, the lattice $\Z_p(1)$ plays
the role of $T^{\ast}$. Thus we have $A = \Q_p/\Z_p$ and then clearly
$K(A) = K$ so that every choice of $K_{\infty}$ will contain $K(A)$.
Note that for all other Tate twists of $\Z_p$ and in many further interesting
cases as the $p$-adic Tate module of an elliptic curve, the requirement
$K(A) \subseteq K_{\infty}$ implies that $K_{\infty}$ contains the
cyclotomic $\Z_p$-extension.

Let us consider the unramified
$\Z_p$-extension $L_{\infty}$ of $L$. Then $L_{\infty}/K$ is an infinite
Galois extension and its Galois group $\mathcal{G}$ is a one-dimensional
$p$-adic Lie group. We let $\Lambda(\mathcal{G})$ denote the Iwasawa algebra
of $\mathcal{G}$ and let $\mathcal{Q}(\mathcal{G})$ be its total ring of fractions.
We also put $\mathcal{Q}^c(\mathcal{G}) := \Q_p^c \otimes_{\Q_p} \mathcal{Q}(\mathcal{G})$.

Now assume that $p$ is odd. Although we never need this assumption 
for our arguments, we have to impose it whenever we refer to
results of Ritter and Weiss, where it is always in force.
The local Galois
Gauss sums behave well under unramified twists (see Proposition
\ref{prop:Gauss-twists} below) 
and give rise to a homomorphism
$\tau_{L_{\infty}/K}$
on the ring of virtual $\Q_p^c$-valued characters of $\mathcal{G}$
with open kernel. This homomorphism takes values in 
$\mathcal{Q}^c(\Gamma_K)^{\times}$, 
where $\Gamma_K := \Gal(K_{\infty}/K) \simeq \Z_p$,
and plays the role
of $T_{L/K}$ above (in fact, the homomorphism
$\tau_{L_{\infty}/K}$ depends upon a choice of isomorphism 
$\C \simeq \C_p$,
but our conjecture does not; we will suppress this dependence in the introduction).

For $n \in \N$ let $L_n$ be the $n$-th layer of the $\Z_p$-extension $L_{\infty}/L$.
We define a complex of $\Lambda(\mathcal{G})$-modules
\[
	 K_{L_{\infty}}^{\bullet}  :=  R\Hom(R\Gamma(L_{\infty}, \Q_p / \Z_p), \Q_p / \Z_p)[-1]
	 \oplus H_{L_{\infty}}[-1],
\]
where $H_{L_{\infty}} := \varprojlim_n H_{L_n}$ is a free
$\Lambda(\mathcal{G})$-module of rank $[K:\Q_p]$, 
and show that this complex is indeed perfect.
To see the analogy with \eqref{eqn:intro-complex},
we observe that local Tate duality induces an isomorphism
\[
	 R\Gamma(L, \Z_p(1))[1] \simeq  R\Hom(R\Gamma(L, \Q_p / \Z_p), \Q_p / \Z_p)[-1]
\]
in the derived category of $\Z_p[G]$-modules. We construct a trivialization
$\phi_{\infty}^{-1}$ of the complex $K_{L_{\infty}}^{\bullet}$ which allows us
to define a refined Euler characteristic 
\[
	C_{L_{\infty}/K} \in K_0(\Lambda(\mathcal{G}), \mathcal{Q}^c(\mathcal{G})).
\]
In contrast to the isomorphism $\phi_L$ above, the map $\phi_{\infty}$
no longer incorporates the valuation map because $\Z_p$ becomes torsion when
considered as an Iwasawa module. For technical purposes, however,
we have to choose a compatible system of integral normal basis generators
along the unramified tower (mainly because we will refer to results
that require coefficient rings with finite residue field
so that we cannot pass to the completion of the ring of integers
in $F_{\infty}$ for a $p$-adic field $F$). 
The same choice will appear in our definition of
(a variant of) the unramified term $U_{L_{\infty}/K}'$. The main conjecture
will then not depend upon this choice. We will also define a certain correction term
$M_{L_{\infty}/K}$.

The main conjecture then asserts the following:
There exists (a unique)  
\[ 
	\zeta_{L_{\infty}/K} \in 
	K_1(\mathcal{Q}^c(\mathcal{G}))
\]
such that
	\[
	\partial(\zeta_{L_{\infty}/K}) = 
	-C_{L_{\infty}/K} - U'_{L_{\infty}/K} + M_{L_{\infty}/K}
	\]
	and
	\[
	\Det(\zeta_{L_{\infty}/K}) = \tau_{L_{\infty}/K}.
	\]
Here, $\partial:K_1(\mathcal{Q}^c(\mathcal{G})) \rightarrow
K_0(\Lambda(\mathcal{G}), \mathcal{Q}^c(\mathcal{G}))$ 
denotes the (surjective) connecting homomorphism of
the long exact sequence of relative $K$-theory
and $\Det$ is a homomorphism mapping $K_1(\mathcal{Q}^c(\mathcal{G}))$
to a certain $\Hom$-group (constructed by Ritter and Weiss \cite{MR2114937}).
The analogy to the main conjecture for totally real fields as formulated
by Ritter and Weiss is apparent. To make the analogy to Breuning's
conjecture clearer, let us assume only for the rest of this paragraph 
that $\mathcal{G}$ is abelian
or, more generally, that $\Det$ is an isomorphism. Then we may put
$T_{L_{\infty}/K} := \partial(\Det^{-1}(\tau_{L_{\infty}/K}))$
and 
\[
	R_{L_{\infty}/K} := T_{L_{\infty}/K} + C_{L_{\infty}/K} + U'_{L_{\infty}/K}
	-M_{L_{\infty}/K} \in  K_0(\Lambda(\mathcal{G}), \mathcal{Q}^c(\mathcal{G})).
\]
Then the main conjecture asserts that $R_{L_{\infty}/K}$ vanishes.

Our conjecture also fits into the framework of local noncommutative
Tamagawa number conjectures of Fukaya and Kato
\cite{MR2276851}. However, \cite[Conjecture 3.4.3]{MR2276851}
only asserts that `there exists a unique way to associate an
isomorphism' (called an $\varepsilon$-isomorphism) with certain 
properties for any pair
$(\Lambda,T)$ of certain adic rings $\Lambda$ and finitely generated 
projective $\Lambda$-modules $T$ endowed with a continuous action of
$G_{\Q_p}$. They do not explain how this isomorphism
can (at least conjecturally) be constructed in general.
In our situation this amounts to the definition of the trivialization
of the complex $K_{L_{\infty}}^{\bullet}$.
Therefore our conjecture makes the conjecture of Fukaya and Kato 
more precise in the situation $K = \Q_p$, $\Lambda = \Lambda(\mathcal{G})$
and $T = \Lambda^{\sharp}(1)$, where $\Lambda^{\sharp}$ denotes the free
$\Lambda$-module of rank $1$ upon which $\sigma \in G_{\Q_p}$ acts
as multiplication by $\overline{\sigma}^{-1}$; here $\overline{\sigma}$
denotes the image of $\sigma$ in $\mathcal{G}$.

Building on work of Fr\"ohlich, Bley, Burns, and Breuning 
we show that our conjecture
holds for tamely ramified extensions. 
If $\mathcal{G}$ is abelian, then this is the local analogue of
Wiles' result \cite{MR1053488} on the main conjecture for
totally real fields.
This allows us to deduce the conjecture
`over the maximal order' from its good functorial behaviour. 
Note that $p$ does not divide $[L_{\infty} : K_{\infty}]$ if $L/K$
is tamely ramified, and thus $\Lambda(\mathcal{G})$ is itself a maximal
order (over the classical Iwasawa algebra $\Z_p \llbracket T \rrbracket$)
in this case.

We give an important application of our results, which 
has no analogue at finite level: 
it suffices to prove the main conjecture after
localization at the height $1$ prime ideal $(p)$
of $\Z_p \llbracket T \rrbracket$. The cohomology groups of
the complex $K_{L_{\infty}}^{\bullet}$ then become free
(and thus perfect)
by a result of the author \cite{Swan-Iwasawa}
and so one does not need to take care of the associated 
extension class any longer. This application makes heavy use of
a result of Ritter and Weiss \cite{MR2205173}
on the image of $K_1(\Lambda(\mathcal{G}))$ under $\Det$.
Note that a similar reduction step appears in the proof of the main
conjecture for totally real fields.

In a forthcoming article we will show that our conjecture implies
Breuning's conjecture and also the 
equivariant local epsilon constant conjecture
for unramified twists of $\Z_p(1)$. 
Note that (with a few exceptions) these conjectures are known to hold
in exactly the same cases: for tamely ramified extensions \cite{MR2078894, MR3509721},
for certain weakly, but wildly ramified extensions \cite{MR3520002, MR3651025},
and if $L/\Q_p$ is an abelian extension \cite{MR2078894, MR3194646}
(see also \cite{MR2290586, MR2410782}).
So our work explains this analogy and provides a unifying approach
to these results. Moreover, it overcomes two major obstacles to proving
Breuning's conjecture: (i) the valuation map no longer appears
and so the trivialization of the complex is considerably easier and (ii)
one may reduce to a situation where the occurring complex has
perfect cohomology groups 
and so one does not need to take care of the extension 
class.\\

This article is organized as follows. 
In \S \ref{sec:alg-prelim} we review algebraic $K$-theory of $p$-adic
group rings and Iwasawa algebras.
In particular, we study how $K$-theory behaves
when one passes from group rings to Iwasawa algebras.
We introduce the determinant map
of Ritter--Weiss and compare it with the equivalent notion
of the reduced norm. In \S \ref{sec:Gauss-sums} we introduce
local Galois Gauss sums and study their behaviour under unramified
twists. We define the homomorphism $\tau_{L_{\infty}/K}$
and study its basic properties. The main part of \S \ref{sec:coh-term}
is devoted to the definition of the cohomological term $C_{L_{\infty}/K}$.
We introduce the complex $K_{L_{\infty}}^{\bullet}$ and show that it
is perfect. This in fact holds for more general $\Z_p$-extensions.
We then study (normal) integral basis generators and the behaviour
of the $p$-adic logarithm along the unramified tower.
Choosing a certain compatible system of normal integral basis generators,
we define a trivialization of the complex $K_{L_{\infty}}^{\bullet}$.
A similar choice will then appear in the definition of the unramified
term $U'_{L_{\infty}/K}$. 
We show that the sum $C_{L_{\infty}/K} + U'_{L_{\infty}/K}$
is well defined up to the image of an element 
$x \in K_1(\Q_p^c \otimes_{\Z_p} \Lambda(\mathcal{G}))$ such that 
$\Det(x)=1$. This will be sufficient for our purposes, but we point out
that it is conjectured that the map $\Det$ is injective. For instance,
this is true if $\mathcal{G}$ is abelian or, more generally, if
$p$ does not divide the order of the (finite) commutator
subgroup of $\mathcal{G}$ (this follows from 
\cite[Proposition 4.5]{MR3092262} as explained in
\cite[Remark 4.8]{hybrid-EIMC}). 
We also define the correction
term in this section. We formulate the main conjecture in 
\S \ref{sec:MC}. We show that it is well posed and study its functorial
properties. We also provide some first evidence including a result
that does not have an analogue at finite level.
In \S \ref{sec:max-orders} we prove our conjecture `over the maximal
order'. This includes a full proof of the conjecture for tamely ramified
extensions. As a corollary, we obtain an important reduction step
toward a full proof of the conjecture.

\subsection*{Acknowledgements}
The author acknowledges financial support provided by the 
Deutsche Forschungsgemeinschaft (DFG) 
within the Heisenberg programme (No.\, NI 1230/3-1).
I would like to thank Werner Bley for various enlightening discussions 
concerning local and global epsilon constant conjectures during my stay
at LMU Munich. 
I also thank Otmar Venjakob for his constructive suggestions on
an earlier version of this article.
Finally, I owe Henri Johnston a great debt of gratitude
for his interest in this project and the many valuable comments,
which helped to considerably improve this article.

\subsection*{Notation and conventions}
All rings are assumed to have an identity element and all modules are assumed
to be left modules unless otherwise stated.
Unadorned tensor products will always denote tensor products over $\Z$.
If $K$ is a field, we denote its absolute Galois group by $G_K$.
If $R$ is a ring, we write $M_{m \times n}(R)$ for the set of all 
$m \times n$ matrices with entries in $R$. We denote the group of invertible
matrices in $M_{n \times n}(R)$ by $\GL_n(R)$.
Moreover, we let $\zeta(R)$ denote the centre of the ring $R$.
If $M$ is an $R$-module we denote by $\pd_R(M)$ the projective dimension
of $M$ over $R$.

\section{Algebraic Preliminaries} \label{sec:alg-prelim}

\subsection{Derived categories}
Let $\Lambda$ be a noetherian ring and $\PMod(\Lambda)$ be the category of all finitely
generated projective $\Lambda$-modules. We write $\mathcal D(\Lambda)$ for the derived category
of $\Lambda$-modules and $\mathcal C^b(\PMod(\Lambda))$ for the category of bounded complexes
of finitely generated projective $\Lambda$-modules.
Recall that a complex of $\Lambda$-modules is called perfect if it is 
isomorphic in $\mathcal D(\Lambda)$
to an element of $\mathcal C^b(\PMod(\Lambda))$.
We denote the full triangulated subcategory of $\mathcal D(\Lambda)$
comprising perfect complexes by $\mathcal D^{\perf}(\Lambda)$.
If $M$ is a $\Lambda$-module and $n$ is an integer, we write $M[n]$ for the complex
\[
	\dots \longrightarrow 0 \longrightarrow 0 \longrightarrow M 
	\longrightarrow 0 \longrightarrow 0 \longrightarrow \dots.
\]
where $M$ is placed in degree $-n$. This is compatible with the usual shift
operator on cochain complexes.


\subsection{Relative Algebraic $K$-theory}\label{subsec:K-theory}
For further details and background on algebraic $K$-theory used in this section, 
we refer the reader to
\cite{MR892316} and \cite{MR0245634}.
Let $\Lambda$ be a noetherian ring.
We write $K_{0}(\Lambda)$ for the Grothendieck group of $\PMod(\Lambda)$ 
(see \cite[\S 38]{MR892316})
and $K_{1}(\Lambda)$ for the Whitehead group (see \cite[\S 40]{MR892316})
which is the abelianized infinite general linear group.
We denote the relative algebraic $K$-group corresponding to a ring
homomorphism $\Lambda \rightarrow \Lambda'$ by $K_0(\Lambda, \Lambda')$.
We recall that $K_{0}(\Lambda, \Lambda')$ is 
an abelian group with generators $[X,g,Y]$ where
$X$ and $Y$ are finitely generated projective $\Lambda$-modules
and $g:\Lambda' \otimes_{\Lambda} X \rightarrow \Lambda' \otimes_{\Lambda} Y$ 
is an isomorphism of $\Lambda'$-modules;
for a full description in terms of generators and relations, 
we refer the reader to \cite[p.\ 215]{MR0245634}.
Furthermore, there is a long exact sequence of relative $K$-theory
(see  \cite[Chapter 15]{MR0245634})
\begin{equation}\label{eqn:long-exact-seq}
K_{1}(\Lambda) \longrightarrow K_{1}(\Lambda') \xrightarrow{\partial_{\Lambda,\Lambda'}}
K_{0}(\Lambda,\Lambda') \longrightarrow K_{0}(\Lambda) 
\longrightarrow K_{0}(\Lambda').
\end{equation}

Let $R$ be a noetherian integral domain of characteristic $0$ with field of fractions $E$.
Let $A$ be a finite-dimensional semisimple $E$-algebra and let $\Lambda$ be an $R$-order in $A$.
For any field extension $F$ of $E$ we set $A_{F} := F \otimes_{E} A$.
Let $K_{0}(\Lambda, F) = K_0(\Lambda, A_F)$ denote the relative
algebraic $K$-group associated to the ring homomorphism 
$\Lambda \hookrightarrow A_{F}$. We then abbreviate 
the connecting homomorphism $\partial_{\Lambda,A_F}$
to $\partial_{\Lambda,F}$.
The reduced norm map
\[
\Nrd_{A}: A \longrightarrow \zeta(A)
\]
is defined componentwise on the Wedderburn decomposition of $A$ (see \cite[\S 9]{MR1972204})
and extends to matrix rings over $A$ in the obvious way; hence this induces
a map $K_{1}(A) \rightarrow \zeta(A)^{\times}$ which we also denote by $\Nrd_A$.

Let $\zeta(A) = \prod_{i} E_i$ be the decomposition of $\zeta(A)$ into a product
of fields. For any $x = (x_i)_i \in \zeta(A)$ we define an invertible
element $^{\ast}x = (^{\ast}x_i)_i \in \zeta(A)^{\times}$ by
$^{\ast}x_i := x_i$ if $x_i \not=0$ and $^{\ast}x_i = 1$ if $x_i = 0$.

\subsection{Refined Euler characteristics} \label{subsec:refined-Euler}
For any $C^{\bullet} \in \mathcal C^b (\PMod (\Lambda))$ we
define $\Lambda$-modules
\[
 C^{ev} := \bigoplus_{i \in \Z} C^{2i}, \quad C^{odd} := \bigoplus_{i \in \Z} C^{2i+1}.
\]
Similarly, we define $H^{ev}(C^{\bullet})$ and $H^{odd}(C^{\bullet})$ 
to be the direct sum over all even and odd degree
cohomology groups of $C^{\bullet}$, respectively.
A pair $(C^{\bullet},t)$
consisting of a complex $C^{\bullet} \in \mathcal D^{\perf}(\Lambda)$ and an
isomorphism $t: H^{odd}(C_F ^{\bullet}) \rightarrow H^{ev}(C_F^{\bullet})$ is called a
trivialized complex, where we write $C_F^{\bullet}$ for
$F \otimes^{\mathbb L}_R C^{\bullet}$.
We refer to $t$ as a trivialization of $C^{\bullet}$.

One defines the refined Euler characteristic 
$\chi_{\Lambda,F}(C^{\bullet}, t) \in K_0(\Lambda,F)$ of a trivialized complex as follows:
Choose a complex $P^{\bullet} \in \mathcal C^b(\PMod(\Lambda))$ which is
quasi-isomorphic to $C^{\bullet}$. Let $B^i(P_F ^{\bullet})$ and 
$Z^i(P_F^{\bullet})$ denote the $i$-th cobounderies and $i$-th cocycles of
$P_F ^{\bullet}$, respectively. 
For every $i \in \Z$ we have the obvious exact sequences
\[ 0 \longrightarrow {B^i(P_F^{\bullet})} \longrightarrow {Z^i(P_F^{\bullet})} \longrightarrow
 {H^i(P_F^{\bullet})} \longrightarrow 0,
\]
\[
   0 \longrightarrow {Z^i(P_F^{\bullet})} \longrightarrow {P_F^i} \longrightarrow 
   {B^{i+1}(P_F^{\bullet})} \longrightarrow 0. 
\]
If we choose splittings of the above sequences, we get an
isomorphism of $A_F$-modules
\[   \phi_t: P_F^{odd}  \simeq  \bigoplus_{i \in \Z} B^i(P_F^{\bullet}) \oplus
      H^{odd}(P_F^{\bullet})
      \simeq  \bigoplus_{i \in \Z} B^i(P_F^{\bullet})  \oplus H^{ev}(P_F^{\bullet})
      \simeq  P_F^{ev},
\]
where the second map is induced by $t$. Then the refined
Euler characteristic is defined to be
\[
	\chi_{\Lambda, A_F} (C^{\bullet}, t) = 
	\chi_{\Lambda, F} (C^{\bullet}, t) := 
	[P^{odd}, \phi_t, P^{ev}] \in K_0(\Lambda, F)
\]
which indeed is independent of all choices made in the
construction.
For further information concerning refined Euler characteristics
we refer the reader to \cite{MR2076565}.

\subsection{$p$-adic group rings}
Let $G$ be a finite group and $F$ a field of charactersitic $0$.
We write $\Irr_F(G)$ for the set of $F$-irreducible characters of $G$.
We fix an algebraic closure $F^c$ of $F$ and let $G_F := \Gal(F^c/F)$ denote
the absolute Galois group of $F$.

Fix a prime $p$ and set $\Irr(G) := \Irr_{\Q_p^c}(G)$. 
Then $G_{\Q_p}$ acts on each $\Q_p^c$-valued
character $\eta$ of $G$ and thereby on $\Irr(G)$ via ${}^{\sigma}\eta(g) = \sigma(\eta(g))$
for all $\sigma \in G_{\Q_p}$ and $g \in G$.
We fix a $\Q_p^c[G]$-module $V_{\eta}$ with character $\eta$.
Choosing a $\Q_p^c$-basis of $V_{\eta}$ yields a matrix representation
\[
 \pi_{\eta}: G \longrightarrow \GL_{\eta(1)} (\Q_p^c)
\]
with character $\eta$. We define a linear character
\begin{eqnarray*}
 \det_{\eta}: G & \longrightarrow & (\Q_p^c)^{\times}\\
 g & \mapsto & \det_{\Q_p^c}(\pi_{\eta}(g)) = \det_{\Q_p^c}(g \mid V_{\eta}).
\end{eqnarray*}
The Wedderburn decomposition of $\Q_p^c[G]$ is given by
\[
 \Q_p^c[G] = \bigoplus_{\eta \in \Irr(G)} \Q_p^c[G] e(\eta) \simeq \bigoplus_{\eta \in \Irr(G)} M_{\eta(1) \times \eta(1)}(\Q_p^c),
\]
where $e(\eta) := \eta(1) / |G| \sum_{g \in G} \eta(g^{-1}) g$ are primitive central 
idempotents and the isomorphism on the right
maps each $g \in G$ to the tuple $(\pi_{\eta}(g))_{\eta \in \Irr(G)}$.
In particular, we have an isomorphism
\[
 \zeta(\Q_p^c[G]) \simeq \bigoplus_{\eta \in \Irr(G)} \Q_p^c.
\]
The reduced norm of $x \in \Q_p^c[G]$ is then given by
$\Nrd_{\Q_p^c[G]}(x) = (\det_{\Q_p^c}(x \mid V_{\eta}))_{\eta \in \Irr(G)}$.
For every $g \in G$ we have in particular
\begin{equation} \label{eqn:Nrd(g)}
	\Nrd_{\Q_p^c[G]}(g) = (\det_{\eta}(g))_{\eta \in \Irr(G)}.
\end{equation}

By a well-known theorem of Swan (see \cite[Theorem (32.1)]{MR632548}) the 
map $K_{0}(\Z_{p}[G]) \rightarrow K_{0}(\Q_{p}^c[G])$ 
induced by extension of scalars is injective. 
Thus from \eqref{eqn:long-exact-seq} we obtain an exact sequence
\begin{equation}\label{eqn:group-ring-K-exact-seq}
K_{1}(\Z_{p}[G]) \longrightarrow K_{1}(\Q_{p}^c[G]) 
\stackrel{\partial_p}{\longrightarrow} K_{0}(\Z_{p}[G],\Q_{p}^c)
\longrightarrow 0,
\end{equation}
where we write $\partial_p$ for $\partial_{\Z_p[G], \Q_p^c}$.
If $H$ is a subgroup
of $G$, then there exist natural restriction maps $\res^G_H$ for all
$K$-groups in \eqref{eqn:group-ring-K-exact-seq}. If $H$ is a normal subgroup
of $G$, then there likewise exist natural quotient maps $\quot^G_{G/H}$ for all
$K$-groups in \eqref{eqn:group-ring-K-exact-seq}.
Moreover, the reduced norm map induces an isomorphism 
\begin{equation}
 \Nrd_{\Q_p^c[G]}: K_{1}(\Q_{p}^c[G]) \stackrel{\simeq}{\longrightarrow} \zeta(\Q_{p}^c[G])^{\times}
\end{equation}
by \cite[Theorem (45.3)]{MR892316}, and one has an equality
\begin{equation} \label{eqn:Nrd-padic-grouprings}
\Nrd_{\Q_p[G]}(K_{1}(\Z_{p}[G]))=\Nrd_{\Q_p[G]}(\Z_{p}[G]^{\times})
\end{equation}
as follows from  \cite[Theorem (40.31)]{MR892316}. 

We need the following generalization of Taylor's
fixed point theorem \cite{MR608528}
(see \cite[Theorem 10A]{MR717033})
due to Izychev and Venjakob \cite[Theorem 2.21]{MR2905563}.

\begin{theorem} \label{thm:fixed-points}
	Let $E$ be a tame (possibly infinite) Galois extension of $\Q_p$.
	Let $H$ be an open subgroup of $\Gal(E/\Q_p)$ that contains the inertia
	subgroup, and put $F := E^H$. Then
	\[
		\left(\Nrd_{E[G]}(\mathcal{O}_E[G]^{\times})\right)^H = 
		\Nrd_{F[G]}\left(\mathcal{O}_F[G]^{\times}\right).
	\]
\end{theorem}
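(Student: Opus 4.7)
The plan is to reduce the theorem to the classical fixed-point theorem of Taylor by passing to finite layers in the tame tower, exploiting the fact that all objects involved -- the integral group ring units, the reduced norm, and $H$-invariance -- interact well with filtered unions.

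The inclusion $\Nrd_{F[G]}(\mathcal{O}_F[G]^\times) \subseteq (\Nrd_{E[G]}(\mathcal{O}_E[G]^\times))^H$ is essentially formal: any $u \in \mathcal{O}_F[G]^\times$ also lies in $\mathcal{O}_E[G]^\times$, and the reduced norm commutes with scalar extension, so $\Nrd_{E[G]}(u) = \Nrd_{F[G]}(u) \in \zeta(F[G])^\times$, which is visibly $H$-invariant.

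For the reverse inclusion, I would write $E = \bigcup_\alpha E_\alpha$ as a filtered union of finite Galois extensions of $\Q_p$ containing $F$; this is possible because $F/\Q_p$ is finite (since $H$ is open). Put $H_\alpha := \Gal(E_\alpha/F)$, which is the image of $H$ under restriction; since inertia subgroups are compatible with restriction, $H_\alpha$ contains the inertia of $E_\alpha/\Q_p$, and in particular $F/\Q_p$ is unramified. A key observation is that $\mathcal{O}_E[G]^\times = \bigcup_\alpha \mathcal{O}_{E_\alpha}[G]^\times$, since any unit and its inverse together have only finitely many coefficients, all lying in some $\mathcal{O}_{E_\alpha}$. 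Consequently, any $\xi \in (\Nrd_{E[G]}(\mathcal{O}_E[G]^\times))^H$ lies in $\Nrd_{E_\alpha[G]}(\mathcal{O}_{E_\alpha}[G]^\times)$ for some $\alpha$, and its $H$-invariance there is equivalent to $H_\alpha$-invariance (since the action of $H$ on $E_\alpha$ factors through $H_\alpha$). This reduces the problem to the finite-layer identity
\[
 (\Nrd_{E_\alpha[G]}(\mathcal{O}_{E_\alpha}[G]^\times))^{H_\alpha} = \Nrd_{F[G]}(\mathcal{O}_F[G]^\times),
\]
which is Taylor's classical fixed-point theorem applied to the tame Galois extension $E_\alpha/F$.

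The main obstacle is the classical case itself, a genuinely deep result of Taylor proved via Fr\"ohlich-style congruences and an intricate analysis of Galois Gauss sums on $\mathcal{O}_{E_\alpha}[G]^\times$; I would invoke it as a black box. The remaining passage to the limit is formal, though one must carefully verify compatibility of the reduced norm under scalar extension and that $H$-invariance at the infinite level really descends to $H_\alpha$-invariance at each finite level.
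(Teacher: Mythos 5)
Your argument is correct, but note that the paper does not actually prove this statement: it is quoted directly from Izychev--Venjakob \cite[Theorem 2.21]{MR2905563}, so there is no internal argument to compare against, and what you have written is essentially the expected proof of that cited generalization. The reduction is sound at every step: writing $E$ as the filtered union of its finite Galois subextensions $E_\alpha/\Q_p$ containing (the Galois closure of) $F$; the observation that $\mathcal{O}_E[G]^{\times} = \bigcup_\alpha \mathcal{O}_{E_\alpha}[G]^{\times}$ because a unit and its inverse involve only finitely many coefficients; the compatibility of $\Nrd$ with the scalar extensions $F[G] \subseteq E_\alpha[G] \subseteq E[G]$; and the fact that $H$-invariance of an element of $\zeta(E_\alpha[G])^{\times}$ is equivalent to invariance under the image $H_\alpha = \Gal(E_\alpha/F)$ of $H$. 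The one point deserving explicit care is the exact form of the finite-level black box: you need the fixed point theorem for an arbitrary finite tame Galois extension $E_\alpha/F$ of $p$-adic fields with fixed points taken under $\Gal(E_\alpha/F)$, and this is indeed the generality of Fr\"ohlich's Theorem 10A (Taylor's theorem), so the invocation is legitimate. Finally, observe that the hypothesis that $H$ contain inertia enters your argument only via the (then unused) remark that $F/\Q_p$ is unramified; this is harmless, since the finite-level theorem does not require an unramified base, but it is worth being aware that your proof does not visibly exploit this assumption.
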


Let $S$ be a ring extension of a ring $R$.
We denote the kernel of the natural map $K_1(R[G]) \rightarrow
K_1(S[G])$ by $SK_1(R[G], S)$. If $R$ is a domain
with field of fractions $K$, we put
$SK_1(R[G]) := SK_1(R[G], K)$.

\begin{lemma} \label{lem:SK1}
	Let $G$ be a finite group and let $F$ be a finite extension of $\Q_p$ with 
	ring of integers $\mathcal{O}$. Then the following holds.
	\begin{enumerate}
		\item 
		There is an exact sequence of abelian groups
		\[
		0 \longrightarrow SK_1(\Z_p[G], \mathcal{O}) \longrightarrow
		SK_1(\Z_p[G]) \longrightarrow SK_1(\mathcal{O}[G])
		\longrightarrow 
		\]
		\[
		\longrightarrow K_0(\Z_p[G], \mathcal{O}[G]) \longrightarrow K_0(\Z_p[G], \Q_p^c)
		\longrightarrow K_0(\mathcal{O}[G], \Q_p^c) \longrightarrow 0.
		\]
		\item
		$SK_1(\Z_p[G], \mathcal{O})$ is a finite $p$-group.
		\item
		If in addition the degree $[F_0:\Q_p]$ of the
		maximal unramified subfield $F_0$ in $F$ is prime to $p$, then
		$SK_1(\Z_p[G], \mathcal{O})$ vanishes.
	\end{enumerate}	 
\end{lemma}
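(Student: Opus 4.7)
The plan for (i) is to assemble the sequence from the relative $K$-theory long exact sequences of the three ring inclusions $\Z_p[G] \hookrightarrow \mathcal{O}[G] \hookrightarrow \Q_p^c[G]$. The main ingredient is the octahedral-style exact sequence attached to this composition,
\[
K_1(\mathcal{O}[G], \Q_p^c) \longrightarrow K_0(\Z_p[G], \mathcal{O}[G]) \longrightarrow K_0(\Z_p[G], \Q_p^c) \longrightarrow K_0(\mathcal{O}[G], \Q_p^c) \longrightarrow 0,
\]
the $0$ on the right coming from Swan's theorem as used in \eqref{eqn:group-ring-K-exact-seq} (which in particular gives $K_0(\mathcal{O}[G]) \hookrightarrow K_0(\Q_p^c[G])$). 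The three $SK_1$ terms then arise by identifying the kernels of the maps $K_1(\Z_p[G]) \to K_1(\mathcal{O}[G])$, $K_1(\Z_p[G]) \to K_1(\Q_p^c[G])$ and $K_1(\mathcal{O}[G]) \to K_1(\Q_p^c[G])$ with $SK_1(\Z_p[G], \mathcal{O})$, $SK_1(\Z_p[G])$ and $SK_1(\mathcal{O}[G])$, respectively; for the latter two identifications I use that injectivity of $\zeta(F[G]) \hookrightarrow \zeta(\Q_p^c[G])$, combined with the reduced-norm isomorphism, forces $K_1(F[G]) \hookrightarrow K_1(\Q_p^c[G])$, so that the definition of $SK_1$ is insensitive to enlarging the coefficient field from $F$ to $\Q_p^c$. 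Splicing produces the six-term sequence.

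Part (ii) is then immediate from (i): $SK_1(\Z_p[G], \mathcal{O})$ embeds into $SK_1(\Z_p[G])$, which is a finite $p$-group by the classical theorem of Oliver. For (iii), set $d := [F_0:\Q_p]$ and factor $\Z_p \hookrightarrow \mathcal{O}$ as $\Z_p \xrightarrow{j} \mathcal{O}_{F_0} \xrightarrow{k} \mathcal{O}$. Since $\mathcal{O}_{F_0}$ is free of rank $d$ as a $\Z_p$-module, restriction of scalars furnishes a transfer $\mathrm{tr}: K_1(\mathcal{O}_{F_0}[G]) \to K_1(\Z_p[G])$ satisfying $\mathrm{tr} \circ j_* = d \cdot \mathrm{id}$. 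Because $d$ is coprime to $p$ and $SK_1(\Z_p[G])$ is a finite $p$-group by (ii), multiplication by $d$ is an automorphism of $SK_1(\Z_p[G])$, so $j_*$ is injective there; equivalently $SK_1(\Z_p[G], \mathcal{O}_{F_0}) = 0$. To propagate this across the totally ramified step $F/F_0$, the plan is to invoke Oliver's structure theorem for $SK_1$ of $p$-adic group rings, according to which $SK_1(\mathcal{O}'[G])$ depends on the $p$-adic ring of integers $\mathcal{O}'$ only through its residue field; since $\mathcal{O}_{F_0}$ and $\mathcal{O}$ share the same residue field, this forces $k_*: SK_1(\mathcal{O}_{F_0}[G]) \to SK_1(\mathcal{O}[G])$ to be an isomorphism. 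Composing with the injective $j_*$ yields injectivity of $i_* = k_* \circ j_*$ on $SK_1(\Z_p[G])$, i.e.\ $SK_1(\Z_p[G], \mathcal{O}) = 0$.

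The hard part is the last step. The naive transfer applied directly to $\Z_p \hookrightarrow \mathcal{O}$ only shows that $SK_1(\Z_p[G], \mathcal{O})$ is annihilated by $[F:\Q_p] = ed$, which may still be divisible by $p$ through the ramification index $e$; the hypothesis $\gcd(d,p)=1$ therefore alone is not enough without further input. Bridging this gap is the role of Oliver's explicit description of $SK_1$ of $p$-adic group rings, which is the substantive technical ingredient. Once it is granted, everything else is formal diagram-chasing within the relative $K$-theory of $p$-adic orders.
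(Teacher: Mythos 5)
Parts (i) and (ii) of your proposal follow the paper's own route: the paper likewise obtains (i) by a diagram chase through the relative $K$-theory sequences of the three pairs $(\Z_p[G],\mathcal{O}[G])$, $(\Z_p[G],\Q_p^c[G])$ and $(\mathcal{O}[G],\Q_p^c[G])$, using Swan's theorem and the injectivity of $K_1(F[G])\rightarrow K_1(\Q_p^c[G])$ supplied by \cite[Theorem (45.3)]{MR892316}, and it deduces (ii) from the finiteness of $SK_1(\Z_p[G])$ \cite[Theorem (46.9)]{MR892316}. For (iii) you genuinely diverge: the paper simply quotes \cite[Theorem 2.25]{MR2905563}, which says that the arrow $SK_1(\Z_p[G])\rightarrow SK_1(\mathcal{O}[G])$ in (i) is an isomorphism when $[F_0:\Q_p]$ is prime to $p$, whereas you reassemble the injectivity you actually need from two more primitive inputs: a transfer argument for the unramified step $\Z_p\subseteq\mathcal{O}_{F_0}$ (correct as stated, since $\mathrm{tr}\circ j_* = d\cdot\id$ forces the kernel of $j_*$, which lies in the finite $p$-group $SK_1(\Z_p[G])$ by (i), to vanish), and Oliver's theorem on the behaviour of $SK_1$ under totally ramified extension of the coefficient ring. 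This is in effect an unpacking of how the Izychev--Venjakob result is obtained, so your route trades one citation for another, more classical one; note it only yields injectivity of $SK_1(\Z_p[G])\rightarrow SK_1(\mathcal{O}[G])$, which is all that (iii) requires, while the cited theorem gives bijectivity.

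One point needs tightening. You phrase the Oliver input as ``$SK_1(\mathcal{O}'[G])$ depends on $\mathcal{O}'$ only through its residue field'' and conclude that ``this forces $k_*$ to be an isomorphism.'' That inference is a non sequitur: an abstract isomorphism of the two groups does not imply that the specific map induced by the inclusion $\mathcal{O}_{F_0}[G]\hookrightarrow\mathcal{O}[G]$ is one, and injectivity of the composite $k_*\circ j_*$ is exactly what your argument rests on. The theorem you want (in Oliver's book on Whitehead groups of finite groups) is stated in the stronger, functorial form — the inclusion of the maximal unramified subring induces an isomorphism on $SK_1$ — so the argument closes, but you should cite and use it in that form rather than the residue-field slogan.
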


\begin{proof}
	It follows from \cite[Theorem (45.3)]{MR892316} that the natural map
	$K_1(\Q_p[G]) \rightarrow K_1(\Q_p^c[G])$ is injective. Therefore
	$SK_1(\Z_p[G])$ identifies with the kernel of $K_1(\Z_p[G])
	\rightarrow K_1(\Q_p^c[G])$. A similar observation holds for
	$SK_1(\mathcal{O}[G])$. As $K_0(\Z_p[G]) \rightarrow K_0(\Q_p^c[G])$
	is injective by Swan's theorem, a forteriori the map
	$K_0(\Z_p[G]) \rightarrow K_0(\mathcal{O}[G])$ has to be injective.
	Considering the long exact sequences of relative $K$-theory
	\eqref{eqn:long-exact-seq} for the three occurring pairs,
	a diagram chase shows that we have (i).	
	Then (ii) follows as $SK_1(\Z_p[G])$ is a finite $p$-group by
	\cite[Theorem (46.9)]{MR892316}. The last claim is a consequence of
	\cite[Theorem 2.25]{MR2905563} which actually says that the third
	arrow in (i) is an isomorphism in this case.
\end{proof}

\subsection{Iwasawa algebras of one-dimensional $p$-adic Lie groups} \label{subsec:Iwasawa-algebras}

We assume for the rest of this section that $p$ is an odd prime.
Let $\mathcal{G}$ be a profinite group.
The complete group algebra of $\mathcal{G}$ over $\Z_p$ is 
\[
\Lambda(\mathcal{G}) := \Z_{p}\llbracket \mathcal{G} \rrbracket = \varprojlim \Z_{p}[\mathcal{G}/\mathcal{N}],
\]
where the inverse limit is taken over all open normal subgroups $\mathcal{N}$ of $\mathcal{G}$. Then $\Lambda(G)$ is a compact $\Z_p$-algebra and we denote
the kernel of the natural augmentation map
$\Lambda(\mathcal{G}) \twoheadrightarrow \Z_p$ by $\Delta(\mathcal{G})$.
If $M$ is a (left) $\Lambda(\mathcal{G})$-module we let
$M_{\mathcal{G}} := M / \Delta(\mathcal{G}) M$ be the module of coinvariants
of $M$. This is the maximal quotient module of $M$ with trivial
$\mathcal{G}$-action. 
Similarly, we denote the maximal submodule of $M$ upon which
$\mathcal{G}$ acts trivially by $M^{\mathcal{G}}$.

Now suppose that $\mathcal{G}$
contains a finite normal subgroup $H$ 
such that $\overline{\Gamma} := \mathcal{G}/H \simeq \Z_p$.
Then $\mathcal{G}$ may be written as a semi-direct product $\mathcal{G} = H \rtimes \Gamma$
where $\Gamma \leq \mathcal{G}$ and 
$\Gamma \simeq \overline{\Gamma} \simeq \Z_{p}$. 
In other words, $\mathcal{G}$ is  a one-dimensional $p$-adic Lie group.

If $F$ is a finite field extension of $\Q_{p}$  with ring of integers 
$\mathcal{O}=\mathcal{O}_{F}$, we put 
$\Lambda^{\mathcal{O}}(\mathcal{G}) := \mathcal{O} \otimes_{\Z_{p}} \Lambda(\mathcal{G}) = \mathcal{O}\llbracket \mathcal{G} \rrbracket$.
We fix a topological generator $\gamma$ of $\Gamma$ and put 
$\overline{\gamma} := \gamma \mod H$ which is a topological generator of $\overline{\Gamma}$.
Since any homomorphism $\Gamma \rightarrow \Aut(H)$ must have open kernel, 
we may choose a natural number $n$ such that $\gamma^{p^n}$ is central in $\mathcal{G}$; 
we fix such an $n$.
As $\Gamma_{0} := \Gamma^{p^n} \simeq \Z_{p}$, there is a ring isomorphism
$R:=\mathcal{O}\llbracket \Gamma_{0} \rrbracket \simeq \mathcal{O}\llbracket T \rrbracket$ induced by $\gamma^{p^n} \mapsto 1+T$
where $\mathcal{O}\llbracket T \rrbracket$ denotes the power series ring in one variable over $\mathcal{O}$.
If we view $\Lambda^{\mathcal{O}}(\mathcal{G})$ as an $R$-module 
(or indeed as a left $R[H]$-module), there is a decomposition
\begin{equation*} 
\Lambda^{\mathcal{O}}(\mathcal{G}) = \bigoplus_{i=0}^{p^n-1} R[H] \gamma^{i}.
\end{equation*}
Hence $\Lambda^{\mathcal{O}}(\mathcal{G})$ is finitely generated as an $R$-module
and is an $R$-order in the separable $E:=Quot(R)$-algebra
$\mathcal{Q}^{F} (\mathcal{G})$, the total ring of fractions of 
$\Lambda^{\mathcal{O}}(\mathcal{G})$, obtained
from $\Lambda^{\mathcal{O}}(\mathcal{G})$ by adjoining inverses of all central regular elements.
Note that $\mathcal{Q}^{F} (\mathcal{G}) =  E \otimes_{R} \Lambda^{\mathcal{O}}(\mathcal{G})$ 
and that by \cite[Lemma 1]{MR2114937} we have 
$\mathcal{Q}^{F} (\mathcal{G}) = F \otimes_{\Q_{p}} \mathcal{Q}(\mathcal{G})$,
where $\mathcal{Q}(\mathcal{G}) := \mathcal{Q}^{\Q_{p}}(\mathcal{G})$.

For any field $F$ of characteristic $0$ let $\Irr_{F}(\mathcal{G})$ 
be the set of 
$F$-irreducible characters of $\mathcal{G}$ with open kernel.
Fix a character $\chi \in \Irr_{\Q_{p}^{c}}(\mathcal{G})$ and let 
$\eta$ be an irreducible constituent of $\res^{\mathcal{G}}_{H} \chi$.
Then $\mathcal{G}$ acts on $\eta$ as $\eta^{g}(h) = \eta(g^{-1}hg)$
for $g \in \mathcal{G}$, $h \in H$, and following \cite[\S 2]{MR2114937} we set
\[
  St(\eta) := \{g \in \mathcal{G}: \eta^g = \eta \}, \quad 
  e_{\chi} := \sum_{\eta \mid \res^{\mathcal{G}}_{H} \chi} e(\eta).
\]
By \cite[Corollary to Proposition 6]{MR2114937} $e_{\chi}$ is a primitive central idempotent of
$\mathcal{Q}^{c}(\mathcal{G}) := \Q_{p}^{c} \otimes_{\Q_{p}} \mathcal{Q}(\mathcal{G})$.
In fact, every primitive central idempotent of $\mathcal{Q}^{c}(\mathcal{G})$ is of this form
and $e_{\chi} = e_{\chi'}$ if and only if $\chi = \chi' \otimes \rho$ 
for some character $\rho$ of $\mathcal{G}$ of type $W$
(i.e.~$\res^{\mathcal{G}}_{H} \rho = 1$).
The irreducible constituents of $\res^{\mathcal{G}}_{H} \chi$ are precisely 
the conjugates of $\eta$
under the action of $\mathcal{G}$, each occurring with the same 
multiplicity $z_{\chi}$ by \cite[Proposition 11.4]{MR632548}. 
By \cite[Lemma 4]{MR2114937} we have $z_{\chi}=1$ and thus we also have equalities
\begin{equation*} 
\res^{\mathcal{G}}_{H} \chi = \sum_{i=0}^{w_{\chi}-1} \eta^{\gamma^{i}},
\quad
e_{\chi} = \sum_{i=0}^{w_{\chi}-1} e(\eta^{\gamma^{i}}) = \frac{\chi(1)}{|H|w_{\chi}}\sum_{h \in H} \chi(h^{-1})h,
\end{equation*}
where $w_{\chi} := [\mathcal{G} : St(\eta)]$.
Note that  $\chi(1) = w_{\chi} \eta(1)$ and that
$w_{\chi}$ is a power of $p$ since $H$ is a subgroup of $St(\eta)$.

Let $V_{\chi}$ denote a realisation of $\chi$ over $\Q_p^c$.	
By \cite[Proposition 5]{MR2114937}, there exists a unique element
$\gamma_{\chi} \in \zeta(\mathcal{Q}^{c}(\mathcal{G})e_{\chi})$ such that $\gamma_{\chi}$
acts trivially on $V_{\chi}$ and $\gamma_{\chi}= gc$ where $g \in \mathcal{G}$ 
with $(g \bmod H) = \overline{\gamma}^{w_{\chi}}$
and $c \in (\Q_{p}^{c}[H]e_{\chi})^{\times}$. Moreover, $\gamma_{\chi}= gc=cg$.
By \cite[Proposition 5]{MR2114937}, the element $\gamma_{\chi}$
generates a procyclic $p$-subgroup $\Gamma_{\chi}$ 
of $(\mathcal{Q}^{c}(\mathcal{G})e_{\chi})^{\times}$ and induces an isomorphism
\begin{equation} \label{eqn:centre-gamma_chi}
  \zeta(\mathcal{Q}^c(\mathcal{G}) e_{\chi}) \simeq \mathcal{Q}^c(\Gamma_{\chi}).
\end{equation}

\subsection{$K$-theory of Iwasawa algebras} \label{subsec:K-of-Iwasawa-algebras}
We now specialze sequence \eqref{eqn:long-exact-seq} to the present situation.
If $F / \Q_p$ is a finite field extension, then
by \cite[Corollary 3.8]{MR3034286} we have an exact sequence
\begin{equation} \label{eqn:connecting-surj-F} 
 K_1(\Lambda(\mathcal{G})) \longrightarrow K_1(\mathcal{Q}^F(\mathcal{G}))
 \longrightarrow K_0(\Lambda(\mathcal{G}), \mathcal{Q}^F(\mathcal{G}))
 \longrightarrow 0
\end{equation}
and likewise an exact sequence
\begin{equation} \label{eqn:connecting-surj-c}
 K_1(\Lambda(\mathcal{G})) \longrightarrow K_1(\mathcal{Q}^c(\mathcal{G}))
 \longrightarrow K_0(\Lambda(\mathcal{G}), \mathcal{Q}^c(\mathcal{G}))
 \longrightarrow 0.
\end{equation}
As any $x \in K_1(\Q_p^c \otimes_{\Z_p} \Lambda(\mathcal{G}))$ 
actually lies in the image
of $K_1(F \otimes_{\Z_p} \Lambda(\mathcal{G}))$ 
for sufficiently large $F$, we deduce
from \cite[Theorem 3.7]{MR3034286} that the natural map
$K_1(\Q_p^c \otimes_{\Z_p} \Lambda(\mathcal{G})) \rightarrow
K_1(\mathcal{Q}^c(\mathcal{G}))$ is injective. An easy diagram chase 
now shows the following.

\begin{lemma} 
The natural map
\[
K_0(\Lambda(\mathcal{G}), \Q_p^c \otimes_{\Z_p} \Lambda(\mathcal{G}))
\rightarrow K_0(\Lambda(\mathcal{G}), \mathcal{Q}^c(\mathcal{G}))
\]
induced by extension of scalars is injective.
\end{lemma}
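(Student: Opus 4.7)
The plan is to carry out the diagram chase alluded to in the excerpt. I will apply the long exact sequence of relative $K$-theory \eqref{eqn:long-exact-seq} to both ring extensions $\Lambda(\mathcal{G}) \hookrightarrow \Q_p^c \otimes_{\Z_p} \Lambda(\mathcal{G})$ and $\Lambda(\mathcal{G}) \hookrightarrow \mathcal{Q}^c(\mathcal{G})$, and arrange the two resulting sequences as the rows of a commutative diagram connected by the natural base-change maps. The outer verticals are the identity on $K_1(\Lambda(\mathcal{G}))$ and the identity on $K_0(\Lambda(\mathcal{G}))$; the middle vertical
\[
g \colon K_1(\Q_p^c \otimes_{\Z_p} \Lambda(\mathcal{G})) \longrightarrow K_1(\mathcal{Q}^c(\mathcal{G}))
\]
is injective by the discussion immediately preceding the lemma; and the remaining vertical is the map $f$ whose injectivity is to be established.

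Now take $y \in \ker f$. Commutativity of the right-hand square shows that $y$ maps to $0$ in $K_0(\Lambda(\mathcal{G}))$, so exactness of the top row produces an element $x \in K_1(\Q_p^c \otimes_{\Z_p} \Lambda(\mathcal{G}))$ whose image under the top connecting map is $y$. Then $f(y)$ equals the image of $g(x)$ under the bottom connecting map $\partial_{\Lambda(\mathcal{G}),\mathcal{Q}^c(\mathcal{G})}$, and this vanishes, so \eqref{eqn:connecting-surj-c} yields $z \in K_1(\Lambda(\mathcal{G}))$ lifting $g(x)$. Writing $z'$ for the image of $z$ in $K_1(\Q_p^c \otimes_{\Z_p} \Lambda(\mathcal{G}))$, commutativity of the left square combined with the injectivity of $g$ gives $x = z'$. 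Consequently $y$ is the image of $z'$ under the top connecting map, which is zero by exactness of the top row at $K_1(\Q_p^c \otimes_{\Z_p} \Lambda(\mathcal{G}))$.

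The argument is genuinely routine, so I do not expect a substantive obstacle: every input — injectivity of $g$, exactness of both rows, and the identification of the outer verticals with the identity — is already recorded before the statement. The only minor bookkeeping point is that both rows have to be extended one term to the right by the forgetful map to $K_0(\Lambda(\mathcal{G}))$ (so that exactness at $K_0(\Lambda(\mathcal{G}), \cdot)$ yields the first lifting step); this is the generality in which \eqref{eqn:long-exact-seq} is stated, so it costs nothing.
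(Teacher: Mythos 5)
Your proof is correct and is precisely the ``easy diagram chase'' that the paper invokes without writing out: the same two long exact sequences, the injectivity of $K_1(\Q_p^c \otimes_{\Z_p} \Lambda(\mathcal{G})) \to K_1(\mathcal{Q}^c(\mathcal{G}))$, and the surjectivity statement \eqref{eqn:connecting-surj-c} are exactly the inputs the author has in mind. No issues.
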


Following \cite[Proposition 6]{MR2114937}, we define a map
\[
  j_{\chi}: \zeta(\mathcal{Q}^{c} (\mathcal{G})) \twoheadrightarrow 
  \zeta(\mathcal{Q}^{c} (\mathcal{G})e_{\chi}) \simeq \mathcal{Q}^{c}(\Gamma_{\chi}) 
  \longrightarrow  \mathcal{Q}^{c}(\overline{\Gamma}),
\]
where the isomorphism is \eqref{eqn:centre-gamma_chi} and 
the last arrow is induced by mapping $\gamma_{\chi}$ to $\overline{\gamma}^{w_{\chi}}$.
It follows from op.\ cit.\ that $j_{\chi}$ is independent of the choice of 
$\overline{\gamma}$ and that 
for every matrix $\Theta \in M_{n \times n} (\mathcal{Q}^c(\mathcal{G}))$ we have
\begin{equation*} \label{eqn:jchi-det}
j_{\chi} (\Nrd_{\mathcal{Q}^c(\mathcal{G})}(\Theta)) = 
\det_{\mathcal{Q}^{c}(\overline{\Gamma})} (\Theta \mid \Hom_{\Q_p^c[H]}(V_{\chi},  \mathcal{Q}^{c}(\mathcal{G})^n)).
\end{equation*}
Here, $\Theta$ acts on $f \in \Hom_{\Q_p^c[H]}(V_{\chi},  \mathcal{Q}^{c}(\mathcal{G})^{n})$ 
via right multiplication,
and $\overline{\gamma}$ acts on the left via 
$(\overline{\gamma} f)(v) = \gamma \cdot f(\gamma^{-1} v)$ for all $v \in V_{\chi}$,
where we recall that $\gamma$ is the unique lift of $\overline{\gamma}$ to 
$\Gamma \leq \mathcal{G}$.
Hence the map
\begin{eqnarray*}
\Det(~)(\chi): K_{1}(\mathcal{Q}^c(\mathcal{G})) & \longrightarrow & 
  \mathcal{Q}^{c}(\overline{\Gamma})^{\times} \\
 {[P,\alpha]}& \mapsto & \mathrm{det}_{\mathcal{Q}^{c}(\overline{\Gamma})} (\alpha \mid \Hom_{\Q_p^c[H]}(V_{\chi}, P)),
\end{eqnarray*}
where $P$ is a projective $\mathcal{Q}^c(\mathcal{G})$-module and $\alpha$ a 
$\mathcal{Q}^c(\mathcal{G})$-automorphism of $P$, 
is just $j_{\chi} \circ \Nrd_{\mathcal{Q}^c(\mathcal{G})}$ (see \cite[\S 3, p.558]{MR2114937}).
If $\rho$ is a character of $\mathcal{G}$ of type $W$ (i.e.~$\res^{\mathcal{G}}_H \rho = 1$)
then we denote by
$\rho^{\sharp}$ the automorphism of the field $\mathcal{Q}^{c}(\overline{\Gamma})$ induced by
$\rho^{\sharp}(\overline{\gamma}) = \rho(\overline{\gamma}) \overline{\gamma}$. 
Moreover, we denote the additive group generated by all $\Q_{p}^{c}$-valued
characters of $\mathcal{G}$ with open kernel by $R_p(\mathcal{G})$. We let
$\Hom^{W}(R_{p}( \mathcal{G}), \mathcal{Q}^{c}(\overline{\Gamma})^{\times})$
be the group of all homomorphisms 
$f: R_p(\mathcal{G}) \rightarrow \mathcal{Q}^{c}(\overline{\Gamma})^{\times}$ satisfying
$f(\chi \otimes \rho) = \rho^{\sharp}(f(\chi))$ for all characters $\rho$ 
of type $W$. Finally, $\Hom^{\ast}(R_{p}( \mathcal{G}), \mathcal{Q}^{c}(\overline{\Gamma})^{\times})$
is the subgroup of $\Hom^{W}(R_{p}( \mathcal{G}), \mathcal{Q}^{c}(\overline{\Gamma})^{\times})$
of all homomorphisms $f$ that in addition satisfy
$f({}^{\sigma}\chi) = \sigma(f(\chi))$ for all Galois automorphisms 
$\sigma \in G_{\Q_{p}}$. If $A$ is a subring of
$\mathcal{Q}^{c}(\overline{\Gamma})$, we put
\[
\Hom^{W}(R_{p}( \mathcal{G}), A^{\times}) :=
\Hom(R_{p}( \mathcal{G}), A^{\times}) \cap
\Hom^{W}(R_{p}( \mathcal{G}), \mathcal{Q}^{c}(\overline{\Gamma})^{\times})
\]
and similarly with $\Hom^{\ast}$.

By \cite[Proof of Theorem 8]{MR2114937} we have a $G_{\Q_p}$-equivariant isomorphism
\begin{eqnarray*}
\zeta(\mathcal{Q}^c(\mathcal{G}))^{\times} & \simeq & 
\Hom^{W}(R_{p}(\mathcal{G}), \mathcal{Q}^{c}(\overline{\Gamma})^{\times})\\
x & \mapsto & [\chi \mapsto j_{\chi}(x)].
\end{eqnarray*}
By \cite[Theorem 8]{MR2114937} the map $\Theta \mapsto [\chi \mapsto \Det(\Theta)(\chi)]$
defines a homomorphism
\[
\Det: K_{1}(\mathcal{Q}^c(\mathcal{G})) \rightarrow \Hom^{W}(R_p(\mathcal{G}), \mathcal{Q}^{c}(\overline{\Gamma})^{\times})
\]
such that $\Det$ maps $K_{1}(\mathcal{Q}(\mathcal{G}))$ into
$\Hom^{\ast}(R_{p}( \mathcal{G}), \mathcal{Q}^{c}(\overline{\Gamma})^{\times})$,
and such that we obtain commutative triangles
\begin{equation} \label{eqn:Det_triangle-c}
\xymatrix{
& K_{1}(\mathcal{Q}^c(\mathcal{G})) \ar[dl]_{\Nrd_{\mathcal{Q}^c(\mathcal{G})}} \ar[dr]^{\Det} &\\
{\zeta(\mathcal{Q}^c(\mathcal{G}))^{\times}} \ar[rr]^-{\simeq} & & {\Hom^{W}(R_p( \mathcal{G}), \mathcal{Q}^{c}(\overline{\Gamma})^{\times})}}
\end{equation}
and
\begin{equation} \label{eqn:Det_triangle}
\xymatrix{
& K_{1}(\mathcal{Q}(\mathcal{G})) \ar[dl]_{\Nrd_{\mathcal{Q}(\mathcal{G})}} \ar[dr]^{\Det} &\\
{\zeta(\mathcal{Q}(\mathcal{G}))^{\times}} \ar[rr]^-{\simeq} & & {\Hom^{\ast}(R_p( \mathcal{G}), \mathcal{Q}^{c}(\overline{\Gamma})^{\times})}.}
\end{equation}

Let $\Z_p^c$ be the integral closure of $\Z_p$ in $\Q_p^c$ an put
$\Lambda^{c}(\overline{\Gamma}) := \Z_p^c \otimes_{\Z_p} \Lambda(\overline{\Gamma})$.
By \cite[Lemma 2]{MR2242618} the map $\Det$ restricts to a homomorphism
\begin{equation} \label{eqn:Det-of-Iwasawa}
  \Det: K_1(\Lambda(\mathcal{G})) \longrightarrow      
  \Hom^{\ast}(R_p( \mathcal{G}), \Lambda^{c}(\overline{\Gamma})^{\times}).
\end{equation}   
Let $\aug_{\overline{\Gamma}}: \Q_p^c \otimes_{\Z_p} 
\Lambda(\overline{\Gamma}) \twoheadrightarrow \Q_p^c$
be the natural augmentation map.
The following result will be useful when we like to check whether a given
homomorphism 
lies in the image of $K_1(\Lambda(\mathcal{G}))$ under $\Det$.

\begin{lemma} \label{lem:Det-criterion}
 Let $f,g \in \Hom^{W}(R_p( \mathcal{G}), (\Q_p^c \otimes_{\Z_p} \Lambda(\overline{\Gamma}))^{\times})$
 be two homomorphisms. Suppose that $\aug_{\overline{\Gamma}}(f(\chi)) = 
 \aug_{\overline{\Gamma}}(g(\chi))$ for all $\chi \in 
 \Irr_{\Q_p^c}(\mathcal{G})$.
 Then we have $f=g$. 
\end{lemma}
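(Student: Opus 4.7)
The plan is to pass to the quotient $h := f \cdot g^{-1}$, which lies in the same group $\Hom^W(R_p(\mathcal{G}), (\Q_p^c \otimes_{\Z_p} \Lambda(\overline{\Gamma}))^{\times})$ and for which the hypothesis becomes $\aug_{\overline{\Gamma}}(h(\chi)) = 1$ for every $\chi \in \Irr_{\Q_p^c}(\mathcal{G})$. The problem then reduces to showing $h \equiv 1$ on $\Irr_{\Q_p^c}(\mathcal{G})$, since a homomorphism out of $R_p(\mathcal{G})$ is determined by its values on the irreducible characters.

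Fix $\chi \in \Irr_{\Q_p^c}(\mathcal{G})$. I would identify $\Lambda(\overline{\Gamma}) \cong \Z_p\llbracket T \rrbracket$ via $\overline{\gamma} \mapsto 1+T$ and write $h(\chi)$ as a power series $P_\chi(T)$ with coefficients in some finite extension $F/\Q_p$ and bounded denominators; such a series converges on the open unit disk of $\mathbb{C}_p$. The twist property of $h$ says that for every character $\rho$ of $\mathcal{G}$ of type $W$ one has $h(\chi \otimes \rho) = \rho^{\sharp}(h(\chi))$, where $\rho^{\sharp}$ sends $\overline{\gamma}$ to $\rho(\overline{\gamma})\overline{\gamma}$. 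Composing with the augmentation, which corresponds to setting $T = 0$, yields
\[
\aug_{\overline{\Gamma}}(h(\chi \otimes \rho)) = P_\chi\bigl(\rho(\overline{\gamma}) - 1\bigr),
\]
and the hypothesis applied to $\chi \otimes \rho$ forces $P_\chi(\rho(\overline{\gamma}) - 1) = 1$ for every $\rho$ of type $W$.

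Next I would invoke the fact that as $\rho$ ranges over all characters of $\mathcal{G}$ of type $W$, the value $\rho(\overline{\gamma})$ ranges over \emph{all} $p$-power roots of unity in $\Q_p^c$ (each such root defines a continuous character of $\overline{\Gamma} \cong \Z_p$ which has open kernel and is trivial on $H$). Thus $P_\chi(T) - 1$ has infinitely many distinct zeros of the form $\zeta - 1$, accumulating at $T = 0$, all lying in the open unit disk of $\mathbb{C}_p$. On the other hand, $P_\chi(T) - 1$ lies in $p^{-m}\mathcal{O}_F\llbracket T \rrbracket$ for some $m$, and Weierstrass preparation applied to $p^m(P_\chi - 1)$ writes it, if nonzero, as a unit of $\mathcal{O}_F\llbracket T \rrbracket$ times a distinguished polynomial, which has only finitely many zeros in the open unit disk. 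This contradiction forces $P_\chi = 1$, so $h(\chi) = 1$, i.e.\ $f(\chi) = g(\chi)$.

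The only step requiring any real care is the correct identification of the augmentation of $\rho^{\sharp}(h(\chi))$ with the evaluation $P_\chi(\rho(\overline{\gamma}) - 1)$; once this is in hand, the argument is a standard $p$-adic rigidity (Weierstrass preparation) argument applied uniformly in $\chi$. No deep input beyond the defining relation of $\rho^{\sharp}$ and the convergence of bounded power series on the open $p$-adic unit disk is needed.
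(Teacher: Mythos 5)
Your argument is correct and is essentially the paper's own proof: the paper also identifies $\Lambda(\overline{\Gamma})$ with $\Z_p\llbracket T\rrbracket$, uses the type-$W$ twist relation to show that the (additive) difference $f_\chi(T)-g_\chi(T)$ vanishes at all points $\rho(\overline{\gamma})-1$, and concludes by the standard rigidity result for bounded power series (citing \cite[Corollary 7.4]{MR1421575}, whose proof is exactly the Weierstrass preparation argument you spell out). Your only cosmetic deviations are working with the multiplicative quotient $fg^{-1}$ rather than the difference and inlining the Weierstrass step rather than citing it.
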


\begin{proof}
	Let $\chi \in \Irr_{\Q_p^c}(\mathcal{G})$ be a character.
	There is an isomorphism $\Lambda(\overline{\Gamma}) 
	\simeq \Z_p \llbracket T \rrbracket$, the ring of formal power
	series in one variable $T$ with coefficients in $\Z_p$, which
	maps $\overline{\gamma}$ to $1+T$.
	We identify $f(\chi)$ and $g(\chi)$ with the corresponding
	power series $f_{\chi}(T)$ and $g_{\chi}(T)$ in
	$\Q_p^c \otimes_{\Z_p} \Z_p\llbracket T \rrbracket$, respectively.
	We have to show that $h_{\chi}(T) := f_{\chi}(T) - g_{\chi}(T)$
	vanishes. The condition $\aug_{\overline{\Gamma}}(f(\chi)) = 
	\aug_{\overline{\Gamma}}(g(\chi))$ is equivalent to
	$h_{\chi}(0) = 0$. Now let $\rho$ be a character of type $W$.
	Then we have $\rho^{\sharp}(h_{\chi}(T)) = f_{\chi \otimes \rho}(T) - 
	g_{\chi \otimes \rho}(T) = h_{\chi \otimes \rho}(T)$, and so
	we obtain
	\[
		h_{\chi}(\rho(\overline{\gamma}) - 1) = h_{\chi \otimes \rho}(0) = 0.
	\]
	Now $h_{\chi}(T)$ vanishes by \cite[Corollary 7.4]{MR1421575}.
\end{proof}

\begin{example} \label{ex:Det(g)}
	For any $g \in \mathcal{G}$ we claim that
	the homomorphism $\Det(g)$ is given 
	on irreducible characters $\chi \in \Irr_{\Q_p^c}(\mathcal{G})$ by
	\begin{equation} \label{eqn:Det(g)}
		\chi \mapsto \det_{\chi}(g) \overline{g}^{\chi(1)},
	\end{equation}
	where $\overline{g} \in \overline{\Gamma}$ denotes the image of $g$
	under the canonical projection $\mathcal{G} \twoheadrightarrow
	\overline{\Gamma}$. We first note that
	$\det_{\chi \otimes \rho}(g) \overline{g}^{(\chi \otimes \rho)(1)} = \rho(g)^{\chi(1)} \det_ {\chi}(g) 
	\overline{g}^{\chi(1)} 
	= \rho^{\sharp}(\det_{\chi}(g) \overline{g}^{\chi(1)})$, and
	so \eqref{eqn:Det(g)} defines an element in
	$\Hom^{W}(R_p( \mathcal{G}), (\Q_p^c \otimes_{\Z_p} \Lambda(\overline{\Gamma}))^{\times})$. 
	The middle displayed formula on p.~2774 of 
	\cite[proof of Theorem 6.4]{MR2609173} shows that
	$\Det(g)(\chi)$ belongs to $\Q_p^c \otimes_{\Z_p} \Lambda(\overline{\Gamma})$. As the same is true for
	$\Det(g^{-1})(\chi)$, we see that 
	$\Det(g) \in \Hom^{W}(R_p( \mathcal{G}), (\Q_p^c \otimes_{\Z_p} \Lambda(\overline{\Gamma}))^{\times})$. 
	By Lemma \ref{lem:Det-criterion} it now suffices to show that
	$\aug_{\overline{\Gamma}}(\Det(g)(\chi)) = \det_{\chi}(g)$.
	We have $\Det(g)(\chi) = j_{\chi}(\Nrd_{\mathcal{Q}(\mathcal{G})}(g))$
	by triangle \eqref{eqn:Det_triangle}. 
	Choose a normal subgroup $\Gamma' \simeq \Z_p$ of $\mathcal{G}$
	which lies in the kernel of $\chi$. Put $G' := \mathcal{G} / 
	\Gamma'$ and view $\chi$ as a character of $G'$.
	Now \cite[(8)]{MR2609173} implies that
	$\aug_{\overline{\Gamma}}(j_{\chi}(\Nrd_{\mathcal{Q}(\mathcal{G})}(g)))$ equals
	the $\chi$-component of $\Nrd_{\Q_p[G']}(g')$,
	where $g'  := g \mod \Gamma'$. However, this $\chi$-component
	is $\det_{\chi}(g') = \det_{\chi}(g)$ by \eqref{eqn:Nrd(g)} as desired.
\end{example}

\begin{remark}\label{rem:Det-square}
	As we have observed in Example \ref{ex:Det(g)}, the proof of
	\cite[Theorem 6.4]{MR2609173} and in particular \cite[(8)]{MR2609173}
	show that we have a commutative square
	\begin{equation*} 
	\xymatrix{
		K_1(\Q_p^c \otimes_{\Z_p} \Lambda(\mathcal{G})) \ar[r] \ar[d]_{\Det} 
		& \varprojlim K_1(\Q_p^c[\mathcal{G}/ \mathcal{N}]) 
		\ar[d]_{\simeq}^{\left(\Nrd_{\Q_p^c[\mathcal{G}/ \mathcal{N}]}\right)_{\mathcal{N}}} \\
		\Hom^{W}(R_p( \mathcal{G}), (\Q_p^c \otimes_{\Z_p} \Lambda(\overline{\Gamma}))^{\times}) \ar[r] & 
		\varprojlim \zeta(\Q_p^c[\mathcal{G}/ \mathcal{N}])^{\times}
		\simeq \prod_{\chi \in \Irr_{\Q_p^c}(\mathcal{G})} (\Q_p^c)^{\times}
	}
	\end{equation*}
	where the inverse limits are taken over all open normal subgroups 
	$\mathcal{N}$ of $\mathcal{G}$.
	Now Lemma \ref{lem:Det-criterion} implies that the bottom map
	(which is given by $f \mapsto (\aug_{\overline \Gamma}(f(\chi)))_{\chi}$) is injective.
\end{remark}

\begin{prop} \label{prop:limit-relative-K0}
	Let $F$ be a finite extension of $\Q_p$ with 
	ring of integers $\mathcal{O}$. Then we have a canonical isomorphism
	\[
		K_0(\Lambda(\mathcal{G}), \Lambda^{\mathcal{O}}(\mathcal{G})) \simeq
		\varprojlim K_0(\Z_p[\mathcal{G} / \mathcal{N}], \mathcal{O}[\mathcal{G} / \mathcal{N}]),
	\]
	where the inverse limit runs over all open normal subgroup $\mathcal{N}$
	of $\mathcal{G}$.
\end{prop}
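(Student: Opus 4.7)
The plan is to compare the long exact sequence of relative $K$-theory \eqref{eqn:long-exact-seq} attached to the pair $(\Lambda(\mathcal{G}), \Lambda^{\mathcal{O}}(\mathcal{G}))$ with the inverse limit over open normal subgroups $\mathcal{N}$ of $\mathcal{G}$ of the analogous five-term exact sequences
\[
K_1(\Z_p[\mathcal{G}/\mathcal{N}]) \longrightarrow K_1(\mathcal{O}[\mathcal{G}/\mathcal{N}]) \longrightarrow K_0(\Z_p[\mathcal{G}/\mathcal{N}], \mathcal{O}[\mathcal{G}/\mathcal{N}]) \longrightarrow K_0(\Z_p[\mathcal{G}/\mathcal{N}]) \longrightarrow K_0(\mathcal{O}[\mathcal{G}/\mathcal{N}]).
\]
The canonical comparison map sends a class $[P, \phi, Q] \in K_0(\Lambda(\mathcal{G}), \Lambda^{\mathcal{O}}(\mathcal{G}))$ to the compatible family of classes $[P/\Delta(\mathcal{N})P,\, \phi_{\mathcal{N}},\, Q/\Delta(\mathcal{N})Q]$, where $\phi_{\mathcal{N}}$ is the isomorphism induced by $\phi$. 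This map fits into a morphism of five-term exact sequences whose bottom row is obtained from the above inverse systems.

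The argument then splits into two parts. First, one identifies the four outer vertical arrows as isomorphisms: for the two $K_0$-columns this is a consequence of idempotent lifting together with the fact that a finitely generated projective module over the Iwasawa algebra is recovered from its compatible family of reductions to the finite group rings; for the two $K_1$-columns this is the standard identification of the continuous $K_1$ of a compact adic ring with the inverse limit of $K_1$ along its finite quotients, which can be extracted from the foundations of Fukaya--Kato \cite[\S 1.5]{MR2276851}. Second, one needs the inverse limit of the five-term sequences to remain exact, which follows once the Mittag-Leffler condition is verified on the $K_1$-terms; here ML holds because the transition maps between the $K_1$ of finite $p$-adic group rings are surjective on a cofinal subsystem, as the images of the unit groups $\Z_p[\mathcal{G}/\mathcal{N}]^{\times}$ (resp.\ $\mathcal{O}[\mathcal{G}/\mathcal{N}]^{\times}$) are compatible under the natural projections.

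Once the outer arrows are isomorphisms and the bottom row is exact, the five lemma delivers the desired isomorphism. The main obstacle is the identification of $K_1(\Lambda^{\mathcal{O}}(\mathcal{G}))$ with $\varprojlim K_1(\mathcal{O}[\mathcal{G}/\mathcal{N}])$: unlike $K_0$, which is controlled by idempotents and projective modules, $K_1$ involves stable equivalence of matrices of arbitrary size and so requires a genuinely topological argument to interchange the limit with the stabilisation. Everything else in the proof is essentially bookkeeping with the long exact sequences and the comparison diagram.
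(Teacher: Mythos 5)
Your proposal is correct in outline but closes the argument differently from the paper. The paper never invokes the five lemma, nor the identification $K_0(\Lambda(\mathcal{G})) \simeq \varprojlim K_0(\Z_p[\mathcal{G}/\mathcal{N}])$. Instead it passes to the cofinal system $G_n = \mathcal{G}/\Gamma^{p^n}$ and takes inverse limits of the \emph{four}-term exact sequences
\[
0 \to SK_1(\Z_p[G_n],\mathcal{O}) \to K_1(\Z_p[G_n]) \to K_1(\mathcal{O}[G_n]) \to K_0(\Z_p[G_n],\mathcal{O}[G_n]) \to 0
\]
coming from Lemma \ref{lem:SK1}, whose exactness on the right encodes the injectivity of $K_0(\Z_p[G_n]) \to K_0(\mathcal{O}[G_n])$ (a consequence of Swan's theorem). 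Since $SK_1(\Z_p[G_n],\mathcal{O})$ is finite and the systems $K_1(\Z_p[G_n])$ have surjective transition maps, the limit sequence is exact; Fukaya--Kato then identifies the two middle limits with $K_1(\Lambda(\mathcal{G}))$ and $K_1(\Lambda^{\mathcal{O}}(\mathcal{G}))$, exhibiting $\varprojlim K_0(\Z_p[G_n],\mathcal{O}[G_n])$ as the cokernel of $K_1(\Lambda(\mathcal{G})) \to K_1(\Lambda^{\mathcal{O}}(\mathcal{G}))$, i.e.\ as the relative group. Your five-lemma route is more symmetric and makes explicit the surjectivity of the connecting map onto $K_0(\Lambda(\mathcal{G}),\Lambda^{\mathcal{O}}(\mathcal{G}))$ (which the paper leaves implicit), but at the cost of having to actually prove the two $K_0$-identifications, which the paper's route avoids entirely.

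Two of your justifications should be tightened. First, the surjectivity of the transition maps $K_1(\Z_p[G_{n+1}]) \to K_1(\Z_p[G_n])$ does not follow from the images of the unit groups being ``compatible under the natural projections''; the correct argument is that the kernel of $\Z_p[G_{n+1}] \to \Z_p[G_n]$ is contained in the Jacobson radical (the group-theoretic kernel being a $p$-group), so units lift, and $K_1$ of a $p$-adic group ring is generated by units \cite[Theorem (40.31)]{MR892316}. Second, Mittag--Leffler ``on the $K_1$-terms'' alone does not formally yield exactness of an inverse limit of five-term sequences: one must check vanishing of $\varprojlim^1$ for the successive image systems as well, in particular for $\im\bigl(K_1(\mathcal{O}[G_n]) \to K_0(\Z_p[G_n],\mathcal{O}[G_n])\bigr)$. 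This does hold here, because these images are quotients of systems with surjective transition maps, but it is exactly the kind of bookkeeping that the paper's truncated four-term sequence is designed to minimise.
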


\begin{proof}
	Choose $n_0 \in \N$ such that $\Gamma^{p^{n_0}}$ is central in 
	$\mathcal{G}$. For $n \geq n_0$ we put $G_n := \mathcal{G} / \Gamma^{p^n}$.
	As the poset of subgroups $\Gamma^{p^n}$, $n \geq n_0$ is cofinal
	in the poset of all open normal subgroups of $\mathcal{G}$, we have to
	show that
	\[
	K_0(\Lambda(\mathcal{G}), \Lambda^{\mathcal{O}}(\mathcal{G})) \simeq
	\varprojlim_n K_0(\Z_p[G_n], \mathcal{O}[G_n]).
	\]
	As the kernel of the natural projection 
	$\Z_p[G_{n+1}] \twoheadrightarrow \Z_p[G_n]$ 
	is contained in the radical of $\Z_p[G_{n+1}]$ 
	by \cite[Proposition 5.26]{MR632548},
	we have a surjection $\Z_p[G_{n+1}]^{\times} \twoheadrightarrow
	\Z_p[G_{n}]^{\times}$ by \cite[Exercise 5.2]{MR632548}.
	We then likewise have $K_1(\Z_p[G_{n+1}]) \twoheadrightarrow
	K_1(\Z_p[G_{n}])$ by \cite[Theorem (40.31)]{MR892316}. Therefore
	the inverse system $K_1(\Z_p[G_{n}])$, $n \geq n_0$ satisfies the
	Mittag--Leffler condition. As $SK_1(\Z_p[G_n], \mathcal{O})$
	is finite for all $n \geq n_0$ by Lemma \ref{lem:SK1}, 
	taking inverse limits over the exact
	sequences of abelian groups
	\[
	0 \rightarrow SK_1(\Z_p[G_n], \mathcal{O}) \rightarrow
	K_1(\Z_p[G_n]) \rightarrow K_1(\mathcal{O}[G_n]) \rightarrow
	K_0(\Z_p[G_n], \mathcal{O}[G_n]) \rightarrow 0
	\]
	is exact. By \cite[Proposition 1.5.1]{MR2276851}
	(this requires $\mathcal{O}$ having a finite residue field) we have
	canonical isomorphisms
	\[
		K_1(\Lambda(\mathcal{G})) \simeq \varprojlim_n K_1(\Z_p[G_n]),
		\quad
		K_1(\Lambda^{\mathcal{O}}(\mathcal{G})) \simeq \varprojlim_n K_1(\mathcal{O}[G_n]).
	\]
	We thus obtain an exact sequence
	\[
	0 \rightarrow \varprojlim_n SK_1(\Z_p[G_n], \mathcal{O}) \rightarrow
	K_1(\Lambda(\mathcal{G})) \rightarrow 
	K_1(\Lambda^{\mathcal{O}}(\mathcal{G})) \rightarrow
	\varprojlim_n K_0(\Z_p[G_n], \mathcal{O}[G_n]) \rightarrow 0
	\]
	as desired.
\end{proof}

\begin{lemma} \label{lem:Nrd-SK1}
	Let $F$ be a finite extension of $\Q_p$ with 
	ring of integers $\mathcal{O}$. Define
	\[
	SK_1(\Lambda^{\mathcal{O}}(\mathcal{G})) :=
	\varprojlim SK_1(\mathcal{O}[\mathcal{G} / \mathcal{N}]),
	\]
	where the inverse limit is taken over all open normal subgroup $\mathcal{N}$
	of $\mathcal{G}$. Then we have an exact sequence
	\[
		0 \longrightarrow SK_1(\Lambda^{\mathcal{O}}(\mathcal{G})) \longrightarrow
		K_1(\Lambda^{\mathcal{O}}(\mathcal{G}))
		\xrightarrow{\Nrd}
		\zeta(\mathcal{Q}^F(\mathcal{G}))^{\times}.
	\]
\end{lemma}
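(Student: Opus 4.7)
The plan is to obtain the exact sequence as the inverse limit of well-known analogues at finite level, and then identify the target with $\zeta(\mathcal{Q}^F(\mathcal{G}))^{\times}$. Fix $n_0 \in \N$ such that $\Gamma^{p^{n_0}}$ is central in $\mathcal{G}$, and put $G_n := \mathcal{G}/\Gamma^{p^n}$ for $n \geq n_0$; the subgroups $\Gamma^{p^n}$ are cofinal among open normal subgroups of $\mathcal{G}$. At each finite level, $F[G_n]$ is semisimple, so $\Nrd: K_1(F[G_n]) \hookrightarrow \zeta(F[G_n])^{\times}$ is injective by \cite[Theorem (45.3)]{MR892316}; combined with the definition $SK_1(\mathcal{O}[G_n]) = \ker(K_1(\mathcal{O}[G_n]) \to K_1(F[G_n]))$, this yields the exact sequence
\[
0 \longrightarrow SK_1(\mathcal{O}[G_n]) \longrightarrow K_1(\mathcal{O}[G_n]) \xrightarrow{\,\Nrd\,} \zeta(F[G_n])^{\times}.
\]

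Next I would pass to the inverse limit over $n$. As in the proof of Proposition \ref{prop:limit-relative-K0}, the transition maps $K_1(\mathcal{O}[G_{n+1}]) \twoheadrightarrow K_1(\mathcal{O}[G_n])$ are surjective (the kernel of $\mathcal{O}[G_{n+1}] \twoheadrightarrow \mathcal{O}[G_n]$ sits in the Jacobson radical, units lift by \cite[Exercise 5.2]{MR632548}, and $K_1$ is generated by units via \cite[Theorem (40.31)]{MR892316}), so the system is Mittag--Leffler and the inverse limit of the finite-level exact sequences remains exact. By \cite[Proposition 1.5.1]{MR2276851} the middle term becomes $K_1(\Lambda^{\mathcal{O}}(\mathcal{G}))$, and the left-hand term equals $SK_1(\Lambda^{\mathcal{O}}(\mathcal{G}))$ by definition, producing
\[
0 \longrightarrow SK_1(\Lambda^{\mathcal{O}}(\mathcal{G})) \longrightarrow K_1(\Lambda^{\mathcal{O}}(\mathcal{G})) \xrightarrow{\,\varprojlim_n \Nrd_n\,} \varprojlim_n \zeta(F[G_n])^{\times}.
\]

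The final step is to replace $\varprojlim_n \zeta(F[G_n])^{\times}$ by $\zeta(\mathcal{Q}^F(\mathcal{G}))^{\times}$. The reduced norm $\Nrd_{\mathcal{Q}^F(\mathcal{G})}: K_1(\Lambda^{\mathcal{O}}(\mathcal{G})) \to \zeta(\mathcal{Q}^F(\mathcal{G}))^{\times}$ and the level-$n$ reduced norms are compatible through a natural map $\iota: \zeta(\mathcal{Q}^F(\mathcal{G}))^{\times} \to \varprojlim_n \zeta(F[G_n])^{\times}$; granted that $\iota$ is injective, the kernel of $\Nrd_{\mathcal{Q}^F(\mathcal{G})}$ coincides with the kernel of $\varprojlim_n \Nrd_n$, which is $SK_1(\Lambda^{\mathcal{O}}(\mathcal{G}))$, as required. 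The main obstacle is precisely establishing this injectivity. For this I would extend scalars to $\Q_p^c$: the inclusion $\zeta(\mathcal{Q}^F(\mathcal{G}))^{\times} \hookrightarrow \zeta(\mathcal{Q}^c(\mathcal{G}))^{\times}$ is clear, and combining the $G_{\Q_p}$-equivariant isomorphism $\zeta(\mathcal{Q}^c(\mathcal{G}))^{\times} \simeq \Hom^{W}(R_p(\mathcal{G}), \mathcal{Q}^c(\overline{\Gamma})^{\times})$ with Remark \ref{rem:Det-square} and Lemma \ref{lem:Det-criterion}, the composite with evaluation $f \mapsto (\aug_{\overline{\Gamma}}(f(\chi)))_{\chi \in \Irr_{\Q_p^c}(\mathcal{G})}$ into $\prod_{\chi} (\Q_p^c)^{\times}$ is injective. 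This composition factors through $\varprojlim_n \zeta(\Q_p^c[G_n])^{\times}$, yielding the desired injectivity of $\iota$ and completing the proof.
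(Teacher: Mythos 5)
Your proof is correct in substance and takes essentially the same route as the paper: the paper also works with $x=(x_n)_n\in\varprojlim_n K_1(\mathcal{O}[G_n])$, uses the fact (from Example \ref{ex:Det(g)}/Remark \ref{rem:Det-square}) that $\aug_{\overline{\Gamma}}(j_{\chi}(\Nrd_{\mathcal{Q}^F(\mathcal{G})}(x)))$ equals the $\chi$-component of $\Nrd_{F[G_n]}(x_n)$, and concludes with Lemma \ref{lem:Det-criterion}. One imprecision: there is no natural map $\iota$ defined on all of $\zeta(\mathcal{Q}^F(\mathcal{G}))^{\times}$, since $\aug_{\overline{\Gamma}}$ (and hence the evaluation $f\mapsto(\aug_{\overline{\Gamma}}(f(\chi)))_{\chi}$) only makes sense on elements corresponding to homomorphisms valued in $(\Q_p^c\otimes_{\Z_p}\Lambda(\overline{\Gamma}))^{\times}$ rather than in $\mathcal{Q}^c(\overline{\Gamma})^{\times}$, and Lemma \ref{lem:Det-criterion} is likewise restricted to that subgroup; this is harmless here because $\Nrd_{\mathcal{Q}^F(\mathcal{G})}(K_1(\Lambda^{\mathcal{O}}(\mathcal{G})))$ does land in that subgroup, which is the only place your argument applies $\iota$.
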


\begin{proof}
	Let $x = (x_n)_n \in
	K_1(\Lambda^{\mathcal{O}}(\mathcal{G})) =
	\varprojlim_n K_1(\mathcal{O}[G_n])$. Then we have
	$\Nrd_{\mathcal{Q}^F(\mathcal{G})}(x) = 1$ if and only if
	the homomorphism
	\[
		\left[\chi \mapsto j_{\chi} (\Nrd_{\mathcal{Q}^F(\mathcal{G})}(x))\right]
		\in \Hom^W(R_p(\mathcal{G}), (\Q_p^c \otimes_{\Z_p} \Lambda(\overline{\Gamma}))^{\times})
	\]
	is trivial. Let $\chi \in \Irr_{\Q_p^c}(\mathcal{G})$ be 
	a character. Choose $n \in \N$ such that $\chi$ factors through
	$G_n$. As in Example \ref{ex:Det(g)} we have that
	$\aug_{\overline \Gamma} (j_{\chi} (\Nrd_{\mathcal{Q}^F(\mathcal{G})}(x)))$
	agrees with the $\chi$-component of $\Nrd_{F[G_n]}(x_n)$.
	Now Lemma \ref{lem:Det-criterion} implies the claim.
\end{proof}

\begin{remark} \label{rmk:Nrd-injective}
	As noted in \cite[Remark E]{MR2114937}, a conjecture of Suslin implies
	that the reduced norm
	$\Nrd_{\mathcal{Q}^F(\mathcal{G})}: K_1(\mathcal{Q}^F(\mathcal{G}))
	\rightarrow \zeta(\mathcal{Q}^F(\mathcal{G}))^{\times}$ is injective.
	This is true if $\mathcal{G}$ is abelian
	or, more generally, if
	$p$ does not divide the order of the commutator
	subgroup of $\mathcal{G}$ (this follows from 
	\cite[Proposition 4.5]{MR3092262} as explained in
	\cite[Remark 4.8]{hybrid-EIMC}). 
	Whenever this holds, Lemma \ref{lem:Nrd-SK1} shows that
	$SK_1(\Lambda^{\mathcal{O}}(\mathcal{G}))$ 
	identifies with the kernel of the 
	natural map
	$K_1(\Lambda^{\mathcal{O}}(\mathcal{G})) \rightarrow
	K_1(\mathcal{Q}^F(\mathcal{G}))$.
\end{remark}

\begin{remark}
	If $F = \Q_p$ and $\mathcal{G}$ is a pro-$p$-group, then
	$SK_1(\Lambda(\mathcal{G}))$ coincides with the kernel of the 
	natural map
	$K_1(\Lambda(\mathcal{G})) \rightarrow
	K_1(\Lambda^{\infty}(\mathcal{G}))$, where
	$\Lambda^{\infty}(\mathcal{G}) := 
	\varprojlim  \Q_p[\mathcal{G}/\mathcal{N}]$
	(see \cite[Corollary 3.2]{MR3093505}).
\end{remark}

\section{Galois Gauss sums} \label{sec:Gauss-sums}

\subsection{General Notation}
Fix a prime $p$.
For any $p$-adic field $K$ we denote its ring of integers by $\mathcal{O}_K$
and let $\pi_K \in \mathcal{O}_K$ be a uniformizer. Then $\mathfrak{p}_K := \pi_K \mathcal{O}_K$
is the unique maximal ideal in $\mathcal O_K$. 
We let $v_K: K^{\times} \rightarrow \Z$ be the associated normalized valuation,
i.e.\ $v_K(\pi_K) = 1$.
If $\mathfrak{a}$ is any ideal in $\mathcal{O}_K$, we let 
$N(\mathfrak{a}) = |\mathcal{O}_K / \mathfrak{a}|$ be its absolute norm.
In particular, $N(\mathfrak{p}_K)$ is the cardinality of the residue field of $K$.
We set $U_K^0 := \mathcal{O}_K^{\times}$ and $U_K^n := 1 + \mathfrak p_K^n$
for every positive integer $n$. We denote the absolute different of $K$ by
$\mathfrak{D}_K$ so that
\[
 \mathfrak{D}_K^{-1} = \left\{ x \in K \mid \Tr_{K/\Q_p}(x \mathcal{O}_K) \subseteq \Z_p \right\},
\]
where for any finite extension $K/F$ of local fields $\Tr_{K/F}: K \rightarrow F$
denotes the trace map. Similarly, we let $N_{K/F}: K^{\times} \rightarrow F^{\times}$
be the field theoretic norm map. We use the same notation for the norm
on ideals so that in particular $N_{K/F}(\mathfrak{p}_K) = \mathfrak{p}_F^{f_{K/F}}$,
where $f_{K/F}$ denotes the degree of the corresponding residue field extension.

Let $G_K^{\ab} := \Gal(K^{\ab}/K)$ be the Galois group over $K$ of the maximal abelian 
extension $K^{\ab}$ of $K$ and let
\[
  (-, K): K^{\times} \longrightarrow G_K^{\ab}
\]
be the local Artin map. Then we have commutative diagrams
\begin{equation} \label{eqn:LCT-norms}
 \xymatrix{
 K^{\times} \ar[rr]^{(-,K)} \ar[d]_{N_{K/F}} & & G_K^{\ab} \ar[d] 
 & & K^{\times} \ar[rr]^{(-,K)} & & G_K^{\ab}\\
 F^{\times} \ar[rr]^{(-,F)} & & G_F^{\ab} 
 & & F^{\times} \ar[rr]^{(-,F)} \ar[u] & & G_F^{\ab} \ar[u]_{\Ver_{K/F}}
 }
\end{equation}
where in the left diagram the vertical arrow on the right 
denotes the canonical map, and in the right diagram the vertical arrows
are the natural embedding and the transfer map
$\Ver_{K / F}: G_{F}^{\ab} \rightarrow G_K^{\ab}$.

\subsection{Abelian Galois Gauss sums}
Let $L/K$ be a finite Galois extension of $p$-adic fields with abelian Galois group $G$.
Then every $\chi \in \Irr_{\C}(G)$ may be viewed as a complex
character of $K^{\times}$ via the local Artin map
and the natural projection $G_K^{\ab} \twoheadrightarrow G$.
The conductor of $\chi$ is the ideal $\mathfrak{f}(\chi) = \mathfrak{p}_K^{m_{\chi}}$,
where $m_{\chi}$ is the smallest integer such that $\chi(U_K^{m_{\chi}}) = 1$.
Let $\psi_{p}$ be the composition of the following three maps:
\[
\psi_{p} : \Q_{p} \longrightarrow \Q_{p}/\Z_{p} \longrightarrow \Q/\Z \longrightarrow \C^{\times},
\]
where the first map is the canonical surjection, 
the second map is the canonical injection which maps $\Q_{p}/\Z_{p}$
onto the $p$-component of the divisible group $\Q/\Z$, 
and the third map is the exponential map $x \mapsto e^{2\pi ix}$.
Thus $\psi_{p}(\Z_{p})=1$ and for any $r \in \N$ we have $\psi_{p}(p^{-r})=\zeta_{p^{r}}$ 
where $\zeta_{p^{r}} = e^{\frac{2\pi ix}{p^{r}}}$
is a primitive $p^{r}$th root of unity. 
Define the standard additive character $\psi_{K}: K \longrightarrow \C^{\times}$ 
to be the composition 
$\psi_{p} \circ \mathrm{Tr}_{K/\Q_{p}}$.
Note that the codifferent $\mathfrak{D}_{K}^{-1}$ is the largest ideal of $K$
on which $\psi_{K}$ is trivial.

\begin{definition}\label{def:GGs-linear}
The local Galois Gauss sum $\tau_{K}(\chi)$ is defined to be the sum
\[
  \tau_{K}(\chi) = \sum_{u \in U_K^0 / U_K^{m_{\chi}}} 
  \chi(u c_{\chi}^{-1}) \psi_{K}(u c_{\chi}^{-1}) \in \Q^c,
\]
where $c_{\chi}$ is any generator of the ideal $\mathfrak{f}(\chi)\mathfrak{D}_{K}$ 
(the sum is easily shown to be independent of the choice of $c_{\chi}$).
\end{definition}

\begin{remark} \label{rem:special-GGs}
If $\chi$ is unramified (i.e. $m_{\chi} = 0$) then we may view
$\chi: K^{\times} / \mathcal{O}_K^{\times} \rightarrow \C^{\times}$ as a function on
the fractional ideals in $K$. Then the sum in Definition \ref{def:GGs-linear} 
reduces to one term and we have
$\tau_{K}(\chi)=\chi(\mathfrak{D}_{K}^{-1})$. 
If $\chi$ is ramified (i.e. $m_{\chi} > 0$) then the sum runs over all
$u \in U_K^0 / U_K^{m_{\chi}} = \mathcal{O}_K^{\times} / 1 + \mathfrak{f}(\chi)$.
\end{remark}

\begin{remark}
 One knows that $|\tau_K(\chi)| = \sqrt{N(\mathfrak{f}(\chi))}$
 (see \cite[Chapter II, Proposition 2.2]{MR0447187}, for instance).
 In particular, $\tau_K(\chi)$ is non-zero.
\end{remark}

The following result is well known (see the proof of \cite[Chapter III, Lemma 6.1]{MR717033},
for instance). We give a proof for convenience of the reader.

\begin{prop}\label{prop:Gauss-twists-abelian}
Let $\chi, \rho \in \Irr_{\C}(G)$ be two irreducible characters of $G$.
If $\rho$ is unramified then 
\[
  \tau_{K}(\chi \otimes \rho) = \rho((\mathfrak{f}(\chi)\mathfrak{D}_{K})^{-1}) \tau_{K}(\chi)
  = \rho(c_{\chi}^{-1}) \tau_K(\chi).
\] 
\end{prop}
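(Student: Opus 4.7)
The plan is to split into two cases according to whether $\chi$ itself is ramified or unramified, and in each case to exploit the fact that an unramified character $\rho$ is trivial on $\mathcal{O}_K^{\times} = U_K^0$ (indeed, it factors through $K^{\times}/\mathcal{O}_K^{\times}$ via the Artin map).

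First I would observe that since $\rho(U_K^0) = 1$ and $U_K^{m_\chi} \subseteq U_K^0$, the twisted character $\chi \otimes \rho$ is trivial on $U_K^{m_\chi}$ precisely when $\chi$ is, so $m_{\chi \otimes \rho} = m_\chi$ and $\mathfrak{f}(\chi \otimes \rho) = \mathfrak{f}(\chi)$. Hence any generator $c_\chi$ of $\mathfrak{f}(\chi)\mathfrak{D}_K$ may be used as $c_{\chi \otimes \rho}$.

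In the ramified case $m_\chi > 0$, I would plug into Definition \ref{def:GGs-linear}:
\[
\tau_K(\chi \otimes \rho) = \sum_{u \in U_K^0 / U_K^{m_\chi}} \chi(u c_\chi^{-1})\rho(u c_\chi^{-1}) \psi_K(u c_\chi^{-1}).
\]
Since $\rho$ is multiplicative and trivial on $U_K^0$, we have $\rho(u c_\chi^{-1}) = \rho(c_\chi^{-1})$ for every $u \in U_K^0$, so this constant factor pulls out of the sum and yields $\rho(c_\chi^{-1}) \tau_K(\chi)$. The identification $\rho(c_\chi^{-1}) = \rho((\mathfrak{f}(\chi)\mathfrak{D}_K)^{-1})$ is immediate from the fact that $\rho$, being unramified, is a well-defined character on fractional ideals and $c_\chi$ generates $\mathfrak{f}(\chi)\mathfrak{D}_K$.

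In the unramified case $m_\chi = 0$, $\chi \otimes \rho$ is again unramified and Remark \ref{rem:special-GGs} gives
\[
\tau_K(\chi \otimes \rho) = (\chi \otimes \rho)(\mathfrak{D}_K^{-1}) = \rho(\mathfrak{D}_K^{-1}) \chi(\mathfrak{D}_K^{-1}) = \rho(\mathfrak{D}_K^{-1}) \tau_K(\chi),
\]
and since $\mathfrak{f}(\chi) = \mathcal{O}_K$, we have $\rho(\mathfrak{D}_K^{-1}) = \rho((\mathfrak{f}(\chi)\mathfrak{D}_K)^{-1}) = \rho(c_\chi^{-1})$. There is no genuine obstacle here; the only point requiring any care is the verification that the conductor is preserved under unramified twist and that the chosen $c_\chi$ may be reused for $\chi \otimes \rho$, both of which follow at once from $\rho|_{U_K^0} = 1$.
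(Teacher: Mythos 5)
Your argument is correct and is essentially the paper's proof: the conductor is unchanged under the unramified twist, and the factor $\rho(uc_{\chi}^{-1})=\rho(c_{\chi}^{-1})$ pulls out of the Gauss sum because $\rho$ is trivial on $U_K^0$. The case split on whether $\chi$ is ramified is unnecessary, since the sum in Definition \ref{def:GGs-linear} already reduces to the single term $\chi(c_{\chi}^{-1})\psi_K(c_{\chi}^{-1})$ when $m_{\chi}=0$, so the one computation covers both cases uniformly.
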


\begin{proof}
As $\rho$ is unramified, we have $m_{\chi \otimes \rho} = m_{\chi}$
and thus $\mathfrak{f}(\chi \otimes \rho)=\mathfrak{f}(\chi)$.
Let $c_{\chi}$ be a generator of 
$\mathfrak{f}(\chi \otimes \rho)\mathfrak{D}_{K}= 
\mathfrak{f}(\chi)\mathfrak{D}_{K}$.
Hence
\begin{eqnarray*}
\tau_{K}(\chi \otimes \rho) 
&=& \sum_{u \in U_K^0/U_K^{m_{\chi}}} (\chi \otimes \rho)(u 
	c_{\chi}^{-1}) \psi_{K}(u c_{\chi}^{-1}) \\
&=& \rho(c_{\chi}^{-1}) \sum_{u \in U_K^0/U_K^{m_{\chi}}} 
	\chi(u c_{\chi}^{-1}) \psi_{K}(u c_{\chi}^{-1}) \\
&=& \rho(c_{\chi}^{-1}) \tau_{K}(\chi)\\
&=& \rho((\mathfrak{f}(\chi)\mathfrak{D}_{K})^{-1}) \tau_{K}(\chi),
\end{eqnarray*}
where the second equality uses the fact that $\rho(uc_{\chi}^{-1})=\rho(c_{\chi}^{-1})$ 
for all $u \in \mathcal{O}_{K}^{\times}$.
\end{proof}

\subsection{General Galois Gauss sums}
Now let $L/K$ be an arbitrary finite Galois extension of $p$-adic fields
with Galois group $G$. We write $R(G)$ for the ring of virtual $\C$-valued characters
of $G$. 
If $\chi$ is a character of $G$, then $\deg(\chi) := \chi(1)$ is called the degree of $\chi$.
This uniquely extends to a homomorphism
\[
 \deg: R(G) \longrightarrow \Z.
\]
Let $A$ be an abelian group.
A family of homomorphisms
$f_{L^H}:R(H) \rightarrow A$, where $H$ runs through all subgroups of $G$,
is called inductive in degree $0$ if
$f_K(\ind_H^G \chi) = f_{L^H}(\chi)$ for every subgroup $H$ of $G$ and every
$\chi \in R(H)$ of degree $0$. Such a family is uniquely determined
by its values on linear characters (see \cite[Chapter III, remark after Lemma 1.1]{MR717033}; the argument also appears in the proof
of Proposition \ref{prop:Gauss-twists} below, in particular see \eqref{eqn:Brauer-GGs}).

The following definition is in fact well defined (see \cite[Chapter II, \S 4]{MR0447187}).

\begin{definition}
 There is a unique family of homomorphisms
 \begin{eqnarray*}
  \tau_{L^H}: R(H) & \longrightarrow & (\Q^c)^{\times}\\
  \chi & \mapsto & \tau_{L^H}(\chi)
 \end{eqnarray*}
 such that $\tau_{L^H}(\chi)$ is the abelian Galois Gauss sum defined 
 in Definition \ref{def:GGs-linear}
 for every linear character $\chi$ of $H$,
 and such that the family is inductive in degree $0$.
 We call $\tau_K(\chi)$ the local Galois Gauss sum of $\chi$.
\end{definition}

If $\chi$ is a complex valued character of $G$, we let $\mathfrak{f}(\chi)$ be
the Artin conductor of $\chi$. As $\mathfrak{f}(\chi + \chi') = \mathfrak{f}(\chi)
\cdot \mathfrak{f}(\chi')$ for any two characters $\chi$ and $\chi'$, 
there is a unique way to define $\mathfrak{f}(\chi)$ for any virtual character $\chi \in R(G)$
such that $\mathfrak{f}$ is a homomorphism on $R(G)$ with values in the fractional
ideals of $K$.
We now prove the following generalization of Proposition \ref{prop:Gauss-twists-abelian} which might be also well known to experts.

\begin{prop} \label{prop:Gauss-twists}
 Let $\chi, \rho \in \Irr_{\C}(G)$ be two irreducible characters of $G$.
If $\rho$ is unramified (and thus linear) then 
\[
  \tau_{K}(\chi \otimes \rho) = \rho((\mathfrak{f}(\chi)\mathfrak{D}_{K}^ {\chi(1)})^{-1}) \tau_{K}(\chi)
  = \rho(c_{\chi}^{-1}) \tau_K(\chi),
\] 
where $c_{\chi}$ is any generator of the ideal $\mathfrak{f}(\chi) \mathfrak{D}_K^{\chi(1)}$.
\end{prop}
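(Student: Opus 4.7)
The plan is to appeal to the uniqueness for families of homomorphisms inductive in degree $0$ in order to reduce the claim to the abelian case of Proposition \ref{prop:Gauss-twists-abelian}. First I would observe that because $\rho$ is unramified, it factors through the cyclic quotient $G/I$ (where $I \leq G$ denotes the inertia subgroup), and hence is linear; the same reasoning shows that $\res_H^G \rho$ is a linear unramified character of $H$ for any subgroup $H \leq G$.

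For each subgroup $H$ of $G$ I would introduce the two homomorphisms $R(H) \to (\Q^c)^{\times}$ defined by
\[
F^{(1)}_{L^H}(\psi) := \tau_{L^H}(\psi \otimes \res_H^G \rho), \qquad
F^{(2)}_{L^H}(\psi) := (\res_H^G \rho)\bigl((\mathfrak{f}(\psi)\mathfrak{D}_{L^H}^{\psi(1)})^{-1}\bigr) \, \tau_{L^H}(\psi).
\]
Each is a well-defined homomorphism: tensoring with a fixed character and the Artin conductor $\mathfrak{f}$ are additive, $\psi \mapsto \mathfrak{D}_{L^H}^{\psi(1)}$ is a homomorphism because $\psi(1)$ is additive, and $\res_H^G \rho$, being unramified, is well-defined on fractional ideals of $L^H$. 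It therefore suffices to verify that both families are inductive in degree $0$ and agree on all linear characters at every level; the uniqueness statement then forces $F^{(1)}_K = F^{(2)}_K$ on all of $R(G)$.

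For inductivity I would argue as follows. If $\psi \in R(H)$ satisfies $\psi(1) = 0$, the projection formula yields $\ind_H^G \psi \otimes \rho = \ind_H^G(\psi \otimes \res_H^G \rho)$, and since $\psi \otimes \res_H^G \rho$ still has degree $0$, the inductivity of $\tau$ gives $F^{(1)}_K(\ind_H^G \psi) = F^{(1)}_{L^H}(\psi)$. For $F^{(2)}$ one invokes the conductor--discriminant formula $\mathfrak{f}(\ind_H^G \psi) = N_{L^H/K}(\mathfrak{f}(\psi)) \cdot \mathfrak{d}(L^H/K)^{\psi(1)}$, which for $\psi(1)=0$ collapses to $\mathfrak{f}(\ind_H^G \psi) = N_{L^H/K}(\mathfrak{f}(\psi))$; the different contribution $\mathfrak{D}_K^{\deg \ind_H^G \psi}$ vanishes for the same reason; and the left square of \eqref{eqn:LCT-norms} identifies $\rho \circ N_{L^H/K}$ with $\res_H^G \rho$, whence $F^{(2)}_K(\ind_H^G \psi) = F^{(2)}_{L^H}(\psi)$.

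Agreement on linear characters comes from Proposition \ref{prop:Gauss-twists-abelian}: a linear character $\theta$ of $H$ and the linear character $\res_H^G \rho$ both factor through $H^{\ab} = \Gal(L^{[H,H]}/L^H)$, and the Galois Gauss sum depends only on the underlying character, so Proposition \ref{prop:Gauss-twists-abelian} applies in this abelian setting to give $F^{(1)}_{L^H}(\theta) = F^{(2)}_{L^H}(\theta)$. The main technical subtlety is the bookkeeping for $F^{(2)}$: one has to see precisely how the absolute differents $\mathfrak{D}_K$ and $\mathfrak{D}_{L^H}$ interact with the relative discriminant $\mathfrak{d}(L^H/K)$ under induction, and this forces a careful application of the conductor--discriminant formula together with the norm-compatibility of the local Artin map.
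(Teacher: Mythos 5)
Your proposal is correct and is essentially the paper's own argument in different packaging: the paper explicitly writes out the Brauer decomposition $\chi - \chi(1)\mathbb{1}_G = \sum_U z_U\,\ind_U^G(\lambda_U - \mathbb{1}_U)$ and computes both sides term by term, while you invoke the resulting uniqueness principle for families inductive in degree $0$; the underlying ingredients (projection formula, inductivity of the Artin conductor, $\res^G_H\rho = \rho\circ N_{L^H/K}$ via the left square of \eqref{eqn:LCT-norms}, and the abelian case) are the same. Both versions are sound.
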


\begin{proof}
 For any group $G$ we write $\mathbb{1}_G$ for the trivial character.
 By a strengthened version of Brauer's induction theorem (see \cite[Exercise 10.6]{MR0450380})
 there are subgroup $U$ of $G$ and linear characters $\lambda_U$ of $U$ such that
 \begin{equation} \label{eqn:Brauer-chi}
  \chi - \chi(1) \mathbb 1_G = \sum_U z_U \, \ind_U^G(\lambda_U - \mathbb 1_U),
 \end{equation}
  where the $z_U$ are suitable integers. 
  As Galois Gauss sums are inductive in degree $0$ and 
  $\tau_K(\mathbb 1_G) = \tau_{L^U}(\mathbb 1_U) = 1$ for all
  $U$, equation \eqref{eqn:Brauer-chi} implies that
  \begin{equation} \label{eqn:Brauer-GGs}
   \tau_K(\chi) = \prod_U \tau_{L^U}(\lambda_U)^{z_U}.
  \end{equation}
  By \cite[Corollary 10.20]{MR632548} we likewise have
  \begin{equation} \label{eqn:Brauer-chirho}
   \chi \otimes \rho - \chi(1) \rho = \sum_U z_U \, \ind_U^G((\lambda_U \otimes \rho_U) - \rho_U ),
  \end{equation}
  where we put $\rho_U := \res^G_U \rho$. Note that $\rho_U = \rho \circ N_{L^U/K}$
  by local class field theory (use the left diagram \eqref{eqn:LCT-norms}).
  For the Artin conductor we compute
  \begin{eqnarray*}
   \mathfrak{f}(\chi) & = & \mathfrak{f}(\chi - \chi(1) \mathbb 1_G)\\
   & = & \prod_U \mathfrak{f}(\ind_U^G(\lambda_U - \mathbb 1_U))^{z_U}\\
   & = & \prod_U N_{L^U/K}(\mathfrak{f}(\lambda_U - \mathbb 1_U))^{z_U}\\
   & = & \prod_U N_{L^U/K}(\mathfrak{f}(\lambda_U))^{z_U}.
  \end{eqnarray*}
  Here, the first and the last equality follow from the fact that the conductor of
  the trivial character is trivial, whereas the other two equalities
  follow from the fundamental properties of the Artin conductor (see 
  \cite[Chapter II, \S 1]{MR0447187}). We thus obtain an equality
  \begin{equation} \label{eqn:rho-equality}
   \rho(\mathfrak{f}(\chi)) = \prod_U \rho_U(\mathfrak{f}(\lambda_U))^{z_U}.
  \end{equation}
  For the Galois Gauss sums we then have
  \begin{eqnarray*}
   \tau_K(\chi \otimes \rho)\tau_K(\rho)^{- \chi(1)} 
   & = & \prod_U \tau_{L^U}(\lambda_U \otimes \rho_U)^{z_U} \tau_{L^U}(\rho_U)^{-z_U}\\
   & = & \prod_U \rho_U(\mathfrak{f}(\lambda_U))^{-z_U} \tau_{L^U}(\lambda_U)^{z_U}\\
   & = & \rho(\mathfrak{f}(\chi))^{-1} \tau_K(\chi).
  \end{eqnarray*}
  As Galois Gauss sums are inductive in degree $0$, the first equality follows
  from \eqref{eqn:Brauer-chirho}. The second is Proposition \ref{prop:Gauss-twists-abelian}
  and the equality $\tau_{L^U}(\rho_U) = \rho_U(\mathfrak{D}_{L^U}^{-1})$
  (see Remark \ref{rem:special-GGs}), whereas the third is \eqref{eqn:Brauer-GGs}
  and \eqref{eqn:rho-equality}. As $\tau_{K}(\rho) = \rho(\mathfrak{D}_{K}^{-1})$
  we are done.
\end{proof}

If $K/F$ is a field extension and $\chi$ is a character of $G_K$, then we also
write $\ind_K^F(\chi)$ for the induced character $\ind_{G_K}^{G_F}(\chi)$.
By \cite[Remark 3, p.\ 109]{MR717033} one has
\[
 \tau_{\Q_p}(\ind_K^{\Q_p}(\chi)) = \tau_K(\chi) \tau_{\Q_p}(\ind_K^{\Q_p}(\mathbb 1))^{\chi(1)}.
\]
In fact, this is easily deduced from inductivity in degree $0$. Now
Proposition \ref{prop:Gauss-twists} obviously implies the following.

\begin{corollary} \label{cor:Gauss-twists}
 Let $\chi, \rho \in \Irr_{\C}(G)$ be two irreducible characters of $G$.
 If $\rho$ is unramified then 
 \[
  \tau_{\Q_p}(\ind_K^{\Q_p}(\chi \otimes \rho)) = \rho(c_{\chi}^{-1}) \tau_{\Q_p}(\ind_K^{\Q_p}(\chi)).
 \]
\end{corollary}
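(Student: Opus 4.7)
The plan is to combine Proposition \ref{prop:Gauss-twists} with the induction formula
\[
\tau_{\Q_p}(\ind_K^{\Q_p}(\psi)) = \tau_K(\psi) \,\tau_{\Q_p}(\ind_K^{\Q_p}(\mathbb 1))^{\psi(1)},
\]
which is recorded just before the corollary. First I would apply this formula to the character $\chi \otimes \rho$, noting that since $\rho$ is unramified it is linear (as mentioned in the hypothesis of Proposition \ref{prop:Gauss-twists}), so $(\chi \otimes \rho)(1) = \chi(1) \rho(1) = \chi(1)$. Thus the auxiliary factor $\tau_{\Q_p}(\ind_K^{\Q_p}(\mathbb 1))^{(\chi \otimes \rho)(1)}$ coincides with $\tau_{\Q_p}(\ind_K^{\Q_p}(\mathbb 1))^{\chi(1)}$, which is precisely the factor appearing when we apply the same induction formula to $\chi$ itself.

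Having matched the two auxiliary factors, I would substitute the identity $\tau_K(\chi \otimes \rho) = \rho(c_{\chi}^{-1}) \tau_K(\chi)$ provided by Proposition \ref{prop:Gauss-twists}, and then fold the remaining $\tau_K(\chi) \tau_{\Q_p}(\ind_K^{\Q_p}(\mathbb 1))^{\chi(1)}$ back into $\tau_{\Q_p}(\ind_K^{\Q_p}(\chi))$ via the induction formula applied in the reverse direction. This yields exactly the claimed identity
\[
\tau_{\Q_p}(\ind_K^{\Q_p}(\chi \otimes \rho)) = \rho(c_{\chi}^{-1}) \tau_{\Q_p}(\ind_K^{\Q_p}(\chi)).
\]

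There is essentially no obstacle here, which is why the author writes that the corollary is obvious: the only subtlety is the degree bookkeeping in the exponent of $\tau_{\Q_p}(\ind_K^{\Q_p}(\mathbb 1))$, and this is trivially resolved by the fact that $\rho$ is linear. No further results are needed beyond Proposition \ref{prop:Gauss-twists} and the inductivity-in-degree-$0$ property embodied in the induction formula.
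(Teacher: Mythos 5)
Your proposal is correct and is exactly the argument the paper has in mind: the corollary is stated as an immediate consequence of the induction formula $\tau_{\Q_p}(\ind_K^{\Q_p}(\psi)) = \tau_K(\psi)\,\tau_{\Q_p}(\ind_K^{\Q_p}(\mathbb 1))^{\psi(1)}$ together with Proposition \ref{prop:Gauss-twists}, and your degree bookkeeping (using that an unramified $\rho$ is linear, so $(\chi\otimes\rho)(1)=\chi(1)$) is the only point requiring attention.
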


Let $\kappa$ be the $p$-adic cyclotomic character
\[
    \kappa: G_{\Q} \longrightarrow \Z_{p}^{\times},
\]
defined by $\omega(\zeta) = \zeta^{\kappa(\omega)}$ for every 
$\omega \in G_{\Q}$
and every $p$-power root of unity $\zeta$.

\begin{theorem} \label{thm:Galois-action}
 Let $\chi$ be a character of $G_K$ with open kernel. 
 Then for every $\omega \in G_{\Q}$ one has
 \[
  \omega^{-1}(\tau_K(\omega \circ \chi)) = \tau_K(\chi) \cdot \det_{\chi}(\kappa(\omega)).
 \]
\end{theorem}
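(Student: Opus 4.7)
The plan is to reduce the identity to the case where $\chi$ is a linear character of $G_K$ via Brauer's induction theorem, and then verify that case by a direct change of variables in the Gauss sum.

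First I would dispose of the linear case. If $\chi$ is linear, view it as a character of $K^{\times}$ via local class field theory. Since $\omega$ permutes roots of unity we have $\mathfrak{f}(\omega \circ \chi) = \mathfrak{f}(\chi)$, so the same element $c_\chi$ generates $\mathfrak{f}(\omega \circ \chi)\mathfrak{D}_K$. The definition of $\kappa$ shows that $\omega$ acts on $p$-power roots of unity as $\zeta \mapsto \zeta^{\kappa(\omega)}$, and hence that $\omega^{-1}(\psi_K(x)) = \psi_K(\kappa(\omega)^{-1} x)$ for every $x \in K$. Applying $\omega^{-1}$ to the defining sum of $\tau_K(\omega \circ \chi)$ therefore gives
\[
\omega^{-1}(\tau_K(\omega \circ \chi)) = \sum_{u \in U_K^0/U_K^{m_\chi}} \chi(uc_\chi^{-1}) \psi_K(\kappa(\omega)^{-1}uc_\chi^{-1}).
\]
The substitution $u \mapsto \kappa(\omega)u$ (a bijection of $U_K^0/U_K^{m_\chi}$, since $\kappa(\omega) \in \Z_p^\times \subseteq U_K^0$) transforms this into $\chi(\kappa(\omega))\tau_K(\chi)$. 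As $\det_\chi = \chi$ for linear $\chi$, this is the required identity.

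For general $\chi$, I would apply the strong form of Brauer's induction theorem used in the proof of Proposition \ref{prop:Gauss-twists}: pick a finite Galois quotient $G = \Gal(L/K)$ of $G_K$ through which $\chi$ factors and write
\[
\chi - \chi(1)\mathbb{1}_G = \sum_U z_U \, \ind_U^G(\lambda_U - \mathbb{1}_U)
\]
with $U \leq G$, linear $\lambda_U$ of $U$, and $z_U \in \Z$. Since $\omega$ commutes with induction and fixes the trivial character, the same decomposition holds for $\omega \circ \chi$ with each $\lambda_U$ replaced by $\omega \circ \lambda_U$. Using that the family $\{\tau_{L^U}\}_U$ is multiplicative, inductive in degree $0$, and satisfies $\tau_{L^U}(\mathbb{1}_U) = 1$, one gets
\[
\tau_K(\chi) = \prod_U \tau_{L^U}(\lambda_U)^{z_U} \quad \text{and} \quad \tau_K(\omega \circ \chi) = \prod_U \tau_{L^U}(\omega \circ \lambda_U)^{z_U}.
\]
Applying the linear case over each field $L^U$ then yields
\[
\omega^{-1}(\tau_K(\omega \circ \chi)) = \tau_K(\chi) \cdot \prod_U \lambda_U(\kappa(\omega))^{z_U},
\]
where each $\lambda_U(\kappa(\omega))$ is interpreted via local class field theory for $(L^U)^{\times}$ using $\kappa(\omega) \in \Z_p^\times \subseteq (L^U)^\times$.

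The only remaining step---and the only real technicality---is to identify the product $\prod_U \lambda_U(\kappa(\omega))^{z_U}$ with $\det_\chi(\kappa(\omega))$. For this I would invoke the classical formula
\[
\det_{\ind_U^G \lambda} = \det_{\ind_U^G \mathbb{1}_U} \cdot (\lambda \circ \Ver_{L^U/K})
\]
for linear characters $\lambda$ of $U$, together with the right diagram in \eqref{eqn:LCT-norms}, which identifies the transfer $\Ver_{L^U/K}: G_K^{\ab} \to G_{L^U}^{\ab}$ via local class field theory with the natural inclusion $K^\times \hookrightarrow (L^U)^\times$. Passing to the degree-$0$ differences $\lambda_U - \mathbb{1}_U$ cancels the contributions of $\det_{\ind_U^G \mathbb{1}_U}$, and evaluating at $\kappa(\omega) \in K^\times$ produces exactly the identity $\det_\chi(\kappa(\omega)) = \prod_U \lambda_U(\kappa(\omega))^{z_U}$. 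This interplay between the determinant of induced characters, the transfer map, and local class field theory is the sole technical subtlety in the argument.
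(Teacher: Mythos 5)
Your argument is correct. The paper itself offers no proof of this statement: it simply cites \cite[Chapter II, Theorem 5.1]{MR0447187}, and what you have written out is, in essence, the standard proof of that cited result. Your treatment of the linear case (invariance of the conductor under $\omega$, the identity $\omega^{-1}(\psi_K(x)) = \psi_K(\kappa(\omega)^{-1}x)$, and the substitution $u \mapsto \kappa(\omega)u$) is sound, and the reduction via Brauer induction in degree $0$ exactly parallels the mechanism the paper uses in the proof of Proposition \ref{prop:Gauss-twists}, i.e.\ the identities \eqref{eqn:Brauer-chi} and \eqref{eqn:Brauer-GGs}. You also correctly identify the one genuinely delicate point: the meaning of $\det_{\chi}(\kappa(\omega))$ is via the Artin map, $\det_{\chi}((\kappa(\omega),K))$, and the identity $\det_{\chi}((\kappa(\omega),K)) = \prod_U \lambda_U((\kappa(\omega),L^U))^{z_U}$ follows from the determinant-of-induction formula (the finite-group analogue of \eqref{eqn:Ver-chi-relation}) combined with the compatibility of the transfer with the Artin maps recorded in the right-hand diagram of \eqref{eqn:LCT-norms}; the sign terms $\det_{\ind_U^G \mathbb{1}_U}$ indeed cancel in degree $0$. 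The only thing your write-up leaves implicit is that $\det$ is a homomorphism on virtual characters and commutes with the $\omega$-twist, but both are immediate.
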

\begin{proof}
 This is \cite[Chapter II, Theorem 5.1]{MR0447187}, for instance.
\end{proof}

\subsection{Galois Gauss sums in unramified $\Z_p$-extensions} \label{sec:GGs-Z_p-extensions}
In this subsection $p$ is assumed to be odd.
For a $p$-adic field $K$ we let $K_{\infty}$ be the unique unramified
$\Z_p$-extension. Then $\Gamma_K := \Gal(K_{\infty}/K) \simeq \Z_p$ is topologically
generated by the Frobenius automorphism $\phi_K \in \Gamma_K$.

Now let $L/K$ be a finite Galois extension of $p$-adic fields with Galois group $G$.
Then $L_{\infty} / K$ is a $p$-adic Lie extension of dimension $1$ and we put
$\mathcal{G} := \Gal(L_{\infty}/K)$ and $H := \Gal(L_{\infty}/K_{\infty})$. 
By the argument given in \cite[\S 1]{MR2114937} the short exact sequence
\begin{equation*} 
	1 \longrightarrow H \longrightarrow \mathcal{G} \longrightarrow \Gamma_K
	\longrightarrow 1
\end{equation*}
splits. We may therefore write $\mathcal{G}$ as a
semi-direct product $\mathcal{G} \simeq H \rtimes \Gamma$, where
$\Gamma \simeq \Gamma_K \simeq \Z_p$. 
Note that $\Gamma_K$ now plays the same role as $\overline{\Gamma}$ in
\S \ref{subsec:Iwasawa-algebras} and \S \ref{subsec:K-of-Iwasawa-algebras}.

The maximal abelian extension $\Q_p^{\ab}$ of $\Q_p$ is the compositum of the maximal
unramified extension $\Q_p^{\ur}$ of $\Q_p$ and the totally ramified abelian
extension $\Q_p^{\ram} := \Q_p(\zeta_{p^{\infty}})$ which is obtained by adjoining
all $p$-power roots of unity. For $\omega \in G_{\Q_p}$ we define
$\omega^{\ur} \in G_{\Q_p}^{\ab}$ by declaring 
$\omega^{\ur}|_{\Q_p^{\ur}} = \omega|_{\Q_p^{\ur}}$ and 
$\omega^{\ur}|_{\Q_p^{\ram}} = \id$. Similarly, we define
$\omega^{\ram} \in G_{\Q_p}^{\ab}$ by declaring 
$\omega^{\ram}|_{\Q_p^{\ram}} = \omega|_{\Q_p^{\ram}}$ and 
$\omega^{\ram}|_{\Q_p^{\ur}} = \id$.

Each $\omega \in G_{\Q_p}$ acts on the finite set of left cosets 
$G_{\Q_p} / G_K$ by left multiplication and we let
$\epsilon_{K/\Q_p}(\omega) \in \left\{\pm 1 \right\}$ be the signature
of this permutation.
Then \cite[Chapter II, Proposition 3.2]{MR0447187}
states that for any character $\chi$ of $G_K$ with open kernel one has
\begin{equation} \label{eqn:Ver-chi-relation}
\det_{\ind_K^{\Q_p}(\chi)}(\omega) = 
\epsilon_{K/\Q_p}(\omega)^{\chi(1)} \cdot
\det_{\chi}(\Ver_{K/\Q_p}(\omega)).
\end{equation}

\begin{theorem} \label{thm:GGs-in-Z_p}
 Choose an isomorphism $j: \C \simeq \C_p$. 
 \begin{enumerate}
  \item 
 The map 
 \begin{eqnarray*}
  \tau_{L_{\infty}/K}^{(j)}: R_p(\mathcal{G}) & \longrightarrow & (\Q_p^c \otimes_{\Z_p} \Lambda(\Gamma_K))^{\times}\\
  \chi & \mapsto & \phi_K^{-v_K(c_{\chi})} j\left(\tau_{\Q_p}(\ind_K^{\Q_p}(j^{-1} \circ \chi))\right)
 \end{eqnarray*}
 belongs to $\Hom^W(R_p(\mathcal{G}), (\Q_p^c \otimes_{\Z_p} \Lambda(\Gamma_K))^{\times})$, where 
 $c_{\chi}$ is a generator of $\mathfrak{f}(\chi)\mathfrak{D}_K^{\deg(\chi)}$.
 \item
 If $j': \C \simeq \C_p$ is a second choice of isomorphism, then
 \[
   \tau_{L_{\infty}/K}^{(j)} \cdot \left(\tau_{L_{\infty}/K}^{(j')}\right)^{-1}
   \in \Det(K_1(\Lambda(\mathcal{G}))).
 \]
 \item
 For every $\omega \in G_{\Q_p}$ we have
 \[
  \omega\left(\tau_{L_{\infty}/K}^{(j)}(\omega^{-1} \circ \chi)\right) = 
  \tau_{L_{\infty}/K}^{(j)}(\chi) \cdot \det_{\ind_K^{\Q_p}(\chi)}(\omega^{\ram}).
 \]
 \end{enumerate}
\end{theorem}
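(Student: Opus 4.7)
We prove the three parts in order.

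\textbf{Part (i).} Additivity of $\tau_{\Q_p}\circ \ind_K^{\Q_p}$ as a function $R_p(\mathcal{G}) \to (\Q^c)^\times$ (which follows from inductivity in degree $0$ and multiplicativity on virtual characters), combined with the additivity of $v_K(c_\chi) = v_K(\mathfrak{f}(\chi)) + \deg(\chi)\cdot v_K(\mathfrak{D}_K)$ in $\chi$, shows that $\tau_{L_\infty/K}^{(j)}$ is a homomorphism. Since local Galois Gauss sums are nonzero and $\phi_K \in \Lambda(\Gamma_K)^\times$, the values lie in $(\Q_p^c \otimes_{\Z_p} \Lambda(\Gamma_K))^\times$. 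For the type $W$ property, let $\rho$ be a character of $\mathcal{G}$ of type $W$. Then $\rho$ factors through $\Gamma_K$ and corresponds, via the local Artin map, to an unramified character of $K^\times$. Since $\mathfrak{f}(\chi \otimes \rho) = \mathfrak{f}(\chi)$ and $\deg(\chi \otimes \rho) = \deg(\chi)$, one may take $c_{\chi \otimes \rho} = c_\chi$. Applying Corollary \ref{cor:Gauss-twists} to $j^{-1}\circ\chi$ and the unramified character $j^{-1}\circ\rho$ and then transporting via $j$, together with the identity $\rho(c_\chi) = \rho(\phi_K)^{v_K(c_\chi)}$ (local class field theory applied to an unramified character), yields $\tau_{L_\infty/K}^{(j)}(\chi \otimes \rho) = \rho^{\sharp}(\tau_{L_\infty/K}^{(j)}(\chi))$.

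\textbf{Part (ii).} Two isomorphisms $j, j'$ differ by an automorphism $\omega \in G_{\Q_p}$ of $\Q_p^c$ fixing $\Q_p$ (via $j$, say $j' = \omega \circ j$); equivalently, they differ on $\overline{\Q}$ by $\sigma := j^{-1}\omega j \in G_\Q$. Since $\phi_K^{-v_K(c_\chi)}$ does not depend on $j$, Theorem \ref{thm:Galois-action} gives
\begin{equation*}
\bigl(\tau_{L_\infty/K}^{(j)} \cdot (\tau_{L_\infty/K}^{(j')})^{-1}\bigr)(\chi)
= \det_{\ind_K^{\Q_p}(\chi)}(\kappa(\omega))^{\pm 1}
\end{equation*}
(with sign determined by the normalisation of the Artin map). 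Using \eqref{eqn:Ver-chi-relation}, this decomposes into the $\epsilon$-sign factor times $\det_\chi(\Ver_{K/\Q_p}(-))$ applied to the element of $G_{\Q_p}^{\ab}$ determined by $\kappa(\omega)$; by Example \ref{ex:Det(g)} this value at $\chi$ is exactly $\Det(g)(\chi)$ for a suitable $g \in \mathcal{G}$ (the image under the Artin map of $\Ver_{K/\Q_p}$ applied to the class of $\kappa(\omega)$). Hence the quotient lies in $\Det(K_1(\Lambda(\mathcal{G})))$.

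\textbf{Part (iii).} Expand the LHS using the definition. Since conductor and degree are invariant under Galois conjugation of characters, $v_K(c_{\omega^{-1}\circ \chi}) = v_K(c_\chi)$, and the action of $\omega \in G_{\Q_p}$ on $\Q_p^c \otimes_{\Z_p} \Lambda(\Gamma_K)$ fixes $\phi_K$, hence
\begin{equation*}
\omega(\tau_{L_\infty/K}^{(j)}(\omega^{-1}\circ \chi))
= \phi_K^{-v_K(c_\chi)} \cdot \omega\bigl(j(\tau_{\Q_p}(\ind_K^{\Q_p}(j^{-1}\omega^{-1}\circ \chi)))\bigr).
\end{equation*}
Writing $\sigma := j^{-1}\omega j$, one has $j^{-1}\omega^{-1}\circ \chi = \sigma^{-1}\circ (j^{-1}\circ \chi)$; moreover $\ind_K^{\Q_p}$ commutes with Galois conjugation of characters. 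Transporting through $j$ and applying Theorem \ref{thm:Galois-action} in the form $\sigma(\tau_{\Q_p}(\sigma^{-1}\circ \eta)) = \tau_{\Q_p}(\eta)\det_\eta(\kappa(\sigma))^{-1}$ with $\eta = \ind_K^{\Q_p}(j^{-1}\circ \chi)$ yields
\begin{equation*}
\omega(\tau_{L_\infty/K}^{(j)}(\omega^{-1}\circ \chi))
= \tau_{L_\infty/K}^{(j)}(\chi) \cdot \det_{\ind_K^{\Q_p}(\chi)}(\kappa(\omega))^{-1}.
\end{equation*}
To conclude, local class field theory identifies the inertia subgroup of $G_{\Q_p}^{\ab}$ with $\Z_p^\times$ in such a way that $\omega^{\ram}$ corresponds to $\kappa(\omega)^{-1}$ (for the Artin normalisation used in the paper), which gives $\det_{\ind_K^{\Q_p}(\chi)}(\kappa(\omega))^{-1} = \det_{\ind_K^{\Q_p}(\chi)}(\omega^{\ram})$, as required.

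The main obstacle is the bookkeeping in part (iii) relating the cyclotomic character value $\kappa(\omega)$ to the ramified part $\omega^{\ram}$ via local class field theory, including matching sign conventions. Part (ii) also requires a delicate identification—via \eqref{eqn:Ver-chi-relation} and Example \ref{ex:Det(g)}—of the Galois ratio as the image under $\Det$ of a group element of $\mathcal{G}$.
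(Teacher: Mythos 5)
Your overall strategy coincides with the paper's: part (i) via Corollary \ref{cor:Gauss-twists} and the identity $\rho(c_\chi^{-1})=\rho(\phi_K)^{-v_K(c_\chi)}$ from class field theory, part (ii) via Theorem \ref{thm:Galois-action}, \eqref{eqn:Ver-chi-relation} and Example \ref{ex:Det(g)}, and part (iii) via Theorem \ref{thm:Galois-action} and $\omega^{\ram}=(\kappa(\omega),\Q_p)^{-1}$. Parts (i) and (iii) are fine.

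In part (ii), however, there is a genuine gap at the step you yourself flag as ``delicate''. You assert that $\det_\chi(\Ver_{K/\Q_p}(\tilde\omega))$ ``is exactly $\Det(g)(\chi)$ for a suitable $g\in\mathcal{G}$'', but Example \ref{ex:Det(g)} gives $\Det(g)(\chi)=\det_\chi(g)\,\overline{g}^{\chi(1)}$, with an extra factor $\overline{g}^{\chi(1)}\in\Lambda(\Gamma_K)$. Since the left-hand side is a scalar in $(\Q_p^c)^{\times}$ and $\Gamma_K$ is torsion-free, the identification only holds if the chosen lift $\tilde g\in\mathcal{G}$ of $\Ver_{K/\Q_p}(\tilde\omega)$ satisfies $\overline{\tilde g}=1$, i.e.\ $\tilde g\in H$. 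This must be verified, and it is exactly where the unramifiedness of $L_\infty/L$ enters: by the right-hand diagram of \eqref{eqn:LCT-norms} one has
\[
  \Ver_{K/\Q_p}\bigl((\kappa(\omega),\Q_p)\bigr)\big|_{K_\infty}
  = (\kappa(\omega),K)\big|_{K_\infty} = \id_{K_\infty},
\]
because $\kappa(\omega)\in\Z_p^{\times}$ is a unit and units map into the inertia subgroup, which acts trivially on the unramified tower. (For the cyclotomic $\Z_p$-extension this step would fail.) A second, smaller omission: you invoke ``the $\epsilon$-sign factor'' without explaining why $\chi\mapsto\epsilon_{K/\Q_p}(\tilde\omega)^{\chi(1)}$ lies in $\Det(K_1(\Lambda(\mathcal{G})))$; the paper observes that it equals $\Det(\pm 1)$ because $\Hom_{\Q_p^c[H]}(V_\chi,\mathcal{Q}^c(\mathcal{G}))$ has $\mathcal{Q}^c(\Gamma_K)$-dimension $\chi(1)$. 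Both points are fillable, but as written the argument for (ii) is incomplete at its decisive step.
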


\begin{proof}
 Let $\rho$ be a character of type $W$. Then $\rho$ factors through $\Gamma_K$
 and thus it is unramified. We compute
 \begin{eqnarray*}
  \tau_{L_{\infty}/K}^{(j)}(\chi \otimes \rho)
  & = & \phi_K^{-v_K(c_{\chi \otimes \rho})} j\left(\tau_{\Q_p}(\ind_K^{\Q_p}(j^{-1} \circ (\chi \otimes \rho)))\right)\\
  & = & \phi_K^{-v_K(c_{\chi})} \rho(c_{\chi}^{-1}) j\left(\tau_{\Q_p}(\ind_K^{\Q_p}(j^{-1} \circ \chi))\right)\\
  & = & \rho^{\sharp}(\phi_K^{-v_K(c_{\chi})}) j\left(\tau_{\Q_p}(\ind_K^{\Q_p}(j^{-1} \circ \chi))\right)\\
  & = & \rho^{\sharp}\left(\tau_{L_{\infty}/K}^{(j)}(\chi)\right),
 \end{eqnarray*}
 where the second equality follows from Corollary \ref{cor:Gauss-twists}.
 The third equality holds as $\rho(\phi_K) = \rho(\pi_K)$ by local class field theory. This proves (i). For (ii) we write 
 $j'|_{\Q^c} = j|_{\Q^c} \circ \omega$ with $\omega \in G_{\Q}$.
 Then Theorem \ref{thm:Galois-action} implies that
 \begin{eqnarray*}
 \tau_{L_{\infty}/K}^{(j)}(\chi) \cdot
  \left(\tau_{L_{\infty}/K}^{(j')}(\chi)\right)^{-1} & = & j\left(\det_{\ind_K^{\Q_p}(j^{-1} \circ \chi)}(\kappa(\omega)) \right) \\
  & = & \det_{\ind_K^{\Q_p}(\chi)}(\tilde \omega),
 \end{eqnarray*}
 where $\tilde \omega := (\kappa(\omega), \Q_p) \in G_{\Q_p}^{\ab}$. 
 Let $F := L_{\infty} \cap K^{\ab}$ and choose any $\tilde g \in \mathcal{G}$
 such that $\tilde g|_F = \Ver_{K/\Q_p}(\tilde \omega)|_F$.
 Then we have that 
 $\det_{\chi}(\tilde g) = \det_{\chi}(\Ver_{K/\Q_p}(\tilde \omega))$
 and
 \begin{eqnarray*}
	 \tilde g|_{K_{\infty}} & = & \Ver_{K/\Q_p}(\tilde \omega)|_{K_{\infty}}\\
	 & = & \Ver_{K/\Q_p}((\kappa(\omega), \Q_p))|_{K_{\infty}}\\
	 & = & (\kappa(\omega), K)|_{K_{\infty}}\\
	 & = & \id_{K_{\infty}}
 \end{eqnarray*}
 by local class field theory (use the right diagram \eqref{eqn:LCT-norms}).
 It now follows from Example \ref{ex:Det(g)}
 that
 \[
	 \left[ \chi \mapsto \det_{\chi}(\Ver_{K/\Q_p}(\tilde \omega)) \right]
	 = \Det(\tilde g) \in \Det(K_1(\Lambda(\mathcal{G}))).
 \] 
 As $\Hom_{\Q_p^c[H]}(V_{\chi}, \mathcal{Q}^c(\mathcal{G}))$ is a
 $\mathcal{Q}^c(\Gamma_K)$-vector space of dimension
 $\chi(1)$ (see the proof of \cite[Proposition 6]{MR2114937}), 
 we also have that
 \[
	 \left[\chi \mapsto \epsilon_{K/\Q_p}(\tilde \omega)^{\chi(1)} \right]
	 = \Det(\epsilon_{K/\Q_p}(\tilde \omega)) 
	 \in \Det(K_1(\Lambda(\mathcal{G}))).
 \]
 Now \eqref{eqn:Ver-chi-relation} implies (ii). 
 Finally, (iii) is also deduced from
 Theorem \ref{thm:Galois-action} once we note that
 $\omega^{\ram} = (\kappa(\omega), \Q_p)^{-1}$ for every
 $\omega \in G_{\Q_p}^{\ab}$.
\end{proof}

\subsection{Functorialities}
Let $N$ be a finite normal subgroup of $\mathcal{G}$ and let $\mathcal{H}$ be an open subgroup of $\mathcal{G}$.
There are canonical maps (see \cite[\S 3]{MR2114937})
\begin{eqnarray*}
	\quot^{\mathcal{G}}_{\mathcal{G}/N}: & \Hom^{W}(R_{p}(\mathcal{G}), \mathcal{Q}^{c}(\Gamma_{K})^{\times}) \longrightarrow &
	\Hom^{W}(R_{p}(\mathcal{G}/N), \mathcal{Q}^{c}(\Gamma_{K})^{\times}),\\
	\res^{\mathcal{G}}_{\mathcal{H}}: & \Hom^{W}(R_{p}(\mathcal{G}), \mathcal{Q}^{c}(\Gamma_{K})^{\times}) \longrightarrow &
	\Hom^{W}(R_{p}(\mathcal{H}), \mathcal{Q}^{c}(\Gamma_{K'})^{\times}),
\end{eqnarray*}
where $K' := L_{\infty}^{\mathcal{H}}$; here for $f \in \Hom^{W}(R_{p}(\mathcal{G}), \mathcal{Q}^{c}(\Gamma_{K})^{\times})$
we have $(\quot^{\mathcal{G}}_{\mathcal{G}/N} f)(\chi) = f(\infl^{\mathcal{G}}_{\mathcal{G}/N} \chi)$ and
$(\res^{\mathcal{G}}_{\mathcal{H}} f)(\chi') = f(\ind^{\mathcal{G}}_{\mathcal{H}} \chi')$ for $\chi \in R_{p}(\mathcal{G}/N)$
and $\chi' \in R_{p}(\mathcal{H})$. Note that we view
$\mathcal{Q}^{c}(\Gamma_{K'})$ as a subfield of $\mathcal{Q}^{c}(\Gamma_{K})$
via $\phi_{K'} \mapsto \phi_K^{f_{K'/K}}$.

\begin{prop} \label{prop:functoriality-Gauss}
	Choose an isomorphism $j: \C \simeq \C_p$. Then the following statements
	hold.
	\begin{enumerate}
		\item 
		Let $N$ be a finite normal subgroup of $\mathcal{G}$ and put
		$L_{\infty}' := L_{\infty}^N$. Then
		\[
			\quot^{\mathcal{G}}_{\mathcal{G}/N}
			\left(\tau_{L_{\infty}/K}^{(j)}\right) = 
			\tau_{L_{\infty}'/K}^{(j)}.
		\]
		\item
		Let $\mathcal{H}$ be an open subgroup of $\mathcal{G}$ and put
		$K' := L_{\infty}^{\mathcal{H}}$. Then
		\[
		\res^{\mathcal{G}}_{\mathcal{H}}
		\left(\tau_{L_{\infty}/K}^{(j)}\right) = 
		\tau_{L_{\infty}/K'}^{(j)}.
		\]
	\end{enumerate}
\end{prop}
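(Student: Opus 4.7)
The plan is to reduce both parts to a direct computation from the defining formula
\[
\tau_{L_{\infty}/K}^{(j)}(\chi) = \phi_K^{-v_K(c_{\chi})} \cdot j\bigl(\tau_{\Q_p}(\ind_K^{\Q_p}(j^{-1}\circ\chi))\bigr),
\]
whose right hand side splits naturally into an unramified twist factor $\phi_K^{-v_K(c_\chi)}$ and an absolute Galois Gauss sum factor. I will show that each of these has the required behaviour under inflation and induction separately.

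For part (i), let $\chi\in R_p(\mathcal{G}/N)$ and set $\widetilde\chi := \infl^{\mathcal{G}}_{\mathcal{G}/N}\chi$. Since $N$ is a finite normal subgroup of $\mathcal{G}$ and the quotient $\mathcal{G}/H\simeq\Gamma_K$ is torsion-free, we have $N\subseteq H$, so that $K_\infty\subseteq L_\infty'$ and the base field $K$, its Frobenius $\phi_K$ and its absolute different $\mathfrak{D}_K$ are common to the two settings. Moreover the characters $\chi$ and $\widetilde\chi$, pulled back to the absolute Galois group $G_K$, agree, so $\mathfrak{f}(\chi)=\mathfrak{f}(\widetilde\chi)$ and therefore $v_K(c_\chi)=v_K(c_{\widetilde\chi})$. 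Similarly $\ind_K^{\Q_p}(j^{-1}\circ\chi)=\ind_K^{\Q_p}(j^{-1}\circ\widetilde\chi)$ as characters of $G_{\Q_p}$. The defining formula then yields $\tau_{L_\infty/K}^{(j)}(\widetilde\chi)=\tau_{L_\infty'/K}^{(j)}(\chi)$, which is (i).

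For part (ii), let $\chi\in R_p(\mathcal{H})$ and set $\tilde\chi:=\ind^{\mathcal{G}}_{\mathcal{H}}\chi$. By transitivity of induction $\ind_K^{\Q_p}(\tilde\chi)=\ind_{K'}^{\Q_p}(\chi)$, so the absolute Galois Gauss sum factors of $\tau_{L_\infty/K}^{(j)}(\tilde\chi)$ and $\tau_{L_\infty/K'}^{(j)}(\chi)$ already coincide. It remains to verify that $\phi_K^{-v_K(c_{\tilde\chi})}=\phi_{K'}^{-v_{K'}(c_\chi)}$ under the embedding $\mathcal{Q}^c(\Gamma_{K'})\hookrightarrow\mathcal{Q}^c(\Gamma_K)$ given by $\phi_{K'}\mapsto\phi_K^{f_{K'/K}}$; equivalently, one must check the numerical identity $v_K(c_{\tilde\chi})=f_{K'/K}\cdot v_{K'}(c_\chi)$. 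Combining the conductor formula for induced characters $\mathfrak{f}(\tilde\chi)=N_{K'/K}(\mathfrak{D}_{K'/K}^{\chi(1)}\,\mathfrak{f}(\chi))$, the tower relation $v_{K'}(\mathfrak{D}_{K'})=v_{K'}(\mathfrak{D}_{K'/K})+e_{K'/K}v_K(\mathfrak{D}_K)$, the degree identity $\tilde\chi(1)=e_{K'/K}f_{K'/K}\chi(1)$, and $v_K\circ N_{K'/K}=f_{K'/K}\cdot v_{K'}$, the required identity drops out after a short cancellation.

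Both parts are essentially formal. The one slightly computational ingredient is the conductor-valuation match-up in part (ii), which is bookkeeping with classical local-field invariants once the conductor-discriminant relation for induced characters is invoked; I do not expect any genuine obstacle.
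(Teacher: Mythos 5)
Your proposal is correct and follows essentially the same route as the paper: part (i) is the easy observation that $N\subseteq H$ forces all the defining data to coincide (the paper simply declares this part "easy"), and part (ii) reduces via transitivity of induction to the valuation identity $v_K(c_{\ind_{\mathcal H}^{\mathcal G}\chi'})=f_{K'/K}\,v_{K'}(c_{\chi'})$, which the paper also establishes from the conductor formula for induced characters together with $\mathfrak D_{K'}=\mathfrak D_{K'/K}\mathfrak D_K\mathcal O_{K'}$ and $v_K\circ N_{K'/K}=f_{K'/K}\cdot v_{K'}$. The only cosmetic difference is that you carry out the cancellation at the level of valuations while the paper manipulates the ideals directly.
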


\begin{proof}
	Part (i) is easy so that we only prove part (ii).
	Let $\chi' \in R_p(\mathcal{H})$. 
	We have to show that
	$\tau_{L_{\infty}/K}^{(j)}(\ind_{\mathcal{H}}^{\mathcal{G}}(\chi')) 
	= \tau_{L_{\infty}/K'}^{(j)}(\chi')$.
	We clearly have
	$\ind^{\Q_p}_K (j^{-1} \circ \ind_{\mathcal{H}}^{\mathcal{G}}(\chi'))
	= \ind^{\Q_p}_{K'} (j^{-1} \circ \chi')$ so that it suffices to show
	that 
	\begin{equation} \label{eqn:phi-equality}
		\phi_K^{-v_K\left(c_{\ind_{\mathcal{H}}^{\mathcal{G}}(\chi')}\right)} 
		= \phi_{K'}^{- v_{K'}\left(c_{\chi'}\right)}.
	\end{equation}
	For this we compute (see \cite[p.~23]{MR0447187} for the first equality)
	\begin{eqnarray*}
		\mathfrak{f}(\ind_{\mathcal{H}}^{\mathcal{G}}(\chi'))
		\mathfrak D_K^{\deg(\ind_{\mathcal{H}}^{\mathcal{G}}(\chi'))}
		& = & N_{K'/K}\left(\mathfrak f(\chi') 
		\mathfrak D_{K'/K}^{\deg(\chi')}\right)
		\cdot \mathfrak D_K^{\deg(\ind_{\mathcal{H}}^{\mathcal{G}}(\chi'))}\\
		& = & N_{K'/K}\left(\mathfrak f(\chi') 
		\mathfrak D_{K'/K}^{\deg(\chi')}
		\mathfrak D_K^{\deg(\chi')}\right)\\
		& = & N_{K'/K}\left(\mathfrak f(\chi') 
		\mathfrak D_{K'}^{\deg(\chi')}\right).
	\end{eqnarray*}
	As $v_K \circ N_{K'/K} = f_{K'/K} \cdot v_{K'}$ it follows that
	$v_K(c_{\ind_{\mathcal{H}}^{\mathcal{G}}(\chi')}) = 
	f_{K'/K} \cdot v_{K'}\left(c_{\chi'}\right)$.
	Since $\phi_K^{f_{K'/K}} = \phi_{K'}$ we get \eqref{eqn:phi-equality}.
\end{proof}

\section{The cohomological and the unramified term} \label{sec:coh-term}

\subsection{Galois cohomology} \label{subsec:Galois-coh}
 If $F$ is a field and $M$ is a topological
 $G_F$-module, we write $R\Gamma(F,M)$ for the complex of continuous 
 cochains of $G_F$ with coefficients in $M$ and $H^i(F,M)$ for its
 cohomology in degree $i$. Similarly, we write $H_i(F,M)$
 for the $i$-th homology group of $G_F$ with coefficients in $M$. 
 If $F$ is an algebraic extension
 of $\Q_p$ and $M$ is a discrete or compact $G_F$-module, then for $r \in \Z$
 we denote the $r$-th Tate twist of $M$ by $M(r)$. For any abelian group
 $A$ we write $\widehat A$ for its $p$-completion, that is
 $\widehat A = \varprojlim_n A / p^n A$.
 
 Now let $L/K$ be a finite Galois extension of $p$-adic fields with
 Galois group $G$. We recall that
 \[
	 C_L^{\bullet} := R\Gamma(L, \Z_p(1))[1] \in \mathcal{D}(\Z_p[G])
 \]
 is a perfect complex of $\Z_p[G]$-modules which is acyclic outside degrees
 $0$ and $1$ and that there are canonical isomorphisms of
 $\Z_p[G]$-modules
 \begin{equation} \label{eqn:coh-of-C_L}
	 H^0(C_L^{\bullet}) \simeq \widehat{L^{\times}}, \quad
	 H^1(C_L^{\bullet}) \simeq \Z_p.
 \end{equation}
 We note that local Tate duality induces an isomorphism
 \begin{equation} \label{eqn:Tate-duality}
	 C_L^{\bullet} \simeq R\Gamma(L, \Q_p / \Z_p)^{\vee}[-1]
 \end{equation}
 in $\mathcal{D}(\Z_p[G])$, where we write $(C^{\bullet})^{\vee}$ for $R\Hom(C^{\bullet}, \Q_p / \Z_p)$
 for any complex $C^{\bullet}$. 
 
 Now let $L_{\infty}$ be an arbitrary $\Z_p$-extension of $L$
 with Galois group $\Gamma_L$ and for each $n \in \N$ 
 let $L_n$ be its $n$-th layer. We assume that
 $L_{\infty} / K$ is again a Galois extension with Galois group
 $\mathcal{G} := \Gal(L_{\infty} / K)$.
 We let $X_{L_{\infty}}$ denote the Galois group
 over $L_{\infty}$ of the maximal abelian pro-$p$-extension
 of $L_{\infty}$. We put
 \[
 Y_{L_{\infty}} := \Delta(G_K)_{G_{L_{\infty}}}
 = \Z_p \widehat \otimes_{\Lambda(G_{L_{\infty}})} \Delta(G_K)
 \]
 and observe that $\pd_{\Lambda(\mathcal{G})}(Y_{L_{\infty}}) \leq 1$
 by \cite[Theorem 7.4.2]{MR2392026}. As $H_1(L_{\infty}, \Z_p)$
 canonically identifies with $X_{L_{\infty}}$, taking 
 $G_{L_{\infty}}$-coinvariants of the obvious short exact sequence
 \begin{equation} \label{eqn:aug-ses}
 0 \longrightarrow \Delta(G_K) \longrightarrow \Lambda(G_K) \longrightarrow
 \Z_p \longrightarrow 0
 \end{equation}
 yields an exact sequence
 \begin{equation} \label{eqn:4term-sequence}
	0 \longrightarrow X_{L_{\infty}} \longrightarrow Y_{L_{\infty}}
	\longrightarrow \Lambda(\mathcal{G}) \longrightarrow \Z_p \longrightarrow 0
 \end{equation}
 of $\Lambda(\mathcal{G})$-modules (this should be compared to the 
 sequence constructed by Ritter and Weiss \cite[\S 1]{MR1935024}).
 The middle arrow thus defines a perfect complex
 of $\Lambda(\mathcal{G})$-modules
 \[
	 C_{L_{\infty}}^{\bullet}: \dots \longrightarrow 0 \longrightarrow
	 Y_{L_{\infty}} \longrightarrow \Lambda(\mathcal{G})
	 \longrightarrow 0 \longrightarrow \dots,
 \]
 where we place $Y_{L_{\infty}}$ in degree $0$. This complex
 obviously is acyclic outside degrees $0$ and $1$ and we have
 isomorphisms
 \begin{equation} \label{eqn:coh-at-infty}
	 H^0(C_{L_{\infty}}^{\bullet}) \simeq X_{L_{\infty}}, \quad
	 H^1(C_{L_{\infty}}^{\bullet}) \simeq \Z_p.
 \end{equation}
 The following is a variant of \cite[Corollary 4.16]{MR1894938}
 and \cite[Theorem 2.4]{MR3072281}.
 
 \begin{prop} \label{prop:complex-infinity}
 	With the above notation, we have isomorphisms
 	\begin{eqnarray*}	 	
	 	C_{L_{\infty}}^{\bullet} & \simeq &
	 	R\Gamma(L_{\infty}, \Q_p / \Z_p)^{\vee}[-1] \\
	 	&\simeq & \varprojlim_n C_{L_{n}}^{\bullet} 
 	\end{eqnarray*}
 	in $\mathcal{D}(\Lambda(\mathcal{G}))$.
 \end{prop}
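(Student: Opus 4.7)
The strategy is to treat both sides of the proposed isomorphisms as shifted Pontryagin duals and exploit the $\Lambda(\mathcal{G})$-equivariant Pontryagin duality pairing between cohomology with $\Q_p/\Z_p$-coefficients and homology with $\Z_p$-coefficients. For the first isomorphism I would begin by producing a $\Lambda(\mathcal{G})$-equivariant quasi-isomorphism
\[
R\Gamma(L_{\infty}, \Q_p/\Z_p)^{\vee} \simeq \Lambda(\mathcal{G}) \otimes^{\mathbb{L}}_{\Lambda(G_K)} \Z_p,
\]
where the right-hand side inherits its $\Lambda(\mathcal{G})$-structure from the identification $\Lambda(\mathcal{G}) \simeq \Z_p \widehat{\otimes}_{\Lambda(G_{L_{\infty}})} \Lambda(G_K)$, so that it also computes $\Z_p \otimes^{\mathbb{L}}_{\Lambda(G_{L_{\infty}})} \Z_p$. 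I would then compute the right-hand side by deriving the augmentation sequence \eqref{eqn:aug-ses}, which represents $\Z_p$ in $\mathcal{D}(\Lambda(G_K))$ as the two-term complex $[\Delta(G_K) \to \Lambda(G_K)]$ concentrated in degrees $-1$ and $0$. Applying $\Lambda(\mathcal{G}) \otimes^{\mathbb{L}}_{\Lambda(G_K)}$ yields the two-term complex $[\Lambda(\mathcal{G}) \otimes^{\mathbb{L}}_{\Lambda(G_K)} \Delta(G_K) \to \Lambda(\mathcal{G})]$ in the same degrees; shifting by $[-1]$ produces something of the right shape to equal $C_{L_{\infty}}^{\bullet}$, provided that the derived tensor factor involving $\Delta(G_K)$ collapses to the ordinary tensor $Y_{L_{\infty}}$.

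The main technical step, and the chief obstacle, is this Tor vanishing: one must show $\mathrm{Tor}_i^{\Lambda(G_K)}(\Lambda(\mathcal{G}), \Delta(G_K)) = 0$ for all $i \geq 1$. Using the identification $\Lambda(\mathcal{G}) \simeq \Z_p \widehat{\otimes}_{\Lambda(G_{L_{\infty}})} \Lambda(G_K)$ these Tor groups become the continuous group homology $H_i(G_{L_{\infty}}, \Delta(G_K))$, and the augmentation sequence combined with the $\Lambda(G_{L_{\infty}})$-freeness of $\Lambda(G_K)$ identifies them with $H_{i+1}(G_{L_{\infty}}, \Z_p)$ for $i \geq 1$. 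By the standard Pontryagin duality between continuous $\Z_p$-homology and continuous $\Q_p/\Z_p$-cohomology, these are dual to $H^{i+1}(L_{\infty}, \Q_p/\Z_p)$. The latter all vanish: for $i+1 \geq 3$ by $\mathrm{cd}_p(G_{L_{\infty}}) \leq \mathrm{cd}_p(G_L) = 2$, and for $i+1 = 2$ because $H^2(L_{\infty}, \Q_p/\Z_p) = \varinjlim_n H^2(L_n, \Q_p/\Z_p) = 0$, where at each finite level local Tate duality gives $H^2(L_n, \Q_p/\Z_p) \simeq H^0(L_n, \Z_p(1))^{\vee} = 0$ since each $L_n$ contains only finitely many $p$-power roots of unity. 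This secures the first isomorphism.

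For the second isomorphism I would apply local Tate duality \eqref{eqn:Tate-duality} at each finite level to rewrite $C_{L_n}^{\bullet} \simeq R\Gamma(L_n, \Q_p/\Z_p)^{\vee}[-1]$, and then combine the fact that Pontryagin duality exchanges $\varprojlim$ and $\varinjlim$ with the standard identification $\varinjlim_n R\Gamma(L_n, \Q_p/\Z_p) \simeq R\Gamma(L_{\infty}, \Q_p/\Z_p)$ for continuous cohomology with a fixed discrete coefficient module along the tower $G_{L_{\infty}} = \bigcap_n G_{L_n}$. Each cohomology group of $R\Gamma(L_n, \Q_p/\Z_p)^{\vee}$ is a finitely generated $\Z_p$-module, and the transition maps in the inverse system (Pontryagin duals of restrictions) satisfy Mittag--Leffler, so that ordinary and derived $\varprojlim$ coincide and commute with Pontryagin dualisation. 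Concatenated with the first isomorphism this delivers the full chain of quasi-isomorphisms in $\mathcal{D}(\Lambda(\mathcal{G}))$, with compatibility of the $\Lambda(\mathcal{G})$-structures traced back to the underlying $\mathcal{G}$-equivariant setup on $R\Gamma(L_{\infty}, \Q_p/\Z_p)$.
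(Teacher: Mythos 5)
Your argument is correct and follows essentially the same route as the paper: both identify $R\Gamma(L_{\infty},\Q_p/\Z_p)^{\vee}$ with $\Z_p \widehat{\otimes}^{\mathbb L}_{\Lambda(G_{L_{\infty}})}\Z_p$ via Pontryagin duality, compute the latter from the augmentation sequence \eqref{eqn:aug-ses} after checking that the higher homology groups $H_i(L_{\infty},\Delta(G_K))$ vanish, and obtain the second isomorphism by dualising $\varinjlim_n R\Gamma(L_n,\Q_p/\Z_p)$ using finite-level Tate duality. The only (harmless) difference is that you establish the required acyclicity directly from $\mathrm{cd}_p(G_{L_{\infty}})\leq 2$ together with $H^2(L_n,\Q_p/\Z_p)=0$, whereas the paper deduces it from the already-proved second isomorphism.
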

 
 \begin{proof}
 	Tate duality \eqref{eqn:Tate-duality} 
 	and \cite[Chapter III, Lemma 1.16]{MR559531}
 	imply that we have isomorphisms
 	\begin{eqnarray*}
 		\varprojlim_n C_{L_{n}}^{\bullet} 
 		& \simeq & \varprojlim_n R\Gamma(L_n, \Q_p / \Z_p)^{\vee}[-1] \\
 		& \simeq & (\varinjlim_n R\Gamma(L_n, \Q_p / \Z_p))^{\vee}[-1] \\
 		& \simeq & R\Gamma(L_{\infty}, \Q_p / \Z_p)^{\vee}[-1]
 	\end{eqnarray*}
 	in $\mathcal{D}(\Lambda(\mathcal{G}))$. This gives the second isomorphism
 	of the theorem. In particular, the complex
 	$R\Gamma(L_{\infty}, \Q_p / \Z_p)^{\vee}$ is acyclic outside degrees
 	$-1$ and $0$ (see also \cite[Theorem 7.1.8(i)]{MR2392026}).
 	
 	For any compact right $\Lambda(G_{L_{\infty}})$-module $M$ and
 	any discrete left $\Lambda(G_{L_{\infty}})$-module $N$ (considered
 	as complexes in degree $0$) there is an isomorphism
 	\[
	 	M \widehat \otimes^{\mathbb L}_{\Lambda(G_{L_{\infty}})}
	 	N^{\vee} \simeq
	 	R\Hom_{\Lambda(G_{L_{\infty}})}(M,N)^{\vee}
 	\]
 	in $\mathcal{D}(\Lambda(\mathcal{G}))$ by
 	\cite[Corollary 5.2.9]{MR2392026}. We note that
 	$R\Gamma(L_{\infty}, N) \simeq R\Hom_{\Lambda(G_{L_{\infty}})}(\Z_p,N)$
 	and so specializing $M = \Z_p$ and $N = \Q_p/\Z_p$ yields an isomorphism
 	\begin{equation} \label{eqn:adjointness}
 	\Z_p \widehat \otimes^{\mathbb L}_{\Lambda(G_{L_{\infty}})}
 	\Z_p \simeq
 	R\Gamma(L_{\infty},\Q_p / \Z_p)^{\vee}
 	\end{equation}
 	in $\mathcal{D}(\Lambda(\mathcal{G}))$. Therefore 
 	$\Z_p \widehat \otimes^{\mathbb L}_{\Lambda(G_{L_{\infty}})} \Z_p$
 	is also acyclic outside degrees $-1$ and $0$. We now apply
 	the functor $\Z_p \widehat \otimes_{\Lambda(G_{L_{\infty}})} -$
 	to sequence \eqref{eqn:aug-ses} and obtain a long exact sequence
 	in homology which coincides with \eqref{eqn:4term-sequence}.
 	In particular, we derive from this that
 	\[
	 	H_i(L_{\infty}, \Delta(G_K)) = H_i(L_{\infty}, \Lambda(G_K)) 
	 	= 0 \quad \mbox{for all } i \geq 1.
 	\]
 	Hence the exact triangle
 	\[
	 	\Z_p \widehat \otimes^{\mathbb L}_{\Lambda(G_{L_{\infty}})} \Delta(G_K)
	 	\longrightarrow
	 	\Z_p \widehat \otimes^{\mathbb L}_{\Lambda(G_{L_{\infty}})} \Lambda(G_K)
	 	\longrightarrow
	 	\Z_p \widehat \otimes^{\mathbb L}_{\Lambda(G_{L_{\infty}})} \Z_p
 	\]
 	implies that the complex $C_{L_{\infty}}^{\bullet}[1]$ is isomorphic
 	to $\Z_p \widehat \otimes^{\mathbb L}_{\Lambda(G_{L_{\infty}})} \Z_p$ in
 	$\mathcal{D}(\Lambda(\mathcal{G}))$. The result follows from this
 	and \eqref{eqn:adjointness}.
 \end{proof}
 
 We now specialize to the case, where $L_{\infty}$ is the unramified
 $\Z_p$-extension of $L$.
 We put 
 $U^1(L_{\infty}) := \varprojlim_n U^1_{L_n}$ where the 
 transition maps are given by the norm maps.
 
 \begin{corollary} \label{cor:coh-at-infty-I}
 	The complex $C_{L_{\infty}}^{\bullet}$ is a perfect complex of
 	$\Lambda(\mathcal{G})$-modules which is acyclic outside degrees $0$
 	and $1$. If $L_{\infty}$ is the unramified $\Z_p$-extension of $L$,
 	then we have canonical isomorphisms of $\Lambda(\mathcal{G})$-modules
 	\[
	 	H^0(C_{L_{\infty}}^{\bullet}) \simeq U^1(L_{\infty}) 
		 	\simeq X_{L_\infty}, \quad
		H^1(C_{L_{\infty}}^{\bullet}) \simeq \Z_p.
 	\]
 	In particular, $H^i(C_{L_{\infty}}^{\bullet})$ has no non-trivial
 	finite submodule for each $i \in \Z$.
 \end{corollary}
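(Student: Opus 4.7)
The first sentence of the corollary is essentially built into the construction: $C_{L_\infty}^\bullet$ is by definition a two-term complex concentrated in degrees $0$ and $1$, and since $\Lambda(\mathcal{G})$ is projective and $\mathrm{pd}_{\Lambda(\mathcal{G})}(Y_{L_\infty}) \leq 1$, replacing $Y_{L_\infty}$ by a length-one projective resolution exhibits $C_{L_\infty}^\bullet$ as a perfect complex. The exact sequence \eqref{eqn:4term-sequence} then records the identifications $H^0(C_{L_\infty}^\bullet) \simeq X_{L_\infty}$ and $H^1(C_{L_\infty}^\bullet) \simeq \Z_p$ as $\Lambda(\mathcal{G})$-modules.

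The main task, in the unramified case, is the identification $X_{L_\infty} \simeq U^1(L_\infty)$. My plan is to argue at each finite level via local class field theory and then pass to the inverse limit. Write $X_n := \Gal(L_n^{\mathrm{ab},p}/L_n)$, so $X_{L_\infty} = \varprojlim_n X_n$. Local class field theory supplies a $\Gal(L_n/K)$-equivariant isomorphism $X_n \simeq \widehat{L_n^\times}$, where $\widehat{\phantom{L}}$ denotes $p$-completion, compatible with norm maps as $n$ varies. Choose a uniformizer $\pi$ of $L$, which by unramifiedness is also a uniformizer of each $L_n$, and decompose $L_n^\times = \pi^{\Z} \times k_n^\times \times U^1_{L_n}$ where $k_n$ is the residue field. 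Since $|k_n|$ is a power of $p$, the factor $k_n^\times$ has order prime to $p$ and vanishes after $p$-completion, yielding $\widehat{L_n^\times} \simeq \Z_p \oplus U^1_{L_n}$. The norm $N_{L_{n+1}/L_n}$ sends $\pi$ into $\mathcal{O}_{L_n}^\times \cdot \pi^p$, so on the $\Z_p$-summand the transition maps are multiplication by $p$ and have vanishing inverse limit, while on the principal-unit summand they are the norms, giving $U^1(L_\infty)$ by definition. This produces the required $\Lambda(\mathcal{G})$-isomorphism $X_{L_\infty} \simeq U^1(L_\infty)$.

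For the final claim, since $C_{L_\infty}^\bullet$ is acyclic outside degrees $0$ and $1$, only those two cohomology groups need attention. The group $H^1(C_{L_\infty}^\bullet) \simeq \Z_p$ has no non-trivial finite subgroup at all. For $H^0(C_{L_\infty}^\bullet) \simeq U^1(L_\infty)$ it suffices to show that this module is $\Z_p$-torsion-free. The torsion in $U^1_{L_n}$ equals $\mu_{p^\infty}(L_n)$, and since any nontrivial extension obtained by adjoining $p$-power roots of unity is totally ramified, one has $\mu_{p^\infty}(L_n) = \mu_{p^\infty}(L)$ for all $n$, a finite group of some bounded order $p^k$. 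The restriction of $N_{L_{n+1}/L_n}$ to these roots of unity is the $p$-th power map (raising to the degree of the extension), so $\varprojlim_n \mu_{p^\infty}(L_n) = 0$ and $U^1(L_\infty)$ is torsion-free. The chief technical point is verifying $\mathcal{G}$-equivariance across the inverse limit in the middle step, but this follows routinely from the functoriality of reciprocity maps.
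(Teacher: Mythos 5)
Your proof is correct and follows essentially the same route as the paper: identify $X_{L_\infty}\simeq\varprojlim_n\widehat{L_n^\times}$ by local class field theory, observe that the valuation part dies in the inverse limit because the transition maps are multiplication by $p$, and kill the $p$-power roots of unity in the limit to get the absence of finite submodules. The only differences are cosmetic — the paper uses the valuation exact sequence rather than a chosen splitting of $L_n^\times$ (which sidesteps the fact that the norm maps do not respect your direct sum decomposition exactly, though the limit computation is unaffected) and cites Neukirch--Schmidt--Wingberg, Theorem 11.2.4, for the two facts you prove by hand.
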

 \begin{proof}
 After taking $p$-completions, the 
 valuation map $L_n^{\times} \twoheadrightarrow \Z$ induces
 an exact sequence of $\Z_p[\Gal(L_n/K)]$-modules
 \[
	0 \longrightarrow U^1_{L_n} \longrightarrow \widehat{L_n^{\times}}
	\longrightarrow \Z_p \longrightarrow 0. 
 \]
 Taking inverse limits, where the transition maps on the left and
 in the middle are the norm maps and on the right are multiplication
 by $p$, induces an isomorphism
 $U^1(L_{\infty}) \simeq \varprojlim_n \widehat{L_n^{\times}}$
 (see also \cite[Theorem 11.2.4(iii)]{MR2392026}).
 Moreover, we have $\varprojlim_n \widehat{L_n^{\times}} \simeq
 X_{L_{\infty}}$ by local class field theory.
 We also note that $U^1(L_{\infty})$ has no non-trivial finite submodule
 by \cite[Theorem 11.2.4(ii)]{MR2392026}.
 Now Proposition \ref{prop:complex-infinity} and
 \eqref{eqn:coh-at-infty} imply the result.
 \end{proof}
 
 \subsection{Modified Galois cohomology}
 We write $\Sigma(L)$ for the set of all embeddings $L \rightarrow \Q_p^c$
 fixing $\Q_p$ and define
 \[
	 H_L := \bigoplus_{\sigma \in \Sigma(L)} \Z_p.
 \]
 If $L/K$ is an extension of $p$-adic fields, then the restriction map
 $\Sigma(L) \rightarrow \Sigma(K)$, $\sigma \mapsto \sigma |_K$
 induces an epimorphism $H_L \twoheadrightarrow H_K$.
 If $L/K$ is a Galois extension with Galois group $G$, then $H_L$ is a free
 $\Z_p[G]$-module of rank $[K:\Q_p]$. More precisely, if we choose a lift
 $\hat \tau \in \Sigma(L)$ for every $\tau \in \Sigma(K)$, then the set 
 $\left\{\hat \tau \mid \tau \in \Sigma(K)\right\}$ constitutes a
 $\Z_p[G]$-basis of $H_L$. Following Breuning \cite[\S 2.4]{MR2078894}
 we define a perfect complex
 \[
	 K_L^{\bullet} := R\Gamma(L, \Z_p(1))[1] \oplus H_L[-1] 
	 = C_L^{\bullet} \oplus H_L[-1]
 \]
 in $\mathcal{D}(\Z_p[G])$. 
 Let $L_{\infty}$ be an arbitrary $\Z_p$-extension of $L$ such that 
 $L_{\infty} / K$ is a Galois extension with Galois group $\mathcal{G}$.
 We put
 $H_{L_{\infty}} := \varprojlim_n H_{L_n}$ which is a free
 $\Lambda(\mathcal{G})$-module of rank $[K:\Q_p]$.
 We define a complex of $\Lambda(\mathcal{G})$-modules
 \begin{eqnarray*}
 K_{L_{\infty}}^{\bullet} & := & R\Gamma(L_{\infty}, \Q_p / \Z_p)^{\vee}[-1]
  \oplus H_{L_{\infty}}[-1] \\
 & \simeq & C_{L_{\infty}}^{\bullet} \oplus H_{L_{\infty}}[-1],
 \end{eqnarray*}
 where the isomorphism in $\mathcal{D}(\Lambda(\mathcal{G}))$ has been
 established in Proposition \ref{prop:complex-infinity}.
 The following is immediate from Corollary \ref{cor:coh-at-infty-I}.
 
 \begin{corollary} \label{cor:coh-at-infty-II}
 	The complex $K_{L_{\infty}}^{\bullet}$ is a perfect complex of
 	$\Lambda(\mathcal{G})$-modules which is acyclic outside degrees $0$
 	and $1$. If $L_{\infty}$ is the unramified $\Z_p$-extension of $L$,
 	then we have canonical isomorphisms of $\Lambda(\mathcal{G})$-modules
 	\[
 	H^0(K_{L_{\infty}}^{\bullet}) \simeq U^1(L_{\infty}) 
 	\simeq X_{L_\infty}, \quad
 	H^1(K_{L_{\infty}}^{\bullet}) \simeq \Z_p \oplus H_{L_{\infty}}.
 	\]
 	In particular, $H^i(K_{L_{\infty}}^{\bullet})$ has no non-trivial
 	finite submodule for each $i \in \Z$.
 \end{corollary}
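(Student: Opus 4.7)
The plan is to exploit the direct sum decomposition
\[
	K_{L_{\infty}}^{\bullet} \simeq C_{L_{\infty}}^{\bullet} \oplus H_{L_{\infty}}[-1]
\]
in $\mathcal{D}(\Lambda(\mathcal{G}))$ established via Proposition \ref{prop:complex-infinity}, and to reduce every assertion to the corresponding property of the two summands. There is essentially no new input beyond Corollary \ref{cor:coh-at-infty-I}; everything is a formal consequence of this decomposition together with the fact that $H_{L_{\infty}} = \varprojlim_n H_{L_n}$ is a free $\Lambda(\mathcal{G})$-module of rank $[K:\Q_p]$.

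For perfectness, I would note that $C_{L_{\infty}}^{\bullet}$ is perfect by Corollary \ref{cor:coh-at-infty-I}, while $H_{L_{\infty}}[-1]$ is represented by a free $\Lambda(\mathcal{G})$-module placed in a single degree and is therefore perfect as well; the direct sum of two perfect complexes is perfect. Acyclicity outside degrees $0$ and $1$ follows similarly: $C_{L_{\infty}}^{\bullet}$ is acyclic outside these degrees by Corollary \ref{cor:coh-at-infty-I} and $H_{L_{\infty}}[-1]$ is concentrated in degree $1$.

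The cohomology identifications are then immediate since cohomology commutes with finite direct sums. In degree $0$ the summand $H_{L_{\infty}}[-1]$ contributes nothing, so
\[
	H^0(K_{L_{\infty}}^{\bullet}) \simeq H^0(C_{L_{\infty}}^{\bullet}),
\]
and in the unramified case Corollary \ref{cor:coh-at-infty-I} identifies the right hand side with $U^1(L_{\infty}) \simeq X_{L_{\infty}}$. In degree $1$ we get
\[
	H^1(K_{L_{\infty}}^{\bullet}) \simeq H^1(C_{L_{\infty}}^{\bullet}) \oplus H_{L_{\infty}} \simeq \Z_p \oplus H_{L_{\infty}}.
\]

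Finally, the absence of non-trivial finite submodules is straightforward and does not represent a real obstacle: in degree $0$ the cohomology agrees with that of $C_{L_{\infty}}^{\bullet}$, which has no non-trivial finite submodule by Corollary \ref{cor:coh-at-infty-I}; in degree $1$ the module $\Z_p \oplus H_{L_{\infty}}$ is $\Z_p$-torsion free (both summands are), hence contains no non-trivial finite submodule; in all other degrees the cohomology vanishes. Thus the corollary follows entirely by bookkeeping from the already established Corollary \ref{cor:coh-at-infty-I}, and the only genuine content of the argument lives in that earlier result.
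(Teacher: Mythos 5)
Your proposal is correct and follows exactly the route the paper intends: the paper simply declares the corollary ``immediate from Corollary \ref{cor:coh-at-infty-I}'', and your argument is precisely the bookkeeping that makes this explicit, using the decomposition $K_{L_{\infty}}^{\bullet} \simeq C_{L_{\infty}}^{\bullet} \oplus H_{L_{\infty}}[-1]$ and the freeness of $H_{L_{\infty}}$. Nothing is missing.
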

 
 \subsection{Normal bases in unramified $\Z_p$-extensions} \label{sec:normal-bases}
 If $L_{\infty}/L$ is the unramified $\Z_p$-extension, 
 the ring of integers $\mathcal{O}_{L_n}$
 is free (of rank $1$) as a module over the group ring 
 $\mathcal{O}_{L}[\Gal(L_n/L)]$ for each $n$. Thus we may choose a 
 normal integral basis generator
 $b_n \in \mathcal{O}_{L_n}$, that is
 $\mathcal{O}_{L_n} = \mathcal{O}_{L}[\Gal(L_n/L)] \cdot b_n$. 
 In fact, more is true.
 
 \begin{lemma} \label{lem:limits-IBG}
 	There exists $b \in \varprojlim_n \mathcal{O}_{\Q_{p,n}}$
 	such that for every $p$-adic field $F$ we have
 	$\mathcal{O}_{F_n} = \mathcal{O}_F[\Gal(F_n/F)] \cdot b_n$ 
 	if we write $b = (b_n)_n \in \varprojlim_n \mathcal{O}_{F_{n}}$.
 \end{lemma}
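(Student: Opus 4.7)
The strategy is to build a trace-compatible tower of normal integral basis generators in $\{\mathcal{O}_{\Q_{p,n}}\}_n$, and then propagate the normal integral basis property to every base field $F$ via a residue field reduction.

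The key preliminary is the following criterion: for any finite unramified Galois extension $L/F$ of $p$-adic fields with $\Gal(L/F)$ a $p$-group, an element $\beta \in \mathcal{O}_L$ generates $\mathcal{O}_L$ over $\mathcal{O}_F[\Gal(L/F)]$ if and only if $\Tr_{k_L/k_F}(\bar\beta) \neq 0$, where $\bar\beta$ denotes the residue of $\beta$ in $k_L$. This is a Nakayama-type statement: $\mathcal{O}_L$ is free of rank one over $\mathcal{O}_F[\Gal(L/F)]$ by Noether's theorem, and the ring $k_F[\Gal(L/F)]$ is local because $\Gal(L/F)$ is a $p$-group and $k_F$ has characteristic $p$. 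Additive Hilbert 90 identifies the quotient of $k_L$ by the augmentation ideal times $k_L$ with $k_F$, via the trace map.

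Given this criterion, I would construct $(b_n)_n$ inductively. Set $b_0 := 1 \in \Z_p^\times$. Assuming $b_n \in \mathcal{O}_{\Q_{p,n}}$ is an NIB generator for $\Q_{p,n}/\Q_p$, pick any NIB generator $\alpha_0$ for $\Q_{p,n+1}/\Q_p$ (which exists by Noether's theorem); a direct calculation using that the abelian group $G_{n+1} := \Gal(\Q_{p,n+1}/\Q_p)$ contains the normal subgroup $H := \Gal(\Q_{p,n+1}/\Q_{p,n})$ gives
\[
\Tr_{\Q_{p,n+1}/\Q_{p,n}}(x\cdot \alpha_0) \;=\; \pi(x)\cdot\Tr(\alpha_0) \qquad \text{for all } x \in \Z_p[G_{n+1}],
\]
where $\pi\colon \Z_p[G_{n+1}] \twoheadrightarrow \Z_p[G_n]$ is the natural projection and $G_n := \Gal(\Q_{p,n}/\Q_p)$. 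In particular $\Tr(\alpha_0)$ is itself an NIB for $\Q_{p,n}/\Q_p$, so there is a unique $u \in \Z_p[G_n]^\times$ with $b_n = u\cdot \Tr(\alpha_0)$. Lifting $u$ to some $\tilde u \in \Z_p[G_{n+1}]^\times$ and setting $b_{n+1} := \tilde u \cdot \alpha_0$ yields the desired NIB with $\Tr(b_{n+1}) = b_n$. Such a lift exists because $\ker(\pi)$ is generated by the elements $1-h$ for $h \in H$, which are nilpotent modulo $p$ (as $H$ is a $p$-group); hence $\ker(\pi)$ lies in the Jacobson radical of $\Z_p[G_{n+1}]$ and units lift by \cite[Exercise 5.2]{MR632548}.

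For the universality claim, given a $p$-adic field $F$ write $F \cap \Q_{p,\infty} = \Q_{p,k}$; then $F_n = F \cdot \Q_{p,n+k}$ with $F$ and $\Q_{p,n+k}$ linearly disjoint over $\Q_{p,k}$, so $\mathcal{O}_{F_n} = \mathcal{O}_F \otimes_{\mathcal{O}_{\Q_{p,k}}} \mathcal{O}_{\Q_{p,n+k}}$ and restriction identifies $\Gal(F_n/F)$ with $\Gal(\Q_{p,n+k}/\Q_{p,k})$. The NIB claim for $F_n/F$ thus reduces to the corresponding claim for $\Q_{p,n+k}/\Q_{p,k}$, which by the residue field criterion amounts to $\Tr_{k_{\Q_{p,n+k}}/k_{\Q_{p,k}}}(\bar b_{n+k}) \neq 0$. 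This follows from the tower formula $\Tr_{k_{\Q_{p,n+k}}/\F_p} = \Tr_{k_{\Q_{p,k}}/\F_p}\circ\Tr_{k_{\Q_{p,n+k}}/k_{\Q_{p,k}}}$ together with $\Tr_{k_{\Q_{p,n+k}}/\F_p}(\bar b_{n+k}) \neq 0$, which holds because $b_{n+k}$ is an NIB generator for $\Q_{p,n+k}/\Q_p$. The trace compatibility in the $F$-tower is inherited from that in the $\Q_p$-tower via the same restriction identification.

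The principal obstacle is the lifting-of-units step $\Z_p[G_{n+1}]^\times \twoheadrightarrow \Z_p[G_n]^\times$ underlying the inductive construction; this is the only non-formal algebraic input and relies crucially on the relative Galois groups being $p$-groups, so the kernel of the projection is nilpotent modulo $p$. Everything else is a formal consequence of standard residue field reduction and the tower formula for traces.
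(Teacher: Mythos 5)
Your proof is correct and follows the same overall architecture as the paper's: reduce the normal integral basis property to the nonvanishing of a trace in the residue field, build a trace-compatible system over $\Q_p$ starting from $b_0=1$, and pass to a general base field $F$ by the shift $b_n=\beta_{n+m}$ together with the identification $\Gal(F_n/F)\simeq\Gal(\Q_{p,n+m}/\Q_{p,m})$. The one genuine difference is the construction of $b$ itself: the paper simply observes that, the transition maps in $\varprojlim_n\mathcal{O}_{\Q_{p,n}}$ being the (surjective, since the tower is unramified) trace maps, one may take \emph{any} preimage of $1\in\Z_p$, and the residue criterion then holds automatically at every layer because $\Tr_{\overline{\Q_{p,n}}/\overline{\Q_{p,m}}}(\overline{\beta_n})=\overline{\beta_m}$ and $\Tr_{\overline{\Q_{p,m}}/\mathbb F_p}(\overline{\beta_m})=\overline{1}\neq 0$. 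Your inductive construction --- Noether's theorem at each level, the identity $\Tr(x\alpha_0)=\pi(x)\Tr(\alpha_0)$, and lifting the unit $u$ through the radical of $\Z_p[G_{n+1}]$ --- is valid (the kernel of $\Z_p[G_{n+1}]\to\Z_p[G_n]$ does lie in the radical since $H$ is a $p$-group), but it is machinery you do not need: surjectivity of the traces already hands you the compatible system, and the NIB property of each $b_n$ over $\Q_p$ then follows from the criterion rather than having to be built in by hand. Your restriction of the residue criterion to $p$-group Galois groups is harmless here, as all relevant layers of the unramified $\Z_p$-tower have cyclic $p$-power Galois groups.
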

 
 \begin{proof}
 	Let $\overline{F_n}$ and $\overline{F}$ denote the residue fields of
 	$F_n$ and $F$, respectively. Let $b_n \in \mathcal{O}_{F_n}$ and 
 	write $\overline{b_n}$ for its image in $\overline{F_n}$.
 	Then $\mathcal{O}_{F_n} = \mathcal{O}_F[\Gal(F_n/F)] \cdot b_n$
 	if and only if $\Tr_{\overline{F_n} / \overline{F}}(\overline{b_n}) \not=0$
 	by \cite[Propositions 2.2 and 5.1]{MR3411126} 
 	(see also \cite[Remark 2.3]{MR3411126}).
 	Since $F_{n+1}/F_n$ is unramified, the trace maps 
 	$\Tr_{F_{n+1}/F_n}: \mathcal{O}_{F_{n+1}} \rightarrow \mathcal{O}_{F_n}$
 	are surjective. Therefore the canonical map
 	$\varprojlim_n \mathcal{O}_{F_n} \rightarrow \mathcal{O}_F$ is also 
 	surjective. Let $b = (\beta_n)_n \in \varprojlim_n \mathcal{O}_{\Q_{p,n}}$
 	be a pre-image of $1 \in \Z_p$ (i.e.\ such that $\beta_0=1$).
 	Then $\Tr_{\overline{\Q_{p,n}} / \overline{\Q_p}}(\overline{\beta_n}) = \overline{\beta_0}
 	=\overline{1}$ is non-zero for all $n \geq 0$.
 	Now let $m \geq 0$ be such that  $F \cap \Q_{p, \infty} = \Q_{p,m}$.
 	Then we have $b = (b_n)_n \in \varprojlim_n \mathcal{O}_{F_{n}}$, where
 	$b_n = \beta_{n+m}$. It follows that
 	\[
 	\Tr_{\overline{F_n} / \overline{F}}(\overline{b_n}) = 
 	\Tr_{\overline{\Q_{p,n+m}} / \overline{\Q_{p,m}}}(\overline{\beta_{n+m}}) 
 	= \overline{\beta_m} \not= 0
 	\]
 	as desired.
 \end{proof}
 
 As before let $L/K$ be a finite Galois extension of $p$-adic fields
 with Galois group $G$ and put $G_n := \Gal(L_n/K)$, $n \geq 0$.
 Then we have $ \varprojlim_n G_n \simeq \mathcal{G}
 \simeq H \rtimes \Gamma$ and we let $L'$ be the fixed field under $\Gamma$.
 Then $L'_{\infty}$ identifies with $L_{\infty}$ and we may suppose
 that $L = L'_{n_0}$ for some integer $n_0 \geq 0$. 
 Note that now $G_n$ may be written as $G_n \simeq H \rtimes \Gal(L_n/L')$,
 where $\Gal(L_n/L')$ is a cyclic group of order $p^{n+n_0}$.
 By Lemma \ref{lem:limits-IBG} we have
 $\varprojlim_n \mathcal{O}_{L_n} \simeq \varprojlim_n \mathcal{O}_{L'_n}
 \simeq \Lambda^{\mathcal{O}_{L'}}(\Gamma)$
 as $\Lambda(\Gamma)$-modules. In particular,
 $\varprojlim_n \mathcal{O}_{L_n}$ is a free
 $\Lambda(\Gamma)$-module of rank $[L':\Q_p]$.
 
 \begin{prop} \label{prop:normal-bases-tower}
 	There is an $a = (a_n)_n \in \varprojlim_n \mathcal{O}_{L_n}$
 	with the following properties:
 	\begin{enumerate}
 		\item 
 		each $a_n$ generates a normal basis for $L_n/K$;
 		\item
 		the $\Lambda^{\mathcal{O}_K}(\mathcal{G})$-linear map
 		\begin{eqnarray*}
 			\Lambda^{\mathcal{O}_K}(\mathcal{G}) & \longrightarrow & 
 			\varprojlim_n \mathcal{O}_{L_n}\\
 			1 & \mapsto & a
 		\end{eqnarray*}
 		is injective and its cokernel is a finitely generated
 		$\Lambda(\Gamma)$-torsion module whose $\lambda$-invariant vanishes.
 	\end{enumerate}
 \end{prop}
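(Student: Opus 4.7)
The plan is to construct $a$ explicitly as the product of a normal basis generator for $L/K$ with the universal integral generator $b$ supplied by Lemma~\ref{lem:limits-IBG}. First, choose $c \in L$ generating a normal basis for the Galois extension $L/K$ (by the classical normal basis theorem); after scaling by a non-zero element of $K$, I may and will assume $c \in \mathcal{O}_L$. Write $(b_n)_n$ for the image of $b$ in $\varprojlim_n \mathcal{O}_{L_n}$ and set $a_n := c \cdot b_n \in \mathcal{O}_{L_n}$. Since $c \in L$ is fixed by $\Gal(L_{n+1}/L_n)$ and $(b_n)_n$ is trace-compatible by the very construction in the proof of Lemma~\ref{lem:limits-IBG}, so is $(a_n)_n$; hence $a = (a_n)_n$ lies in $\varprojlim_n \mathcal{O}_{L_n}$.

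To verify (i), let $K_m := L \cap K_\infty$ be the maximal unramified subextension of $L/K$. Because $L_\infty/L$ is the unramified $\Z_p$-extension, one has $L_\infty = L \cdot K_\infty$ and, for every $n \ge 0$, $L_n = L \cdot K_{m+n}$ with $L \cap K_{m+n} = K_m$. The \'etale base change $K_{m+n}/K_m$ then yields $L_n = L \otimes_{K_m} K_{m+n}$ and identifies $G_n = \Gal(L_n/K)$ with the fibre product $G \times_{\Gal(K_m/K)} \Gal(K_{m+n}/K)$. For a compatible pair $\sigma = (g,\gamma) \in G_n$, one computes $\sigma(a_n) = g(c) \cdot \gamma(b_n)$. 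Combining the normal basis property of $c$ for $L/K$ with the normal basis identity $\mathcal{O}_{K_{m+n}} = \mathcal{O}_K[\Gal(K_{m+n}/K)] \cdot b_n$ (Lemma~\ref{lem:limits-IBG} applied to $F = K$, noting that $b_n$ lies in the unramified subfield $K_{m+n} \subseteq L_n$), a direct linear-algebra computation on the tensor-product decomposition shows that these $[L_n:K]$ elements are $K$-linearly independent in $L_n$ and thus form a normal basis.

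For (ii), injectivity at each finite level follows from (i), and injectivity passes to the inverse limit. As observed preceding the proposition, both $\Lambda^{\mathcal{O}_K}(\mathcal{G})$ and $\varprojlim_n \mathcal{O}_{L_n}$ are free $\Lambda^{\mathcal{O}_K}(\Gamma)$-modules of rank $|H|$, so the cokernel $M$ is a finitely generated $\Lambda(\Gamma)$-torsion module. For such a module $\lambda(M) = 0$ if and only if $M$ is annihilated by some power of $p$. Using the same tensor-product decomposition, the level-$n$ cokernel $\mathcal{O}_{L_n}/\mathcal{O}_K[G_n] \cdot a_n$ is controlled by the fixed finite abelian group $\mathcal{O}_L/\mathcal{O}_K[G] \cdot c$, which is annihilated by some $p^k$ independent of $n$. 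Since $\mathcal{O}_{K_{m+n}}$ is flat over $\mathcal{O}_{K_m}$, this annihilation is preserved under tensoring, so each level cokernel is killed by $p^k$, and consequently so is $M$; this gives $\lambda(M) = 0$.

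The principal obstacle I anticipate is the clean execution of the linear-algebra arguments in (i) and the cokernel identification in (ii) when $m > 0$: the fibre-product constraint on $G_n$ restricts which pairs $(g, \gamma) \in G \times \Gal(K_{m+n}/K)$ actually appear, so one must verify the normal-basis property and the cokernel bound by decomposing along the $[K_m:K]$ cosets of $\Gal(L/K_m) \times \Gal(K_{m+n}/K_m)$ inside $G_n$. A careful bookkeeping using the $\mathcal{O}_{K_m}$-algebra structure on $\mathcal{O}_{K_{m+n}}$, uniform in $n$, is the key technical step.
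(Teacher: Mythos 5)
Your construction $a_n = c\cdot b_n$ with an arbitrary normal basis generator $c$ of $L/K$ does not work in general, and the ``direct linear-algebra computation'' you defer in (i) is not a mere bookkeeping issue: it fails exactly in the case $K_m = L\cap K_\infty \neq K$ that you flag as the key technical step. The simplest obstruction is the trivial-character resolvent. By trace-compatibility of $b$ one has
\[
\Tr_{L_n/K}(c\,b_n) \;=\; \Tr_{L/K}\bigl(c\cdot\Tr_{L_n/L}(b_n)\bigr) \;=\; \Tr_{L/K}(c\,b_0),
\]
and $b_0$ lies in $L\cap\Q_{p,\infty}$ but in general not in $K$. The functional $c\mapsto\Tr_{L/K}(c\,b_0)$ is then not proportional to any of the resolvent functionals $c\mapsto\sum_{g\in G}\chi(g^{-1})g(c)$, so one can choose $c$ generating a normal (even a normal integral) basis of $L/K$ with $\Tr_{L/K}(c\,b_0)=0$; for such a $c$ the conjugates $\sigma(a_n)$, $\sigma\in G_n$, sum to zero and $a_n$ generates no normal basis for any $n$. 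Concretely, take $K=\Q_p$ and $L=\Q_{p,1}$: here $b_0=\beta_1$ satisfies $\Tr_{L/K}(\beta_1)=1$, hence $\overline{\beta_1}\notin\F_p$, the residue functionals $\bar c\mapsto\Tr(\bar c\,\overline{\beta_1})$ and $\bar c\mapsto\Tr(\bar c)$ are independent, and one finds $c\in\mathcal{O}_L$ with $\Tr(\bar c)\neq 0$ but $\Tr_{L/K}(c\beta_1)=0$. The product-of-generators device requires $L$ and the unramified tower to be linearly disjoint over $K$, which is precisely what fails when $m>0$.

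The paper's proof avoids this by working with the complement $L' = L_\infty^{\Gamma}$ of the unramified direction (so that $L'$ and $K_\infty$ are linearly disjoint over $K$ and $L = L'_{n_0}$), taking $b$ relative to $L'$, and setting $a_n := \Tr_{L/L'}(c)\cdot b_{n+n_0}$: tracing $c$ down to $L'$ removes the overlap with the unramified tower, after which the spanning argument (expand over $\Gal(L'_n/L')$ using $b$, then over $H$ using $\Tr_{L/L'}(c)$) and the uniform $p$-power bound on the level-$n$ cokernels go through. Your part (ii) is structurally sound (equal $\Lambda(\Gamma)$-ranks plus injectivity gives a torsion cokernel, and uniform annihilation by a fixed power of $p$ gives $\lambda=0$), but it presupposes (i) level by level and therefore inherits the same gap; you would need to modify the element $a$ as above before either part can be completed.
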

 
 \begin{proof}
 	Choose $b = (b_n)_n \in \varprojlim_n \mathcal{O}_{L'_{n}}$
 	as in Lemma \ref{lem:limits-IBG} with $F = L'$. Let $c \in
 	\mathcal{O}_L$ generate a normal basis for $L/K$
 	and put $a_n' := \Tr_{L/L'}(c) \cdot b_n \in \mathcal{O}_{L'_n}$
 	and $a_n := a'_{n+n_0} \in \mathcal{O}_{L_n}$. 
 	Then $a := (a_n)_n$ belongs to $\varprojlim_n \mathcal{O}_{L_n}$.
 	In order to verify the
 	first property we have to show that $a_n'$ generates a normal
 	basis for $L'_n/K$ for $n \geq n_0$. For this let $x \in L_n'$
 	be arbitrary. Let $\gamma_n$ be a generator of the cyclic group
 	$\Gal(L'_n/L')$. Then we may write 
 	$x = \sum_{i=0}^{p^n-1} y_i \gamma_n^i(b_n)$ with $y_i \in L'$,
 	$0 \leq i < p^n$. As $L' \subseteq L$ we may likewise write
 	$y_i = \sum_{g \in G} z_{i,g} g(c)$ with $z_{i,g} \in K$,
 	$g \in G$ for each $i$. Since $y_i$ is invariant under
 	$\Gal(L/L')$ we find that $z_{i,g} = z_{i,g'g}$
 	whenever $g' \in \Gal(L/L')$, and so 
 	we may write
 	$y_i = \sum_{h \in H} z_{i,h} h(\Tr_{L/L'}(c))$. 
 	As $\Tr_{L/L'}(c)$ is invariant under $\Gal(L_n'/L')$
 	and $b_n$ is invariant under $H$ by Lemma \ref{lem:limits-IBG}, we
 	find that
 	\[
	 	x = \sum_{i=0}^{p^n-1} \sum_{h \in H} z_{i,h} h \gamma_n^i(a_n')
 	\]
 	as desired. 
 	Moreover, if $x \in \mathcal{O}_{L_n'}$ then we may
 	choose each $y_i \in \mathcal{O}_{L'}$. If $p^m$ is the index
 	of $\mathcal{O}_K[G] \cdot c$ in $\mathcal{O}_L$ then
 	$p^m z_{i,h} \in \mathcal{O}_K$ for all $h \in H$, $0 \leq i < p^n$.
 	Let $C_n$ be the cokernel of the injection 
 	$\mathcal{O}_K[G_n] \rightarrow \mathcal{O}_{L_n}$ that maps $1$
 	to $a_n$. Then $p^m C_n = 0$ for all $n \geq 0$.
 	For each $n \geq 0$ we now have a commutative diagram
 	\[ \xymatrix{
	 	0 \ar[r] & \mathcal{O}_K[G_{n+1}] \ar[r] \ar[d] & 
		 	\mathcal{O}_{L_{n+1}} \ar[d]^{\Tr_{L_{n+1 / L_n}}} \ar[r]
		 	& C_{n+1} \ar[d] \ar[r] & 0\\
		0 \ar[r] & \mathcal{O}_K[G_{n}] \ar[r] & 
		\mathcal{O}_{L_{n}} \ar[r] & C_n \ar[r] & 0
 	}\]
 	with exact rows and surjective vertical maps. Taking inverse limits
 	is therefore exact and we obtain an exact sequence of
 	$\Lambda^{\mathcal{O}_K}(\mathcal{G})$-modules
 	\[
 	0 \longrightarrow \Lambda^{\mathcal{O}_K}(\mathcal{G}) \longrightarrow 
 	\varprojlim_n \mathcal{O}_{L_n} \longrightarrow C_{\infty}
 	\longrightarrow 0,
 	\]
 	where $C_{\infty} := \varprojlim_n C_n$. As
 	$\Lambda^{\mathcal{O}_K}(\mathcal{G})$ and 
 	$\varprojlim_n \mathcal{O}_{L_n}$ are free $\Lambda(\Gamma)$-modules
 	of the same (finite) rank, the cokernel $C_{\infty}$ is a finitely
 	generated $\Lambda(\Gamma)$-torsion module.
 	Since $p^m$ annihilates $C_{\infty}$, its $\lambda$-invariant
 	vanishes. Thus the second property holds as well.
 \end{proof}

 \subsection{The logarithm in unramified $\Z_p$-extensions}
 For any $p$-adic field $L$ we let $\log_L: L^{\times} \rightarrow L$
 denote the $p$-adic logarithm, normalized as usual such that $\log_L(p) = 0$.
 If $L_{\infty}/L$ is the unramified $\Z_p$-extension of $L$
 with $n$-th layer $L_n$, then we simply write $\log_n$ for $\log_{L_n}$.
 
 \begin{prop} \label{prop:log-limits}
 	The maps $\log_n: U_{L_n}^1 \rightarrow L_n$ induce a 
 	well defined injective map
 	\[
 	\log_{\infty}: U^1(L_{\infty}) \longrightarrow
 	\Q_p \otimes_{\Z_p} \varprojlim_n \mathcal{O}_{L_n}
 	\]
 	and an isomorphism of $\Q_p \otimes_{\Z_p} \Lambda(\mathcal{G})$-modules
 	\[
 	\log_{\infty}: \Q_p \otimes_{\Z_p} U^1(L_{\infty}) 
 	\stackrel{\simeq}{\longrightarrow}
 	\Q_p \otimes_{\Z_p} \varprojlim_n \mathcal{O}_{L_n}.
 	\]
 \end{prop}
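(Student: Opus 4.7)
The plan is to construct $\log_{\infty}$ by passing to an inverse limit and then analyse its kernel and cokernel separately. First, the $p$-adic logarithm is Galois-equivariant and satisfies the identity $\log_{n}(N_{L_{n+1}/L_{n}}(u)) = \Tr_{L_{n+1}/L_{n}}(\log_{n+1}(u))$ on $U^{1}_{L_{n+1}}$, since $\log$ is a group homomorphism commuting with the action of $\Gal(L_{n+1}/L_{n})$ and $\log_{n+1}$ restricts to $\log_{n}$ on $L_{n}$. The only subtle point in passing to the limit is that $\log_{n}$ does not in general land in $\mathcal{O}_{L_{n}}$. Because $L_{n}/L$ is unramified, however, the absolute ramification index $e$ of $L_{n}/\Q_{p}$ is the same for all $n$. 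A term-by-term estimate of the series $\log(1+x) = \sum_{k \geq 1} (-1)^{k+1}x^{k}/k$ using $v_{L_{n}}(x^{k}/k) \geq k - e\, v_{p}(k)$ then produces a fixed integer $N \geq 0$, depending only on $e$ and $p$, such that $p^{N}\log_{n}(U^{1}_{L_{n}}) \subseteq \mathcal{O}_{L_{n}}$ for every $n$. Taking the inverse limit therefore yields a $\Lambda(\mathcal{G})$-linear map $\log_{\infty}\colon U^{1}(L_{\infty}) \to p^{-N} \varprojlim_{n}\mathcal{O}_{L_{n}} \subseteq \Q_{p} \otimes_{\Z_{p}} \varprojlim_{n} \mathcal{O}_{L_{n}}$.

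For injectivity, I would observe that $\ker(\log_{n}) = \mu_{p^{\infty}}(L_{n}) \cap U^{1}_{L_{n}}$. Since $L_{\infty}/L$ is unramified and any nontrivial enlargement of $\mu_{p^{\infty}}$ forces ramification, $\mu_{p^{\infty}}(L_{\infty})$ coincides with $\mu_{p^{\infty}}(L)$ and is therefore finite. Hence for all sufficiently large $n$ the transition map $N_{L_{n+1}/L_{n}}$ acts on the finite groups $\ker(\log_{n+1})$ as $\zeta \mapsto \zeta^{p}$, so $\varprojlim_{n}\ker(\log_{n}) = 0$ and the Mittag--Leffler condition is trivially satisfied. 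Applying the snake lemma to the inverse limit of the short exact sequences $0 \to \ker(\log_{n}) \to U^{1}_{L_{n}} \to \log_{n}(U^{1}_{L_{n}}) \to 0$ then yields $\ker(\log_{\infty}) = 0$.

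To upgrade this to an isomorphism after $\otimes_{\Z_{p}} \Q_{p}$, it suffices to show that $\coker(\log_{\infty})$ is annihilated by a fixed power of $p$. Here I would invoke the $p$-adic exponential $\exp_{n}\colon \mathfrak{p}_{L_{n}}^{k} \to U^{k}_{L_{n}}$, which converges and is a two-sided inverse of $\log_{n}$ as soon as $k > e/(p-1)$; since $e$ does not depend on $n$, one may fix such a $k$ uniformly. Then $\log_{n}(U^{1}_{L_{n}}) \supseteq \mathfrak{p}_{L_{n}}^{k}$, so the quotient $p^{-N}\mathcal{O}_{L_{n}}/\log_{n}(U^{1}_{L_{n}})$ is annihilated by $p^{N'}$ for a fixed $N'$ depending only on $N$, $e$, $p$ and $k$. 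Left-exactness of $\varprojlim$ embeds $\coker(\log_{\infty})$ into this uniformly $p^{N'}$-torsion inverse limit, so $\coker(\log_{\infty})$ is itself killed by $p^{N'}$. Tensoring with $\Q_{p}$ then eliminates both kernel and cokernel, which gives the desired isomorphism of $\Q_{p}\otimes_{\Z_{p}}\Lambda(\mathcal{G})$-modules. The main technical point is simply that the two bounds $N$ and $N'$ can be chosen independently of $n$; this uniformity is precisely what the hypothesis that $L_{\infty}/L$ is unramified guarantees, and is the reason the argument does not go through in the cyclotomic tower.
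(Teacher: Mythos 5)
Your proof is correct and follows the same overall strategy as the paper: all three steps (well-definedness, injectivity, surjectivity up to bounded $p$-torsion) rest on the fact that the relevant bounds are uniform along the unramified tower. The details differ in two places worth noting. For well-definedness and the cokernel bound, the paper works with the commutative square relating $\log_n$ and $\log_{\Q_p}$ via $N_{L_n/\Q_p}$ and $\Tr_{L_n/\Q_p}$, landing in $p\,\mathfrak{D}_{L_n}^{-1}=p\,\mathfrak{D}_{L}^{-1}\mathcal{O}_{L_n}$, and then uses a four-term exact sequence involving $U^1_{L_n}/U^m_{L_n}$ and $p\,\mathfrak{D}_{L_n}^{-1}/\mathfrak{p}_{L_n}^m$; your direct series estimate and your use of $\exp_n$ on $\mathfrak{p}_{L_n}^k$ are equivalent and equally valid. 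For injectivity, the paper cites the general fact that $U^1(L_{\infty})$ has no non-trivial finite submodule (Neukirch--Schmidt--Wingberg, Theorem 11.2.4(ii)), whereas your computation of $\varprojlim_n \mu_{p^{\infty}}(L_n)$ under the norm maps is more elementary and self-contained; it buys independence from that reference at the cost of the small verification that unramifiedness freezes $\mu_{p^{\infty}}$.

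One point you should make explicit: the assertion that left-exactness of $\varprojlim$ embeds $\coker(\log_{\infty})$ into $\varprojlim_n\bigl(p^{-N}\mathcal{O}_{L_n}/\log_n(U^1_{L_n})\bigr)$ is not a formal consequence of left-exactness alone. What left-exactness gives is an embedding of $B/\varprojlim_n \im(\log_n)$ into the limit of the cokernels, where $B=p^{-N}\varprojlim_n\mathcal{O}_{L_n}$; to identify this with $\coker(\log_{\infty})$ you need $\im(\log_{\infty})=\varprojlim_n \im(\log_n)$, i.e. the vanishing of $\varprojlim^1_n \ker(\log_n)$. This does hold, precisely because the kernels are finite and hence satisfy Mittag--Leffler — the observation you already made in the injectivity paragraph — but the logical dependence should be stated, since without it the cokernel of the limit map could a priori be larger than the limit of the cokernels. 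With that sentence added, the argument is complete.
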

 
 \begin{proof}
 	If $p$ is odd, the $p$-adic logarithm
 	induces an isomorphism $U_{\Q_p}^1 \simeq p \Z_p$.
 	For $p=2$ we have $U_{\Q_2}^2 \simeq 4 \Z_2$. As for every
 	$u \in U^1_{\Q_2}$ we have $u^2 \in U^2_{\Q_2}$, it follows
 	that $\log_{\Q_2}(u) = 2^{-1}\log_{\Q_2}(u^2)$ belongs to $2 \Z_2$.
 	For any $p$ and each $n \geq 0$ we therefore have 
 	a commutative square
 	\[\xymatrix{
 		U_{L_n}^1
 		\ar[rr]^{\log_n} \ar[d]_{N_{L_{n}/\Q_p}} & & 
 		p \mathfrak{D}_{L_n}^{-1} \ar[d]^{\Tr_{L_{n}/\Q_p}} \\
 		U_{\Q_p}^1 \ar[rr]^{\log_{\Q_p}} & & 
 		p \Z_p.
 	}\]
 	Since $L_n/L$ is unramified, we have $\mathfrak{D}_{L_n}
 	= \mathfrak{D}_{L} \mathcal{O}_{L_n}$. Therefore the denominators
 	of $\log_n(U^1_{L_n})$ are bounded independently of $n$ and thus
 	$\log_{\infty}$ is well defined. 
 	
 	The kernel of $\log_n$ consists of the 
 	$p$-power roots of unity $\mu_p(L_n)$
 	in $L_n$. As $U^1(L_{\infty})$ contains no non-trivial elements
 	of finite order by \cite[Theorem 11.2.4(ii)]{MR2392026}
 	(see also Corollary \ref{cor:coh-at-infty-I}), 
 	the map $\log_{\infty}$ is injective.
 	
 	Let $C_n$ be the cokernel of the map $\log_n$
 	in the above diagram. As $L_n / L$ is unramified, we may choose
 	an integer $m \geq 1$, independent of $n$,
 	such that $\log_n$ induces an isomorphism
 	$U_{L_n}^m \simeq \mathfrak{p}_{L_n}^m$ for all $n$. 
 	We thus have exact sequences
 	\[
 	0 \longrightarrow \mu_p(L_n) \longrightarrow 	U_{L_n}^1 / U_{L_n}^m
 	\longrightarrow p \mathfrak{D}_{L_n}^{-1} / \mathfrak{p}_{L_n}^m
 	\longrightarrow C_n \longrightarrow 0
 	\]
 	for all $n \geq 0$. 
	Choose a natural number $N$ such that $p^{N+1} 
	\mathfrak{D}_{L}^{-1} / \mathfrak{p}_{L}^m$ vanishes.
	Then $p^N$ annihilates $p \mathfrak{D}_{L_n}^{-1} / \mathfrak{p}_{L_n}^m$
	and thus $C_n$ for each $n \geq 0$.
	Therefore $\varprojlim_n C_n$ is a finitely generated 
	Iwasawa torsion module
	with vanishing $\lambda$-invariant and hence
	$\Q_p \otimes_{\Z_p} \varprojlim_n C_n = 0$ as desired.
 \end{proof}
 
 \subsection{Embeddings in unramified $\Z_p$-extensions} \label{subsec:embeddings}
 As before let $L/K$ be a finite Galois extension of $p$-adic fields
 with Galois group $G$. The various embeddings of $L$
 into $\Q_p^c$ induce a $\Q_p^c[G]$-isomorphism
 \begin{eqnarray*}
 \rho_L: \Q_p^c \otimes_{\Q_p} L & \longrightarrow & \Q_p^c \otimes_{\Z_p} H_L
	 = \bigoplus_{\sigma \in \Sigma(L)} \Q_p^c\\
	z \otimes l & \mapsto & (z \sigma(l))_{\sigma \in \Sigma(L)}.
 \end{eqnarray*}
 We now study the behaviour of the maps $\rho_{L_n}$ along the 
 unramified tower. To lighten notation we simply write $\rho_n$
 for $\rho_{L_n}$. For any $\tau \in \Sigma(K)$ we choose a lift
 $\hat \tau: L_{\infty} \hookrightarrow \Q_p^c$. We define
 \[
	 K_{\tau} := \tau(K), \quad L_{\tau} := \hat \tau(L), \quad
	 L_{\tau,n} = \hat \tau(L_n),
 \]
 where $0 \leq n \leq \infty$, and note that these definitions do not
 depend on the particular choice of $\hat \tau$ because the fields
 $L$ and $L_n$, $0 \leq n \leq \infty$ are all Galois over $K$.
 Recall that
 \[
	 \Q_p^c \otimes_{\Z_p} H_{L_n} \simeq \bigoplus_{\tau \in \Sigma(K)} \Q_p^c[G_n].
 \]
 We let $\rho_{\tau,n}$ be the 
 composition of $\rho_n$ and the projection onto the $\tau$-component,
 that is $\rho_{\tau,n}(x) = \sum_{g \in G_n} \hat{\tau}g(x) g^{-1}$
 for every $x \in L_n$. It is clear that
 \[
	 \rho_{\tau,n}(\mathcal{O}_{L_n}) \subseteq \mathcal{O}_{L_{\tau,n}}[G_n].
 \]
 However, we will need a slightly finer result.
 For this let $\phi \in \Gamma \leq \mathcal{G}$ be the unique
 element such that $\phi$ maps to $\phi_K$ under the natural
 projection $\mathcal G \twoheadrightarrow \mathcal{G}/H \simeq \Gamma_K$.
 As in \S \ref{sec:normal-bases} we let $L'$ 
 be the fixed field $L_{\infty}^{\Gamma}$.
 Then $L_{\infty} = L'_{\infty}$ and we may suppose that
 $L = L'_{n_0}$ for some integer $n_0 \geq 0$.
 
%
%
 
 Fix an integer $n \geq 0$ and an embedding $\tau \in \Sigma(K)$.
 Let $E_{\tau}$ be either $L_{\tau,m}$ for some $n \leq m \leq \infty$
 or the completion of $L_{\tau,\infty}$. We let $\phi$
 act on $E_{\tau}$ via $\hat \tau \phi \hat \tau^{-1}$.
 Then we have
 \begin{equation} \label{eqn:E_tau-invariance}
	 E_{\tau}^{\phi^{p^n} = 1} = E_{\tau}^{\Gamma^{p^n}} = L_{\tau,n}'
	 := \hat{\tau}(L_n').
 \end{equation}
 We point out that this may depend upon the choice of $\hat{\tau}$
 for small $n$.
 Let $\mathcal{O}_{E_{\tau}}$ be the ring of integers in $E_{\tau}$.
 Then $\phi \otimes 1$ acts on the coefficients of
 \[
	 \mathcal{O}_{E_{\tau}}[G_n] = 
		 \mathcal{O}_{E_{\tau}} \otimes_{\Z_p} \Z_p[G_n],
 \]
 and $1 \otimes \phi$ acts via right multiplication
 by $\phi |_{L_n} \in G_n$. Inspired by \cite[\S 2]{MR3194646}, we define
 \[
	 \mathcal{O}_{E_{\tau}}[G_n]_{\varphi} :=
	 \left\{y \in \mathcal{O}_{E_{\tau}}[G_n] \mid (\phi \otimes 1)y
	 = y (1 \otimes \phi) \right\}
 \]
 which is easily seen to be an $\mathcal O_{L_{\tau}'}[G_n]$-submodule
 of $\mathcal{O}_{E_{\tau}}[G_n]$. As $1 \otimes \phi^{p^{n+n_0}}$ acts trivially
 on $\mathcal{O}_{E_{\tau}}[G_n]$, equation \eqref{eqn:E_tau-invariance}
 implies that in fact
 \[
	 \mathcal{O}_{E_{\tau}}[G_n]_{\varphi} = 
	 \mathcal{O}_{L_{\tau,n}}[G_n]_{\varphi}.
 \]
 
 \begin{lemma} \label{lem:rho-image}
 	For every integer $n \geq 0$ and every $\tau \in \Sigma(K)$ we have
 	\[
	 	\rho_{\tau,n}(\mathcal{O}_{L_n}) \subseteq
	 	 \mathcal{O}_{L_{\tau,n}}[G_n]_{\varphi}.
 	\]
 \end{lemma}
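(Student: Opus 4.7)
The containment $\rho_{\tau,n}(\mathcal{O}_{L_n}) \subseteq \mathcal{O}_{L_{\tau,n}}[G_n]$ has already been observed, so the plan is just to verify the $\varphi$-equivariance condition
\[
(\phi \otimes 1)\,\rho_{\tau,n}(x) \;=\; \rho_{\tau,n}(x)\,(1 \otimes \phi)
\]
for every $x \in \mathcal{O}_{L_n}$. Once this is done, the lemma is immediate from the definition of $\mathcal{O}_{E_\tau}[G_n]_\varphi$ (applied with $E_\tau = L_{\tau,n}$).

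The key compatibility is the relation $\hat\tau \circ \phi = (\hat\tau \phi \hat\tau^{-1}) \circ \hat\tau$ between the two ways $\phi$ is made to act: on $L_n$ via its image in $\mathcal{G}$ and on $E_\tau$ via conjugation by $\hat\tau$. Writing $g_\phi := \phi|_{L_n} \in G_n$ (so that $1 \otimes \phi$ is right multiplication by $g_\phi$), I would expand
\[
(\phi \otimes 1)\,\rho_{\tau,n}(x)
= \sum_{g \in G_n} (\hat\tau \phi \hat\tau^{-1})\bigl(\hat\tau g(x)\bigr)\, g^{-1}
= \sum_{g \in G_n} \hat\tau\bigl((\phi g)(x)\bigr)\, g^{-1},
\]
and then use that $(\phi g)(x) = (g_\phi g)(x)$ on $L_n$.

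The final step is the change of summation variable $g' = g_\phi g$, which converts the right-hand side to $\sum_{g' \in G_n} \hat\tau(g'(x))\, g'^{-1} g_\phi = \rho_{\tau,n}(x)\,(1 \otimes \phi)$, as required. There is no genuine obstacle here; the only thing to be careful about is to distinguish cleanly between the action of $\phi$ on the coefficient ring $E_\tau$ (via $\hat\tau \phi \hat\tau^{-1}$) and the action of $\phi$ on $L_n$ (via its image $g_\phi$ in $G_n$), and to use the intertwining $\hat\tau \circ \phi = (\hat\tau \phi \hat\tau^{-1}) \circ \hat\tau$ at exactly one step.
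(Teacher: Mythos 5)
Your plan is correct and is essentially identical to the paper's proof: one expands $(\phi\otimes 1)\rho_{\tau,n}(x)$ using the intertwining of $\hat\tau$ with the two actions of $\phi$, then re-indexes the sum over $G_n$ by $g\mapsto \phi|_{L_n}\, g$ to pull out a right factor of $\phi$. The paper's version is just a terser form of the same three-line computation, so there is nothing to add.
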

 
 \begin{proof}
	For $x \in \mathcal{O}_{L_n}$ we compute
	\begin{eqnarray*}
		(\phi \otimes 1) \rho_{\tau,n}(x)
		& = & (\phi \otimes 1) \sum_{g \in G_n} \hat{\tau}g(x) g^{-1}\\
		& = & \sum_{g \in G_n} \hat{\tau} \phi g(x) g^{-1}\\
		& = & \sum_{g \in G_n} \hat{\tau}g(x) g^{-1} \phi\\
		& = & \rho_{\tau,n}(x) (1 \otimes \phi)
	\end{eqnarray*}
	as desired.
 \end{proof}
 
 We now consider the $\mathcal O_{L_{\tau}'}[G_n]$-module
 $\mathcal{O}_{L_{\tau,n}}[G_n]_{\varphi}$ in more detail.
 Let us put $\Gamma_n := \Gamma / (\Gamma)^{p^n} \simeq \Gal(L_n'/L')$.
 Recall that $L_{\tau,n} = L'_{\tau, n + n_0}$ so that
 $\mathcal{O}_{L_{\tau,n}}$ is a 
 $\mathcal O_{L_{\tau}'}[\Gamma_{n+n_0}]$-module in a natural way.
 
 \begin{prop} \label{prop:phi-iso}
	 For every integer $n \geq 0$ and every $\tau \in \Sigma(K)$
	 there is a natural isomorphism of $\mathcal O_{L_{\tau}'}[G_n]$-modules \[
		 \delta_{\tau,n}: \mathcal{O}_{L_{\tau,n}}[G_n]_{\varphi} 
		 \stackrel{\simeq}{\longrightarrow} 
		 \mathcal O_{L_{\tau}'}[G_n] 
		 \otimes_{\mathcal O_{L_{\tau}'}[\Gamma_{n+n_0}]} \mathcal{O}_{L_{\tau,n}}.
	 \]
 \end{prop}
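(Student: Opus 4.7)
The plan is to construct $\delta_{\tau,n}$ explicitly using the semidirect product decomposition. Fix once and for all the topological generator $\phi \in \Gamma \subseteq \mathcal{G}$, so that its image in $G_n$ generates a cyclic subgroup of order $p^{n+n_0}$ that splits the quotient $G_n \twoheadrightarrow \Gamma_{n+n_0}$; this realises $G_n$ as $H \rtimes \langle\phi\rangle$ and gives the inclusion $\mathcal{O}_{L_\tau'}[\Gamma_{n+n_0}] \hookrightarrow \mathcal{O}_{L_\tau'}[G_n]$ used in the target. With this fixed, I would first parametrise the source: expanding any element uniquely as
\[
y = \sum_{h\in H}\sum_{i=0}^{p^{n+n_0}-1} y_{h,i}\cdot h\phi^i,\qquad y_{h,i}\in\mathcal{O}_{L_{\tau,n}},
\]
the defining equation $(\phi\otimes 1)y = y(1\otimes\phi)$ becomes $\phi(y_{h,i}) = y_{h,i-1}$, i.e.\ $y_{h,i}=\phi^{-i}(y_{h,0})$. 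This is consistent with $i$ running modulo $p^{n+n_0}$ because $\phi^{p^{n+n_0}}$ acts trivially on $L_{\tau,n}=L'_{\tau,n+n_0}$, as noted already in \eqref{eqn:E_tau-invariance}. Hence $\mathcal{O}_{L_{\tau,n}}[G_n]_\varphi$ is freely parametrised by the tuple $(y_{h,0})_{h\in H}\in \bigoplus_{h\in H}\mathcal{O}_{L_{\tau,n}}$.

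Next I would define
\[
\delta_{\tau,n}(y) \;:=\; \sum_{h\in H} h\otimes y_{h,0}.
\]
On the target, $\mathcal{O}_{L_\tau'}[G_n]$ is a free right $\mathcal{O}_{L_\tau'}[\Gamma_{n+n_0}]$-module on the basis $\{h : h\in H\}$, so every element of the tensor product has a unique normal form $\sum_h h\otimes z_h$; combined with Step~1, this immediately shows that $\delta_{\tau,n}$ is a bijection of underlying $\mathcal{O}_{L_\tau'}$-modules. Only $G_n$-equivariance then remains. The $H$-action is handled by a trivial reindexing, since left multiplication by $h_0\in H$ sends the coefficient family $(y_{h,0})_h$ to $(y_{h_0^{-1}h,0})_h$ on both sides.

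The main obstacle, and the only place where real care is needed, is $\phi$-equivariance, because $\phi h\neq h\phi$ in the semidirect product. The key computation is that, writing $\phi h = ({}^{\phi}h)\phi$ in $G_n$ and using the tensor relation $h'\phi\otimes z = h'\otimes \phi(z)$ in the target, one obtains
\[
\phi\cdot\delta_{\tau,n}(y) = \sum_{h}{}^{\phi}h\otimes\phi(y_{h,0}) = \sum_{h''}h''\otimes\phi(y_{{}^{\phi^{-1}}h'',0}),
\]
and on the source side the recursion $y_{h,i}=\phi^{-i}(y_{h,0})$ together with the same conjugation identity shows that the $(h'',0)$-coefficient of $\phi\cdot y$ is precisely $\phi(y_{{}^{\phi^{-1}}h'',0})$. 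Thus the two expressions match. Naturality in $n$ (with respect to the transition maps in the unramified tower) and in $\tau$ follows at once from the explicit formula, since both the parameterisation in Step~1 and the definition of $\delta_{\tau,n}$ depend only on the fixed choice of $\phi$ and on the Galois action.
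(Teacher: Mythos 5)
Your proof is correct and follows essentially the same route as the paper: both parametrise an element of $\mathcal{O}_{L_{\tau,n}}[G_n]_{\varphi}$ by its coefficients on a set of left coset representatives of $G_n/\Gamma_{n+n_0}$ (you fix the representatives to be $H$ via the semidirect product splitting, the paper works with an arbitrary set $C$ and checks independence) and then verify $G_n$-equivariance using the relation $c\phi_n^{j}\otimes z = c\otimes\phi^{j}(z)$ in the tensor product. The only presentational difference is that you split the equivariance check into the $H$-action and the $\phi$-action, whereas the paper treats a general $g'\in G_n$ in one computation; the substance is identical.
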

 
 \begin{proof}
 	Let $y  = \sum_{g \in G_n} y_g g \in \mathcal{O}_{L_{\tau,n}}[G_n]$
 	be arbitrary. Then we have
 	\begin{equation} \label{eqn:y-phi-invariant}
 	y \in \mathcal{O}_{L_{\tau,n}}[G_n]_{\varphi}
 	\Longleftrightarrow \phi(y_g) = y_{g \phi_n^{-1}} \quad
 	\forall g \in G_n,
 	\end{equation}
 	where we set $\phi_n := \phi |_{L_n} \in \Gamma_{n+n_0} \leq G_n$.
 	Suppose that \eqref{eqn:y-phi-invariant} holds for $y$.
 	Let $C \subseteq G_n$ be a set of left coset representatives
 	of $G_n / \Gamma_{n+n_0}$. Define a map
 	\begin{eqnarray*}
	 	\delta_{\tau,n}:
	 	\mathcal{O}_{L_{\tau,n}}[G_n]_{\varphi} & \longrightarrow &
	 	\mathcal O_{L_{\tau}'}[G_n] 
	 	\otimes_{\mathcal O_{L_{\tau}'}[\Gamma_{n+n_0}]} \mathcal{O}_{L_{\tau,n}}\\
	 	y & \mapsto & \sum_{c \in C} c \otimes y_c,
 	\end{eqnarray*}
 	where unadorned tensor products denote tensor products over
 	$\mathcal O_{L_{\tau}'}[\Gamma_{n+n_0}]$ in this proof. 
 	This map does actually not depend on the choice of $C$.
 	Let $C'$ be a second choice of left coset representatives.
 	Then for each $c' \in C'$ there is a unique $c \in C$ and an
 	integer $j$ such that $c' = c \phi_n^j$. We thus have
 	\[
	 	c' \otimes y_{c'} = c \phi_n^j \otimes \phi^{-j}(y_{c})
	 	= c \otimes y_c
	 \]
	 by \eqref{eqn:y-phi-invariant} as desired. We now show that
	 $\delta_{\tau,n}$ is $G_n$-equivariant. For this let
	 $g'\in G_n$ and $c \in C$ be arbitrary. Then there is a
	 unique $\tilde c \in C$ and an integer $j$ such that
	 $g'c = \tilde c \phi_n^j$. We compute
	 \begin{eqnarray*}
		 g'(c \otimes y_c) & = & \tilde c \phi_n^j \otimes y_c
			 = \tilde c \otimes \phi^j(y_c)\\
			 & = & \tilde c \otimes y_{c \phi_n^{-j}}
			 = \tilde c \otimes y_{(g')^{-1} \tilde c},
	 \end{eqnarray*}
	 where the third equality is \eqref{eqn:y-phi-invariant}.
	 As $y_{(g')^{-1} \tilde c}$ is the coefficient at $\tilde c$
	 of $g' y$, we see that indeed $\delta_{\tau,n}(g'y) = 
	 g' \delta_{\tau,n}(y)$.
	 Finally, it is easily checked that
	 \begin{eqnarray*}
	 	\mathcal O_{L_{\tau}'}[G_n] 
	 	\otimes_{\mathcal O_{L_{\tau}'}[\Gamma_{n+n_0}]} \mathcal{O}_{L_{\tau,n}}
	 	 & \longrightarrow & \mathcal{O}_{L_{\tau,n}}[G_n]_{\varphi}
	 	\\
	 	\sum_{c \in C} c \otimes z_c & \mapsto &
	 	\sum_{c \in C} \sum_{i=0}^{p^{n+n_0}-1} \phi^{-i}(z_c) c \phi_n^i
	 \end{eqnarray*}
	 is an inverse of $\delta_{\tau,n}$.
 \end{proof}
 
 \begin{corollary} \label{cor:phi-limits}
	 For every integer $n \geq 0$ and every $\tau \in \Sigma(K)$
	 the $\mathcal O_{L_{\tau}'}[G_n]$-module $\mathcal{O}_{L_{\tau,n}}[G_n]_{\varphi}$ is free of rank $1$.
	 In fact, any choice of $b \in \varprojlim_n \mathcal{O}_{\Q_{p,n}}$
	 as in Lemma \ref{lem:limits-IBG}
	 defines (non-canonical) isomorphisms
	 $\beta_{\tau,n}: \mathcal{O}_{L_{\tau,n}}[G_n]_{\varphi} \simeq 
	 \mathcal O_{L_{\tau}'}[G_n]$ such that the following two diagrams commute
	 for all $n \geq 0$ and all $\tau, \tau' \in \Sigma(K)$:
	 \[\xymatrix{
	 	\mathcal{O}_{L_{\tau,n+1}}[G_{n+1}]_{\varphi}
	 	 \ar[rr]^-{\beta_{\tau,n+1}} \ar[d] & & 
	 	 \mathcal O_{L_{\tau}'}[G_{n+1}] \ar[d] \\
	 	\mathcal{O}_{L_{\tau,n}}[G_n]_{\varphi} \ar[rr]^-{\beta_{\tau,n}} & & 
	 	\mathcal O_{L_{\tau}'}[G_n],
	 }\]
	 where the vertical arrows are induced by the canonical projection
	 $G_{n+1} \twoheadrightarrow G_n$, and
	 \[\xymatrix{
	 	\mathcal{O}_{L_{\tau,n}}[G_{n}]_{\varphi}
	 	\ar[rr]^-{\beta_{\tau,n}} \ar[d] & & 
	 	\mathcal O_{L_{\tau}'}[G_{n}] \ar[d] \\
	 	\mathcal{O}_{L_{\tau',n}}[G_n]_{\varphi} \ar[rr]^-{\beta_{\tau',n}} & & 
	 	\mathcal O_{L_{\tau'}'}[G_n],
	 }\]
	 where the vertical arrows are induced by applying 
	 $\hat{\tau}' \circ \hat{\tau}^{-1}$ on the coefficients.
 \end{corollary}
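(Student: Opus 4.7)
The plan is as follows. Proposition \ref{prop:phi-iso} reduces the freeness of rank one to trivializing $\mathcal{O}_{L_{\tau,n}}$ as a free $\mathcal{O}_{L_\tau'}[\Gamma_{n+n_0}]$-module of rank one. Since $L_{\tau,n} = \hat\tau(L'_{n+n_0})$ is the $(n+n_0)$-th layer of the unramified $\Z_p$-extension of $L_\tau'$, Lemma \ref{lem:limits-IBG} applied to $F = L_\tau'$ produces a compatible system of normal integral basis generators; let $b_{n+n_0}^\tau \in \mathcal{O}_{L_{\tau,n}}$ denote the relevant component. The resulting $\mathcal{O}_{L_\tau'}[\Gamma_{n+n_0}]$-linear isomorphism $\lambda_\tau^{(n)} \colon \mathcal{O}_{L_{\tau,n}} \xrightarrow{\sim} \mathcal{O}_{L_\tau'}[\Gamma_{n+n_0}]$ sending $b_{n+n_0}^\tau \mapsto 1$ lets me define $\beta_{\tau,n} := (\mathrm{id} \otimes \lambda_\tau^{(n)}) \circ \delta_{\tau,n}$, an $\mathcal{O}_{L_\tau'}[G_n]$-linear isomorphism.

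Using the explicit formula for $\delta_{\tau,n}^{-1}$ from the proof of Proposition \ref{prop:phi-iso}, the preimage of $1$ under $\beta_{\tau,n}$ is
\[
y_{\tau,n} \ = \ \sum_{i=0}^{p^{n+n_0}-1} \phi^{-i}(b_{n+n_0}^\tau) \, \phi_n^i,
\]
where $\phi_n := \phi|_{L_n}$. Since each $\beta_{\tau,n}$ is linear over the relevant group ring, the commutativity of either diagram reduces to checking it on this single generator.

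For the first diagram, I would verify that the vertical map sending $y \in \mathcal{O}_{L_{\tau,n+1}}[G_{n+1}]_\varphi$ to its coefficient-wise sum along the fibres of $G_{n+1} \twoheadrightarrow G_n$ is well defined: iterating the $\varphi$-condition $p^{n+n_0}$ times shows that the image is fixed by $\phi^{p^{n+n_0}}$ on coefficients, hence has coefficients in $\mathcal{O}_{L_{\tau,n}}$, and the new $\varphi$-condition is inherited via the projection. Applying this to $y_{\tau,n+1}$ and using $\phi_{n+1} \mapsto \phi_n$, the coefficient at $\phi_n^j$ becomes
\[
\sum_{k=0}^{p-1} \phi^{-(j + k p^{n+n_0})}(b_{n+n_0+1}^\tau) \ = \ \phi^{-j}\bigl(\Tr_{L_{\tau,n+1}/L_{\tau,n}}(b_{n+n_0+1}^\tau)\bigr) \ = \ \phi^{-j}(b_{n+n_0}^\tau),
\]
the final equality resting on the fact that the transition maps in the inverse limit of Lemma \ref{lem:limits-IBG} are the trace maps. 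Thus $y_{\tau,n+1} \mapsto y_{\tau,n}$, while on the right side $1 \mapsto 1$, and commutativity follows.

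For the second diagram, $b$ is a single element of $\mathcal{O}_{\Q_{p,\infty}}$, so $(\hat\tau' \circ \hat\tau^{-1})(b_{n+n_0}^\tau) = b_{n+n_0}^{\tau'}$; hence $\lambda_\tau^{(n)}$ and $\lambda_{\tau'}^{(n)}$ are intertwined by $\hat\tau' \circ \hat\tau^{-1}$, and the $\delta_{\tau,n}$, being defined via coset representatives in $G_n/\Gamma_{n+n_0}$ independent of $\tau$, manifestly commute with coefficient-wise application of $\hat\tau' \circ \hat\tau^{-1}$. I expect the main obstacle to lie in the explicit calculation for the first diagram: one must correctly identify the fibres of $G_{n+1} \twoheadrightarrow G_n$ restricted to the procyclic part and pair this up with the trace-compatibility of $b$, which in turn requires careful tracking of the index shift between the system $(b_n)$ along $\Q_{p,\infty}/\Q_p$ and the tower of normal basis generators of $L_{\tau,n}/L_\tau'$.
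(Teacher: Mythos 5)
Your construction is the same as the paper's: compose $\delta_{\tau,n}$ from Proposition \ref{prop:phi-iso} with the trivialization of $\mathcal{O}_{L_{\tau,n}}$ given by the trace-compatible normal integral basis generators coming from $b$, and use that the generators for different $\tau$ are the images under $\hat\tau$ of one system over $L'$. The only (harmless) difference is that you verify commutativity by explicit computation on the generator $\sum_i \phi^{-i}(b^{\tau}_{n+n_0})\phi_n^i$, whereas the paper factors each square into two commuting subsquares; both verifications are correct.
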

 
 \begin{proof}
 	Choose $b  \in \varprojlim_n \mathcal{O}_{\Q_{p,n}}$ as in Lemma
 	\ref{lem:limits-IBG} and write $b = (b_n)_n \in
 	\varprojlim_n \mathcal{O}_{L'_{n}}$. For each $\tau \in \Sigma(K)$ we put 
 	$b_{\tau} := (b_{\tau,n})_n \in \varprojlim_n \mathcal{O}_{L'_{\tau,n}}$,
 	where $b_{\tau,n} := \hat{\tau}(b_n)$. Then we have for each
 	$\tau \in \Sigma(K)$ and each $n \geq 0$ that
 	$\mathcal{O}_{L_{\tau,n}'} = \mathcal{O}_{L_{\tau}'}[\Gamma_n] \cdot
 	b_{\tau,n}$. This induces an isomorphism of
 	$\mathcal{O}_{L_{\tau}'}[\Gamma_{n+n_0}]$-modules
 	\[
	 	B_{\tau,n}: \mathcal{O}_{L_{\tau,n}} 
	 	\simeq \mathcal{O}_{L_{\tau}'}[\Gamma_{n+n_0}]
 	\]
 	which maps $b_{\tau,n+n_0}$ to $1$.
	We let
	\[
	\beta_{\tau,n}: \mathcal{O}_{L_{\tau,n}}[G_n]_{\varphi}  \longrightarrow 
	\mathcal O_{L_{\tau}'}[G_n]
	\]
	be the map which is the composition of $\delta_{\tau,n}$ and
	$1 \otimes B_{\tau,n}$. Then $\beta_{\tau,n}$ is an
	isomorphism of $\mathcal O_{L_{\tau}'}[G_n]$-modules
	by Proposition \ref{prop:phi-iso}.
	
	With the above choices, the first diagram
	of the corollary commutes because 
	\[\xymatrix{
		\mathcal{O}_{L_{\tau,n+1}}
		\ar[rr]^-{B_{\tau,n+1}} \ar[d]_{\Tr_{L_{\tau,n+1}/L_{\tau,n}}} & & 
		\mathcal O_{L_{\tau}'}[\Gamma_{n+n_0+1}] \ar[d]^{\mathrm{pr}} \\
		\mathcal{O}_{L_{\tau,n}} \ar[rr]^-{B_{\tau,n}} & & 
		\mathcal O_{L_{\tau}'}[\Gamma_{n+n_0}]
	}\]
	commutes by construction, and the diagram
	\[\xymatrix{
		\mathcal{O}_{L_{\tau,n+1}}[G_{n+1}]_{\varphi}
		\ar[rr]^-{\delta_{\tau,n+1}} \ar[d]_{\mathrm{pr}} & & 
		\mathcal O_{L_{\tau}'}[G_{n+1}] \otimes_{\mathcal O_{L_{\tau}'}[\Gamma_{n+n_0+1}]} \mathcal{O}_{L_{\tau,n+1}} 
		\ar[d]^{\mathrm{pr} \otimes \Tr_{L_{\tau,n+1}/L_{\tau,n}}} \\
		\mathcal{O}_{L_{\tau,n}}[G_{n}]_{\varphi}
		\ar[rr]^-{\delta_{\tau,n}} & & 
		\mathcal O_{L_{\tau}'}[G_{n}] \otimes_{\mathcal O_{L_{\tau}'}[\Gamma_{n+n_0}]} \mathcal{O}_{L_{\tau,n}} 
	}\]
	also commutes, where the maps $\mathrm{pr}$ are induced by the natural
	projection maps $G_{n+1} \twoheadrightarrow G_n$
	and $\Gamma_{n+n_0+1} \twoheadrightarrow \Gamma_{n+n_0}$, respectively. 
	Finally, the second diagram commutes
	as we have $b_{\tau',n} = \hat{\tau}' \hat{\tau}^{-1}(b_{\tau,n})$
	again by construction.
 \end{proof}
 
 \begin{corollary}
 	Let $\tau \in \Sigma(K)$. Then $\Lambda_{\tau}(\mathcal{G})_{\varphi} :=
 	\varprojlim_n \mathcal{O}_{L_{\tau,n}}[G_{n}]_{\varphi}$ is a free
 	$\Lambda^{\mathcal{O}_{L_{\tau}'}}(\mathcal{G})$-module of rank $1$.
 \end{corollary}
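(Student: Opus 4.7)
The plan is to deduce this corollary formally from the first commutative diagram in Corollary \ref{cor:phi-limits}, by passing to inverse limits. First, I would fix a choice of $b \in \varprojlim_n \mathcal{O}_{\Q_{p,n}}$ as in Lemma \ref{lem:limits-IBG}. By Corollary \ref{cor:phi-limits} this choice produces, for every $n \geq 0$, an isomorphism $\beta_{\tau,n}: \mathcal{O}_{L_{\tau,n}}[G_n]_{\varphi} \simeq \mathcal{O}_{L_{\tau}'}[G_n]$ of $\mathcal{O}_{L_{\tau}'}[G_n]$-modules.

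The first diagram of Corollary \ref{cor:phi-limits} precisely asserts that the $\beta_{\tau,n}$ are compatible with the transition maps in the two inverse systems $(\mathcal{O}_{L_{\tau,n}}[G_n]_{\varphi})_n$ (whose limit is $\Lambda_{\tau}(\mathcal{G})_{\varphi}$ by definition) and $(\mathcal{O}_{L_{\tau}'}[G_n])_n$. Taking inverse limits is left exact and preserves isomorphisms, so one obtains an isomorphism
\[
\varprojlim_n \beta_{\tau,n}: \Lambda_{\tau}(\mathcal{G})_{\varphi} \stackrel{\simeq}{\longrightarrow} \varprojlim_n \mathcal{O}_{L_{\tau}'}[G_n].
\]
This map is automatically $\Lambda^{\mathcal{O}_{L_{\tau}'}}(\mathcal{G})$-equivariant because it is the limit of a compatible family of $\mathcal{O}_{L_{\tau}'}[G_n]$-equivariant maps, and $\varprojlim_n \mathcal{O}_{L_{\tau}'}[G_n]$ carries the natural $\Lambda^{\mathcal{O}_{L_{\tau}'}}(\mathcal{G})$-structure.

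It then remains to identify the right-hand side with $\Lambda^{\mathcal{O}_{L_{\tau}'}}(\mathcal{G})$. Since $\mathcal{O}_{L_{\tau}'}$ is a finitely generated free $\Z_p$-module, tensor product with it commutes with the countable inverse limit of finitely generated $\Z_p$-modules $\Z_p[G_n]$, so
\[
\varprojlim_n \mathcal{O}_{L_{\tau}'}[G_n] \simeq \mathcal{O}_{L_{\tau}'} \otimes_{\Z_p} \varprojlim_n \Z_p[G_n] = \mathcal{O}_{L_{\tau}'} \otimes_{\Z_p} \Lambda(\mathcal{G}) = \Lambda^{\mathcal{O}_{L_{\tau}'}}(\mathcal{G})
\]
canonically as $\Lambda^{\mathcal{O}_{L_{\tau}'}}(\mathcal{G})$-modules. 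Composing with $\varprojlim_n \beta_{\tau,n}$ exhibits $\Lambda_{\tau}(\mathcal{G})_{\varphi}$ as free of rank one, as claimed. There is no real obstacle here: all the content is already packaged in Corollary \ref{cor:phi-limits}; the only thing to verify is the harmless commutation of the finite free extension of scalars $\mathcal{O}_{L_{\tau}'} \otimes_{\Z_p} -$ with the inverse limit, which is immediate.
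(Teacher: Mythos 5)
Your argument is correct and is exactly the route the paper intends: the corollary is stated without proof precisely because it follows by taking inverse limits of the compatible isomorphisms $\beta_{\tau,n}$ from the first commutative diagram of Corollary \ref{cor:phi-limits}. Your additional check that $\varprojlim_n \mathcal{O}_{L_{\tau}'}[G_n] \simeq \Lambda^{\mathcal{O}_{L_{\tau}'}}(\mathcal{G})$ is harmless and fills in the only detail the paper leaves implicit.
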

 
 For each $n \in \N$ we have by Lemma \ref{lem:rho-image} that
 $\rho_n$ induces an injective map
 \[
	 \rho_{n}: \mathcal{O}_{L_n} \longrightarrow 
	 \bigoplus_{\tau \in \Sigma(K)} \mathcal{O}_{L_{\tau,n}}[G_n]_{\varphi}.
 \]
 Taking projective limits yields
 an embedding
 \[
	 \rho_{\infty}: \varprojlim_n \mathcal{O}_{L_n} \longrightarrow
	 \bigoplus_{\tau \in \Sigma(K)} \Lambda_{\tau}(\mathcal{G})_{\varphi}.
 \]
 As we have shown above, each choice of 
 $b \in \varprojlim_n \mathcal{O}_{\Q_{p,n}}$ as in Lemma
 \ref{lem:limits-IBG} defines an isomorphism of $\Lambda(\mathcal{G})$-modules
 \[
	 \beta_{\infty}: \bigoplus_{\tau \in \Sigma(K)} \Lambda_{\tau}(\mathcal{G})_{\varphi}
	 \stackrel{\simeq}{\longrightarrow} \bigoplus_{\tau \in \Sigma(K)}
	 \Lambda^{\mathcal{O}_{L_{\tau}'}}(\mathcal{G}).
 \]
 The composite map $\beta_{\infty} \circ \rho_{\infty}$ 
 induces an isomorphism
 of $\Q_p^c \otimes_{\Z_p} \Lambda(\mathcal{G})$-modules
 \begin{equation} \label{eqn:alpha_infty}
	 \alpha_{\infty}:
	 \Q_p^c \otimes_{\Z_p} \varprojlim_n  \mathcal{O}_{L_n}
	 \stackrel{\simeq}{\longrightarrow} \Q_p^c \otimes_{\Z_p} H_{L_{\infty}}.
 \end{equation}
 
 To see this, it suffices to note that $\alpha_{\infty}$ is the
 composite of the following $\Q_p^c \otimes_{\Z_p} \Lambda(\Gamma)$-module
 isomorphisms:
 \[
	\Q_p^c \otimes_{\Z_p} \varprojlim_n \mathcal{O}_{L_n}
	\simeq \Q_p^c \otimes_{\Z_p} \Lambda^{\mathcal{O}_{L'}} (\Gamma)
	\simeq \bigoplus_{\sigma' \in \Sigma(L')} 
		\Q_p^c \otimes_{\Z_p} \Lambda(\Gamma)
	\simeq \Q_p^c \otimes_{\Z_p} H_{L'_{\infty}}
	= \Q_p^c \otimes_{\Z_p} H_{L_{\infty}}.
 \] 
 
 The map $\alpha_{\infty}$ depends on the choices of 
 $b \in \varprojlim_n \mathcal{O}_{\Q_{p,n}}$
 and of the lifts $\hat{\tau}$
 of $\tau \in \Sigma(K)$. 
 
 \begin{lemma} \label{lem:dependence-b-tau}
 	Let $\tilde b \in \varprojlim_n \mathcal{O}_{\Q_{p,n}}$ be a second choice
 	of system of normal integral basis generators as in Lemma
 	\ref{lem:limits-IBG}. 
 	Let $\hat{\tilde \tau}: L_{\infty}
 	\hookrightarrow \Q_p^c$ be lifts of $\tau \in \Sigma(K)$. 	
 	These choices define an isomorphism of 
 	$\Q_p^c \otimes_{\Z_p} \Lambda(\mathcal{G})$-modules
 	$\tilde\alpha_{\infty}$ as in \eqref{eqn:alpha_infty} above.
 	Then 
 	\[
	 	\left[\Q_p^c \otimes_{\Z_p} H_{L_{\infty}}, \tilde\alpha_{\infty} \circ
	 	\alpha_{\infty}^{-1} \right] \in 
	 	K_1(\Q_p^c\otimes_{\Z_p} \Lambda(\mathcal{G}))
 	\]
 	maps to zero in $K_0(\Lambda(\mathcal{G}),\Q_p^c\otimes_{\Z_p} \Lambda(\mathcal{G}))$.
 \end{lemma}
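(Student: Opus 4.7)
The plan is to use the long exact sequence of relative $K$-theory \eqref{eqn:long-exact-seq} to reduce the claim to exhibiting $\tilde\alpha_\infty \circ \alpha_\infty^{-1}$ as the scalar extension along $\Z_p \hookrightarrow \Q_p^c$ of a $\Lambda(\mathcal{G})$-linear automorphism of the free $\Lambda(\mathcal{G})$-module $H_{L_\infty}$. Once such a lift is produced, the class $[\Q_p^c \otimes_{\Z_p} H_{L_\infty}, \tilde\alpha_\infty \circ \alpha_\infty^{-1}]$ lies in the image of $K_1(\Lambda(\mathcal{G})) \to K_1(\Q_p^c \otimes_{\Z_p} \Lambda(\mathcal{G}))$, hence in the kernel of the connecting map to $K_0(\Lambda(\mathcal{G}), \Q_p^c \otimes_{\Z_p} \Lambda(\mathcal{G}))$.

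To build the integral lift I would factor the change of data into two modifications: (a) change the lifts $\hat\tau \mapsto \hat{\tilde\tau}$ while keeping $b$ fixed; (b) change $b \mapsto \tilde b$ while keeping the lifts fixed. For (a), any two lifts of a fixed $\tau \in \Sigma(K)$ differ by a unique $g_\tau \in \mathcal{G}$, say $\hat{\tilde\tau} = \hat\tau \circ g_\tau^{-1}$. A direct computation using $\rho_{\tau,n}(x) = \sum_{g \in G_n} \hat\tau(g(x)) g^{-1}$ gives $\tilde\rho_{\tau,n}(x) = \rho_{\tau,n}(x) \cdot g_{\tau,n}^{-1}$, where $g_{\tau,n}$ denotes the image of $g_\tau$ in $G_n$. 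Tracking this through $\beta_\infty$ (while accounting for the simultaneous conjugation $\phi \mapsto g_\tau^{-1}\phi g_\tau$ of the $\varphi$-invariance condition) yields, on the $\tau$-th summand of $\bigoplus_\tau \Lambda^{\mathcal{O}_{L'_\tau}}(\mathcal{G}) \simeq H_{L_\infty}$, right multiplication by an element of $\mathcal{G}$. This automorphism is integral and commutes with the left $\Lambda(\mathcal{G})$-action.

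For (b), the inverse limit $\varprojlim_n \mathcal{O}_{\Q_{p,n}}$ is a free $\Lambda(\Gamma_{\Q_p})$-module of rank one (with $\Gamma_{\Q_p} := \Gal(\Q_{p,\infty}/\Q_p)$), and both $b$ and $\tilde b$ are free generators, so $\tilde b = u \cdot b$ for some unique unit $u \in \Lambda(\Gamma_{\Q_p})^\times$. Unwinding the constructions of $B_{\tau,n}$ and $\delta_{\tau,n}$ in Proposition \ref{prop:phi-iso} and Corollary \ref{cor:phi-limits}, the effect of this change on $\beta_{\tau,\infty}$ is post-composition with multiplication by an integral unit in $\Lambda^{\mathcal{O}_{L'_\tau}}(\mathcal{G})^\times$ that is canonically determined by $u$. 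Composing (a) and (b) produces the desired integral lift.

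The main obstacle I anticipate lies in step (a): although the submodule $\mathcal{O}_{L_{\tau,n}}[G_n]_\varphi$ itself depends on the choice of lift through the rule $\phi \leftrightarrow \hat\tau\phi\hat\tau^{-1}$, one must verify that the compositions with $\beta_\infty$ and $\tilde\beta_\infty$ nevertheless assemble into a genuinely $\Lambda(\mathcal{G})$-linear integral automorphism of $H_{L_\infty}$. Once this bookkeeping --- together with the analogous but simpler verification for (b) --- is completed, the lemma follows.
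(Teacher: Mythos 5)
Your overall strategy --- show the class lies in the image of $K_1(\Lambda(\mathcal{G})) \rightarrow K_1(\Q_p^c \otimes_{\Z_p} \Lambda(\mathcal{G}))$ and then invoke the long exact sequence --- is exactly the paper's. The gap is in how you propose to achieve this: you want to lift $\tilde\alpha_{\infty} \circ \alpha_{\infty}^{-1}$ to a genuine $\Lambda(\mathcal{G})$-linear automorphism of $H_{L_{\infty}}$ itself, and that is too strong. On the $\tau$-component, $\alpha_{\infty}$ is right multiplication by the inverse of the resolvent $\gamma_{\tau,n}(1) = \sum_i \phi^{-i}(b_{\tau,n+n_0})\phi_n^i$ (composed with $\rho_{\tau,n}$), so the component multiplier of $\tilde\alpha_{\infty}\circ\alpha_{\infty}^{-1}$ is essentially $\gamma_{\tau,n}(1)\, g_{\tau,n}^{-1}\, \tilde\gamma_{\tau,n}(1)^{-1}$. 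Its coefficients lie in $\mathcal{O}_{\Q_{p,m}}$ for large $m$, not in $\Z_p$, so it does not preserve the $\Lambda(\mathcal{G})$-lattice $H_{L_{\infty}}$; your step (b) conclusion (``an integral unit in $\Lambda^{\mathcal{O}_{L_{\tau}'}}(\mathcal{G})^{\times}$'') already concedes this, and a unit of $\Lambda^{\mathcal{O}_{L_{\tau}'}}(\mathcal{G})$ is not enough. Moreover, in step (a) the claim that the change of lift produces right multiplication by an element of $\mathcal{G}$ requires $g_{\tau,n}$ to commute with $\gamma_{\tau,n}(1)$, which fails for nonabelian $\mathcal{G}$ since $\gamma_{\tau,n}(1)\,g_{\tau,n}^{-1}\,\gamma_{\tau,n}(1)^{-1}$ is then not a group element. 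So the obstacle you yourself flag at the end is real and is not overcome by the proposed bookkeeping.

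What the paper does instead is weaker but sufficient: it never descends the automorphism, only its $K_1$-class. Since the automorphism is diagonal with respect to the decomposition indexed by $\Sigma(K)$, its class in $K_1(\Q_p^c \otimes_{\Z_p}\Lambda(\mathcal{G}))$ is represented by the \emph{product} $\nu_n := \prod_{\tau}\gamma_{\tau,n}(1)$ against $\tilde\nu_n$ (the group elements $g_{\tau,n}$ already lie in $\Lambda(\mathcal{G})^{\times}$ and are harmless). One then computes the Galois transformation law $\omega(\nu_n) = \nu_n\,\phi_n^{z(\omega)}$ for $\omega \in G_{\Q_p}$, with the \emph{same} exponent $z(\omega)$ for $\tilde\nu_n$, so that the ratio $\tilde\nu_n^{-1}\nu_n$ is $G_{\Q_p}$-invariant and hence lies in $\Z_p[\Gamma_{n+n_0}]^{\times}$; passing to the limit gives an element of $\Lambda(\Gamma)^{\times} \subseteq \Lambda(\mathcal{G})^{\times}$ mapping to the class in question. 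This Galois descent of the product over all $\tau \in \Sigma(K)$ is the essential mechanism, and it is absent from your argument. To repair your proof you would need to replace ``lift the automorphism'' by ``lift its class in $K_1$'' and supply this invariance computation (together with the fact, from \cite[Proposition 4.3, p.\ 30]{MR717033}, that the resolvents are units).
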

 
 \begin{proof}
 	Fix an integer $n \geq 0$ and let $\tau \in \Sigma(K)$.
 	The inverse of the isomorphism $\beta_{\tau,n}$ is given by 
 	the $\mathcal O_{L_{\tau}'}[G_n]$-linear map
 	\begin{eqnarray*}
 		\gamma_{\tau,n}: \mathcal O_{L_{\tau}'}[G_n] & \longrightarrow &
 		\mathcal{O}_{L_{\tau,n}}[G_n]_{\varphi}\\
 		1 & \mapsto & \sum_{i=0}^{p^{n+n_0}-1} \phi^{-i}(b_{\tau,n+n_0}) \phi_n^i,
 	\end{eqnarray*}
 	where we have used the notation of the 
 	proof of Corollary \ref{cor:phi-limits}.
 	Then we have for sufficiently large $m \geq n$ that
 	\[
	 	\gamma_{\tau,n}(1) 
	 	\in \mathcal{O}_{\Q_{p,m}}[\Gamma_{n + n_0}]^{\times}
 	\]
 	by \cite[Proposition 4.3, p.\ 30]{MR717033}.
 	Let $\omega \in G_{\Q_p}$ be arbitrary. We may write
 	$\omega \hat\tau = \hat{\tau}' \omega_{\tau}$ for some
 	$\tau' \in \Sigma(K)$ and some
 	$\omega_{\tau} \in G_K$. We then have an equality
 	\[
	 	\omega(\gamma_{\tau,n}(1)) = 
	 	\gamma_{\tau',n}(1) \phi_n^{z_{\tau}(\omega)},
 	\]
 	where $z_{\tau}(\omega) \in \Z$ is an integer such that
 	$\omega_{\tau}|_{K_{n+n_0}} = \phi_K^{z_{\tau}(\omega)}|_{K_{n+n_0}}$.
 	We let $z(\omega) := \sum_{\tau \in \Sigma(K)} z_{\tau}(\omega)$ so that
 	$\Ver_{K/{\Q_p}}(\omega) |_{K_{n+n_0}} = \phi_K^{z(\omega)} |_{K_{n+n_0}}$.
 	We define
 	\begin{equation} \label{eqn:nu_n}
	 	\nu_n := \prod_{\tau \in \Sigma(K)} \gamma_{\tau,n}(1)
	 	\in \mathcal{O}_{\Q_{p,m}}[\Gamma_{n + n_0}]^{\times}
 	\end{equation}
 	so that we have an equality
 	\begin{equation} \label{eqn:Galois-on-nu}
 	\omega(\nu_n) = \nu_n \cdot \phi_n^{z(\omega)}.
 	\end{equation} 
 	Replacing $b$ by $\tilde b$ and $\hat \tau$ by
 	$\hat{\tilde \tau}$ for each $\tau \in \Sigma(K)$
 	we obtain in a similar way for each $n \geq 0$ an element
 	$\tilde \nu_n \in \mathcal{O}_{\Q_{p,m}}[\Gamma_{n + n_0}]^{\times}$
 	such that \eqref{eqn:Galois-on-nu} holds with $\nu_n$ replaced by
 	$\tilde \nu_n$.
 	It now follows from \eqref{eqn:Galois-on-nu} that
 	$\tilde \nu_n^{-1} \cdot \nu_n$ is invariant under the action of $G_{\Q_p}$ 
 	and thus
 	\begin{equation} \label{eqn:different-nus}
	 	\tilde \nu_n^{-1} \cdot \nu_n \in
	 	\Z_p[\Gamma_{n + n_0}]^{\times}
 	\end{equation}
 	for each $n$. It follows that 
 	\[
	 	\varprojlim_n (\tilde \nu_n^{-1} \cdot \nu_n) \in
	 	\Lambda(\Gamma)^{\times} \subseteq \Lambda(\mathcal{G})^{\times}
 	\]
 	is a pre-image of $\left[\Q_p^c \otimes_{\Z_p} H_{L_{\infty}},
 	\tilde\alpha_{\infty} \circ \alpha_{\infty}^{-1} \right]$ under
 	the composite map
 	\[
 	\Lambda(\mathcal{G})^{\times} \longrightarrow K_1(\Lambda(\mathcal{G}))
 	\longrightarrow K_1(\Q_p^c \otimes_{\Z_p} \Lambda(\mathcal{G})).
 	\]
 	The long exact sequence of relative $K$-theory now implies the claim.
 \end{proof}
 
 \subsection{The unramified term} \label{subsec:unramified-term}
 
 Let us denote the ring of integers in the maximal tamely ramified extension
 of $\Q_p$ in $\Q_p^c$ by $\mathcal{O}_p^t$. For a finite group $G$
 we let $\iota$ be the scalar extension map
 $K_0(\Z_p[G], \Q_p^c) \rightarrow K_0(\mathcal{O}_p^t[G], \Q_p^c)$.
 
 Now let $L/K$ be a Galois extension of $p$-adic fields with Galois group $G$.
 Then by \cite[Proposition 2.12]{MR2078894} there exists a unique
 $U_{L/K} \in K_0(\Z_p[G], \Q_p^c)$ satisfying the following two properties
 ($U_{L/K}$ is called the \emph{unramified term} attached to $L/K$).
 \begin{enumerate}
 	\item 
 	$\iota(U_{L/K}) = 0$.
 	\item
 	If $u = (u_{\chi})_{\chi \in \Irr(G)} \in 
 	\prod_{\chi \in \Irr(G)} (\Q_p^c)^{\times}$ is any pre-image of $U_{L/K}$
 	under $\partial_p \circ \Nrd_{\Q_p^c[G]}^{-1}$, then
 	$\omega(u_{\omega^{-1} \circ \chi}) = u_{\chi} 
 	\det_{\ind^{\Q_p}_K \chi}(\omega^{\ur})$ for every $\omega \in G_{\Q_p}$.
 \end{enumerate}
  
 Now recall the setting and notation of subsection \ref{subsec:embeddings}.
 We again assume that $\phi \in \Gamma$ maps to $\phi_K$ under $\mathcal{G}
 \twoheadrightarrow \Gamma_K$.
 We let $u_n$ be a pre-image of $U_{L_n/K}$ as in (ii) above, which by (i)
 actually belongs to $\Nrd_{\Q_p^c[G_n]}(\mathcal{O}_p^t[G_n]^{\times})$.
 Recall the definition \eqref{eqn:nu_n} of $\nu_n$.
 We define
 \[
	 u_n' := u_n \cdot \Nrd_{\Q_p^c[G_n]}(\nu_n)^{-1} \in
	 \Nrd_{\Q_p^c[G_n]}(\mathcal{O}_p^t[G_n]^{\times}).
 \]
 We write $u_n' = (u_{n,\chi}')_{\chi \in \Irr(G_n)}$
 and let $\omega \in G_{\Q_p}$. Then (ii) above,
 the Galois action \eqref{eqn:Galois-on-nu} on $\nu_n$ 
 and \eqref{eqn:Det(g)} imply the first equality of
 \begin{eqnarray*}
	 \omega(u_{n,\omega^{-1} \circ \chi}') & = & u_{n,\chi}' 
     \det_{\ind^{\Q_p}_K \chi}(\omega^{\ur})
     \det_{\chi}(\phi_n^{z(\omega)})^{-1}\\
     & = & u_{n,\chi}' \epsilon_{K/\Q_p}(\omega^{\ur})^{\chi(1)}
     \det_{\chi}(\Ver_{K/\Q_p}(\omega^{\ur})
     \phi_n^{-z(\omega)}).
 \end{eqnarray*}
 The second equality is \eqref{eqn:Ver-chi-relation}.
 It is clear from the definition of $z(\omega)$ that the map
 $G_{\Q_p} \rightarrow \Gamma_{n+n_0}'$, $\omega \mapsto
 \phi_n^{z(\omega)}$ is actually a group homomorphism that only depends
 upon $\omega^{\ur}$. Moreover, the restriction of
 $\Ver_{K/\Q_p}(\omega^{\ur}) \phi_n^{-z(\omega)}$ to $K_{n+n_0}$
 is trivial. It follows that there is a positive integer $k$,
 independent of $n$, such that $u_n'$ is invariant under
 $\omega^k$ for all $\omega \in G_{\Q_p}$ (for instance we may
 take $k = 2|H|$). The following result is now implied by
 Theorem \ref{thm:fixed-points}.
 
 \begin{lemma} \label{lem:un'}
 	There is a finite unramified extension $F$ of $\Q_p$ such that
 	$u_n'$ belongs to $\Nrd_{F[G_n]}(\mathcal{O}_F[G_n]^{\times})$
 	for all $n$.
 \end{lemma}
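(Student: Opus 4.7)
The plan is to invoke the Izychev--Venjakob fixed-point theorem (Theorem \ref{thm:fixed-points}) with $E = \Q_p^t$, the maximal tamely ramified extension of $\Q_p$ inside $\Q_p^c$. First I will check that $u_n'$ actually lies in $\Nrd_{\Q_p^t[G_n]}(\mathcal{O}_p^t[G_n]^{\times})$; this should be straightforward, since both $u_n$ and $\nu_n$ have preimages in $\mathcal{O}_p^t[G_n]^{\times}$ (noting that $\mathcal{O}_{\Q_{p,m}} \subseteq \mathcal{O}_p^t$), and the reduced norm of any such element automatically takes values in $\zeta(\Q_p^t[G_n])^{\times}$.

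The heart of the argument will be to exhibit an open subgroup $\mathcal{H} \leq G_{\Q_p}$ that contains the inertia subgroup $I_{\Q_p}$, is independent of $n$, and fixes every $u_n'$. The key point I will establish is that $I_{\Q_p}$ itself already fixes $u_n'$. For any $\omega \in I_{\Q_p}$ one has $\omega^{\ur} = 1$ by definition, so the factors $\epsilon_{K/\Q_p}(\omega^{\ur})^{\chi(1)}$ and $\det_{\chi}(\Ver_{K/\Q_p}(\omega^{\ur}))$ in the Galois transformation law derived just above this lemma are both equal to $1$; moreover, since $\omega \mapsto \phi_n^{z(\omega)}$ is a homomorphism depending only on $\omega^{\ur}$ (as observed in the preceding paragraph), the third factor $\det_{\chi}(\phi_n^{-z(\omega)})$ is also trivial. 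The transformation law therefore collapses to $\omega(u_{n,\omega^{-1}\circ\chi}') = u_{n,\chi}'$, so $I_{\Q_p}$ acts trivially on $u_n'$. Combining this with the established invariance under $\omega^k$ for every $\omega \in G_{\Q_p}$, I will take
\[
\mathcal{H} := I_{\Q_p} \cdot \overline{\langle \phi^k \rangle} \leq G_{\Q_p},
\]
with $\phi$ any lift of a Frobenius; this subgroup is open, contains $I_{\Q_p}$, and is manifestly independent of $n$.

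To conclude, since $u_n' \in \zeta(\Q_p^t[G_n])^{\times}$ the action of $G_{\Q_p}$ on $u_n'$ factors through $\Gal(\Q_p^t/\Q_p)$, so the image of $\mathcal{H}$ there is an open subgroup containing the tame inertia; its fixed field $F$ is then the finite unramified extension of $\Q_p$ of degree $k$. Theorem \ref{thm:fixed-points} will then yield
\[
u_n' \in \bigl(\Nrd_{\Q_p^t[G_n]}(\mathcal{O}_p^t[G_n]^{\times})\bigr)^{\mathcal{H}} = \Nrd_{F[G_n]}(\mathcal{O}_F[G_n]^{\times}),
\]
which gives the lemma. The principal obstacle will be the verification that inertia acts trivially on $u_n'$: the bare invariance under $\omega^k$ alone does not supply an open subgroup of $\Gal(\Q_p^t/\Q_p)$ containing the full tame inertia, so showing that each of the three factors in the Galois transformation law vanishes for $\omega \in I_{\Q_p}$ is crucial.
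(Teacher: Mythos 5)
Your proof is correct and follows the paper's intended argument: the paper likewise deduces the lemma from the Galois transformation law for $u_n'$ derived just before the statement together with the Izychev--Venjakob fixed point theorem (Theorem \ref{thm:fixed-points}). Your explicit verification that the full inertia subgroup fixes $u_n'$ --- because every factor in the transformation law depends only on $\omega^{\ur}$, which is trivial for $\omega$ in inertia --- correctly supplies the detail the paper leaves implicit when it passes from ``$u_n'$ is invariant under $\omega^k$ for all $\omega$'' to the hypotheses of the fixed point theorem.
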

 
 We now define a variant of the unramified term by
 \[
	 U_{L_n/K}' := \partial_p \circ \Nrd_{\Q_p^c[G]}^{-1}(u_n')
	 \in K_0(\Z_p[G_n], \Q_p^c)
 \]
 and note that this only depends upon $L_n/K$ 
 by \eqref{eqn:different-nus}. 
 It follows from \cite[Lemma 2.13]{MR2078894} and the definition of
 $\nu_n$ that we have $\quot^{G_{n+1}}_{G_n}(U_{L_{n+1}/K}')
 = U_{L_n/K}'$ for all $n$.
 Moreover, $U_{L_n/K}'$ maps to zero 
 in $K_0(\mathcal O_F[G_n], \Q_p^c)$ by Lemma \ref{lem:un'}
 and thus has a pre-image $\widehat U_{L_n/K}'$ 
 in $K_0(\Z_p[G_n], \mathcal{O}_F[G_n])$ by Lemma \ref{lem:SK1}.
 As $SK_1(\mathcal{O}_F[G_n])$ is finite for all $n$,
 we may choose these pre-images such that
 $\quot^{G_{n+1}}_{G_n}(\widehat U_{L_{n+1}/K}')
 = \widehat U_{L_n/K}'$ for all $n$. Via Proposition
 \ref{prop:limit-relative-K0} we now define
 \[
	 \widehat U_{L_{\infty}/K}' := \varprojlim_n \widehat U_{L_n/K}'
	 \in K_0(\Lambda(\mathcal{G}), \Lambda^{\mathcal{O}_F}(\mathcal{G}))
 \]
 which is well-defined up to an element in the image of
 $SK_1(\Lambda^{\mathcal{O}_F}(\mathcal{G}))$.
 We let $U_{L_{\infty}/K}'$ be the image of $\widehat U_{L_{\infty}/K}'$
 under the natural map
 $K_0(\Lambda(\mathcal{G}), \Lambda^{\mathcal{O}_F}(\mathcal{G}))
 \rightarrow K_0(\Lambda(\mathcal{G}), \Q_p^c \otimes_{\Z_p} \Lambda(\mathcal{G}))$.
 The following is now an immediate consequence of Lemma \ref{lem:Nrd-SK1}.
 
 \begin{lemma} \label{lem:unramified-term-wdef}
 	The element $U_{L_{\infty}/K}' \in K_0(\Lambda(\mathcal{G}), \Q_p^c \otimes_{\Z_p} \Lambda(\mathcal{G}))$ is well-defined up to
 	the image of
 	an element $x \in K_1(\Q_p^c \otimes_{\Z_p} \Lambda(\mathcal{G}))$
 	such that $\Nrd_{\mathcal Q^c(\mathcal{G})}(x) = 1$.
 \end{lemma}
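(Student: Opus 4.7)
The plan is to identify precisely the ambiguity in the construction of $U_{L_\infty/K}'$ and then transport it through a commutative square so that Lemma \ref{lem:Nrd-SK1} applies directly. By construction two admissible compatible systems $(\widehat U_{L_n/K}')_n$ and $(\widehat U_{L_n/K}'')_n$ differ componentwise by elements of $SK_1(\mathcal{O}_F[G_n])$, and hence produce two inverse limits $\widehat U_{L_\infty/K}'$ and $\widehat U_{L_\infty/K}''$ whose difference in $K_0(\Lambda(\mathcal{G}), \Lambda^{\mathcal{O}_F}(\mathcal{G}))$ is, via the sequence \eqref{eqn:long-exact-seq}, the image under the connecting homomorphism of some element $y$ of $SK_1(\Lambda^{\mathcal{O}_F}(\mathcal{G}))$, the latter being viewed as a subgroup of $K_1(\Lambda^{\mathcal{O}_F}(\mathcal{G}))$ thanks to Lemma \ref{lem:Nrd-SK1}.

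Next I invoke naturality of the long exact sequence of relative $K$-theory for the ring extension $\Lambda^{\mathcal{O}_F}(\mathcal{G}) \hookrightarrow \Q_p^c \otimes_{\Z_p} \Lambda(\mathcal{G})$, which yields a commutative square
\[
\xymatrix{
K_1(\Lambda^{\mathcal{O}_F}(\mathcal{G})) \ar[r] \ar[d] & K_0(\Lambda(\mathcal{G}), \Lambda^{\mathcal{O}_F}(\mathcal{G})) \ar[d] \\
K_1(\Q_p^c \otimes_{\Z_p} \Lambda(\mathcal{G})) \ar[r] & K_0(\Lambda(\mathcal{G}), \Q_p^c \otimes_{\Z_p} \Lambda(\mathcal{G})),
}
\]
whose horizontal maps are the connecting homomorphisms and whose right vertical arrow is the scalar extension map appearing in the definition of $U_{L_\infty/K}'$. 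Letting $x \in K_1(\Q_p^c \otimes_{\Z_p} \Lambda(\mathcal{G}))$ denote the image of $y$ down the left column, the square shows that the difference of the two candidates for $U_{L_\infty/K}'$ is precisely the image of $x$ under the bottom connecting map, which is the required form of ambiguity.

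It remains to verify $\Nrd_{\mathcal{Q}^c(\mathcal{G})}(x) = 1$. By Lemma \ref{lem:Nrd-SK1} we have $\Nrd_{\mathcal{Q}^F(\mathcal{G})}(y) = 1$ in $\zeta(\mathcal{Q}^F(\mathcal{G}))^\times$. Since $\mathcal{Q}^c(\mathcal{G}) = \Q_p^c \otimes_{\Q_p} \mathcal{Q}(\mathcal{G})$ is obtained from $\mathcal{Q}^F(\mathcal{G})$ by scalar extension and the reduced norm is defined componentwise on the Wedderburn decomposition, the reduced norm is compatible with the induced map on $K_1$; hence the image of $y$ in $K_1(\mathcal{Q}^c(\mathcal{G}))$, which coincides with the image of $x$ under the injection $K_1(\Q_p^c \otimes_{\Z_p} \Lambda(\mathcal{G})) \hookrightarrow K_1(\mathcal{Q}^c(\mathcal{G}))$ noted in \S\ref{subsec:K-of-Iwasawa-algebras}, also has trivial reduced norm. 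The only step requiring mild care is this last compatibility of reduced norms under scalar extension, but no serious obstacle arises.
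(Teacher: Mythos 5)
Your proposal is correct and is essentially the paper's argument: the paper dismisses the lemma as "an immediate consequence of Lemma \ref{lem:Nrd-SK1}", and what you have written is precisely the spelled-out version of that — the ambiguity lives in $SK_1(\Lambda^{\mathcal{O}_F}(\mathcal{G}))$, is transported by functoriality of the relative $K$-theory sequences, and has trivial reduced norm by Lemma \ref{lem:Nrd-SK1} together with compatibility of $\Nrd$ with scalar extension. No discrepancy to report.
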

 
 \begin{remark}
 	As follows from Remark \ref{rmk:Nrd-injective}, the map
 	$\Nrd_{\mathcal Q^c(\mathcal{G})}$ conjecturally is injective
 	on $K_1(\mathcal Q^c(\mathcal{G}))$. We have already observed that
 	the natural map 
 	$K_1(\Q_p^c \otimes_{\Z_p} \Lambda(\mathcal{G})) \rightarrow
 	K_1(\mathcal{Q}^c(\mathcal{G}))$ is injective.
 	Thus the element $U_{L_{\infty}/K}'$ is at least conjecturally
 	well-defined.
 \end{remark}
 
 \subsection{Definition of the cohomological term}
 By Proposition \ref{prop:log-limits} and \eqref{eqn:alpha_infty}
 the composite map $\phi_{\infty} := \beta_{\infty} \circ \rho_{\infty}
 \circ \log_{\infty}$ induces and isomorphism of 
 $\Q_p^c \otimes_{\Z_p} \Lambda(\mathcal{G})$-modules
 \begin{equation} \label{eqn:phi-infty}
	 \phi_{\infty}: \Q_p^c \otimes_{\Z_p} U^1(L_{\infty})
	 \stackrel{\simeq}{\longrightarrow} \Q_p^c \otimes_{\Z_p} H_{L_{\infty}}.
 \end{equation}
 By Corollary \ref{cor:coh-at-infty-II} it likewise induces an isomorphism
 of $\mathcal{Q}^c(\mathcal{G})$-modules
 \[
	 \phi_{\infty}: \mathcal{Q}^c(\mathcal{G}) \otimes_{\Lambda(\mathcal{G})}
	 H^0(K^{\bullet}_{L_{\infty}})
	 \stackrel{\simeq}{\longrightarrow} 
	 \mathcal{Q}^c(\mathcal{G}) \otimes_{\Lambda(\mathcal{G})}
	 H^1(K^{\bullet}_{L_{\infty}})
 \]
 
 \begin{definition}
 	Let $L/K$ be a finite Galois extension of $p$-adic fields.
 	Let $L_{\infty}$ be the unramified $\Z_p$-extension of $L$ and let
 	$\mathcal{G} = \Gal(L_{\infty} / K)$. We define the \emph{cohomological term}
 	attached to the extension $L_{\infty}/K$ to be
 	\[
	 	C_{L_{\infty}/K} := 
	 	\chi_{\Lambda(\mathcal{G}), \mathcal{Q}^c(\mathcal{G})}(K_{L_{\infty}}^{\bullet}, \phi_{\infty}^{-1})
	 	\in K_0(\Lambda(\mathcal{G}), \mathcal{Q}^c(\mathcal{G})).
 	\]
 \end{definition}
 
 Now Lemmas \ref{lem:dependence-b-tau} and \ref{lem:unramified-term-wdef}
 and the definition of $U_{L_{\infty}/K}'$ imply the following.
 
 \begin{lemma} \label{lem:well-defined}
 	The element
 	\[
	 	C_{L_{\infty}/K} + U_{L_{\infty}/K}' \in K_0(\Lambda(\mathcal{G}), \mathcal{Q}^c(\mathcal{G}))
 	\]
 	is well-defined up to the image of
 	an element $x \in K_1(\Q_p^c \otimes_{\Z_p} \Lambda(\mathcal{G}))$
 	such that $\Nrd_{\mathcal Q^c(\mathcal{G})}(x) = 1$.
 \end{lemma}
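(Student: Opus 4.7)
The plan is to verify that each of $C_{L_{\infty}/K}$ and $U'_{L_{\infty}/K}$ contributes only controlled ambiguity to the sum, so that the combined ambiguity is precisely the image of some $x \in K_1(\Q_p^c \otimes_{\Z_p} \Lambda(\mathcal{G}))$ with $\Nrd_{\mathcal{Q}^c(\mathcal{G})}(x) = 1$. First I would isolate the sources of non-canonicity. Both $C_{L_{\infty}/K}$ and $U'_{L_{\infty}/K}$ depend on the compatible system $b \in \varprojlim_n \mathcal{O}_{\Q_{p,n}}$ of Lemma \ref{lem:limits-IBG} and on the lifts $\hat\tau$ of the embeddings $\tau \in \Sigma(K)$. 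In addition, $U'_{L_{\infty}/K}$ depends on the compatible system of pre-images $\widehat U'_{L_n/K}$ under the map of Lemma \ref{lem:SK1}(i), which introduces an $SK_1(\Lambda^{\mathcal{O}_F}(\mathcal{G}))$-ambiguity. This last source is already disposed of by Lemma \ref{lem:unramified-term-wdef}, which identifies it, via Lemma \ref{lem:Nrd-SK1}, with the image of an element $x \in K_1(\Q_p^c \otimes_{\Z_p} \Lambda(\mathcal{G}))$ satisfying $\Nrd_{\mathcal{Q}^c(\mathcal{G})}(x) = 1$.

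Next I would show that the $(b, \{\hat\tau\})$-dependence of $C_{L_{\infty}/K}$ is trivial in $K_0(\Lambda(\mathcal{G}), \mathcal{Q}^c(\mathcal{G}))$. For a second choice $(\tilde b, \{\hat{\tilde\tau}\})$, the two trivializations are related by $\tilde\phi_\infty = (\tilde\alpha_\infty \circ \alpha_\infty^{-1}) \circ \phi_\infty$, and the standard change-of-trivialization formula for refined Euler characteristics (\S\ref{subsec:refined-Euler}) yields
\[
\tilde C_{L_{\infty}/K} - C_{L_{\infty}/K} = \pm\,\partial_{\Lambda(\mathcal{G}), \mathcal{Q}^c(\mathcal{G})}\!\left([\tilde\alpha_\infty \circ \alpha_\infty^{-1}]\right),
\]
where the class on the right is viewed in $K_1(\mathcal{Q}^c(\mathcal{G}))$ via scalar extension from $K_1(\Q_p^c \otimes_{\Z_p} \Lambda(\mathcal{G}))$. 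By Lemma \ref{lem:dependence-b-tau}, this class admits a pre-image $\varprojlim_n(\tilde\nu_n^{-1}\nu_n) \in \Lambda(\mathcal{G})^\times \subseteq K_1(\Lambda(\mathcal{G}))$, so by exactness of \eqref{eqn:connecting-surj-c} the connecting map annihilates it, and $C_{L_{\infty}/K}$ is independent of the auxiliary choices.

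Third, I would record that $U'_{L_{\infty}/K}$ is already independent of $(b, \{\hat\tau\})$, as noted in \S\ref{subsec:unramified-term} after the definition of $U'_{L_n/K}$: by \eqref{eqn:different-nus} the ratio $\tilde\nu_n^{-1}\nu_n$ lies in $\Z_p[G_n]^\times$ for every $n$, whence $\partial_p[\nu_n\tilde\nu_n^{-1}]$ vanishes in $K_0(\Z_p[G_n], \Q_p^c)$ by exactness of \eqref{eqn:group-ring-K-exact-seq}; since changing $(b, \{\hat\tau\})$ multiplies $u_n'$ by $\Nrd(\nu_n\tilde\nu_n^{-1})$, this shows $\tilde U'_{L_n/K} = U'_{L_n/K}$, and the equality persists in the limit. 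Combining this with the previous paragraph, the sum $C_{L_{\infty}/K} + U'_{L_{\infty}/K}$ inherits only the $SK_1$-ambiguity analysed in the first paragraph, giving exactly the asserted conclusion.

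The main obstacle is merely bookkeeping across the different relative $K$-groups: Lemma \ref{lem:dependence-b-tau} is stated as vanishing in $K_0(\Lambda(\mathcal{G}), \Q_p^c \otimes_{\Z_p} \Lambda(\mathcal{G}))$, whereas $C_{L_{\infty}/K}$ lives in the coarser group $K_0(\Lambda(\mathcal{G}), \mathcal{Q}^c(\mathcal{G}))$. One must invoke the commutative square of connecting homomorphisms associated with the scalar extension $\Q_p^c \otimes_{\Z_p} \Lambda(\mathcal{G}) \hookrightarrow \mathcal{Q}^c(\mathcal{G})$ to transport the vanishing; once this is recorded, the remainder of the argument is a direct combination of the preceding lemmas.
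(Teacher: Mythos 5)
Your argument is correct and is essentially the paper's own (the paper proves this lemma simply by invoking Lemmas \ref{lem:dependence-b-tau} and \ref{lem:unramified-term-wdef} together with the construction of $U'_{L_{\infty}/K}$, exactly the three ingredients you assemble). Your only additions — the explicit change-of-trivialization formula for $C_{L_{\infty}/K}$ and the transport of the vanishing from $K_0(\Lambda(\mathcal{G}), \Q_p^c \otimes_{\Z_p} \Lambda(\mathcal{G}))$ to $K_0(\Lambda(\mathcal{G}), \mathcal{Q}^c(\mathcal{G}))$ via the compatible connecting homomorphisms — are correct bookkeeping that the paper leaves implicit.
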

 
 The following result should be compared with
 Theorem \ref{thm:GGs-in-Z_p} (iii).
 
 \begin{theorem} \label{thm:Galois-on-coh-term}
 	Let $\xi \in K_1(\mathcal{Q}^c(\mathcal{G}))$ be any pre-image
 	of $C_{L_{\infty}/K} + U_{L_{\infty}/K}'$. Then for every
 	$\omega \in G_{\Q_p}$ we have
 	\[
	 	\omega \left(\Det(\xi)(\omega^{-1} \circ \chi)\right) = 
	 	\Det(\xi)(\chi) \cdot \det_{\ind_K^{\Q_p}(\chi)}(\omega^{\ram})^{-1}.
 	\]
 \end{theorem}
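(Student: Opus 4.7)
The plan is to exploit the freedom in choosing the pre-image $\xi$ (which by Lemma \ref{lem:well-defined} is well-defined up to elements of trivial reduced norm) and to work with a distinguished factorization $\xi = \xi^{C} \cdot \xi^{U}$, where $\xi^{U}$ is built from the compatible system of $\Nrd_{\Q_p^c[G_n]}^{-1}(u_{n}')$ under Proposition \ref{prop:limit-relative-K0}, and $\xi^{C}$ is constructed from the trivialization $\phi_{\infty}^{-1}$ of $K_{L_\infty}^{\bullet}$ together with a chosen system of splittings of a projective representative in $\mathcal{C}^b(\PMod(\Lambda(\mathcal{G})))$. Since $\Det$ is a homomorphism, the identity reduces to computing the Galois transformation of each factor separately and combining them.

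For $\xi^{U}$, the defining property (ii) of $U_{L_n/K}$ combined with the Galois action $\omega(\nu_n) = \nu_n \phi_n^{z(\omega)}$ from \eqref{eqn:Galois-on-nu} yields at each finite level the transformation
\[
\omega(u_{n, \omega^{-1} \circ \chi}') = u_{n,\chi}' \cdot \det_{\ind_K^{\Q_p} \chi}(\omega^{\ur}) \cdot \det_{\chi}(\phi_n^{-z(\omega)}).
\]
Translated via Example \ref{ex:Det(g)} and Remark \ref{rem:Det-square}, this describes the $\omega$-action on $\Det(\xi^{U})(\omega^{-1} \circ \chi)$.

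For $\xi^{C}$, each ingredient of $\phi_{\infty} = \beta_{\infty} \circ \rho_{\infty} \circ \log_{\infty}$ transforms explicitly under $\omega \in G_{\Q_p}$: the map $\log_{\infty}$ is equivariant on the nose, the map $\rho_{\infty}$ is governed by the permutation of lifts $\omega \hat\tau = \hat{\tau}' \omega_\tau$ on $\Sigma(K)$, and $\beta_{\infty}$ transforms via the normal basis formula in the proof of Lemma \ref{lem:dependence-b-tau}. Computing the induced change-of-basis on the representing matrix of $\xi^{C}$ and using \eqref{eqn:Ver-chi-relation}, the $\omega$-transformation of $\Det(\xi^{C})(\omega^{-1} \circ \chi)$ is expected to carry two factors: a factor $\det_{\ind_K^{\Q_p}(\chi)}(\omega)^{-1}$ coming from the embedding permutation (assembled from the signature $\epsilon_{K/\Q_p}(\omega)^{\chi(1)}$ on $G_{\Q_p}/G_K$ and the within-orbit contribution $\det_\chi(\Ver_{K/\Q_p}(\omega))^{-1}$), and a compensating factor $\det_{\chi}(\phi_n^{z(\omega)})$ coming from the action on $\nu_n$ via $\beta_\infty$.

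Combining the two contributions, the $\det_{\chi}(\phi_n^{\pm z(\omega)})$ factors cancel, leaving
\[
\det_{\ind_K^{\Q_p} \chi}(\omega^{\ur}) \cdot \det_{\ind_K^{\Q_p}(\chi)}(\omega)^{-1} = \det_{\ind_K^{\Q_p}(\chi)}(\omega^{\ram})^{-1},
\]
where the last equality uses that $\omega = \omega^{\ur} \cdot \omega^{\ram}$ in $G_{\Q_p}^{\ab}$. The main obstacle in this plan is the precise derivation of the factor $\det_{\ind_K^{\Q_p}(\chi)}(\omega)^{-1}$ from $\xi^{C}$: this requires carefully matching the coordinate change coming from the permutation of $\Sigma(K)$ and the twist on $\beta_{\infty}$ against the value of $\Det$ on a representing matrix in $\GL(\mathcal{Q}^c(\mathcal{G}))$, via the isomorphism \eqref{eqn:centre-gamma_chi}. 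A secondary technical point is to promote the resulting augmentation-level identity at each finite level $n$ to the full $\mathcal{Q}^c(\Gamma_K)^{\times}$-valued identity; this follows from $\sharp$-equivariance under type-$W$ twists (which is automatic for both sides) and from Lemma \ref{lem:Det-criterion} applied to integral representatives obtained by clearing denominators.
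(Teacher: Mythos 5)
Your overall architecture coincides with the paper's: decompose the pre-image into an unramified part and a cohomological part, compute the $G_{\Q_p}$-transformation of each at finite level, and promote the finite-level identity to $\mathcal{Q}^c(\Gamma_K)^{\times}$ via Lemma \ref{lem:Det-criterion}. Your treatment of $\xi^U$ and the final cancellation $\det_{\ind_K^{\Q_p}\chi}(\omega^{\ur})\cdot\det_{\ind_K^{\Q_p}\chi}(\omega)^{-1}=\det_{\ind_K^{\Q_p}\chi}(\omega^{\ram})^{-1}$ are exactly what happens in the paper. However, there are two points where your plan is incomplete. First, the step you yourself flag as ``the main obstacle'' --- extracting the factor $\det_{\ind_K^{\Q_p}(\chi)}(\omega)^{-1}$ from the embedding data --- is precisely the content of the proof and is not carried out; the paper does not re-derive it from scratch but reduces to the finite-level statement that a pre-image of $U_{L_n/K}-[\mathcal{O}_{L_n},\rho_n,H_{L_n}]$ transforms by $\det_{\ind_K^{\Q_p}(\psi)}(\omega^{\ram})^{-1}$, which is \cite[Lemma 2.8]{MR2078894} combined with property (ii) of the unramified term. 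Note also that the $\nu_n$-contributions cancel on the nose ($u_n'\cdot$ the $\beta_\infty$-twist gives back $u_n$ and $\rho_n$), so one never needs to track the $\det_\chi(\phi_n^{\pm z(\omega)})$ factors separately.

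Second, and more seriously, you cannot compute the Galois action on $\Det(\xi^C)$ ``ingredient by ingredient'' from $\phi_\infty$ alone, because $C_{L_{\infty}/K}=\chi_{\Lambda(\mathcal{G}),\mathcal{Q}^c(\mathcal{G})}(K_{L_\infty}^{\bullet},\phi_\infty^{-1})$ also depends on the integral complex (the extension class) and on the splittings, and a pre-image in $K_1(\mathcal{Q}^c(\mathcal{G}))$ is a matrix that mixes all of these. The paper's device is equation \eqref{eqn:coh-term-description}: choosing the lattice $\mathcal{L}$ of Proposition \ref{prop:normal-bases-tower} and $X=\log_\infty^{-1}(\mathcal{L})$, one writes $C_{L_\infty/K}=-[\mathcal{L},\alpha_\infty,H_{L_\infty}]+\chi_{\Lambda(\mathcal{G}),\mathcal{Q}(\mathcal{G})}(\Cone(\lambda),0)$, where the cone term lies in $K_0(\Lambda(\mathcal{G}),\mathcal{Q}(\mathcal{G}))$ and therefore admits a pre-image in $K_1(\mathcal{Q}(\mathcal{G}))$, whose image under $\Det$ is Galois-invariant by \eqref{eqn:Det_triangle} and hence drops out of the asserted identity. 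Without this (or an equivalent) separation of the $\mathcal{Q}(\mathcal{G})$-rational part, the reduction of $\Det(\xi^C)$ to the single term $-[\mathcal{L},\alpha_\infty,H_{L_\infty}]$ --- and thus your entire computation for $\xi^C$ --- is not justified. Supplying \eqref{eqn:coh-term-description} and the citation of \cite[Lemma 2.8]{MR2078894} would close the gap.
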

 
 \begin{proof}
	Let us choose an
	$a = (a_n)_n \in \varprojlim_n \mathcal{O}_{L_n}$
	as in Proposition \ref{prop:normal-bases-tower}.
	Then each $a_n$
	generates a normal basis for $L_n/K$. Let $\mathcal{L} \subseteq
	\varprojlim_n \mathcal{O}_{L_n}$ be the full 
	$\Lambda^{\mathcal{O}_K}(\mathcal{G})$-sublattice generated by $a$.
	We put $X := \log_{\infty}^{-1}(\mathcal{L})$ which is contained
	in $\Q_p \otimes_{\Z_p} U^1(L_{\infty})$ by Proposition
	\ref{prop:log-limits}. We may and do choose an $a$ such 
	that $X$ is actually a $\Lambda(\mathcal{G})$-submodule of 
	$U^1(L_{\infty})$. We consider the map of complexes
	\[
		\lambda: X \oplus H_{L_{\infty}}[-1] \longrightarrow
		K_{L_{\infty}}^{\bullet}
	\]
	which on cohomology induces the natural embeddings
	$X \hookrightarrow U^1(L_{\infty}) \simeq H^0(K_{L_{\infty}}^{\bullet})$
	and $H_{L_{\infty}} \hookrightarrow \Z_p \oplus H_{L_{\infty}}
	\simeq H^1(K_{L_{\infty}}^{\bullet})$.
	The cone of $\lambda$ is a perfect complex of
	$\Lambda(\mathcal{G})$-modules whose cohomology groups
	are torsion as $\Lambda(\Gamma)$-modules. It therefore defines an
	element
	\[
		\chi_{\Lambda(\mathcal{G}), \mathcal{Q}(\mathcal{G})}
		(\Cone(\lambda), 0)
		\in K_0(\Lambda(\mathcal{G}), \mathcal{Q}(\mathcal{G}))
	\]
	which has a pre-image in $K_1(\mathcal{Q}(\mathcal{G}))$.
	As the image of $K_1(\mathcal{Q}(\mathcal{G}))$ under $\Det$
	is Galois-invariant and we have equalities
	\begin{eqnarray}
		C_{L_{\infty}/K} & = & [H_{L_{\infty}}, \phi_{\infty}^{-1}, X]
		+ \chi_{\Lambda(\mathcal{G}), \mathcal{Q}(\mathcal{G})}
		(\Cone(\lambda), 0) \nonumber \\
		& = & - [\mathcal{L}, \alpha_{\infty}, H_{L_{\infty}}]
		+ \chi_{\Lambda(\mathcal{G}), \mathcal{Q}(\mathcal{G})}
		(\Cone(\lambda), 0), \label{eqn:coh-term-description}
	\end{eqnarray}
	we may replace $\xi$ by a pre-image $\xi'$ of
	\[
		U'_{L_{\infty}/K} - [\mathcal{L}, \alpha_{\infty}, H_{L_{\infty}}]
		\in K_0(\Lambda(\mathcal{G}), \Q_p^c \otimes_{\Z_p} \Lambda(\mathcal{G})).
	\]
	Then $\Det(\xi')$ belongs to $\Hom^W(R_p(\mathcal{G}), (\Q_p^c 
	\otimes_{\Z_p} \Lambda(\Gamma_K))^{\times})$ and thus
	by Lemma \ref{lem:Det-criterion} it suffices to show that
	\[
	\omega \left(\aug_{\Gamma_K}(\Det(\xi')(\omega^{-1} \circ \chi))\right) = 
	\aug_{\Gamma_K}(\Det(\xi')(\chi)) \cdot \det_{\ind_K^{\Q_p}(\chi)}(\omega^{\ram})^{-1}
	\]
	for all $\chi \in \Irr_{\Q_p^c}(\mathcal{G})$.
	As in Example \ref{ex:Det(g)}, one can now use \cite[(8)]{MR2609173}
	to deduce this from the following results on finite level.
	Fix a character $\chi \in \Irr_{\Q_p^c}(\mathcal{G})$ and choose
	a sufficiently large $n$ such that $\chi$ factors through $G_n$.
	By Proposition \ref{prop:normal-bases-tower} the embedding
	$\mathcal{L} \hookrightarrow \varprojlim_n \mathcal{O}_{L_n}$
	yields an embedding
	\[
		\mathcal{L}_n := \mathcal{L}_{\Gamma_n} \hookrightarrow
		\mathcal{O}_{L_n}
	\]
	by taking $\Gamma_n$-coinvariants. Choose 
	$\xi_n'  = (\xi'_{n,\psi})_{\psi}\in 
	\prod_{\psi \in \Irr(G_n)}(\Q_p^c)^{\times}$
	such that
	\begin{eqnarray*}
		\partial_p(\Nrd_{\Q_p^c[G_n]}^{-1}(\xi_n')) & = &
			U_{L_n/K}' - [\mathcal{L}_n, \alpha_n, H_{L_n}] \\
			& = & U_{L_n/K} - [\mathcal{L}_n, \rho_n, H_{L_n}]
	\end{eqnarray*}	
	where $\alpha_n$ is induced by $\alpha_{\infty}$.
	Now the Galois-action on a pre-image of the unramified term
	(see \S \ref{subsec:unramified-term} (ii)) and
	\cite[Lemma 2.8]{MR2078894} imply that
	\[
		\omega(\xi'_{n, \omega^{-1} \circ \psi}) = \xi'_{n, \psi} \cdot
		\det_{\ind_K^{\Q_p}(\psi)}(\omega^{\ram})^{-1}
	\]
	for every $\omega \in G_{\Q_p}$ and every $\psi \in \Irr(G_n)$.
	Taking $\psi = \chi$ completes the proof.
 \end{proof}
 
 \subsection{The correction term}
 Let $I$ be the inertia subgroup of $\mathcal{G}$ and let
 $\sigma \in \mathcal{G}$ be a lift of the Frobenius automorphism
 in $\mathcal{G}/I$. Then $I$ is a finite normal subgroup of $\mathcal{G}$
 so that $e_I := |I|^{-1} \sum_{i \in I} i$ is a central idempotent in
 $\mathcal{Q}(\mathcal{G})$. Recall the notation introduced at the end
 of \S \ref{subsec:K-theory} and denote the cardinality of the residue 
 field of $K$ by $q_K$. We let $ m_{L_{\infty}/K}
 \in K_1(\mathcal{Q}(\mathcal{G}))$ be the image of
 \[
 \frac{^{\ast}((1 - \sigma q_K^{-1}) e_I)}
 {^{\ast}((1 - \sigma^{-1}) e_I)} \in
 \zeta(\mathcal{Q}(\mathcal{G}))^{\times}
 \]
 under the canonical maps $\zeta(\mathcal{Q}(\mathcal{G}))^{\times}
 \hookrightarrow \mathcal{Q}(\mathcal{G})^{\times} \rightarrow
 K_1(\mathcal{Q}(\mathcal{G}))$. We put
 \[
 M_{L_{\infty}/K} := 
 \partial_{\Lambda(\mathcal{G}), \mathcal{Q}(\mathcal{G})}(m_{L_{\infty}/K}) \in
 K_0(\Lambda(\mathcal{G}), \mathcal{Q}(\mathcal{G}))
 \]
 and call $M_{L_{\infty}/K}$ the \emph{correction term}.
 
 \subsection{Functorialities}
 Let $N$ be a finite normal subgroup of $\mathcal{G}$ and let $\mathcal{H}$ be an open subgroup of $\mathcal{G}$.
 There are canonical maps
 \begin{eqnarray*}
 	\quot^{\mathcal{G}}_{\mathcal{G}/N}: & K_{0}(\Lambda(\mathcal{G}), \mathcal{Q}^c(\mathcal{G})) \longrightarrow &
 	K_{0}(\Lambda(\mathcal{G}/N), \mathcal{Q}^c(\mathcal{G}/N)),\\
 	\res^{\mathcal{G}}_{\mathcal{H}}: & K_{0}(\Lambda(\mathcal{G}), \mathcal{Q}^c(\mathcal{G})) \longrightarrow &
 	K_{0}(\Lambda(\mathcal{H}), \mathcal{Q}^c(\mathcal{H}))
 \end{eqnarray*}
 induced from scalar extension along $\Lambda(\mathcal{G}) \longrightarrow \Lambda(\mathcal{G}/N)$ and restriction of scalars
 along $\Lambda(\mathcal{H}) \hookrightarrow \Lambda(\mathcal{G})$.
 Likewise, there are restriction and quotient maps between the
 respective $K_1$-groups.
 
 \begin{prop} \label{prop:functoriality-coh}
	The following statements hold.
 	\begin{enumerate}
 		\item 
 		Let $N$ be a finite normal subgroup of $\mathcal{G}$ and put
 		$L_{\infty}' := L_{\infty}^N$. Then we have
 		\[
 		\quot^{\mathcal{G}}_{\mathcal{G}/N}(M_{L_{\infty}/K})
 		= M_{L_{\infty}'/K},
 		\]
 		and up to an element
 		$x \in K_1(\Q_p^c \otimes_{\Z_p} \Lambda(\mathcal{G}/N))$
 		such that $\Nrd_{\mathcal{Q}^c(\mathcal{G}/N)}(x) = 1$
 		we have an equality
 		\[
	 		\quot^{\mathcal{G}}_{\mathcal{G}/N}(C_{L_{\infty}/K} + 
	 		U_{L_{\infty}/K}') = 
	 		C_{L_{\infty}'/K} + U_{L_{\infty}'/K}'.
 		\]
 		\item
 		Let $\mathcal{H}$ be an open subgroup of $\mathcal{G}$
 		and put $K' := L_{\infty}^{\mathcal{H}}$.
 		Then we have
 		\[
 		\res^{\mathcal{G}}_{\mathcal{H}}(M_{L_{\infty}/K})
 		= M_{L_{\infty}/K'},
 		\]
 		and up to an element
 		$x \in K_1(\Q_p^c \otimes_{\Z_p} \Lambda(\mathcal{H}))$
 		such that $\Nrd_{\mathcal{Q}^c(\mathcal{H})}(x) = 1$
 		we have an equality
 		\[
 		\res^{\mathcal{G}}_{\mathcal{H}}(C_{L_{\infty}/K} + 
 		U_{L_{\infty}/K}') = 
 		C_{L_{\infty}/K'} + U_{L_{\infty}/K'}'.
 		\]
 	\end{enumerate}
 \end{prop}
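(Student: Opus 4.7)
The plan is to treat the three summands—correction, cohomological, and unramified terms—separately, exploiting naturality of the various constructions, Breuning's finite-level functorialities, and suitably compatible choices of the normal integral basis generator $b$ and embedding lifts $\hat\tau$. The correction term is the most direct. Under $\mathcal{G} \twoheadrightarrow \mathcal{G}/N$, the inertia subgroup $I$ of $\mathcal{G}$ surjects onto the inertia subgroup of $\mathcal{G}/N$, so $e_I \mapsto e_{IN/N}$; the Frobenius lift $\sigma$ descends to a Frobenius lift in $\mathcal{G}/N$, and $q_K$ is preserved. Naturality of $\partial$ then yields $\quot^{\mathcal{G}}_{\mathcal{G}/N}(M_{L_{\infty}/K}) = M_{L_{\infty}'/K}$. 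For restriction, the inertia subgroup of $\mathcal{H}$ is $\mathcal{H} \cap I$, the Frobenius of $K'$ equals $\phi_K^{f_{K'/K}}$, and $q_{K'} = q_K^{f_{K'/K}}$; testing against irreducible characters of $\mathcal{H}$ via the maps $j_\chi$ of \S \ref{subsec:K-of-Iwasawa-algebras} and using Frobenius reciprocity on the induced characters, one checks that restriction of scalars sends the defining fraction in $\zeta(\mathcal{Q}(\mathcal{G}))^\times$ to the analogous fraction for $L_{\infty}/K'$, and naturality of $\partial$ again concludes.

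For the cohomological term, the key inputs are the canonical isomorphism
\[
\Lambda(\mathcal{G}/N) \otimes^{\mathbb{L}}_{\Lambda(\mathcal{G})} K_{L_{\infty}}^{\bullet} \simeq K_{L_{\infty}'}^{\bullet}
\]
and the identity that $K_{L_{\infty}}^{\bullet}$ viewed over $\Lambda(\mathcal{H})$ coincides with the complex built from $L_{\infty}/K'$. For the Tate-dual summand these follow from Proposition \ref{prop:complex-infinity} combined with local Tate duality (using that $L_\infty' = L_\infty^N$ and inflation), while for $H_{L_{\infty}}$ they reduce to its explicit free $\Lambda(\mathcal{G})$-module structure of rank $[K:\Q_p]$. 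By fixing one system of $b$ and $\hat\tau$ for $L_{\infty}/K$ and using its image for $L_{\infty}'/K$ (for the quotient) and its fibre decomposition along $\Sigma(K') \twoheadrightarrow \Sigma(K)$ for $L_{\infty}/K'$ (for the restriction), the maps $\log_{\infty}$, $\rho_{\infty}$, and $\beta_{\infty}$—hence $\phi_{\infty}^{-1}$—commute with the relevant operations. Functoriality of the refined Euler characteristic under derived scalar extension (respectively restriction of scalars) then gives the claim for $C_{L_{\infty}/K}$ modulo the analogous claim for $U'_{L_{\infty}/K}$.

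For the unramified term, Breuning's finite-level functorialities \cite[Lemma 2.13]{MR2078894} yield $\quot^{G_{n+1}}_{G_n}(U_{L_{n+1}/K}) = U_{L_n/K}$ and the restriction analogue; the modification by $\Nrd(\nu_n)^{-1}$ remains compatible with both operations thanks to the relation $\quot(\nu_{n+1}) = \nu_n$ forced by our choice of $b$ (and the restriction analogue, where the factor $f_{K'/K}$ is absorbed into the Wedderburn-component computation). Passing to inverse limits via Proposition \ref{prop:limit-relative-K0} propagates these identities to $U'_{L_{\infty}/K}$, modulo indeterminacy in the image of $SK_1(\Lambda^{\mathcal{O}_F}(\mathcal{G}/N))$ (respectively $SK_1(\Lambda^{\mathcal{O}_F}(\mathcal{H}))$). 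The main obstacle is the bookkeeping of all choices: different admissible $b$, $\hat\tau$, and $SK_1$-pre-images can interact differently across quotient or restriction. By Lemma \ref{lem:dependence-b-tau} any change of $b$ or $\hat\tau$ contributes an element of trivial reduced norm, and by Lemma \ref{lem:Nrd-SK1} the $SK_1$-indeterminacy likewise lies in the kernel of $\Nrd_{\mathcal{Q}^c(-)}$, so both land in the stated $\Nrd(x)=1$ ambiguity.
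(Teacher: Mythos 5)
Your proposal is correct and follows essentially the same route as the paper: a direct computation for the correction term, the base-change isomorphism $\Lambda(\mathcal{G}/N)\otimes^{\mathbb L}_{\Lambda(\mathcal{G})}K_{L_{\infty}}^{\bullet}\simeq K_{L_{\infty}'}^{\bullet}$ together with Breuning's finite-level arguments for the cohomological and unramified terms, and Lemmas \ref{lem:dependence-b-tau} and \ref{lem:Nrd-SK1} to absorb all choices into the stated $\Nrd(x)=1$ ambiguity. The paper merely sketches this and leaves the details to the reader; your write-up supplies them along the intended lines.
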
	
 
 \begin{proof}
 	A straightforward calculation shows that indeed
 	$\quot^{\mathcal{G}}_{\mathcal{G}/N}(m_{L_{\infty}/K})
 	= m_{L_{\infty}'/K}$ and
 	$\res^{\mathcal{G}}_{\mathcal{H}}(m_{L_{\infty}/K})
 	= m_{L_{\infty}/K'}$. For the sum of the cohomological
 	and the unramified term, the result follows as in
 	\cite[\S 2.4 and \S 2.5]{MR2078894}. The main ingredient is
 	that in case (i) we have an isomorphism
 	\[
	 	\Lambda(\mathcal{G}/N) \otimes^{\mathbb L}_{\Lambda(\mathcal{G})}
	 	K_{L_{\infty}}^{\bullet} \simeq K_{L_{\infty}'}^{\bullet}
 	\]
 	in $\mathcal{D}(\Lambda(\mathcal{G}/N))$ by
 	\cite[Proposition 1.6.5]{MR2276851}.
 	We leave the details to the reader.
 \end{proof}

 \section{The main conjecture} \label{sec:MC}
 
 \subsection{Statement of the main conjecture}
 As before let $L/K$ be a finite Galois extension of $p$-adic
 local fields and let $L_{\infty}$ be the unramified $\Z_p$-extension
 of $L$. Then $L_{\infty}/K$ is a one-dimensional $p$-adic Lie extension
 with Galois group $\mathcal{G}$. Choose an isomorphism $j: \C \simeq \C_p$.
 Lemma \ref{lem:well-defined} implies that the following conjecture is
 well-posed.
 
 \begin{conj} \label{conj:main-conjecture}
 	There exists  $\zeta^{(j)}_{L_{\infty}/K} \in 
 	K_1(\mathcal{Q}^c(\mathcal{G}))$ such that
	\[
		\partial_{\Lambda(\mathcal{G}), \mathcal{Q}^c(\mathcal{G})}(\zeta^{(j)}_{L_{\infty}/K}) = 
		-C_{L_{\infty}/K} - U'_{L_{\infty}/K} + M_{L_{\infty}/K}
	\]
	and
	\[
		\Det(\zeta^{(j)}_{L_{\infty}/K}) = \tau^{(j)}_{L_{\infty}/K}.
	\]
 \end{conj}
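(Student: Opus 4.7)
The plan is to produce $\zeta^{(j)}_{L_\infty/K}$ in two stages. First, exploit the surjectivity in \eqref{eqn:connecting-surj-c} to choose any $\zeta_0 \in K_1(\mathcal{Q}^c(\mathcal{G}))$ realizing the prescribed image $-C_{L_\infty/K} - U'_{L_\infty/K} + M_{L_\infty/K}$ under the connecting map. Second, correct $\zeta_0$ by multiplication with a suitable $u \in K_1(\Lambda(\mathcal{G}))$ so that the product has determinant $\tau^{(j)}_{L_\infty/K}$. With this framing, the conjecture reduces to the single statement
\[
	f := \tau^{(j)}_{L_\infty/K} \cdot \Det(\zeta_0)^{-1} \in \Det(K_1(\Lambda(\mathcal{G}))),
\]
and the various indeterminacies recorded in Lemma \ref{lem:well-defined} all lie in the kernel of $\Nrd_{\mathcal{Q}^c(\mathcal{G})}$ and hence do not interfere with the matching of determinants.

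The first conditions to verify for $f$ are Galois-equivariance and integrality. Theorem \ref{thm:GGs-in-Z_p}(iii) gives the transformation $\omega(\tau^{(j)}_{L_\infty/K}(\omega^{-1} \circ \chi)) = \tau^{(j)}_{L_\infty/K}(\chi) \cdot \det_{\ind_K^{\Q_p}(\chi)}(\omega^{\ram})$, while Theorem \ref{thm:Galois-on-coh-term} gives exactly the inverse rule for the determinant of any pre-image of $C_{L_\infty/K} + U'_{L_\infty/K}$. The correction term $m_{L_\infty/K}$ lives in $K_1(\mathcal{Q}(\mathcal{G}))$ and therefore has Galois-invariant determinant, so the $\omega^{\ram}$-factors cancel and $f$ lands in $\Hom^{\ast}(R_p(\mathcal{G}), \mathcal{Q}^c(\overline{\Gamma})^\times)$. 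For integrality, Theorem \ref{thm:GGs-in-Z_p}(i) already places $\tau^{(j)}_{L_\infty/K}$ in $(\Q_p^c \otimes_{\Z_p} \Lambda(\Gamma_K))^\times$; I would choose $\zeta_0$ via the explicit presentation \eqref{eqn:coh-term-description} of $C_{L_\infty/K}$ together with the $\Lambda^{\mathcal{O}_F}(\mathcal{G})$-construction of $U'_{L_\infty/K}$ from \S \ref{subsec:unramified-term}, so that $\Det(\zeta_0)$ lies in the same integral subgroup.

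The hard core of the conjecture is then to verify the remaining conditions cutting $\Det(K_1(\Lambda(\mathcal{G})))$ out of $\Hom^{\ast}(R_p(\mathcal{G}), \Lambda^c(\overline{\Gamma})^\times)$ in the Ritter--Weiss description \cite{MR2205173}: these are $p$-adic congruences coupling values of $f$ at characters of $\mathcal{G}$ coming from different subgroups, directly analogous to the Deligne--Ribet and Kakde congruences at the heart of the main conjecture for totally real fields, and they are the genuine obstacle. I would first treat the tamely ramified case, where $p \nmid |H|$ forces $\Lambda(\mathcal{G})$ itself to be a maximal order and the congruences collapse: there the argument should combine Fr\"ohlich--Taylor theory with Breuning's local conjecture (known in the tame case by \cite{MR2078894}) and a descent along the finite layers $L_n/K$ using the compatibility $\quot^{G_{n+1}}_{G_n}(\widehat U_{L_{n+1}/K}') = \widehat U_{L_n/K}'$ established in \S \ref{subsec:unramified-term}. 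For the general case I would then invoke the functorial reductions of Propositions \ref{prop:functoriality-Gauss} and \ref{prop:functoriality-coh} to reduce to standard configurations, and exploit the reduction at the height-one prime $(p) \subset \Z_p\llbracket T \rrbracket$ flagged in the introduction: after localization the cohomology of $K^\bullet_{L_\infty}$ becomes $\Lambda(\mathcal{G})_{(p)}$-free by \cite{Swan-Iwasawa}, so that $C_{L_\infty/K}$ admits an explicit matrix presentation and the conjecture reduces to a $K_1$-theoretic identity modulo the Iwasawa $\mu$-invariant.
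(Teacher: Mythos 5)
The statement you are addressing is Conjecture \ref{conj:main-conjecture} itself, the main conjecture of the paper; the paper does not prove it in general, and neither does your proposal. Your first two stages are sound and match the paper's own analysis: surjectivity of the connecting map in \eqref{eqn:connecting-surj-c} supplies a pre-image $\zeta_0$, Lemma \ref{lem:well-defined} controls the indeterminacy (modulo elements with trivial reduced norm), and combining Theorem \ref{thm:GGs-in-Z_p}\,(iii) with Theorem \ref{thm:Galois-on-coh-term} and the Galois-invariance of $\Det(m_{L_{\infty}/K})$ is exactly how the paper proves Propositions \ref{prop:mc-Galois} and \ref{prop:mc-Galois+}. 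Up to that point you have faithfully reproduced the paper's evidence for the conjecture.

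The gap is in your final paragraph: the reductions you invoke do not close the argument. The functorialities of Propositions \ref{prop:functoriality-Gauss} and \ref{prop:functoriality-coh} only pass from $L_{\infty}/K$ to quotient extensions $L_{\infty}'/K$ and to subextensions $L_{\infty}/K'$; they never produce the conjecture for a wildly ramified $L/K$ from tame data. Brauer induction operates character by character and therefore only controls the individual values $f(\chi)$, which is precisely why the paper can prove Theorem \ref{thm:mc-max-order} (membership in $\Hom^{\ast}(R_p(\mathcal{G}),\Lambda^c(\Gamma_K)^{\times})$, i.e.\ the statement ``over the maximal order'') but not membership in $\Det(K_1(\Lambda(\mathcal{G})))$, a strictly finer condition whenever $p$ divides $|H|$. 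Corollary \ref{cor:MC-localizes}, via \cite[Theorem B]{MR2205173} and Theorem \ref{thm:mc-max-order}, reduces the conjecture to showing $f \in \Det(K_1(\Lambda_{(p)}(\mathcal{G})))$, but that localized statement remains unproven: the freeness of the cohomology of $\Lambda_{(p)}(\mathcal{G}) \otimes^{\mathbb L}_{\Lambda(\mathcal{G})} K^{\bullet}_{L_{\infty}}$ simplifies the complex but does not by itself produce the required element of $K_1$. The congruences you correctly flag as ``the genuine obstacle'' are exactly what is missing, and nothing in your proposal supplies them. What you have written is an accurate summary of the paper's partial results (Theorems \ref{thm:mc-tame} and \ref{thm:mc-max-order} and Corollary \ref{cor:MC-localizes}), not a proof of the conjecture.
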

 
 The following observation is immediate from 
 Theorem \ref{thm:GGs-in-Z_p} (ii).
 
 \begin{lemma}
 	Conjecture \ref{conj:main-conjecture} does not depend on 
 	the choice of $j: \C \simeq \C_p$.
 \end{lemma}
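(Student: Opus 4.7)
The plan is to verify that the right-hand side of the first displayed equality in Conjecture \ref{conj:main-conjecture}, namely $-C_{L_{\infty}/K} - U'_{L_{\infty}/K} + M_{L_{\infty}/K}$, manifestly does not involve $j$: the cohomological term, the unramified term and the correction term are all defined without reference to any choice of isomorphism $\C \simeq \C_p$, and Lemma \ref{lem:well-defined} already controls the remaining ambiguity in auxiliary choices up to the image of an $x \in K_1(\Q_p^c \otimes_{\Z_p} \Lambda(\mathcal{G}))$ with $\Nrd_{\mathcal{Q}^c(\mathcal{G})}(x) = 1$. Hence only the second (determinant) condition can potentially be sensitive to $j$, and the task reduces to transferring $\zeta^{(j)}_{L_{\infty}/K}$ from one choice of $j$ to another.

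Given two isomorphisms $j, j': \C \simeq \C_p$, Theorem \ref{thm:GGs-in-Z_p}(ii) supplies an element $u \in K_1(\Lambda(\mathcal{G}))$ with
\[
\Det(u) = \tau^{(j)}_{L_{\infty}/K} \cdot \bigl(\tau^{(j')}_{L_{\infty}/K}\bigr)^{-1}.
\]
Assuming $\zeta^{(j)}_{L_{\infty}/K} \in K_1(\mathcal{Q}^c(\mathcal{G}))$ satisfies the two conditions of the conjecture for $j$, I would simply set
\[
\zeta^{(j')}_{L_{\infty}/K} := u^{-1} \cdot \zeta^{(j)}_{L_{\infty}/K},
\]
where $u$ is viewed in $K_1(\mathcal{Q}^c(\mathcal{G}))$ via the canonical map $K_1(\Lambda(\mathcal{G})) \to K_1(\mathcal{Q}^c(\mathcal{G}))$. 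Multiplicativity of $\Det$ then immediately gives
\[
\Det\bigl(\zeta^{(j')}_{L_{\infty}/K}\bigr) = \Det(u)^{-1} \cdot \tau^{(j)}_{L_{\infty}/K} = \tau^{(j')}_{L_{\infty}/K},
\]
while the exactness of \eqref{eqn:connecting-surj-c} forces the image of $u$ in $K_0(\Lambda(\mathcal{G}), \mathcal{Q}^c(\mathcal{G}))$ under $\partial_{\Lambda(\mathcal{G}), \mathcal{Q}^c(\mathcal{G})}$ to vanish, so
\[
\partial_{\Lambda(\mathcal{G}), \mathcal{Q}^c(\mathcal{G})}\bigl(\zeta^{(j')}_{L_{\infty}/K}\bigr) = \partial_{\Lambda(\mathcal{G}), \mathcal{Q}^c(\mathcal{G})}\bigl(\zeta^{(j)}_{L_{\infty}/K}\bigr) = -C_{L_{\infty}/K} - U'_{L_{\infty}/K} + M_{L_{\infty}/K}.
\]

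The roles of $j$ and $j'$ being symmetric, this shows that the existence statement of Conjecture \ref{conj:main-conjecture} for $j$ is equivalent to that for $j'$, proving the lemma. There is no genuine obstacle: once Theorem \ref{thm:GGs-in-Z_p}(ii) identifies the discrepancy between $\tau^{(j)}_{L_{\infty}/K}$ and $\tau^{(j')}_{L_{\infty}/K}$ as lying in $\Det(K_1(\Lambda(\mathcal{G})))$, the argument is a purely formal manipulation in the long exact sequence of relative $K$-theory.
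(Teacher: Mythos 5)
Your argument is correct and is precisely the paper's intended proof: the paper declares the lemma ``immediate from Theorem \ref{thm:GGs-in-Z_p}~(ii)'', and your write-up simply makes explicit the transfer $\zeta^{(j')} := u^{-1}\zeta^{(j)}$ with $u \in K_1(\Lambda(\mathcal{G}))$ killed by $\partial$ via exactness of \eqref{eqn:connecting-surj-c}. Nothing is missing.
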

 
 \begin{remark}
 	It follows from sequence \eqref{eqn:connecting-surj-c}
 	that $-C_{L_{\infty}/K} - U'_{L_{\infty}/K} + M_{L_{\infty}/K}$
 	always has a pre-image in $K_1(\mathcal{Q}^c (\mathcal{G}))$.
 \end{remark}
 
 \begin{remark}
 	It is expected that the reduced norm $\Nrd_{\mathcal{Q}^c(\mathcal G)}:
 	K_1(\mathcal Q^c (\mathcal G)) \rightarrow 
 	\zeta(\mathcal Q^c(\mathcal G))^{\times}$ is injective.
 	If this is true, then the element $\zeta^{(j)}_{L_{\infty}/K} \in 
 	K_1(\mathcal{Q}^c(\mathcal{G}))$ is unique (if it exists).
 \end{remark}
 
 \begin{remark}
 	In subsequent work we will show that Conjecture \ref{conj:main-conjecture}
 	for $L_{\infty} / K$ implies the equivariant local
 	$\varepsilon$-constant conjecture of Breuning 
 	\cite[Conjecture 3.2]{MR2078894} and, more generally, for
 	unramified twists of $\Z_p(1)$.
 \end{remark}
 
 \begin{remark}
 	Suppose that $\mathcal{G}$ is abelian. Then $\Det$ induces an
 	isomorphism
 	\[
	 	\Det: K_1(\mathcal Q^c(\mathcal G)) \simeq 
	 	\Hom^{W}(R_p( \mathcal{G}), \mathcal{Q}^{c}(\Gamma_K)^{\times}).
 	\]
 	This follows from triangle \eqref{eqn:Det_triangle-c}
 	and Wedderburn's theorem (see the proof of \cite[Lemma 5a]{MR1935024}).
 	We therefore may define
 	\[
	 	T_{L_{\infty}/K} := \partial_{\Lambda(\mathcal{G}), \mathcal{Q}^c(\mathcal{G})}(\Det^{-1}(\tau^{(j)}_{L_{\infty}/K}))
	 	\in K_0(\Lambda(\mathcal{G}), \mathcal{Q}^c(\mathcal{G}))
 	\]
 	which indeed does not depend on $j$ by Theorem \ref{thm:GGs-in-Z_p} (ii).
 	We put
 	\[
	 	R_{L_{\infty}/K} := T_{L_{\infty}/K} + C_{L_{\infty}/K} + U'_{L_{\infty}/K}
	 	- M_{L_{\infty}/K}.
 	\]
 	Then Conjecture \ref{conj:main-conjecture} asserts that
 	$R_{L_{\infty}/K}$ vanishes,
 	and the analogy to Breuning's conjecture 
 	\cite[Conjecture 3.2]{MR2078894} becomes more apparent.
 \end{remark}
 
 \subsection{Functorialities}
 The following result is immediate from Propositions
 \ref{prop:functoriality-Gauss} and \ref{prop:functoriality-coh}.
 
 \begin{prop}
 	Suppose that Conjecture \ref{conj:main-conjecture} holds
 	for the extension $L_{\infty}/K$.
 	\begin{enumerate}
 		\item 
 		Let $N$ be a finite normal subgroup of $\mathcal{G}$
 		and put $L_{\infty}' := L_{\infty}^N$.
 		Then Conjecture \ref{conj:main-conjecture} holds
 		for the extension $L_{\infty}'/K$.
 		\item 
 		Let $\mathcal{H}$ be an open subgroup of $\mathcal{G}$
 		and put $K' := L_{\infty}^{\mathcal{H}}$.
 		Then Conjecture \ref{conj:main-conjecture} holds
 		for the extension $L_{\infty}/K'$.
 	\end{enumerate}
 \end{prop}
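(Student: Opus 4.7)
The plan is to produce the required elements on the smaller/subextension by transporting $\zeta^{(j)}_{L_{\infty}/K}$ along the canonical $K$-theoretic functors, then absorbing the ambiguity built into Lemma \ref{lem:well-defined} by a small correction that lives in the kernel of $\Det$. Specifically, for part (i) I would set
\[
\zeta^{(j)}_{L_{\infty}'/K} := \quot^{\mathcal{G}}_{\mathcal{G}/N}(\zeta^{(j)}_{L_{\infty}/K}) \cdot \iota(x)^{-1},
\]
where $\iota$ denotes the natural map $K_{1}(\Q_p^c \otimes_{\Z_p} \Lambda(\mathcal{G}/N)) \to K_{1}(\mathcal{Q}^c(\mathcal{G}/N))$ and $x \in K_{1}(\Q_p^c \otimes_{\Z_p} \Lambda(\mathcal{G}/N))$ with $\Nrd(x)=1$ is the error term supplied by Proposition~\ref{prop:functoriality-coh}(i). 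For part (ii) I would do the same with $\res^{\mathcal{G}}_{\mathcal{H}}$ in place of $\quot^{\mathcal{G}}_{\mathcal{G}/N}$, using Proposition~\ref{prop:functoriality-coh}(ii) instead.

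To verify the boundary identity in case (i), I would invoke naturality of the connecting map $\partial$ under the ring homomorphism $\Lambda(\mathcal{G}) \to \Lambda(\mathcal{G}/N)$ to obtain $\partial(\quot(\zeta^{(j)}_{L_{\infty}/K})) = \quot(\partial \zeta^{(j)}_{L_{\infty}/K})$, then use the trivial identity $\quot(M_{L_{\infty}/K}) = M_{L_{\infty}'/K}$ (which is immediate from the formula for the correction term and the fact that $N$ is pushed to $1$) together with Proposition~\ref{prop:functoriality-coh}(i) to rewrite $\quot(C_{L_{\infty}/K} + U'_{L_{\infty}/K})$ as $C_{L_{\infty}'/K} + U'_{L_{\infty}'/K} + \partial(\iota(x))$ for an appropriate $x$ with $\Nrd(x)=1$. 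Multiplying by $\iota(x)^{-1}$ cancels the discrepancy. For the $\Det$ identity, naturality of $\Det$ under $\quot$ (which is built into its definition via the maps $j_\chi$ and the identity $\quot(f)(\chi) = f(\infl^{\mathcal{G}}_{\mathcal{G}/N}\chi)$) gives $\Det(\quot(\zeta^{(j)}_{L_{\infty}/K})) = \quot(\Det(\zeta^{(j)}_{L_{\infty}/K})) = \quot(\tau^{(j)}_{L_{\infty}/K})$, which equals $\tau^{(j)}_{L_{\infty}'/K}$ by Proposition~\ref{prop:functoriality-Gauss}(i); and the correction factor $\iota(x)^{-1}$ contributes trivially to $\Det$ since $\Nrd(x)=1$ forces $\Det(\iota(x))=1$ by the commutative triangle \eqref{eqn:Det_triangle-c}.

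Part (ii) is proved by exactly the same argument, using naturality of $\partial$ and $\Det$ under restriction of scalars along $\Lambda(\mathcal{H}) \hookrightarrow \Lambda(\mathcal{G})$; here one also uses the identification of $\mathcal{Q}^c(\Gamma_{K'})$ with a subfield of $\mathcal{Q}^c(\Gamma_K)$ via $\phi_{K'} \mapsto \phi_K^{f_{K'/K}}$, fixed in the paragraph preceding Proposition~\ref{prop:functoriality-Gauss}, so that the targets of $\tau^{(j)}_{L_{\infty}/K'}$ and $\res^{\mathcal{G}}_{\mathcal{H}}(\tau^{(j)}_{L_{\infty}/K})$ agree. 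The only potential obstacle is purely bookkeeping: making sure the ambiguity in $C + U'$ (Lemma~\ref{lem:well-defined}) and the ambiguity appearing in the functoriality statements of Proposition~\ref{prop:functoriality-coh} are matched by a single element $x$ with $\Nrd(x)=1$, and that the compensating factor $\iota(x)^{-1}$ really lies in $K_{1}(\mathcal{Q}^c(\mathcal{G}/N))$ (respectively $K_{1}(\mathcal{Q}^c(\mathcal{H}))$) so that the two defining conditions of Conjecture~\ref{conj:main-conjecture} can be imposed simultaneously. Since the $\Nrd = 1$ condition is preserved by $\iota$ and forces $\Det = 1$, no further obstruction arises and the proof is complete.
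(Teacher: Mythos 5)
Your proposal is correct and follows exactly the route the paper intends: the paper's proof consists of the single remark that the result is ``immediate from Propositions \ref{prop:functoriality-Gauss} and \ref{prop:functoriality-coh}'', and your argument is precisely the unwinding of that remark --- transport $\zeta^{(j)}_{L_{\infty}/K}$ by $\quot^{\mathcal{G}}_{\mathcal{G}/N}$ (resp.\ $\res^{\mathcal{G}}_{\mathcal{H}}$), use naturality of $\partial$ and $\Det$, and absorb the $\Nrd=1$ ambiguity permitted by Lemma \ref{lem:well-defined}, which is harmless for the $\Det$-condition by triangle \eqref{eqn:Det_triangle-c}. No gaps.
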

 
 \begin{remark}
 	Suppose that $\mathcal{G}$ is abelian. Then Propositions
 	\ref{prop:functoriality-Gauss} and \ref{prop:functoriality-coh}
 	indeed show that
 	\[
	 	\quot^{\mathcal{G}}_{\mathcal{G}/N}(R_{L_{\infty}/K})
	 	= R_{L_{\infty}'/K} \quad \mathrm{and} \quad
	 	\res^{\mathcal{G}}_{\mathcal{H}}(R_{L_{\infty}/K})
	 	= R_{L_{\infty}/K'}.
 	\]
 \end{remark}
 
 \subsection{First evidence}
  We first consider the Galois action on the occurring objects.
  By definition $m_{L_{\infty}/K} \in K_1(\mathcal{Q} (\mathcal{G}))$
  is a pre-image of $M_{L_{\infty}/K}$ and $\Det(m_{L_{\infty}/K})$
  is Galois-invariant. Now Theorem \ref{thm:GGs-in-Z_p} (iii)
  and Theorem \ref{thm:Galois-on-coh-term}
  imply the following analogue of \cite[Proposition 3.4]{MR2078894}.
  
  \begin{prop} \label{prop:mc-Galois}
  	For every $x_{L_{\infty}/K} \in K_1(\mathcal{Q}^c (\mathcal{G}))$
  	such that
  	\[
  	\partial_{\Lambda(\mathcal{G}), \mathcal{Q}^c(\mathcal{G})}(x_{L_{\infty}/K}) = 
  	-C_{L_{\infty}/K} - U'_{L_{\infty}/K} + M_{L_{\infty}/K}
  	\]
  	we have
  	\[
  	\Det(x_{L_{\infty}/K})^{-1} \cdot \tau^{(j)}_{L_{\infty}/K}
  	\in \Hom^{\ast}(R_p( \mathcal{G}), \mathcal{Q}^{c}(\Gamma_K)^{\times}).
  	\]
  \end{prop}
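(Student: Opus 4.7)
The plan is to reduce the statement to a direct computation that combines the Galois transformation laws established in Theorem \ref{thm:GGs-in-Z_p}(iii) and Theorem \ref{thm:Galois-on-coh-term}. First, observe by the exact sequence \eqref{eqn:connecting-surj-c} that any two elements $x_{L_{\infty}/K}, x_{L_{\infty}/K}' \in K_1(\mathcal{Q}^c(\mathcal{G}))$ with the prescribed image under $\partial_{\Lambda(\mathcal{G}), \mathcal{Q}^c(\mathcal{G})}$ differ by (the image of) some $y \in K_1(\Lambda(\mathcal{G}))$. By \eqref{eqn:Det-of-Iwasawa}, $\Det(y)$ already lies in $\Hom^{\ast}(R_p(\mathcal{G}), \Lambda^{c}(\Gamma_K)^{\times}) \subseteq \Hom^{\ast}(R_p(\mathcal{G}), \mathcal{Q}^{c}(\Gamma_K)^{\times})$, so it suffices to verify the claim for one convenient representative.

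Pick any pre-image $\xi \in K_1(\mathcal{Q}^c(\mathcal{G}))$ of $C_{L_{\infty}/K} + U'_{L_{\infty}/K}$ and set $x_{L_{\infty}/K} := \xi^{-1} \cdot m_{L_{\infty}/K}$; then $\partial(x_{L_{\infty}/K}) = -C_{L_{\infty}/K} - U'_{L_{\infty}/K} + M_{L_{\infty}/K}$, as required. Since $m_{L_{\infty}/K}$ lies in $K_1(\mathcal{Q}(\mathcal{G}))$, triangle \eqref{eqn:Det_triangle} shows that $\Det(m_{L_{\infty}/K})$ already belongs to $\Hom^{\ast}(R_p(\mathcal{G}), \mathcal{Q}^{c}(\Gamma_K)^{\times})$. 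Writing $\Det(x_{L_{\infty}/K})^{-1} = \Det(\xi) \cdot \Det(m_{L_{\infty}/K})^{-1}$, the proposition therefore reduces to proving that $\Det(\xi) \cdot \tau^{(j)}_{L_{\infty}/K}$ lies in $\Hom^{\ast}$.

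Both factors already lie in $\Hom^W$ (by triangle \eqref{eqn:Det_triangle-c} for $\Det(\xi)$ and by Theorem \ref{thm:GGs-in-Z_p}(i) for $\tau^{(j)}_{L_{\infty}/K}$), so the product lies in $\Hom^W$ and only the Galois equivariance needs to be checked. For any $\omega \in G_{\Q_p}$, Theorem \ref{thm:Galois-on-coh-term} yields
\[
\omega\bigl(\Det(\xi)(\omega^{-1} \circ \chi)\bigr) = \Det(\xi)(\chi) \cdot \det\nolimits_{\ind_K^{\Q_p}(\chi)}(\omega^{\ram})^{-1},
\]
while Theorem \ref{thm:GGs-in-Z_p}(iii) yields
\[
\omega\bigl(\tau^{(j)}_{L_{\infty}/K}(\omega^{-1} \circ \chi)\bigr) = \tau^{(j)}_{L_{\infty}/K}(\chi) \cdot \det\nolimits_{\ind_K^{\Q_p}(\chi)}(\omega^{\ram}).
\]
Multiplying these two identities, the twists $\det_{\ind_K^{\Q_p}(\chi)}(\omega^{\ram})^{\pm 1}$ cancel, which is exactly the Galois equivariance characterising $\Hom^{\ast}$.

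The only point to watch is the signed bookkeeping: the fact that $C_{L_{\infty}/K} + U'_{L_{\infty}/K}$ enters with a minus sign in the defining equation for $x_{L_{\infty}/K}$ is precisely what forces the two Galois twists to appear with opposite exponents and thus to cancel. In other words, the cocycle obstruction on the cohomological/unramified side and the one on the Galois Gauss sum side were constructed so as to compensate each other, and no further obstacle arises. The argument parallels \cite[Proposition 3.4]{MR2078894} at finite level.
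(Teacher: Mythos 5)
Your proof is correct and follows exactly the route the paper intends: the paper states Proposition \ref{prop:mc-Galois} as an immediate consequence of Theorem \ref{thm:GGs-in-Z_p}(iii), Theorem \ref{thm:Galois-on-coh-term} and the Galois-invariance of $\Det(m_{L_{\infty}/K})$, and your write-up simply fills in those details (reduction to one representative via \eqref{eqn:connecting-surj-c} and \eqref{eqn:Det-of-Iwasawa}, then cancellation of the two $\det_{\ind_K^{\Q_p}(\chi)}(\omega^{\ram})^{\pm 1}$ twists).
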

  
  We now prove the following strengthening of Proposition 
  \ref{prop:mc-Galois} which has no analogue at finite level.
  
  \begin{prop} \label{prop:mc-Galois+}
  	For every $x_{L_{\infty}/K} \in K_1(\mathcal{Q}^c (\mathcal{G}))$
  	such that
  	\[
  	\partial_{\Lambda(\mathcal{G}), \mathcal{Q}^c(\mathcal{G})}(x_{L_{\infty}/K}) = 
  	-C_{L_{\infty}/K} - U'_{L_{\infty}/K} + M_{L_{\infty}/K}
  	\]
  	we have
  	\[
  	\Det(x_{L_{\infty}/K})^{-1} \cdot \tau^{(j)}_{L_{\infty}/K}
  	\in \Hom^{\ast}(R_p( \mathcal{G}), (\Q_p^c \otimes_{\Z_p} \Lambda(\Gamma_K))^{\times}).
  	\]
  \end{prop}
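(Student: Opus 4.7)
The plan is to exhibit a specific preimage $x^0 \in K_1(\mathcal{Q}^c(\mathcal{G}))$ of $-C_{L_\infty/K} - U'_{L_\infty/K} + M_{L_\infty/K}$ that lies in the image of $K_1(\Q_p^c \otimes_{\Z_p} \Lambda(\mathcal{G}))$. Once such an $x^0$ is in hand, the claim for $x^0$ follows as follows. By \cite[Theorem 3.7]{MR3034286} every element of $K_1(\Q_p^c \otimes_{\Z_p} \Lambda(\mathcal{G}))$ factors through $K_1(F \otimes_{\Z_p} \Lambda(\mathcal{G}))$ for some finite extension $F/\Q_p$, and the commutative square of Remark \ref{rem:Det-square} together with the $F$-coefficient analogue of \eqref{eqn:Det-of-Iwasawa} place $\Det(x^0)$ inside $\Hom^W(R_p(\mathcal{G}), (\Q_p^c \otimes_{\Z_p} \Lambda(\Gamma_K))^{\times})$. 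Combining this with the integrality of $\tau^{(j)}_{L_\infty/K}$ from Theorem \ref{thm:GGs-in-Z_p}(i) and with Proposition \ref{prop:mc-Galois} gives the desired membership in $\Hom^{\ast}(R_p(\mathcal{G}), (\Q_p^c \otimes_{\Z_p} \Lambda(\Gamma_K))^{\times})$ for this particular choice. For any other preimage $x_{L_\infty/K}$, the quotient $x_{L_\infty/K} \cdot (x^0)^{-1}$ is the image of some $y \in K_1(\Lambda(\mathcal{G}))$; by \eqref{eqn:Det-of-Iwasawa}, $\Det(y)$ lies in $\Hom^{\ast}(R_p(\mathcal{G}), \Lambda^c(\Gamma_K)^{\times})$, which is contained in the target group, so the conclusion transfers.

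To construct $x^0$, one shows that $-C_{L_\infty/K} - U'_{L_\infty/K} + M_{L_\infty/K}$ lies in the image of the injection $K_0(\Lambda(\mathcal{G}), \Q_p^c \otimes_{\Z_p} \Lambda(\mathcal{G})) \hookrightarrow K_0(\Lambda(\mathcal{G}), \mathcal{Q}^c(\mathcal{G}))$ from the Lemma following \eqref{eqn:connecting-surj-c}; the surjective map \eqref{eqn:connecting-surj-F} then supplies $x^0$. The unramified term $U'_{L_\infty/K}$ is in this image by its very construction as the scalar extension of $\widehat U'_{L_\infty/K} \in K_0(\Lambda(\mathcal{G}), \Lambda^{\mathcal{O}_F}(\mathcal{G}))$. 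For the cohomological term, the decomposition \eqref{eqn:coh-term-description} isolates $-[\mathcal{L}, \alpha_\infty, H_{L_\infty}]$, which already lies in the image since $\alpha_\infty$ is defined over $\Q_p^c \otimes_{\Z_p} \Lambda(\mathcal{G})$, and a residual class $\chi_{\Lambda(\mathcal{G}), \mathcal{Q}(\mathcal{G})}(\Cone(\lambda), 0)$. The cone has cohomology $U^1(L_\infty)/X$ in degree $0$ (which is $p$-power torsion by a rank argument, since $X$ and $U^1(L_\infty)$ share the same $\Lambda(\Gamma)$-rank, and thus lifts) and $\Z_p$ in degree $1$; only the $\Z_p$-part obstructs the lift.

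This final obstruction is precisely cancelled by the factor $1/{}^{\ast}((1-\sigma^{-1})e_I)$ in $m_{L_\infty/K}$: on the character components where the value of $(1 - \sigma^{-1})e_I$ vanishes (namely, those $\chi$ trivial on $I$ with $\chi(\sigma) = 1$), the starring replaces this factor by $1$, and the boundary class that is thereby omitted coincides with the class in $K_0(\Lambda(\mathcal{G}), \mathcal{Q}^c(\mathcal{G}))$ of $\Z_p$ viewed as a $\Lambda(\mathcal{G})$-module via $\mathcal{G} \twoheadrightarrow \Gamma_K$. The complementary numerator factor ${}^{\ast}((1-\sigma q_K^{-1})e_I)$ contributes only classes already lifting to $K_0(\Lambda(\mathcal{G}), \Q_p^c \otimes_{\Z_p} \Lambda(\mathcal{G}))$. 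Hence $-\chi_{\Lambda(\mathcal{G}), \mathcal{Q}(\mathcal{G})}(\Cone(\lambda), 0) + M_{L_\infty/K}$ lies in the desired image, completing the construction of $x^0$. The main obstacle is precisely this last cancellation, which requires careful bookkeeping of how the ${}^{\ast}$-operation interacts with the idempotent $e_I$ in the general non-abelian setting, and how $\sigma$, the Frobenius $\phi_K$, and topological generators of $\Gamma_K$ combine on each $\chi$-component.
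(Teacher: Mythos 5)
Your overall strategy coincides with the paper's: by Theorem \ref{thm:GGs-in-Z_p}(i) and Proposition \ref{prop:mc-Galois} everything reduces to showing that $x_{L_{\infty}/K}$ lies in the image of $K_1(\Q_p^c \otimes_{\Z_p} \Lambda(\mathcal{G}))$, equivalently that $-C_{L_{\infty}/K} - U'_{L_{\infty}/K} + M_{L_{\infty}/K}$ dies under the scalar extension map $s_p^c$ to $K_0(\Q_p^c \otimes_{\Z_p} \Lambda(\mathcal{G}), \mathcal{Q}^c(\mathcal{G}))$; the $U'$-term is harmless by construction, and the only genuine obstruction is the copy of $\Z_p$ in $H^1$, which is cancelled by the denominator of $m_{L_{\infty}/K}$. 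The paper performs this computation directly on $K_{L_{\infty}}^{\bullet}$, using that $\phi_{\infty}$ is already an isomorphism over $\Q_p^c \otimes_{\Z_p} \Lambda(\mathcal{G})$ so that after scalar extension the refined Euler characteristic collapses to $\chi(\Q_p^c[-1],0) = -\partial({}^{\ast}((1-\sigma^{-1})e_I)) = s_p^c(M_{L_{\infty}/K})$; you route instead through the cone decomposition \eqref{eqn:coh-term-description}. The two come to the same thing, and your reduction of the general preimage to a specific one via \eqref{eqn:connecting-surj-c} and \eqref{eqn:Det-of-Iwasawa} is fine.

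Two steps, however, are not justified as written. First, that $U^1(L_{\infty})/X$ vanishes after $\otimes_{\Z_p}\Q_p^c$ does \emph{not} follow from ``a rank argument'': equality of $\Lambda(\Gamma)$-ranks only makes the quotient $\Lambda(\Gamma)$-torsion, and a torsion module (e.g.\ $\Z_p$ itself) need not die after inverting $p$ --- indeed that failure is exactly the obstruction you are isolating in degree $1$. What is needed is that the quotient is annihilated by a power of $p$ (vanishing $\lambda$-invariant), and this comes from Proposition \ref{prop:normal-bases-tower}(ii) together with the bounded denominators established in Proposition \ref{prop:log-limits}, not from ranks. Second, your account of the cancellation via the ${}^{\ast}$-operation ``omitting'' components is not the actual mechanism: since $\sigma$ maps to the topological generator $\phi_K$ of $\Gamma_K$, the element $(1-\sigma^{-1})e_I$ is a non-zero-divisor in $\mathcal{Q}(\mathcal{G})$ (its reduced norm is a nonzero element of $\zeta(\mathcal{Q}(\mathcal{G})e_I)$ on every component), so no component is replaced by $1$ on $e_I$. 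The correct point is that
\[
0 \longrightarrow \Q_p^c \otimes_{\Z_p} \Lambda(\mathcal{G}) \xrightarrow{{}^{\ast}((1-\sigma^{-1})e_I)} \Q_p^c \otimes_{\Z_p} \Lambda(\mathcal{G}) \longrightarrow \Q_p^c \longrightarrow 0
\]
is a free resolution over $\Q_p^c \otimes_{\Z_p} \Lambda(\mathcal{G})$ (note $e_I$ need not lie in $\Lambda(\mathcal{G})$ itself when $L/K$ is wildly ramified, so this only works after scalar extension), whence $s_p^c$ of the degree-$1$ obstruction equals $-\partial({}^{\ast}((1-\sigma^{-1})e_I)) = s_p^c(M_{L_{\infty}/K})$, the numerator ${}^{\ast}((1-\sigma q_K^{-1})e_I)$ being a unit of $\zeta(\Q_p \otimes_{\Z_p} \Lambda(\mathcal{G}))$. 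With these two repairs your argument agrees with the paper's proof.
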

  
  \begin{proof}
  	We know that $ \tau^{(j)}_{L_{\infty}/K}$ belongs to
  	$\Hom^{W}(R_p( \mathcal{G}), (\Q_p^c \otimes_{\Z_p} \Lambda(\Gamma_K))^{\times})$ by Theorem \ref{thm:GGs-in-Z_p} (i).
  	By Proposition \ref{prop:mc-Galois} it 
  	therefore suffices to show that
  	$x_{L_{\infty}/K}$ lies in the image of
  	$K_1(\Q_p^c \otimes_{\Z_p} \Lambda(\mathcal{G}))$. This is true
  	if and only if 
  	$-C_{L_{\infty}/K} - U'_{L_{\infty}/K} + M_{L_{\infty}/K}$
  	maps to zero under the canonical scalar extension map
  	\[
	  s_p^c: K_0(\Lambda(\mathcal{G}), \mathcal{Q}^c(\mathcal{G}))
	  	\longrightarrow
  	K_0(\Q_p^c \otimes_{\Z_p} \Lambda(\mathcal{G}), \mathcal{Q}^c(\mathcal{G})).
  	\]
  	As $U'_{L_{\infty}/K}$ lies in 
  	$K_0(\Lambda(\mathcal{G}), \Q_p^c \otimes_{\Z_p} \Lambda(\mathcal{G}))$,
  	we clearly have $s_p^c(U'_{L_{\infty}/K}) = 0$.
  	The following computation then finishes the proof:
  	\begin{eqnarray*}
  	s_p^c(C_{L_{\infty}/K}) & = &
	  	\chi_{\Q_p^c \otimes_{\Z_p} \Lambda(\mathcal{G}),
	  		\mathcal{Q}^c(\mathcal{G})}\left(\Q_p^c \otimes_{\Z_p} U^1(L_{\infty}) \oplus \Q_p^c \otimes_{\Z_p} H_{L_{\infty}}[-1] 
	  	\oplus \Q_p^c[-1], \phi_{\infty}^{-1}\right) \\
	  	& = & \chi_{\Q_p^c \otimes_{\Z_p} \Lambda(\mathcal{G}),
	  		\mathcal{Q}^c(\mathcal{G})}(\Q_p^c[-1], 0)\\
	  	& = & - \partial_{\Q_p^c \otimes_{\Z_p} \Lambda(\mathcal{G}),
	  		\mathcal{Q}^c(\mathcal{G})}(^{\ast}((1-\sigma^{-1})e_I))\\
	  	& = & s_p^c(M_{L_{\infty}/K}).
  	\end{eqnarray*}
  	Here, the first equality follows from the definition of the 
  	cohomological term and the fact that the cohomology of
  	$\Q_p^c \otimes^{\mathbb L}_{\Z_p} C_{L_{\infty}/K}$ is perfect.
  	The second equality is a consequence of \eqref{eqn:phi-infty}. The third
  	equality results from the short exact sequence
  	\[
  	0 \rightarrow \Q_p^c \otimes_{\Z_p} \Lambda(\mathcal{G})
  	\rightarrow \Q_p^c \otimes_{\Z_p} \Lambda(\mathcal{G})
  	\rightarrow \Q_p^c \rightarrow 0,
  	\]
  	where the second arrow is multiplication by 
  	$^{\ast}((1 - \sigma^{-1})e_I)$. The last equality holds as
  	$^{\ast}((1 - \sigma q_K^{-1}) e_I)$ belongs to
  	$\zeta(\Q_p \otimes_{\Z_p} \Lambda(\mathcal{G}))^{\times}$.
  \end{proof}
  
  \begin{remark}
  	Suppose that $\mathcal{G}$ is abelian. Then Proposition 
  	\ref{prop:mc-Galois} asserts that
  	\[
	  	R_{L_{\infty}/K} \in 
	  	K_0(\Lambda(\mathcal{G}), \mathcal{Q}(\mathcal{G})),
  	\]
  	whereas Proposition \ref{prop:mc-Galois+} in fact shows that
  	\[
  	R_{L_{\infty}/K} \in 
  	K_0(\Lambda(\mathcal{G}), \Q_p \otimes_{\Z_p} \Lambda(\mathcal{G})).
  	\]
  \end{remark}
  
  \section{The maximal order case} \label{sec:max-orders}
  
  \subsection{Principal units}
  Let $m \geq 1$ be an integer. As $L_{\infty}$ is the unramified
  $\Z_p$-extension of $L$, we may define
  $U^m(L_{\infty}) := \varprojlim_n U^m_{L_n}$ where the transition maps
  are given by the norm maps. For any integer $m$ we likewise define
  $\mathcal{P}_m := \varprojlim_n \mathfrak p^m_{L_n}$ 
  where the transition maps are given by the trace maps.
  Note that in particular $\mathcal{P}_0 = \varprojlim_n \mathcal{O}_{L_n}$.
  
  \begin{prop} \label{prop:principal-units}
  	Let $L/K$ be at most tamely ramified and let $m$ be an integer. 
  	\begin{enumerate}
  		\item 
  		Then $\mathcal{P}_m$ is a free 
  		$\Lambda^{\mathcal{O}_K}(\mathcal{G})$-module of rank $1$.
  		\item
  		For $m \geq 1$
  		the $\Lambda(\mathcal{G})$-module $U^m(L_{\infty})$ is of
  		projective dimension at most $1$. 
  		\item
  		If $m$ is sufficiently large, then $U^m(L_{\infty})$ is
  		indeed a free $\Lambda(\mathcal{G})$-module of rank $[K: \Q_p]$.
  	\end{enumerate}
  \end{prop}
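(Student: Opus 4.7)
The plan is to prove (i) as the structural backbone, deduce (iii) via the $p$-adic logarithm, and finally reduce (ii) to (i) by a filtration argument. For part (i), since $L/K$ is at most tamely ramified and each $L_n/L$ is unramified, every intermediate extension $L_n/K$ is at most tamely ramified. By Noether's theorem on ambiguous ideals in tame local extensions, $\mathfrak{p}_{L_n}^m$ is a free $\mathcal{O}_K[G_n]$-module of rank $1$ for every $n$. To produce a compatible generator, I follow the strategy of Proposition \ref{prop:normal-bases-tower}: fix a normal integral basis generator $c \in \mathfrak{p}_L^m$ and a compatible system $b = (b_n)_n \in \varprojlim_n \mathcal{O}_{L'_n}$ from Lemma \ref{lem:limits-IBG}, and set $a_n := \Tr_{L/L'}(c)\, b_{n+n_0} \in \mathfrak{p}_{L_n}^m$. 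The same argument as in Proposition \ref{prop:normal-bases-tower} shows that $(a_n)_n$ lies in $\mathcal{P}_m$ and that the induced $\Lambda^{\mathcal{O}_K}(\mathcal{G})$-linear map sending $1$ to this element is injective; the key improvement in the tame case is that the cokernel $C_\infty$ appearing there now vanishes, since each inclusion $\mathcal{O}_K[G_n] \hookrightarrow \mathfrak{p}_{L_n}^m$ is already an isomorphism.

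For part (iii), choose $m_0 > e_L/(p-1)$, where $e_L$ denotes the absolute ramification index of $L$. Because $L_n/L$ is unramified we have $e_{L_n} = e_L$ for all $n$, so the $p$-adic logarithm induces a $\Z_p[G_n]$-module isomorphism $\log_n : U_{L_n}^m \stackrel{\sim}{\longrightarrow} \mathfrak{p}_{L_n}^m$ uniformly in $n$ for every $m \geq m_0$. The norm maps on the left correspond to the trace maps on the right under $\log$, so passing to inverse limits yields a $\Lambda(\mathcal{G})$-module isomorphism $\log_\infty : U^m(L_\infty) \stackrel{\sim}{\longrightarrow} \mathcal{P}_m$. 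Combining this with (i) and the fact that $\Lambda^{\mathcal{O}_K}(\mathcal{G})$ is $\Lambda(\mathcal{G})$-free of rank $[K:\Q_p]$ shows that $U^m(L_\infty)$ is $\Lambda(\mathcal{G})$-free of rank $[K:\Q_p]$.

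For part (ii), given $m \geq 1$, pick $m' > \max(m, e_L/(p-1))$ so that (iii) applies to $m'$. From the exact sequence
\[
	0 \longrightarrow U^{m'}(L_\infty) \longrightarrow U^m(L_\infty)
	\longrightarrow U^m(L_\infty)/U^{m'}(L_\infty) \longrightarrow 0
\]
and the freeness of $U^{m'}(L_\infty)$, it suffices to prove that the quotient has projective dimension at most $1$. To this end, I descend through the finite filtration $U^m(L_\infty) \supset U^{m+1}(L_\infty) \supset \cdots \supset U^{m'}(L_\infty)$ and show that each graded piece $U^i(L_\infty)/U^{i+1}(L_\infty)$ (for $i \geq 1$) has $\pd \leq 1$. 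At finite level the map $1+x \mapsto x$ gives a $\Z_p[G_n]$-module isomorphism $U_{L_n}^i/U_{L_n}^{i+1} \cong \mathfrak{p}_{L_n}^i/\mathfrak{p}_{L_n}^{i+1}$ (the cross-terms lie in $\mathfrak{p}^{2i} \subseteq \mathfrak{p}^{i+1}$), and the congruence $N(1+x) \equiv 1 + \Tr(x) \pmod{U^{i+1}}$ ensures compatibility with the transition maps. Because $L_{n+1}/L_n$ is unramified the transition maps are surjective on these finite quotients, so Mittag--Leffler applies and yields $U^i(L_\infty)/U^{i+1}(L_\infty) \cong \mathcal{P}_i/\mathcal{P}_{i+1}$ as $\Lambda(\mathcal{G})$-modules. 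By (i) the latter is a quotient of two $\Lambda^{\mathcal{O}_K}(\mathcal{G})$-free modules of rank $1$, hence has $\pd \leq 1$ over $\Lambda^{\mathcal{O}_K}(\mathcal{G})$ and therefore over $\Lambda(\mathcal{G})$. The bound then propagates through the finite filtration by the standard fact that an extension of modules of $\pd \leq 1$ has $\pd \leq 1$. The main technical point, which I expect to be the chief obstacle, is the careful bookkeeping of the $\Z_p[G_n]$-equivariance of these identifications on graded pieces and verifying the Mittag--Leffler hypothesis for the inverse-limit exchange; once this is done, the reduction to (i) via the filtration is entirely formal.
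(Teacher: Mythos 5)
Your proof is correct and follows essentially the same route as the paper's: freeness of $\mathfrak{p}_{L_n}^m$ over $\mathcal{O}_K[G_n]$ in the tame case together with the surjective trace transition maps for (i), the $p$-adic logarithm isomorphism $U^m_{L_n}\simeq \mathfrak{p}_{L_n}^m$ for large $m$ for (iii), and induction through the exact sequences $0 \to U^{m+1}(L_\infty) \to U^m(L_\infty) \to \mathcal{P}_m/\mathcal{P}_{m+1} \to 0$ for (ii). You merely supply more detail than the paper's terse argument, namely the explicit compatible generator in (i) and the Mittag--Leffler verification identifying the graded pieces.
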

  
  \begin{proof}
  	For sufficiently large $m$ the $p$-adic logarithm induces isomorphisms
  	of $\Z_p[G_n]$-modules
  	$U^m_{L_n} \simeq \mathfrak p_{L_n}^m$ for all $n \geq 0$. As
  	$L_n/K$ is tamely ramified, the ideal $\mathfrak p_{L_n}^m$ is a free
  	$\mathcal O_K[G_n]$-module of rank $1$ for every integer $m$.
  	 Since the transition maps are surjective, we obtain (i) and (iii).
  	 For $m \geq 1$ we consider the exact sequences
  	 \begin{equation} \label{eqn:principal-units-ses}
	  	 0 \longrightarrow U^{m+1}(L_{\infty}) \longrightarrow
	  	 U^m(L_{\infty}) \longrightarrow \mathcal{P}_m / \mathcal{P}_{m+1}
	  	 \longrightarrow 0.
  	 \end{equation}
  	 Now (i) and (iii) imply (ii) by downwards induction.
  \end{proof}
  
  \begin{remark}
  	Let $I$ be the inertia subgroup of $\mathcal{G}$. If $L/K$ is tamely
  	ramified, then $p$ does not divide $|I|$ by definition.
	Note that $I$ is actually a subgroup of $H$ and that 
	$\mathcal{G} / I \simeq H/I \times \Gamma$ is abelian.
	Since $\Gamma$ is the Galois group of the maximal unramified
	pro-$p$-extension of $K$, we see that $p$ does actually not divide
	$|H|$. Thus $\Lambda(\mathcal{G})$ is a maximal $R$-order in
	$\mathcal{Q}(\mathcal{G})$, where we recall from
	\S \ref{subsec:Iwasawa-algebras} that $R = \Lambda(\Gamma_0)$
	for some central subgroup $\Gamma_0 \simeq \Z_p$ of $\mathcal{G}$
	(this can be deduced from either of \cite[Theorem 3.5]{MR3247796}
	or \cite[Proposition 3.7 or Theorem 3.12]{hybrid-EIMC}).
	If we assume in addition that $\mathcal{G} \simeq H \times \Gamma$,
	then \cite[Theorem 11.2.4(iii) and Proposition 11.2.1]{MR2392026}
	show that $U^1(L_{\infty})$ is a $\Lambda(\mathcal{G})$-module
	of projective dimension at most $1$. This gives an alternative proof
	of Proposition \ref{prop:principal-units} (ii) in a special case.
  \end{remark}
  
  \subsection{Tamely ramified extensions}
  We now prove the main conjecture for tamely ramified extensions.
  
  \begin{theorem} \label{thm:mc-tame}
  	Let $L/K$ be a tamely ramified Galois extension of $p$-adic local fields.
  	Then Conjecture \ref{conj:main-conjecture} holds for $L_{\infty}/K$.
  \end{theorem}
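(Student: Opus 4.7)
The plan is to descend from Breuning's local equivariant epsilon constant conjecture, which is established for tamely ramified extensions in \cite{MR2078894}, to the Iwasawa-theoretic main conjecture for the unramified $\Z_p$-extension $L_{\infty}/K$. Since $L/K$ is tame, the remark following Proposition \ref{prop:principal-units} gives $p \nmid |H|$, hence $\Lambda(\mathcal{G})$ is a maximal $R$-order in $\mathcal{Q}(\mathcal{G})$ and, by Remark \ref{rmk:Nrd-injective}, both $\Nrd_{\mathcal{Q}^c(\mathcal{G})}$ and $\Det$ are injective on $K_1(\mathcal{Q}^c(\mathcal{G}))$. In particular any $\zeta^{(j)}_{L_{\infty}/K}$ satisfying Conjecture \ref{conj:main-conjecture} is unique, so it suffices to exhibit a candidate.

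For each $n \geq 0$, Breuning's theorem gives $R_{L_n/K} = 0$: there exists $t_n \in K_1(\Q_p^c[G_n])$ with $\partial_p(t_n) = -C_{L_n/K} - U_{L_n/K} + M_{L_n/K}$ and with each $\chi$-component of $\Nrd_{\Q_p^c[G_n]}(t_n)$ equal to $j(\tau_{\Q_p}(\ind_K^{\Q_p}(j^{-1}\circ\chi)))$. Multiply $t_n$ by a suitable lift of $\nu_n^{-1}$ from \eqref{eqn:nu_n} to absorb the normalisation distinguishing $U'_{L_n/K}$ from $U_{L_n/K}$, obtaining $t_n'$ with $\partial_p(t_n') = -C_{L_n/K} - U'_{L_n/K} + M_{L_n/K}$. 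Using the functoriality results of Propositions \ref{prop:functoriality-Gauss} and \ref{prop:functoriality-coh} together with the Mittag--Leffler and $SK_1$ arguments behind Proposition \ref{prop:limit-relative-K0}, one arranges the $t_n'$ to be compatible along the tower and passes to the inverse limit to produce $\zeta^{(j)}_{L_{\infty}/K} \in K_1(\mathcal{Q}^c(\mathcal{G}))$.

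It remains to verify the two conditions of the conjecture. The boundary identity reduces to showing that $C_{L_{\infty}/K}$ and $M_{L_{\infty}/K}$ are the appropriate limits of $C_{L_n/K}$ and $M_{L_n/K}$, for which the key input is Corollary \ref{cor:coh-at-infty-I}: the identification $U^1(L_{\infty}) \simeq X_{L_{\infty}}$ via local class field theory ensures that the valuation-map contribution present in Breuning's finite-level trivialization dies in the limit, so that the simpler trivialization $\phi_{\infty}$ produces the correct limiting object. The determinant identity is checked character-by-character via Lemma \ref{lem:Det-criterion}: for $\chi \in \Irr_{\Q_p^c}(\mathcal{G})$ factoring through $G_n$, Remark \ref{rem:Det-square} identifies $\aug_{\Gamma_K}(\Det(\zeta^{(j)}_{L_{\infty}/K})(\chi))$ with the $\chi$-component of $\Nrd_{\Q_p^c[G_n]}(t_n')$, which equals $\aug_{\Gamma_K}(\tau^{(j)}_{L_{\infty}/K}(\chi))$ by construction (the Frobenius power $\phi_K^{-v_K(c_{\chi})}$ lies in the kernel of $\aug_{\Gamma_K}$).

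The main obstacle is the careful bookkeeping needed to show that the twist by $\nu_n^{-1}$ matches exactly the difference between $\Det(t_n)$ and $\tau^{(j)}_{L_{\infty}/K}$, so that the Frobenius power $\phi_K^{-v_K(c_{\chi})}$ in the latter arises precisely from this twist via the Galois action \eqref{eqn:Galois-on-nu}. Equivalently, one must reconcile the Galois-equivariance of $\tau^{(j)}_{L_{\infty}/K}$ (Theorem \ref{thm:GGs-in-Z_p}(iii)) with that of $\partial^{-1}(C_{L_{\infty}/K}+U'_{L_{\infty}/K})$ (Theorem \ref{thm:Galois-on-coh-term}) in the limit along the tower; this is what forces the compatible choice of $t_n'$ to exist in the first place.
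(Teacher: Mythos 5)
There is a genuine gap. Your strategy is to take Breuning's finite-level elements $t_n'$ and ``pass to the inverse limit to produce $\zeta^{(j)}_{L_{\infty}/K} \in K_1(\mathcal{Q}^c(\mathcal{G}))$'', but no inverse-limit description of $K_1(\mathcal{Q}^c(\mathcal{G}))$ in terms of the groups $K_1(\Q_p^c[G_n])$ is available (the Fukaya--Kato limit formula applies to $K_1(\Lambda(\mathcal{G}))\simeq\varprojlim K_1(\Z_p[G_n])$, which requires finite residue fields, not to the total ring of fractions), and Propositions \ref{prop:functoriality-Gauss} and \ref{prop:functoriality-coh} relate $L_\infty/K$ to $L_\infty'/K$ for $L_\infty'=L_\infty^N$ still infinite over $K$ --- they do not descend the Iwasawa-theoretic terms to the finite layers $L_n/K$. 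More seriously, the assertion that ``$C_{L_{\infty}/K}$ and $M_{L_{\infty}/K}$ are the appropriate limits of $C_{L_n/K}$ and $M_{L_n/K}$'' because ``the valuation-map contribution dies in the limit'' is precisely the hard computational content that must be supplied, not a consequence of Corollary \ref{cor:coh-at-infty-I}. The trivializations are genuinely different ($\phi_L$ uses the valuation map, $\phi_\infty$ does not), $\Z_p$ contributes the nontrivial class $-\partial\bigl({}^{\ast}((1-\sigma^{-1})e_I)\bigr)$ rather than vanishing, and the correction terms at finite and infinite level do not match termwise. The paper's proof handles this by an explicit filtration argument that is only available in the tame case: freeness of $\mathcal{P}_m$ over $\Lambda^{\mathcal{O}_K}(\mathcal{G})$, the sequences \eqref{eqn:principal-units-ses}, the integrality of $e_I$ giving a length-one free resolution of $\Z_p$ and of $\mathcal{P}_0/\mathcal{P}_1$, leading to the identity
\[
M_{L_{\infty}/K} - C_{L_{\infty}/K} = [\mathcal{P}_0,\alpha_{\infty},H_{L_{\infty}}] + \partial_{\Lambda(\mathcal{G}),\mathcal{Q}(\mathcal{G})}\bigl({}^{\ast}(-\sigma e_I)\bigr).
\]
Without this computation you cannot match the boundary condition, and the factor ${}^{\ast}(-\sigma e_I)$ (Breuning's $y(K,\chi)$) that must be reconciled with the Galois Gauss sums never appears in your argument.

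A secondary point: you invoke the full conclusion $R_{L_n/K}=0$ of Breuning's theorem, whereas what is actually needed (and what the paper uses) is the intermediate congruence $\tau_{L_n/K}^{(j)}\cdot\Nrd_{\Q_p^c[G_n]}(\xi_n'\cdot{}^{\ast}(-\sigma^{-1}e_I))\in\Nrd_{\Q_p^c[G_n]}(K_1(\Z_p[G_n]))$ coming from Fr\"ohlich's theory and Taylor's fixed point theorem; the finite-level \emph{statement} of Breuning's conjecture involves his cohomological term with the valuation map and his correction term, so quoting its truth does not by itself yield the displayed congruence in the form needed for the limit argument via Lemma \ref{lem:Det-criterion} and Remark \ref{rem:Det-square}. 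Your final character-by-character augmentation check and the observation that $\phi_K^{-v_K(c_\chi)}$ augments to $1$ are in the right spirit, but they only become usable after the candidate has been exhibited inside $\Det(K_1(\Lambda(\mathcal{G})))$ and after the boundary identity has been established by the explicit computation above.
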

  
  \begin{proof}
  	Let $m \geq 1$ be an integer.
  	We consider the following maps of complexes
  	\[
  	\lambda_m: U^m(L_{\infty}) \oplus H_{L_{\infty}}[-1] \longrightarrow
  	K_{L_{\infty}}^{\bullet}
  	\]
  	which on cohomology induces the natural embeddings
  	$U^m(L_{\infty}) \hookrightarrow U^1(L_{\infty}) \simeq H^0(K_{L_{\infty}}^{\bullet})$
  	and $H_{L_{\infty}} \hookrightarrow \Z_p \oplus H_{L_{\infty}}
  	\simeq H^1(K_{L_{\infty}}^{\bullet})$. If $m$ is sufficiently large,
  	the $p$-adic logarithm induces an isomorphism 
  	of $\Lambda(\mathcal{G})$-modules
  	$U^m(L_{\infty}) \simeq \mathcal{P}_m$. We now apply
  	\eqref{eqn:coh-term-description} with $\mathcal{L} = \mathcal{P}_m$
  	and obtain the first equality in
  \begin{eqnarray*}
	  C_{L_{\infty/K}} & = & \chi_{\Lambda(\mathcal{G}), 
		  \mathcal{Q}(\mathcal{G})}(\Cone(\lambda_m), 0)
		  - [\mathcal{P}_m, \alpha_{\infty}, H_{L_{\infty}}] \\
		  & = & \chi_{\Lambda(\mathcal{G}), 
		  	\mathcal{Q}(\mathcal{G})}(\Cone(\lambda_1), 0)
		  - [\mathcal{P}_1, \alpha_{\infty}, H_{L_{\infty}}] \\
		  & = & -\partial_{\Lambda(\mathcal{G}), \mathcal{Q}(\mathcal{G})}
		  \left(^{\ast}\left((1 - \sigma^{-1})q_K e_I\right)\right)
		  - [\mathcal{P}_0, \alpha_{\infty}, H_{L_{\infty}}]
  \end{eqnarray*}
  The second equality follows from the short exact sequences
  \eqref{eqn:principal-units-ses}. For the last equality we first observe
  that $\Cone(\lambda_1) \simeq \Z_p[-1]$. As $L/K$ is tamely ramified,
  the central idempotent $e_I$ actually belongs to $\Lambda(\mathcal{G})$
  so that
  \[
  0 \longrightarrow \Lambda(\mathcal{G}) 
  \xrightarrow{^{\ast}((1 - \sigma^{-1}) e_I)}
  \Lambda(\mathcal{G}) \longrightarrow \Z_p \longrightarrow 0
  \]
  is a free resolution of $\Z_p$. Thus we have an equality
  \[
	  -\partial_{\Lambda(\mathcal{G}), \mathcal{Q}(\mathcal{G})}
	  \left(^{\ast}\left((1 - \sigma^{-1}) e_I\right)\right)
	  = \chi_{\Lambda(\mathcal{G}), 
	  	\mathcal{Q}(\mathcal{G})}(\Cone(\lambda_1), 0).
  \]
  Moreover, the quotient $\mathcal{P}_0 / \mathcal{P}_1$ identifies
  with the inverse limit of the residue fields of the
  $L_n$, $n \geq 0$. We therefore have isomorphisms of
  $\Lambda(\mathcal{G})$-modules
  \[
	  \mathcal{P}_0 / \mathcal{P}_1 \simeq 
	  \varprojlim_n \overline{K}[G_n/I] = 
	  \overline{K}\llbracket \mathcal{G}/I \rrbracket \simeq
	  \mathbb F_p \llbracket \mathcal{G}/I \rrbracket^{\oplus f_{K/\Q_p}},
  \]
  where we recall that $q_K = |\overline{K}| = p^{f_{K/\Q_p}}$.
  Hence there is a free resolution
  \[
	  0 \longrightarrow \Lambda(\mathcal{G})^{f_{K/\Q_p}} 
	  \xrightarrow{^{\ast}(p e_I)}
	  \Lambda(\mathcal{G})^{f_{K/\Q_p}} \longrightarrow 
	  \mathcal{P}_0 / \mathcal{P}_1
	  \longrightarrow 0.
  \]
   We conclude that
  \[
	  [\mathcal{P}_0, \alpha_{\infty}, H_{L_{\infty}}] - 
	  [\mathcal{P}_1, \alpha_{\infty}, H_{L_{\infty}}] =
	  -\partial_{\Lambda(\mathcal{G}), \mathcal{Q}(\mathcal{G})}
	  \left(^{\ast}(q_K e_I)\right).
  \]
  This shows the last equality. It follows that
  \begin{eqnarray*}
	  M_{L_{\infty}/K} - C_{L_{\infty}/K} & = &
	  [\mathcal{P}_0, \alpha_{\infty}, H_{L_{\infty}}]
	  + \partial_{\Lambda(\mathcal{G}), \mathcal{Q}(\mathcal{G})}
	  \left(^{\ast}\left((q_K - \sigma) e_I\right)\right) \\
	  & = &
	  [\mathcal{P}_0, \alpha_{\infty}, H_{L_{\infty}}]
	  + \partial_{\Lambda(\mathcal{G}), \mathcal{Q}(\mathcal{G})}
	  \left(^{\ast}(-\sigma e_I)\right).
  \end{eqnarray*}
  Here, the second equality holds, since we have
  \[
  \frac{^{\ast}\left((q_K - \sigma) e_I\right)}{^{\ast}(-\sigma e_I)}
  = {}^{\ast}\left((1 - q_K \sigma^{-1}) e_I\right)
  \in \Lambda(\mathcal{G})^{\times}.
  \]
  Let $\xi' \in K_1(\Q_p^c \otimes_{\Z_p} \Lambda(\mathcal{G}))$
  be a pre-image of 
  $U'_{L_{\infty}/K} - [\mathcal{P}_0, \alpha_{\infty}, H_{L_{\infty}}]$
  as in the proof of Theorem \ref{thm:Galois-on-coh-term}.
  By the above considerations we have to show that 
  \[
	  \tau_{L_{\infty/K}}^{(j)} \cdot 
	  \Det(\xi' \cdot {}^{\ast}(-\sigma^{-1} e_I)) \in
	  \Det(K_1(\Lambda(\mathcal{G}))).
  \]
  As we have an isomorphism $K_1(\Lambda(\mathcal{G})) \simeq
  \varprojlim_n K_1(\Z_p[G_n])$ by \cite[Proposition 1.5.1]{MR2276851},
  Lemma \ref{lem:Det-criterion} (see also Remark \ref{rem:Det-square})
  and Proposition \ref{prop:mc-Galois+} 
  imply that it suffices to show the following claim on finite level.
  Let $n \geq 0$ be an integer. Then we have an equality
  \[
	  \tau_{L_n/K}^{(j)} \cdot \Nrd_{\Q_p^c[G_n]}(\xi_n' \cdot {}^{\ast}(-\sigma^{-1} e_I)) \in \Nrd_{\Q_p^c[G_n]}(K_1(\Z_p[G_n])),
  \]
  where we put
  \[
	  \tau_{L_n/K}^{(j)} := \left(j\left((\tau_{\Q_p}(\ind_K^{\Q_p} j^{-1} \circ \chi))\right)\right)_{\chi \in \Irr_{\Q_p^c}(G_n)}
  \]
  and $\xi_n' \in K_1(\Q_p^c[G_n])$ is a pre-image of
  \[
	  U_{L_n/K}' - [(\mathcal{P}_0)_{\Gamma_n}, \alpha_n, H_{L_n}]
	  = U_{L_n/K} - [\mathcal{O}_{L_n}, \rho_n, H_{L_n}].
  \]
  However, this claim actually is a main step in the proof of the local
  epsilon constant conjecture for tamely ramified extensions
  \cite[Theorem 3.6]{MR2078894}; apply \cite[Lemma 2.7 and (3.4)]{MR2078894}
  and Taylor's fixed point theorem \cite{MR608528} 
  (see Theorem \ref{thm:fixed-points}). Note that in the notation of
  \cite{MR2078894} one has $\Nrd_{\Q_p^c[G_n]}({}^{\ast}(-\sigma e_I)) =
  \left(y(K, \chi)\right)_{\chi \in \Irr_{\Q_p^c}(G_n)}$.
  \end{proof}
  
  \begin{remark}
  	If $\mathcal{G}$ is abelian, then Theorem \ref{thm:mc-tame}
  	is the local analogue
  	of Wiles' result \cite{MR1053488} on the main conjecture
  	for totally real fields.
  \end{remark}
  
  \subsection{Maximal orders}
  Choose $x_{L_{\infty}/K} \in K_1(\mathcal{Q}^c (\mathcal{G}))$
  such that
  $\partial_{\Lambda(\mathcal{G}), \mathcal{Q}^c(\mathcal{G})}(x_{L_{\infty}/K}) = 
  -C_{L_{\infty}/K} - U'_{L_{\infty}/K} + M_{L_{\infty}/K}$.
  Then Conjecture \ref{conj:main-conjecture} asserts that
  \[
	  \Det(x_{L_{\infty}/K})^{-1} \cdot \tau_{L_{\infty}/K}^{(j)}
	  \in \Det(K_1(\Lambda(\mathcal{G}))).
  \]
  Recall from \eqref{eqn:Det-of-Iwasawa} that we have an inclusion
  \[
	  \Det(K_1(\Lambda(\mathcal{G}))) \subseteq
	  \Hom^{\ast}(R_p( \mathcal{G}), \Lambda^{c}(\Gamma_K)^{\times}).
  \]
  If $\mathcal{M}(\mathcal{G})$ is a maximal $R$-order in 
  $\mathcal{Q}(\mathcal G)$ containing $\Lambda(\mathcal G)$
  (where $R \simeq \Z_p \llbracket T \rrbracket$ is as in \S
  \ref{subsec:Iwasawa-algebras}), then by \cite[Remark H]{MR2114937}
  the bottom isomorphism in triangle \eqref{eqn:Det_triangle} induces
  an isomorphism
  \[
	  \zeta(\mathcal{M}(\mathcal{G}))^{\times} \simeq
	  \Hom^{\ast}(R_p( \mathcal{G}), \Lambda^{c}(\Gamma_K)^{\times}).
  \]
  The following result may therefore be seen as the main conjecture
  `over the maximal order'.
  
  \begin{theorem} \label{thm:mc-max-order}
  	For every $x_{L_{\infty}/K} \in K_1(\mathcal{Q}^c (\mathcal{G}))$
  	such that
  	\[
  	\partial_{\Lambda(\mathcal{G}), \mathcal{Q}^c(\mathcal{G})}(x_{L_{\infty}/K}) = 
  	-C_{L_{\infty}/K} - U'_{L_{\infty}/K} + M_{L_{\infty}/K}
  	\]
  	we have
  	\[
  	\Det(x_{L_{\infty}/K})^{-1} \cdot \tau^{(j)}_{L_{\infty}/K}
  	\in \Hom^{\ast}(R_p( \mathcal{G}), \Lambda^c(\Gamma_K)^{\times}).
  	\]
  \end{theorem}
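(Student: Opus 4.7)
By Proposition~\ref{prop:mc-Galois+} we already have that
\[
f := \Det(x_{L_\infty/K})^{-1} \cdot \tau^{(j)}_{L_\infty/K}
\]
lies in $\Hom^{\ast}(R_p(\mathcal{G}),(\Q_p^c\otimes_{\Z_p}\Lambda(\Gamma_K))^{\times})$, so the substantive content of the theorem is the integrality upgrade: for every $\chi\in\Irr_{\Q_p^c}(\mathcal{G})$ one must show $f(\chi)\in\Lambda^c(\Gamma_K)^{\times}$. The plan is to reduce this to a finite-level question via type-$W$ twists, and then to invoke the tame case (Theorem~\ref{thm:mc-tame}) through Brauer induction.

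First I would exploit the isomorphism $\Lambda(\Gamma_K)\simeq\Z_p\llbracket T\rrbracket$ sending $\overline{\gamma}_K\mapsto 1+T$ to realise $f(\chi)$ as a unit power series $F_\chi(T)\in F\llbracket T\rrbracket^{\times}$, where $F/\Q_p$ is a finite extension containing the values of $\chi$ and its Galois conjugates. Such a unit lies in $\mathcal{O}_F\llbracket T\rrbracket^{\times}$ if and only if its specialisations $F_\chi(\zeta-1)$ lie in $\overline{\Z}_p$ for every $p$-power root of unity $\zeta$, by a standard Weierstrass-preparation-type argument. For each such $\zeta$, if $\rho$ is the character of type $W$ with $\rho(\overline{\gamma}_K)=\zeta$, then the $\Hom^W$ property of $f$ yields $F_\chi(\zeta-1)=\aug_{\Gamma_K}(f(\chi\otimes\rho))$. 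Since $\chi\otimes\rho$ has open kernel it factors through some $G_n=\Gal(L_n/K)$, and by Remark~\ref{rem:Det-square} this augmented value equals the $(\chi\otimes\rho)$-component of the finite-level invariant $\Nrd_{\Q_p^c[G_n]}(x_n)^{-1}\cdot\tau^{(j)}_{L_n/K}$, where $x_n$ is the image of $x_{L_\infty/K}$ at level $n$.

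It therefore suffices to prove integrality of this component at every finite level. Here I would apply Brauer's induction theorem to decompose $\chi\otimes\rho=\sum_i n_i\,\ind_{U_i}^{G_n}(\lambda_i)$ with $U_i\leq G_n$ and $\lambda_i$ linear. The finite-level counterparts of Propositions~\ref{prop:functoriality-Gauss} and~\ref{prop:functoriality-coh} identify the $\ind_{U_i}^{G_n}(\lambda_i)$-component of the finite-level invariant for $L_n/K$ with the $\lambda_i$-component of the analogous invariant for $L_n/L_n^{U_i}$. Whenever $U_i$ is contained in $\Gal(L_n/L)$, the extension $L_n/L_n^{U_i}$ is unramified, hence tame; lifting $U_i$ to $\tilde U_i\leq\mathcal{G}$ and applying Theorem~\ref{thm:mc-tame} to $L_\infty/L_\infty^{\tilde U_i}$, then projecting to level $n$, furnishes the required integrality.

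The main obstacle is that the subgroups supplied by Brauer's theorem are dictated by the abstract structure of $G_n$ and need not respect its ramification filtration, so one cannot always force $U_i\subseteq\Gal(L_n/L)$. To handle the remaining subgroups I would iterate: apply Brauer again to each $\lambda_i$ viewed within an appropriate supergroup, and combine this with the $\Hom^W$ covariance already established in order to reduce, step by step, to tame subextensions. Because the wild inertia of $G_n$ is a $p$-group and is itself Brauer-elementary, this iteration should terminate. Executing the combinatorial reduction carefully while keeping track of the ambiguity in $U'_{L_\infty/K}$ recorded in Lemma~\ref{lem:unramified-term-wdef} constitutes the most technical portion of the argument.
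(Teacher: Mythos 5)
Your first two steps (invoking Proposition \ref{prop:mc-Galois+} and twisting by type-$W$ characters to probe $f(\chi)$ at finite levels) are reasonable, but the proof breaks down exactly where you acknowledge the ``main obstacle''. Formal Brauer induction cannot reduce a wildly ramified character to tamely ramified subextensions: the elementary subgroups it produces are governed by the abstract group structure of $G_n$ and will in general contain wild inertia, and your proposed fix --- ``apply Brauer again to each $\lambda_i$ viewed within an appropriate supergroup'' and iterate --- is not an argument (the $\lambda_i$ are already linear, so a second application of Brauer gains nothing, and no amount of iteration removes wild ramification). If this scheme worked it would prove the finite-level maximal-order statement purely formally from the tame case, which it does not. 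There are two further gaps: (a) integrality of all specialisations $F_\chi(\zeta-1)$ only shows $\mu_\chi\ge 0$, not that $f(\chi)$ is a \emph{unit}; and (b) your identification of $\aug_{\Gamma_K}(f(\chi\otimes\rho))$ with a finite-level invariant attached to $-C_{L_n/K}-U'_{L_n/K}+M_{L_n/K}$ is not available --- the descent of $C_{L_\infty/K}$ to Breuning's $C_{L_n/K}$ is delicate (the valuation map enters at finite level but not at infinite level) and is deferred by the author to a forthcoming article.

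The paper's proof stays at the infinite level and uses a mechanism that has no finite-level analogue. After reducing (via Brauer and functoriality) to abelian $\mathcal{G}=H\times\Gamma_K$ and to $\chi$ of type $S$, Proposition \ref{prop:mc-Galois+} shows $f(\chi)=\pi^{\mu_\chi}g(\chi)$ with $g(\chi)\in\Lambda^c(\Gamma_K)^\times$, so the \emph{only} obstruction is the integer $\mu_\chi$. By the claim in the proof of \cite[Proposition 11]{MR1423032} there is a nonzero integer $m$ such that $m\sum_{\sigma\in V}{}^{\sigma}\chi$ (with $V$ the inertia group of $\Q_p(\chi)/\Q_p$) is a sum of characters induced from cyclic subgroups of $H$ of order prime to $p$; these correspond to tame subextensions, where Theorem \ref{thm:mc-tame} gives vanishing $\mu$. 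Additivity of $\mu$-invariants yields $m|V|\mu_\chi=0$, and since $\mu$-invariants live in the torsion-free group $\Z$ one may divide by $m|V|$ to conclude $\mu_\chi=0$. This ability to divide by a nonzero integer is precisely the Iwasawa-theoretic input your finite-level strategy cannot reproduce; you should restructure your argument around the $\mu$-invariant rather than around specialisations.
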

  
  \begin{proof}
  	This follows from Theorem \ref{thm:mc-tame} by a reduction 
  	argument which mainly uses the functorial properties of
  	the conjecture (see \cite[Theorem 16]{MR2114937} for the analogue
  	in the case of the main conjecture for totally real fields).
  	We sketch the proof for convenience of the reader.
  	Let us put $f := \Det(x_{L_{\infty}/K})^{-1} \cdot \tau^{(j)}_{L_{\infty}/K}$ for simplicity. 
  	We have to show that for every $\chi \in \Irr_{\Q_p^c}(\mathcal{G})$
  	we have $f(\chi) \in \Lambda^c(\Gamma_K)^{\times}$.
  	By Brauer induction we may assume that $\mathcal{G}$
  	is abelian. In particular, we have a decomposition
  	$\mathcal{G} = H \times \Gamma_K$ with an abelian finite group $H$.
  	As we have $f(\chi \otimes \rho) = \rho^{\sharp}(f(\chi))$ for all
  	characters $\rho$ of type $W$, we may in addition assume that
  	$\chi$ is a character of type $S$, i.e.~$\chi$ actually factors
  	through $H$. Since we already know that $f(\chi) \in
  	(\Q_p^c \otimes_{\Z_p} \Lambda(\Gamma_K))^{\times}$ by
  	Proposition \ref{prop:mc-Galois+}, there is a prime element $\pi$
  	in some finite extension of $\Q_p$ such that 
  	$f(\chi) = \pi^{\mu_{\chi}} g(\chi)$ for some $\mu_{\chi} \in \Z$
  	and $g(\chi) \in \Lambda^c(\Gamma_K)^{\times}$.
  	We have to show
  	that the $\mu$-invariant $\mu_{\chi}$ of $f(\chi)$ vanishes.
  	Let us put $\Q_p(\chi) := \Q_p(\chi(h) \mid h \in H)$
  	and let $U$ be the Galois group of the extension $\Q_p(\chi)/ \Q_p$.
  	Recall that $U$ acts on $\chi$ via 
  	${}^{\sigma} \chi := \sigma \circ \chi$. Since $f$ is invariant
  	under Galois action, we may actually choose $\pi \in \Q_p(\chi)$
  	and the $\mu$-invariants $\mu_{\chi}$
  	and $\mu_{{}^{\sigma} \chi}$ coincide. Let $V$ be the inertia subgroup
  	of $U$. By the claim in the proof of \cite[Proposition 11]{MR1423032}
  	there is an integer $m \not= 0$ such that the character
  	\[
	  	\chi' := m \sum_{\sigma \in V} {}^{\sigma} \chi
  	\]
  	can be written as a sum of characters induced from cyclic subgroups of $H$
  	of order prime to $p$. Since these subgroups correspond to tamely ramified
  	subextensions, Theorem \ref{thm:mc-tame} implies that
  	$\mu_{\chi'}$ vanishes. The equality
  	$\mu_{\chi'} = m |V| \mu_{\chi}$ now gives the result.
  \end{proof}
  
  \begin{remark}
  	Theorems \ref{thm:mc-tame} and \ref{thm:mc-max-order} are the 
  	Iwasawa-theoretic analogues
  	of \cite[Theorem 3.6]{MR2078894} and \cite[Corollary 3.8]{MR2078894},
  	respectively. Theorem \ref{thm:mc-max-order} might also be seen
  	as the local analogue of \cite[Theorem 4.12]{hybrid-EIMC}
  	(see also \cite[Example 2]{MR2205173} if 
  	Iwasawa's $\mu$-invariant vanishes).
  \end{remark}
  
  \subsection{Consequences}
  We now prove a reduction step which also appears in the proof of the
  main conjecture for totally real fields (see \cite{MR2205173}).
  This result has no analogue at finite level.
  We let $\Lambda_{(p)}(\mathcal{G})$ be the Iwasawa algebra 
  $\Lambda(\mathcal{G})$ localized at the height $1$ prime ideal
  $(p)$ of $R \simeq \Z_p \llbracket T \rrbracket$.
  
  \begin{corollary} \label{cor:MC-localizes}
  	Choose $x_{L_{\infty}/K} \in K_1(\mathcal{Q}^c (\mathcal{G}))$
  	such that
  	\[
  	\partial_{\Lambda(\mathcal{G}), \mathcal{Q}^c(\mathcal{G})}(x_{L_{\infty}/K}) = 
  	-C_{L_{\infty}/K} - U'_{L_{\infty}/K} + M_{L_{\infty}/K}.
  	\]
  	Then Conjecture \ref{conj:main-conjecture} holds if and only if
  	we have
  	\[
  	\Det(x_{L_{\infty}/K})^{-1} \cdot \tau^{(j)}_{L_{\infty}/K}
  	\in \Det(K_1(\Lambda_{(p)}(\mathcal{G}))).
  	\]
  \end{corollary}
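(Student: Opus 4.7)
The plan is to prove the ``if and only if'' by treating the two implications separately. The forward direction is essentially automatic by naturality: the localization map $\Lambda(\mathcal{G}) \to \Lambda_{(p)}(\mathcal{G})$ induces a homomorphism $K_1(\Lambda(\mathcal{G})) \to K_1(\Lambda_{(p)}(\mathcal{G}))$ that is compatible with $\Det$, so an element already lying in $\Det(K_1(\Lambda(\mathcal{G})))$ is automatically in $\Det(K_1(\Lambda_{(p)}(\mathcal{G})))$. This also handles the implication ``Conjecture \ref{conj:main-conjecture} $\Rightarrow$ the displayed condition''.

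For the non-trivial direction, I would set $f := \Det(x_{L_{\infty}/K})^{-1} \cdot \tau^{(j)}_{L_{\infty}/K}$ and assume $f \in \Det(K_1(\Lambda_{(p)}(\mathcal{G})))$. Theorem \ref{thm:mc-max-order} already supplies the integrality $f \in \Hom^{\ast}(R_p(\mathcal{G}), \Lambda^c(\Gamma_K)^{\times})$. The crucial step is then to invoke the theorem of Ritter and Weiss \cite{MR2205173} on the image of $K_1(\Lambda(\mathcal{G}))$ under $\Det$, which identifies $\Det(K_1(\Lambda(\mathcal{G})))$ inside $\Hom^{\ast}(R_p(\mathcal{G}), \mathcal{Q}^c(\Gamma_K)^{\times})$ as precisely the intersection of $\Hom^{\ast}(R_p(\mathcal{G}), \Lambda^c(\Gamma_K)^{\times})$ with $\Det(K_1(\Lambda_{(p)}(\mathcal{G})))$. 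Since $f$ satisfies both conditions, it lies in $\Det(K_1(\Lambda(\mathcal{G})))$, which is exactly Conjecture \ref{conj:main-conjecture} for $L_{\infty}/K$. This ``local-global'' step is precisely the reduction used by Ritter and Weiss in the proof of the main conjecture for totally real fields, and it transfers verbatim here since it concerns only the $K$-theory of $\Lambda(\mathcal{G})$ and the determinant map $\Det$ of \S \ref{subsec:K-of-Iwasawa-algebras}.

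The hard part will be to invoke \cite{MR2205173} in exactly the form needed: the statement there is phrased in the context of totally real fields, so one must extract the purely $K$-theoretic content, namely the pullback description of $\Det(K_1(\Lambda(\mathcal{G})))$, and verify that it applies to the one-dimensional $p$-adic Lie group $\mathcal{G} = \Gal(L_\infty/K)$ arising here (with $p$ odd, as is assumed throughout \S \ref{subsec:Iwasawa-algebras}). Once this verification is in place, the proof is formal: the converse direction is just the observation that the two conditions on $f$ (integrality from Theorem \ref{thm:mc-max-order}, and lying in $\Det(K_1(\Lambda_{(p)}(\mathcal{G})))$ by hypothesis) together force $f \in \Det(K_1(\Lambda(\mathcal{G})))$.
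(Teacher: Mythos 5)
Your proposal is correct and matches the paper's own argument: the forward direction is functoriality of $\Det$ under localization, and the converse combines Theorem \ref{thm:mc-max-order} with the Ritter--Weiss result (Theorem B of \cite{MR2205173}), which gives the inclusion $\Hom^{\ast}(R_p(\mathcal{G}), \Lambda^c(\Gamma_K)^{\times}) \cap \Det(K_1(\Lambda_{(p)}(\mathcal{G}))) \subseteq \Det(K_1(\Lambda(\mathcal{G})))$. Your caveat about checking that the Ritter--Weiss theorem applies is reasonable but not an obstacle, since that result is a purely $K$-theoretic statement about Iwasawa algebras of one-dimensional $p$-adic Lie groups of exactly the type considered here.
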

  
  \begin{proof}
  	By \cite[Theorem B]{MR2205173} we have an inclusion
  	\[
	  	\Hom^{\ast}(R_p( \mathcal{G}), \Lambda^c(\Gamma_K)^{\times}) \cap
	  	\Det(K_1(\Lambda_{(p)}(\mathcal{G}))) \subseteq
	  	\Det(K_1(\Lambda(\mathcal{G}))).
  	\]
  	Now the result follows from Theorem \ref{thm:mc-max-order}.
  \end{proof}
  
  \begin{remark}
  	In fact, \cite[Theorem B]{MR2205173} shows that one may replace
  	$\Lambda_{(p)}(\mathcal{G})$ by its $(p)$-adic completion
  	in the statement of Corollary \ref{cor:MC-localizes}.  	
  	Then this is the local analogue of \cite[Theorem A]{MR2205173}.
  \end{remark}
  
  Working over $\Lambda_{(p)}(\mathcal{G})$ rather than
  $\Lambda(\mathcal{G})$ has the big advantage that 
  the cohomology groups of the complex $\Lambda_{(p)}(\mathcal{G}) 
  \otimes_{\Lambda(\mathcal{G})}^{\mathbb L}
  K_{L_{\infty}}^{\bullet}$ are free
  $\Lambda_{(p)}(\mathcal{G})$-modules.
  
  \begin{prop}
  	The $\Lambda_{(p)}(\mathcal{G})$-modules
  	$H^i(\Lambda_{(p)}(\mathcal{G}) 
  	\otimes_{\Lambda(\mathcal{G})}^{\mathbb L}
  	K_{L_{\infty}}^{\bullet})$ are free of rank $[K:\Q_p]$
  	for $i=0,1$ and vanish otherwise.
  \end{prop}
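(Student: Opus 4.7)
The plan is to exploit the fact that the complex $K^\bullet_{L_\infty}$ has cohomology concentrated in degrees $0$ and $1$ (Corollary \ref{cor:coh-at-infty-II}), combined with the flatness of the localization $\Lambda_{(p)}(\mathcal{G})$ over $\Lambda(\mathcal{G})$. Flatness yields
\[
H^i\bigl(\Lambda_{(p)}(\mathcal{G}) \otimes^{\mathbb L}_{\Lambda(\mathcal{G})} K^\bullet_{L_\infty}\bigr)
\simeq \Lambda_{(p)}(\mathcal{G}) \otimes_{\Lambda(\mathcal{G})} H^i(K^\bullet_{L_\infty}),
\]
which is automatically $0$ for $i \notin \{0,1\}$, so it suffices to analyse the two remaining degrees separately.

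For the degree $1$ term, Corollary \ref{cor:coh-at-infty-II} identifies $H^1(K^\bullet_{L_\infty})$ with $\Z_p \oplus H_{L_\infty}$. The trivial summand $\Z_p$ is annihilated by $T = \gamma^{p^n} - 1 \in R$, since $\gamma^{p^n}$ lies in the central subgroup $\Gamma_0$ and acts trivially on $\Z_p$. But $T \notin (p)$, so $T$ becomes a unit in $\Lambda_{(p)}(\mathcal{G})$, and consequently $\Lambda_{(p)}(\mathcal{G}) \otimes_{\Lambda(\mathcal{G})} \Z_p = 0$. The second summand $H_{L_\infty}$ is already free over $\Lambda(\mathcal{G})$ of rank $[K:\Q_p]$ and trivially remains free of that rank after localization.

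For the degree $0$ term, Corollary \ref{cor:coh-at-infty-II} gives $H^0(K^\bullet_{L_\infty}) \simeq U^1(L_\infty) \simeq X_{L_\infty}$. Here I would invoke the author's result from \cite{Swan-Iwasawa}, which is precisely the statement that $X_{L_\infty}$ becomes free over $\Lambda_{(p)}(\mathcal{G})$ after localization at the height one prime $(p)$. To pin down the rank as $[K:\Q_p]$, one computes the generic rank over $\mathcal{Q}(\mathcal{G})$: the logarithm isomorphism in Proposition \ref{prop:log-limits} identifies $\Q_p \otimes_{\Z_p} U^1(L_\infty)$ with $\Q_p \otimes_{\Z_p} \varprojlim_n \mathcal{O}_{L_n}$, and the latter is a free $\Lambda(\Gamma)$-module of rank $[L':\Q_p] = |H|\,[K:\Q_p]$ by Lemma \ref{lem:limits-IBG} and the discussion preceding Proposition \ref{prop:normal-bases-tower}. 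Since $\Lambda(\mathcal{G})$ is free of rank $|H|$ over $\Lambda(\Gamma)$, this computes the $\mathcal{Q}(\mathcal{G})$-rank, and hence the $\Lambda_{(p)}(\mathcal{G})$-rank of the free module $\Lambda_{(p)}(\mathcal{G}) \otimes_{\Lambda(\mathcal{G})} U^1(L_\infty)$, as $[K:\Q_p]$.

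The entire weight of the argument rests on the freeness statement from \cite{Swan-Iwasawa}: this is the Swan-theoretic input that makes the $(p)$-localization behave so well, and it is precisely what distinguishes this Iwasawa-theoretic setting from finite level, where the analogous cohomology module $\widehat{L^\times}$ carries a non-trivial extension class that one cannot avoid. Granting this freeness, the rest is bookkeeping with flat base change and the observation that the trivial summand $\Z_p$ is killed because $T$ is inverted.
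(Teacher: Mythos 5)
Your argument is correct and follows the same route as the paper's own proof: vanishing outside degrees $0,1$ via Corollary \ref{cor:coh-at-infty-II}, the summand $\Z_p$ of $H^1$ dying after localization at $(p)$ (the paper leaves the reason implicit; your observation that $T=\gamma^{p^n}-1$ annihilates it and is inverted in $\Lambda_{(p)}(\mathcal{G})$ is exactly right), $H_{L_\infty}$ staying free of rank $[K:\Q_p]$, and the degree-$0$ freeness resting on \cite[Corollary 4.4]{Swan-Iwasawa}. Your extra rank computation via $\log_\infty$ and the $\Lambda(\Gamma)$-rank of $\varprojlim_n\mathcal{O}_{L_n}$ is a harmless elaboration of what the paper states without proof.
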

  
  \begin{proof}
  	The cohomology vanishes outside degrees $0$ and $1$ by
  	Corollary \ref{cor:coh-at-infty-II}. Furthermore, we have
  	$H^1(K_{L_{\infty}}^{\bullet}) \simeq \Z_p \oplus H_{L_{\infty}}$.
  	The $\Lambda(\mathcal{G})$-module $\Z_p$ vanishes after localization
  	at $(p)$, whereas $H_{L_{\infty}}$ already is a free
  	$\Lambda(\mathcal{G})$-module of rank $[K:\Q_p]$. 
  	Finally, the $\Lambda_{(p)}(\mathcal{G})$-module 
  	$H^0(\Lambda_{(p)}(\mathcal{G}) 
  	\otimes_{\Lambda(\mathcal{G})}^{\mathbb L}
  	K_{L_{\infty}}^{\bullet}) \simeq U^1(L_{\infty})_{(p)}$ 
  	is free of the same rank by \cite[Corollary 4.4]{Swan-Iwasawa}
  	(we point out that the results established in \S 
  	\ref{subsec:Galois-coh} at least show 
  	that the projective dimension has to be
  	less or equal to $1$; this shows the slightly weaker result that the 
  	cohomology groups are perfect).
  \end{proof}
  
\nocite*
\bibliography{epsilon-bib}{}
\bibliographystyle{amsalpha}

\end{document}